\let\today\relax
\def\ps@pprintTitle{%
    \let\@oddhead\@empty
    \let\@evenhead\@empty
    \def\@oddfoot{\footnotesize\itshape
         {Preprint submitted to arXiv} \hfill\today}% Optimization Online
    \let\@evenfoot\@oddfoot
    }
\definecolor{cornell-red}{RGB}{179,27,27}
\newtheorem{theorem}{Theorem}
\newtheorem{lemma}{Lemma}
\newtheorem{proposition}{Proposition}
\newtheorem{corollary}{Corollary}
\theoremstyle{definition}
\newtheorem{definition}{Definition}[section]
\newtheorem{example}{Example}
\newtheorem{assumption}{Assumption}
\newtheorem{remark}{\textbf{Remark}}
\theoremstyle{remark}
\newcommand{\setaxiomtag}[1]{% \setaxiomtag{<tag>}
  \let\oldtheaxiom\theaxiom% Store \theaxiom
  \renewcommand{\theaxiom}{#1}% Redefine it to a fixed value
  \g@addto@macro\endaxiom{% At \end{axiom}, ...
    \addtocounter{axiom}{-1}% ...restore axiom counter value and...
    \global\let\theaxiom\oldtheaxiom}% ...restore \theaxiom
  }
\newcommand{\ra}[1]{\renewcommand{\arraystretch}{#1}}  % for table streching
\DeclareMathOperator*{\argmax}{arg\,max}
\DeclareMathOperator*{\argmin}{arg\,min}
\DeclareMathOperator{\Var}{Var}
\DeclareMathOperator{\Cov}{Cov}
\DeclareMathOperator{\diam}{diam}
\DeclareMathOperator{\conv}{conv} 
\DeclareMathOperator{\CVaR}{CVaR}
\DeclareMathOperator{\VaR}{VaR}
\mathchardef\mhyphen="2D % Define a "math hyphen"
\newcommand{\E}{\mathbb{E}}
\newcommand{\Prob}{\mathbb{P}}
\newcommand{\G}{\mathbb{G}}
\newcommand{\M}{\mathbb{M}}
\newcommand{\N}{\mathbb{N}}
\newcommand{\Q}{\mathbb{Q}}
\newcommand{\R}{\mathbb{R}}
\renewcommand{\S}{\mathbb{S}}
\newcommand{\calB}{\mathcal{B}}
\newcommand{\calF}{\mathcal{F}}
\newcommand{\calH}{\mathcal{H}}
\newcommand{\calK}{\mathcal{K}}
\newcommand{\calP}{\mathcal{P}}
\newcommand{\calS}{\mathcal{S}}
\newcommand{\calU}{\mathcal{U}}
\newcommand{\calX}{\mathcal{X}}
\newcommand{\calY}{\mathcal{Y}}
\newcommand{\bbmd}{\mathbbm{d}}
\newcommand{\bbmD}{\mathbbm{D}}
\newcommand{\bbmH}{\mathbbm{H}}
\newcommand{\sfd}{\mathsf{d}}
\newcommand{\tp}{\top}
\newcommand{\one}{\bm{1}} % mathds
\newcommand{\norm}[1]{\left\lVert#1\right\rVert}
\newcommand{\norms}[1]{\lVert#1\rVert}
\newcommand{\xb}{\pmb{x}}
\newcommand{\yb}{\pmb{y}}
\newcommand{\pb}{\pmb{p}}
\newcommand{\mub}{\pmb{\mu}}
\newcommand{\Sigmab}{\pmb{\Sigma}}
\newcommand{\xbbar}{\overline{\pmb{x}}}
\newcommand{\xib}{\pmb{\xi}}
\newcommand{\xibh}{\widehat{\pmb{\xi}}}
\newcommand{\Probh}{\widehat{\Prob}}
\newcommand{\calXh}{\widehat{\calX}}
\newcommand{\calPh}{\widehat{\calP}}
\newcommand{\upsilonh}{\widehat{\upsilon}}
\newcommand{\rh}{\widehat{r}}
\newcommand{\xih}{\widehat{\xi}}
\newcommand{\Cbar}{\overline{C}}
\renewcommand{\hbar}{\overline{h}}
\newcommand{\muh}{\widehat{\mu}}
\newcommand{\sigmah}{\widehat{\sigma}}
\mathchardef\mhyphen="2D % Define a "math hyphen"
\begin{document}

\begin{frontmatter}

\title{On the Trade-Off Between Distributional Belief and Ambiguity: Conservatism, Finite-Sample Guarantees, and Asymptotic Properties}

\author{Man Yiu Tsang}
\cortext[cor1]{Corresponding author. }
\ead{mat420@lehigh.edu}
\author{Karmel S.~Shehadeh\corref{cor1}}
\ead{kas720@lehigh.edu}

\address{Department of Industrial and Systems Engineering, Lehigh University, Bethlehem, PA,  USA}

\begin{abstract}

\noindent We propose and analyze a new data-driven trade-off (TRO) approach for modeling uncertainty that serves as a middle ground between the optimistic approach, which adopts a distributional belief, and the pessimistic distributionally robust optimization approach, which hedges against distributional ambiguity. We equip the TRO model with a TRO ambiguity set characterized by a size parameter controlling the level of optimism and a shape parameter representing distributional ambiguity. We first show that constructing the TRO ambiguity set using a general star-shaped shape parameter with the empirical distribution as its star center is necessary and sufficient to guarantee the hierarchical structure of the sequence of TRO ambiguity sets. Then, we analyze the properties of the TRO model, including quantifying conservatism, quantifying bias and generalization error, and establishing asymptotic properties. Specifically, we show that the TRO model could generate a spectrum of decisions, ranging from optimistic to conservative decisions. Additionally, we show that it could produce an unbiased estimator of the true optimal value. Furthermore, we establish the almost-sure convergence of the optimal value and the set of optimal solutions of the TRO model to their true counterparts. We exemplify our theoretical results using an inventory control problem and a portfolio optimization problem.

\begin{keyword} 
Stochastic optimization, distributionally robust optimization, conservatism, bias analysis, asymptotic convergence
\end{keyword}

\end{abstract}
\end{frontmatter}

\section{Introduction} \label{sec:introduction}

Consider stochastic optimization problems of the following form:
\begin{equation} \label{prob:SO}
    \upsilon^\star = \inf_{\xb\in\calX} \E_{\Prob^\star} \big[f(\xb,\xib)\big].
\end{equation}
In \eqref{prob:SO}, $\xb$ is a vector of decision variables; $\calX \subseteq \R^n$ is a non-empty set of deterministic constraints on $\xb$; $\xib:\Omega\rightarrow\Xi$ is a random vector defined on a measurable space $(\Omega,\calF)$ with support $\Xi\subseteq\R^\ell$; and $f:\R^n\times\R^\ell \rightarrow\R$ is a continuous function in $\xb$ for a given $\xib\in\Xi$ and is measurable in $\xib$ for given $\xb\in\calX$. Function $f$ measures the performance of the system of interest, and $\E_{\Prob^\star}[\cdot]$ denotes the expectation with respect to (w.r.t.) probability measure $\Prob^\star$ defined on $(\Omega,\calF)$. We assume that \eqref{prob:SO} has a finite optimal value with a non-empty set of optimal solutions and $\E_{\Prob^\star}|f(\xb,\xib)|<\infty$ for any $\xb\in\calX$.

In many real-life settings, the true distribution $\Prob^\star$ of $\xib$ is fundamentally unknown. Suppose instead that we have a (potentially small) set of historical observations of $\{\xibh_1,\dots,\xibh_N\}$ of $\xib$. Moreover, suppose we \textit{believe} this data reflects or well approximates the true distribution $\Prob^\star$. Then,  we can estimate $\E [f(\xb,\xib)]$ for any $\xb \in \calX$ by averaging values $f(\xb,\xib_j)$, for $j \in [N]:=\{1,\dots,N\}$. This leads to the following sample average approximation (SAA) 
\begin{equation} \label{prob:SAA}
    \upsilonh_N^\text{SAA} = \inf_{\xb\in\calX} \E_{\Probh_N}\big[f(\xb,\xib)\big] =\inf_{\xb\in\calX} \frac{1}{N} \sum_{i=1}^N f(\xb,\xibh_i) 
\end{equation} 
of the true problem \eqref{prob:SO}, where $\Probh_N=N^{-1}\sum_{i=1}^N\delta_{\xibh_i}$ is the empirical distribution of $\xibh$ and $\delta_a$ is the Dirac measure on $a$. Optimal solutions to \eqref{prob:SAA} are known to be sensitive to the input data used in the model, i.e., they are a function of a particular \textit{distributional belief} \citep{Birge_Louveaux:2011,Shapiro_et_al:2014}. Moreover, as pointed out by \cite{Kuhn_et_al:2019}, even if one employs sophisticated statistical techniques to estimate the uncertainty distribution using historical data, the estimated distribution $\Probh_N$ may differ from the actual distribution $\Prob^\star$. This is concerning because optimal solutions to \eqref{prob:SAA} obtained using an estimated distribution may inherit estimation errors and bias, leading to poor performance and disappointments under unseen data. This phenomenon is known as the \textit{optimizer's curse} \citep{Mohajerin-Esfahani_Kuhn:2018, Smith_Winkler:2006, Van-Parys_et_al:2021}.

One popular approach to address distributional ambiguity is distributionally robust optimization (DRO). In DRO, instead of faithfully adopting a specific distribution (such as the empirical distribution), we consider an ambiguity set encompassing various potential distributions of $\xib$ against which we aim to safeguard. Let $\calP(\Xi)$ be the set of probability measures defined on the measurable space $(\R^\ell,\calB)$ induced by $\xib$ with support $\Xi$, where $\calB=\calB(\R^\ell)$ is the Borel $\sigma$-field. The DRO counterpart of problem \eqref{prob:SO} is as follows: 
\begin{equation} \label{prob:DRO}
    \upsilonh_N^\text{DRO} = \inf_{\xb\in\calX} \sup_{\Prob\in\calP_N}\E_{\Prob}\big[f(\xb,\xib)\big], 
\end{equation} 
where $\calP_N\subseteq\calP(\Xi)$ is the ambiguity set. The versatility and power of DRO stem from its ability to account for one’s incomplete knowledge of the distribution $\Prob$ of $\xib$ by specifying an ambiguity set $\calP_N$. There are two common approaches in the literature for constructing the ambiguity set. The first utilizes partial distributional information such as the moments and support  \citep{Delage_Ye:2010, Goh_Sim:2010, Nie_et_al:2023, Postek_et_al:2018, Postek_et_al:2019, Sun_et_al:2022, Wiesemann_et_al:2014, Xu_et_al:2018} and marginal distribution \citep{Chen_et_al:2023, Kakouris_Rustem:2014}.  This approach often leads to tractable DRO formulations. However, ambiguity sets constructed using this approach do not fully capture what is known about $\xib$ and often lead to conservative decisions. Moreover, as pointed out by \cite{Lam:2021}, DRO models based on such sets do not have large-sample asymptotic convergence properties. The second approach considers distributions that are close to a reference distribution (e.g., empirical distribution) in the sense of a chosen statistical distance. Popular choices of the statistical distance are $\phi$-divergence \citep{Bayraksan_Love:2015, Ben-Tal_et_al:2013} and Wasserstein distance \citep{Blanchet_et_al:2021, Gao_Kleywegt:2022, Mohajerin-Esfahani_Kuhn:2018, Xie_et_al:2021}.  The distance-based approach also leads to tractable reformulations under mild conditions \citep{Bayraksan_Love:2015, Mohajerin-Esfahani_Kuhn:2018, Zhao_Guan:2015}. In addition, some distance-based DRO (e.g., Wasserstein DRO) model enjoys out-of-sample and asymptotic consistency guarantees \citep{Duchi_et_al:2021, Mohajerin-Esfahani_Kuhn:2018, Kuhn_et_al:2019}.

Since its inception, the DRO approach has received significant attention in operations research, economics, finance, and other fields due to its desirable theoretical properties and ability to produce decisions that maintain robust performance under various distributions. Nevertheless, there are also criticisms concerning the conservatism of the DRO approach. These concerns arise because solutions to \eqref{prob:DRO} hedge against the worst-case distribution within a possibly diverse set of distributions $\calP_N$ and hence could be overly conservative. In particular, the worst-case distributions that attain $\sup_{\Prob\in\calP_N}\E_{\Prob}\big[f(\xb,\xib)]$ under many celebrated (moment- and distance-based) ambiguity sets are shown to be discrete with few support points even though the true distribution $\Prob^*$ could be continuous or have a large number of other support points \citep{Bayraksan_Love:2015, Chen_et_al:2011,Das_et_al:2021,  de-Klerk_et_al:2020, Gao_Kleywegt:2022, Long_et_al:2021}. For example, the worst-case distribution of the DRO model for a newsvendor problem that employs a mean-variance ambiguity set is a two-point distribution \citep{Scarf:1959}. Similarly, the results of \cite{Long_et_al:2021} indicate that if $f(\xb,\xib)$ is supermodular in $\xib$ for any $\xb\in\calX$, then under the mean-absolute-deviation ambiguity set, the worst-case distribution is supported on $(2\ell+1)$ points, where $\ell$ is the dimension of $\xib$. Hence, the DRO approach often hedges against pessimistic scenarios that are less likely to be observed in practice and thus result in conservative decisions; that is, the realized objective value obtained by DRO solutions would often be better than the optimal value of problem \eqref{prob:DRO}. 

Recently, several authors proposed approaches to mitigate the conservatism of DRO models. These include globalized DRO \citep{Ding_et_al:2020, Li_Xing:2022, Liu_et_al:2023} and techniques to reduce the size of the ambiguity set, such as imposing additional structural properties on distributions in moment-based ambiguity set \citep{de-Klerk_et_al:2020, Hanasusanto_et_al:2015, Lam_et_al:2021, Li_et_al:2019, Van-Parys_et_al:2016}, restricting distributions in the ambiguity set to specific parametric family  \citep{Iyengar_et_al:2022, Michel_et_al:2021, Michel_et_al:2022}, and developing modified Wasserstein metrics for distance-based ambiguity sets \citep{Liu_et_al:2022b, Wang_et_al:2021}. These pioneering approaches often change the structure of the DRO problem and, as a result, require tailored reformulation and solution techniques to solve the modified problem effectively. Thus, most of these studies mainly focused on reformulating and solving their proposed models without analyzing the models' statistical properties. We refer to \cite{Kuhn_et_al:2024} and \cite{Rahimian_Mehrotra:2022} for a comprehensive review of the theoretical and computational developments in the DRO literature.

Inevitably, there are trade-offs among different approaches to modeling uncertainty. On the one hand, by following a blind distributional belief, one may obtain decisions based on an over-optimistic viewpoint that might lead to disappointing performance in practice. On the other hand, by focusing the optimization on the worst-case distribution, one may obtain pessimistic (conservative) solutions. In this paper, our goal is to introduce and analyze an alternative trade-off approach for modeling uncertainty that serves as a middle ground between the optimistic approach, which adopts a distributional belief, and the pessimistic approach, which protects against distributional ambiguity. We formulate the trade-off (TRO)  problem as follows: 
\begin{equation} \label{model:trade-off_model}
    \upsilonh_N(\theta)= \inf_{\xb\in\calX} \Big \{ (1-\theta) \cdot \E_{\Probh_N}[f(\xb,\xib)]+\theta \cdot \sup_{\Q\in\calP_N}\E_{\Q}[f(\xb,\xib)]  \Big\}=\inf_{\xb\in\calX}\, \sup_{\Prob\in\calP'_{N,\theta}} \E_{\Prob}[f(\xb,\xib)], 
\end{equation}
where $\calP'_{N,\theta}$ is the \textit{TRO ambiguity set} defined as 
\begin{equation} \label{eqn:trade-off_ambig_set}
    \calP'_{N,\theta} = \big\{ (1-\theta)\Probh_N + \theta\Q \mid \Q\in\calP_N\big\}
\end{equation}
for some $\theta\in[0,1]$. The TRO model  \eqref{model:trade-off_model} can be viewed as a new data-driven DRO model equipped with a TRO ambiguity set $\calP'_{N,\theta}$. The TRO ambiguity set $\calP'_{N,\theta}$ in \eqref{eqn:trade-off_ambig_set} is characterized by two parameters: the \textit{shape} parameter $\calP_N$ and the \textit{size} parameter $\theta$. The shape parameter $\calP_N$ represents distributional ambiguity and could be any (data-driven) ambiguity set satisfying some mild assumptions to be made precise later. The size parameter $\theta\in[0,1]$ controls the level of optimism, i.e., it controls the trade-off between solving the problem under a distributional belief and solving it under ambiguity. When $\theta=0$, problem \eqref{model:trade-off_model} reduces to problem \eqref{prob:SAA}. In contrast, when $\theta=1$, problem \eqref{model:trade-off_model} reduces to problem \eqref{prob:DRO}. Between the two extremes, $\theta\in(0,1)$ indicates a trade-off between optimistic and pessimistic perceptions of the objective. Thus, as we later show, by changing the value of $\theta$ in \eqref{model:trade-off_model}, one could obtain a spectrum of optimal solutions, ranging from optimistic to conservative solutions. 

We emphasize that one can construct the TRO ambiguity set $\calP'_{N,\theta}$ using any shape parameter $\calP_N$, including general moment- and distance-based ambiguity sets. This flexibility allows us to derive general results outlined in Section~\ref{subsec:contributions} on the TRO model's conservatism, finite-sample properties, and asymptotic convergence. Moreover, one can adopt the same techniques developed for reformulating and solving classical DRO problems with specific ambiguity sets $\calP_N$ to reformulate and solve the TRO problem with $\calP'_{N,\theta}$  constructed using $\calP_N$ as the shape parameter. To the best of our knowledge and according to recent surveys \citep{Kuhn_et_al:2024, Rahimian_Mehrotra:2022}, our paper is the first to formally introduce the TRO approach and conduct a thorough theoretical investigation of its properties. Notably, \cite{Shehadeh_Tucker:2022} is the first to explore the use of a convex combination of SAA and DRO (with mean-support ambiguity set) to model uncertainty in the context of disaster relief. Their numerical investigations show that the trade-off approach could lead to a less conservative disaster preparation plan than the DRO approach, with better post-disaster response performance than the SAA-based plan. However, that work was problem-specific and did not analyze the theoretical properties of the model. Recently, \cite{Wang_et_al:2023} analyzed the trade-off between the robustness to unseen data and the specificity of the training data in the context of machine learning. As we show in \ref{apdx:Wang_et_al}, \cite{Wang_et_al:2023}'s Bayesian distributionally robust (BDR) model is a special case of our TRO model. In particular, by choosing the shape parameter $\calP_N$ as the distance-based ambiguity set  $B_{\epsilon_N}(\Probh_N)=\{\Prob\in\calP(\Xi)\mid \Delta(\Prob,\Probh_N)\leq \epsilon\}$ (where $\calP(\Xi)$ is the set of probability measures on the support $\Xi$ and $\Delta$ is a statistical distance), our TRO model reduces to  \cite{Wang_et_al:2023}'s BDR model. While our theoretical investigations are valid for any shape parameter (including moment-based ambiguity sets), \cite{Wang_et_al:2023}'s analyses are limited to the special case where $\calP_N$ is a distance-based ambiguity set. In addition, different from \cite{Wang_et_al:2023}, we introduce and study the hierarchical properties of the TRO ambiguity $\calP'_{N,\theta}$, analyze the conservatism of our TRO model, and provide more comprehensive analyses of the finite-sample guarantees and asymptotic properties of the TRO model; see \ref{apdx:Wang_et_al} for detailed discussions. Finally, we note that the form of our TRO ambiguity set resembles those considered in robust statistics, particularly in Huber contamination models \citep{Huber:1964}.  In \ref{apdx:add_discuss:Huber}, we discuss the differences between the Huber contamination model and our TRO model.

\subsection{Contributions}\label{subsec:contributions}
\noindent  We highlight the following main contributions of our theoretical investigations:
\begin{itemize}[itemsep=0mm,topsep=0mm,leftmargin=5mm]

    \item \textit{Quantifying the conservatism of the TRO model}. We investigate the conservatism of the TRO model from two perspectives: the size and properties of the TRO ambiguity set and the characteristics of the model’s optimal value and solutions.  First, we establish that constructing the TRO ambiguity set  $\calP'_{N,\theta}$ using a general star-shaped shape parameter $\calP_N$ with a star center $\Probh_N$ is both necessary and sufficient for the sequence   $\{\calP'_{N,\theta}\mid\theta\in[0,1]\}$ to be non-decreasing (Theorem~\ref{thm:calP_nondecreasing}, part (i)). If, in addition, $\calP_N\ne\{\Probh_N\}$,  the sequence $\{\calP'_{N,\theta}\mid\theta\in[0,1]\}$ increases with $\theta$, i.e., $\calP'_{N,\theta}$ contains more distributions with a larger $\theta$ (Theorem~\ref{thm:calP_nondecreasing}, part (ii)). These results indicate that the TRO model becomes more conservative as $\theta$ increases. Second, we conduct quantitative stability analyses to quantify the difference in the optimal value $\upsilonh_N(\theta)$ and the set of optimal solutions $\calXh_N(\theta)$ and hence the conservatism incurred by perturbation in $\calP_{N,\theta}'$. Specifically, we show the Lipschitz continuity of the optimal value function $\upsilonh_N(\theta)$ and the  H\"{o}lder continuity of $\calXh_N(\theta)$ (Theorem~\ref{thm:sensitivity_in_theta}). In addition, we quantify the difference between the optimal value (resp. set of optimal solutions) and the convex combination of optimal values (resp. sets of optimal solutions) to the SAA and DRO problems resulting from solving each separately (Theorem~\ref{thm:conservatism}). Together, the results of Theorems~\ref{thm:calP_nondecreasing}--\ref{thm:conservatism} show that by solving the TRO model with different $\theta\in[0,1]$, one can obtain a spectrum of decisions that span $[\upsilonh_N(0),\upsilonh_N(1)]$, representing decisions with different levels of conservatism.

    \item \textit{Finite-sample properties}. The optimal value of our TRO model $\widehat{\upsilon}_N(\theta)$ is an estimator of the optimal value $ \upsilon^\star$ of problem \eqref{prob:SO}. We show that under some mild assumptions, there exists $\theta^\text{u}_N\in[0,1]$ such that $\upsilonh_N(\theta^\text{u}_N)$ is an unbiased estimator (Theorem~\ref{thm:debias_theta}). In addition, we show that our TRO model could produce estimators with a smaller bias than the SAA estimator (Corollary~\ref{cor:debias_theta}) and derive the asymptotic convergence rate of $\theta^\text{u}_N$ as $N\rightarrow\infty$ (Theorems~\ref{thm:rate_of_theta_LIL} and \ref{thm:rate_of_theta_AN}). Moreover, our TRO estimator may not have a significantly larger variance than the SAA estimator, especially when $\theta$ is small (Theorem~\ref{thm:TRO_estimator_variance}). These analytical results hold for TRO models with TRO ambiguity sets constructed using general shape parameters, including moment-based ambiguity sets. Moreover, we derive a bound on the generalization error of the TRO model (Theorem~\ref{thm:generalization_error}). Our results suggest that the TRO model might have a tighter generalization bound than the SAA and DRO models. We also show that the generalization error has an exponentially decaying tail for specific choices of the shape parameter.
   
    \item \textit{Asymptotic properties}. We show that as the number of data points $N \rightarrow \infty $, the optimal value $\upsilonh_N(\theta_N)$  and the set of optimal solutions $\calXh_N(\theta_N)$ of the TRO problem \eqref{model:trade-off_model} converge respectively to the true optimal value $\xb^*$ and set of optimal solutions $\calX^*$ of problem \eqref{prob:SO} almost surely (Theorem~\ref{thm:asymptotic_convergence}). In addition, we derive the asymptotic distribution of $\upsilonh_N(\theta_N)$ (Theorem~\ref{thm:asy_dist}). These asymptotic properties hold for TRO models with TRO ambiguity sets constructed using general shape parameters, such as moment-based ambiguity sets. This differs from the existing convergence results established for data-driven DRO models, which mainly employ distance-based ambiguity sets. For the special case when the shape parameter is chosen as a distance-based ambiguity set, we can recover the asymptotics of the optimal value of classical distance-based DRO models (see, e.g., \citealp{Blanchet_Shapiro:2023}). Specifically, we show that $\upsilonh_N(\theta_N)$ converges to different distributions depending on the convergence rates of the size parameter $\theta_N$ and the radius $r_N$ in the shape parameter (Theorem~\ref{thm:asy_dist_distance_based}).
\end{itemize}

\subsection{Structure of the paper}
The remainder of the paper is organized as follows. In Section~\ref{sec:conservatism_TRO}, we investigate the conservatism of the TRO model. In Section~\ref{sec:finite_sample_property}, we analyze the bias of the optimal value of our TRO model $\upsilonh_N(\theta)$ as an estimator of the true optimal value $\upsilon^\star$ and the generalization error of our TRO model. In Section~\ref{sec:asymptotic}, we derive the asymptotic properties of our TRO model. Finally, in Section~\ref{sec:numerics}, we exemplify our theoretical results using an inventory control problem and a mean-risk portfolio optimization problem.

\subsection{Notation}
For a set $S$ in a general convex space $X$, the convex hull of $S$ is defined as $\conv(S)=\big\{\sum_{i=1}^k \alpha_i x_i\mid k\in\N,\, \sum_{i=1}^k\alpha_i = 1,\, \alpha_i\geq 0,\, x_i\in S,\, \forall i\in\{1,\dots,k\}\big\}$. We use $\norms{\cdot}$ to denote a general norm defined on a vector space. For any $a\in\R$, we define $(a)_+=\max\{a,0\}$. In the Euclidean space $\R^n$, we use boldface letter such as $\yb$ to denote a column vector in $\R^n$ and $\norms{\yb}_p$ to denote the $p$-norm of the vector $\yb$.  For any functions $g(N)$ and $h(N)$ defined on $\N$, we write $h(N)=O(g(N))$ (as $N\rightarrow\infty$) if there exists $M>0$ and $N_0\in\N$ such that $h(N)\leq Mg(N)$ for all $N\geq N_0$, and we write $h(N)=o(g(N))$ (as $N\rightarrow\infty$) if $\lim_{N\rightarrow\infty}h(N)/g(N)=0$. Two probability measures $\Prob_1$ and $\Prob_2$ on a measurable space $(\Omega,\calF)$ are equal, i.e., $\Prob_1=\Prob_2$, if $\Prob_1(B)=\Prob_2(B)$ for all $B\in\calF$. For a probability measure $\Prob$, the variance of a random variable $Y$ under $\Prob$ is denoted as $\Var_{\Prob}(Y)$, and the covariance of two random variables $Y_1$ and $Y_2$ is denoted as $\Cov_{\Prob}(Y_1,Y_2)$. For a sequence of random variables $\{Y_n\}_{n\in\N}$ and a sequence of positive real numbers $\{a_n\}_{n\in\N}$, we write $Y_n=o_{\Prob}(a_n)$ if $|Y_n|/a_n\rightarrow0$ in probability.

\section{Conservatism of the TRO Model} \label{sec:conservatism_TRO}

In this section, we investigate the conservatism of the TRO model from two angles:  by analyzing the size and properties of the TRO ambiguity set  (Section~\ref{sec:TRO_ambig_set_property}) and properties of the TRO model’s optimal value and solutions (Section~\ref{sec:conservatism}). We recognize that there is no universally accepted definition or method for quantifying conservatism in the literature. Different studies have approached conservatism in DRO models from varying perspectives. One common interpretation links conservatism to the size of the ambiguity set, with a larger set encompassing a wider range of distributions---potentially including extreme or pathological ones---which could lead to overly conservative decisions (e.g., \citealp{Ding_et_al:2020, Mohajerin-Esfahani_Kuhn:2018, Van-Parys_et_al:2016}). In Section~\ref{sec:TRO_ambig_set_property}, we follow this view and analyze properties of the TRO ambiguity set $\calP'_{N,\theta}$ and the sequence of TRO ambiguity sets $\{\calP'_{N,\theta}\mid \theta\in[0,1]\}$ as a function of the size parameter $\theta$. Another perspective on conservatism interprets the optimal value of the DRO model as a proxy  (e.g., \citealp{Gorissen_et_al:2015, Li_Xing:2022, Van-Parys_et_al:2021}). In minimization (or maximization) problems, a higher (or lower) optimal value reflects more conservative or cautious decisions. From this perspective, the DRO model's optimal value provides an indirect measure of conservatism, indicating how conservatively the model accounts for the worst-case distribution within the defined ambiguity set. In Section~\ref{sec:conservatism}, we adopt this perspective to study the properties of the TRO model's optimal value and its corresponding optimal solution.

\subsection{Hierarchical Property of the TRO Ambiguity Set} \label{sec:TRO_ambig_set_property}

In this section, we analyze properties of $\calP'_{N,\theta}$ and the sequence of TRO ambiguity sets $\{\calP'_{N,\theta}\mid \theta\in[0,1]\}$. To facilitate the discussion, we introduce the notion of star-shapedness (equivalently, star convexity) of the set of distributions $\calP_N$ in Definition~\ref{def:starshaped} and hierarchical properties of $\{\calP'_{N,\theta}\mid \theta\in[0,1]\}$ in Definition~\ref{def:Hierarchical}.

\begin{definition}[Star-Shaped Set] \label{def:starshaped}
The set of distributions $\calP_N$ is called \textit{star-shaped} if there exists $\M\in\calP_N$ such that 
\begin{equation} \label{eqn:starshaped}
(1-\alpha) \M + \alpha \Prob \in \calP_N,\quad\forall \alpha\in[0,1],\, \Prob\in\calP_N. 
\end{equation}
Any $\M\in\calP_N$ satisfying condition~\eqref{eqn:starshaped} is called a \textit{star center} of $\calP_N$.
\end{definition}

\begin{definition}[Hierarchical Properties]\label{def:Hierarchical}
The sequence $\{\calP'_{N,\theta}\mid \theta\in[0,1]\}$ satisfies the \textit{hierarchical property} if $\calP'_{N,\theta}$ is non-decreasing in $\theta$, i.e., $\calP'_{N,\theta_1}\subseteq\calP'_{N,\theta_2}$ for any $0\leq\theta_1<\theta_2\leq1$. The sequence $\{\calP'_{N,\theta}\mid \theta\in[0,1]\}$ satisfies the \textit{strict hierarchical property} if  $\calP'_{N,\theta}$ is increasing in $\theta$, i.e., $\calP'_{N,\theta_1}\subset\calP'_{N,\theta_2}$ for any $0\leq\theta_1<\theta_2\leq1$.
\end{definition}

The hierarchical properties indicate that the size of the TRO ambiguity set $\calP'_{N,\theta}$ increases with $\theta$, i.e., $\calP'_{N,\theta}$ contains more distributions with a larger $\theta$. In other words, with a larger $\theta$, our TRO model hedges against a larger set of distributions and thus could lead to more conservative decisions. In Theorem~\ref{thm:calP_nondecreasing}, we provide necessary and sufficient conditions for the sequence of TRO ambiguity sets $\{\calP'_{N,\theta}\mid \theta\in[0,1]\}$ to satisfy these properties.

\begin{theorem}\label{thm:calP_nondecreasing}
The following assertions hold.
\begin{enumerate}[topsep=1mm,itemsep=0mm]
    \item [(i)] The sequence of TRO ambiguity sets $\{\calP'_{N,\theta}\mid \theta\in[0,1]\}$ satisfies the hierarchical property if and only if $\calP_N$ is star-shaped with a star center $\Probh_N\in\calP_N$.
    \item [(ii)] The sequence of TRO ambiguity sets $\{\calP'_{N,\theta}\mid \theta\in[0,1]\}$ satisfies the strict hierarchical property if and only if $\calP_N$ is star-shaped with a star center $\Probh_N\in\calP_N$ and $\calP_N\ne\{\Probh_N\}$.
\end{enumerate}
\end{theorem}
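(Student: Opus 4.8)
The plan is to prove both directions of each equivalence by directly manipulating the defining expression $\calP'_{N,\theta} = \{(1-\theta)\Probh_N + \theta\Q \mid \Q\in\calP_N\}$, reducing set inclusions between two values of $\theta$ to a pointwise condition on distributions that is exactly the star-shapedness condition \eqref{eqn:starshaped}. First I would record the trivial boundary cases: $\calP'_{N,0} = \{\Probh_N\}$ and $\calP'_{N,1} = \calP_N$, and note that for the hierarchical property to even hold at $\theta_1 = 0$ we need $\Probh_N \in \calP'_{N,\theta}$ for all $\theta$, which already forces $\Probh_N \in \calP_N$ (take $\theta = 1$). So in both directions one may assume $\Probh_N \in \calP_N$ and focus on the genuine content.

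For the ``if'' direction of (i): assume $\calP_N$ is star-shaped with star center $\Probh_N$, and fix $0 \le \theta_1 < \theta_2 \le 1$. Take an arbitrary element $(1-\theta_1)\Probh_N + \theta_1\Q$ of $\calP'_{N,\theta_1}$ with $\Q\in\calP_N$. I want to exhibit it as $(1-\theta_2)\Probh_N + \theta_2\Q'$ for some $\Q'\in\calP_N$. Matching coefficients, the natural candidate is $\Q' = (1 - \theta_1/\theta_2)\Probh_N + (\theta_1/\theta_2)\Q$ (valid since $\theta_1/\theta_2 \in [0,1)$, and handling $\theta_1 = 0$ separately where $\Q' = \Probh_N$ works). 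Since $\Probh_N$ is a star center, $\Q'\in\calP_N$ by \eqref{eqn:starshaped} with $\alpha = \theta_1/\theta_2$; a short algebraic check confirms $(1-\theta_2)\Probh_N + \theta_2\Q' = (1-\theta_1)\Probh_N + \theta_1\Q$. Hence $\calP'_{N,\theta_1}\subseteq\calP'_{N,\theta_2}$.

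For the ``only if'' direction of (i): assume the hierarchical property holds. As noted, taking $\theta_1 = 0$ gives $\Probh_N \in \calP_N$. Now I need to verify condition \eqref{eqn:starshaped} with $\M = \Probh_N$: fix $\alpha\in[0,1]$ and $\Prob\in\calP_N = \calP'_{N,1}$, and show $(1-\alpha)\Probh_N + \alpha\Prob \in \calP_N$. The idea is to view $(1-\alpha)\Probh_N + \alpha\Prob$ as an element of $\calP'_{N,\alpha}$ (taking $\Q = \Prob$), and then invoke $\calP'_{N,\alpha}\subseteq\calP'_{N,1} = \calP_N$ when $\alpha < 1$ (the case $\alpha = 1$ is immediate). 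This gives exactly what is needed. Part (ii) then follows by refining part (i): under star-shapedness with center $\Probh_N$ we already have $\calP'_{N,\theta_1}\subseteq\calP'_{N,\theta_2}$; strictness fails exactly when $\calP'_{N,\theta_1} = \calP'_{N,\theta_2}$ for some $\theta_1 < \theta_2$, and I would show this collapse happens for \emph{some} pair if and only if it happens for the pair $(0,1)$, i.e., if and only if $\calP_N = \{\Probh_N\}$. One direction is clear ($\calP_N = \{\Probh_N\}$ makes every $\calP'_{N,\theta} = \{\Probh_N\}$); for the converse, if $\calP_N \ne \{\Probh_N\}$ pick $\Prob\in\calP_N$ with $\Prob\ne\Probh_N$, and show $(1-\theta_2)\Probh_N + \theta_2\Prob$ lies in $\calP'_{N,\theta_2}$ but not in $\calP'_{N,\theta_1}$ — the latter because any element of $\calP'_{N,\theta_1}$ is within ``fraction $\theta_1$'' of $\Probh_N$ along some direction, and matching it to $(1-\theta_2)\Probh_N + \theta_2\Prob$ would force a representation that pushes $\Prob$ outside the needed range; concretely one can evaluate both sides on a measurable set $B$ with $\Prob(B) \ne \Probh_N(B)$ and derive a contradiction from $\theta_1 < \theta_2$.

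The main obstacle is the strict-inclusion argument in part (ii): the ``if'' direction of (i) is pure coefficient-matching, but to rule out $\calP'_{N,\theta_1} = \calP'_{N,\theta_2}$ one must argue that no element of the smaller set can represent a ``boundary'' element of the larger set. I expect the cleanest route is to fix a witnessing distribution $\Prob\in\calP_N\setminus\{\Probh_N\}$ and a Borel set $B$ separating $\Prob$ from $\Probh_N$, reduce the whole question to real numbers $p = \Prob(B)$, $\hat p = \Probh_N(B)$, $q = \Q(B)$ for the competing $\Q\in\calP_N$, and observe that $(1-\theta_1)\hat p + \theta_1 q = (1-\theta_2)\hat p + \theta_2 p$ forces $q = \hat p + (\theta_2/\theta_1)(p - \hat p)$, which can be made to lie outside $[0,1]$ (hence no such $\Q$ exists) by a suitable choice of $B$ — or, more robustly and without any boundedness worry, one simply notes this $q$ is pinned down while $\Q$ ranges over a set not containing the required distribution, yielding the contradiction. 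Everything else is routine algebra with convex combinations of measures.
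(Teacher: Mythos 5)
Your part (i) is correct and follows essentially the paper's route: the identity $(1-\theta_1)\Probh_N+\theta_1\Q=(1-\theta_2)\Probh_N+\theta_2\bigl[(1-\theta_1/\theta_2)\Probh_N+(\theta_1/\theta_2)\Q\bigr]$ handles both directions, and your use of the pair $(\alpha,1)$ for necessity is a mild streamlining of the paper's argument with general $\theta_1<\theta_2$. The necessity half of part (ii) (strict hierarchy implies $\calP_N\ne\{\Probh_N\}$) is also fine.

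The gap is in the sufficiency half of part (ii). You fix an \emph{arbitrary} $\Prob\in\calP_N\setminus\{\Probh_N\}$ and claim $(1-\theta_2)\Probh_N+\theta_2\Prob$ lies in $\calP'_{N,\theta_2}$ but not in $\calP'_{N,\theta_1}$. Membership in $\calP'_{N,\theta_1}$ would require the uniquely determined measure $\Q=\Probh_N+(\theta_2/\theta_1)(\Prob-\Probh_N)$ to belong to $\calP_N$, and you assert it cannot, either because some value $\Q(B)$ leaves $[0,1]$ or because ``$\calP_N$ does not contain the required distribution.'' Neither holds for an arbitrary witness: take $\calP_N=\{(1-t)\Probh_N+t\Prob_0\mid t\in[0,1]\}$ with $\Prob_0\ne\Probh_N$, $\theta_1=1/2$, $\theta_2=1$, and $\Prob=\tfrac12\Probh_N+\tfrac12\Prob_0$; then $\Q=\Prob_0\in\calP_N$ and your candidate element \emph{does} lie in $\calP'_{N,1/2}$. (The second justification is circular: whether $\calP_N$ contains the required $\Q$ is precisely what must be proved.) Strict inclusion does hold in this example, but only via an ``extremal'' witness such as $\Prob_0$ itself, and showing that such an extremal element always exists is the genuinely nontrivial step your proposal skips. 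The paper handles it by contradiction: if no witness exists, i.e.\ $\calP_N\subseteq\{(1-\theta_1/\theta_2)\Probh_N+(\theta_1/\theta_2)\Q\mid\Q\in\calP_N\}$, then every $\M\in\calP_N$ admits the recursive representation $\M=(1-\theta_1/\theta_2)\,\Probh_N\sum_{i=0}^{n-1}(\theta_1/\theta_2)^i+(\theta_1/\theta_2)^n\Q_n$ for all $n$, and letting $n\to\infty$ forces $\M=\Probh_N$, contradicting $\calP_N\ne\{\Probh_N\}$. Some version of this iteration (or an equivalent contraction argument) is needed to close your proof.
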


Theorem~\ref{thm:calP_nondecreasing}  establishes that constructing the TRO ambiguity set $\calP'_{N,\theta}$ using a star-shaped parameter $\calP_N$ with a star center $\Probh_N$ is necessary and sufficient for the sequence of TRO ambiguity sets $\{\calP'_{N,\theta}\mid \theta\in[0,1]\}$ to satisfy the hierarchical property. Part (i) shows that for a general star-shaped shape parameter $\calP_N$, the TRO ambiguity set $\calP'_{N,\theta}$ is non-decreasing in $\theta$, i.e., $\calP'_{N,\theta_1}\subseteq \calP'_{N,\theta_2}$ whenever $\theta_1\leq\theta_2$, indicating that the objective function of the trade-off model \eqref{model:trade-off_model} is non-decreasing in $\theta$. Figure \ref{fig:trade_off_ambig_set_1} illustrates the relationship between the sets $\{\Probh_N\}$, $\calP'_{N,\theta_1}$, $\calP'_{N,\theta_2}$, and $\calP_N$ with $0<\theta_1<\theta_2<1$ as suggested by part (ii) of Theorem~\ref{thm:calP_nondecreasing}. Specifically, this figure shows how the TRO ambiguity set  $\calP'_{N,\theta}$ enlarges with $\theta$. This, in turn, implies that the TRO model is more conservative when we pick a larger $\theta$. Note that if $\calP'_{N,\theta}$ is constructed using a star-shaped $\calP_N$ with a star center $\M \neq \Probh$, then $\{\calP'_{N,\theta}\mid\theta\in[0,1]\}$ does not satisfy the hierarchical properties; see \ref{apdx:example_star_center} for an example.

\begin{figure}[t!]
    \centering
    \includegraphics[scale=0.85]{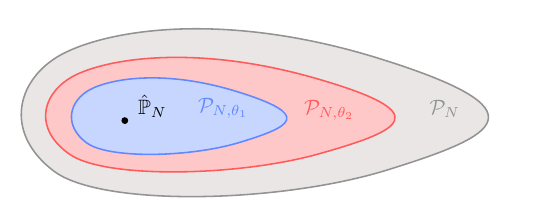}
    \caption{Illustration of the strict hierarchical property of the sequence of TRO ambiguity sets $\{\calP'_{N,\theta}\mid 0<\theta_1<\theta_2<1\}$.}
    \label{fig:trade_off_ambig_set_1}
\end{figure}

Theorem~\ref{thm:calP_nondecreasing} requires the shape parameter $\calP_N$ to be star-shaped with a star center $\Probh_N$.  As is well known, if $\calP_N$ is convex and $\Probh_N\in\calP_N$, then $\calP_N$ is star-shaped with a star center $\Probh_N$ (see Lemma~\ref{lem:star_shaped_convex} in \ref{apdx:lem:star_shaped_convex}). Many celebrated (data-driven) ambiguity sets $\calP_N$ are convex and contain the empirical distribution; see Examples \ref{eg:moment_ambig_set}--\ref{eg:RO_ambig_set} below. Thus, they are star-shaped with a star center $\Probh_N$. On the other hand, a star-shaped set is not necessarily convex; hence, the hierarchical properties hold for general shape parameters.  For example, if $\calP_N$ is the union of a collection of convex ambiguity sets $\{\calP_{N,\gamma}\}_{\gamma\in\Gamma}$ and $\Probh_N\in \calP_{N,\gamma}$ for all $\gamma\in\Gamma$, then $\calP_N$ is star-shaped with a star center $\Probh_N$ but not generally convex. Note that in some settings, one may want to consider the union of multiple (convex) ambiguity sets as the shape parameters. For example, the available data may not always follow a structured pattern and, thus, a known shape parameter. Also, the decision-makers may be unable to articulate their preference for the set of distributions to protect against. In such settings, considering a union of two or more ambiguity sets may better represent or capture the true distribution of the available data. In Propositions~\ref{prop:star_shape_moment} and~\ref{prop:star_shape_distance}, we derive necessary and sufficient conditions for general moment- and distance-based ambiguity sets, respectively, to be star-shaped with a star center $\Probh_N$.

\begin{proposition} \label{prop:star_shape_moment}
Consider the moment-based ambiguity set $\calP_N=\big\{\Prob\in\calP(\Xi)\mid \E_{\Prob}[\Phi_i(\xib)]\in\calK_i,\,i\in\{1,\dots,p\}\big\}$, where $\Phi_i:\Xi\rightarrow\R^{d_i\times d_i}$ is a matrix-valued function and $\calK_i\subseteq\R^{d_i\times d_i}$ is a set of matrices for all $i\in\{1,\dots,p\}$. Then, $\calP_N$ is star-shaped with a star center $\Probh_N$ if and only if $\calK_i$ is star-shaped on $\calS_i:=\{\E_{\Q}[\Phi_i(\xib)]\mid \Q\in\calP_N\}\subseteq\R^{d_i\times d_i}$ with a star center $\E_{\Probh_N}[\Phi_i(\xib)]\in\calS_i$, i.e., $(1-\alpha)\E_{\Probh_N}[\Phi_i(\xib)] + \alpha \Psi\in\calK_i$ for any $\alpha\in[0,1]$ and $\Psi\in\calS_i$, for all $i\in\{1,\dots,p\}$.
\end{proposition}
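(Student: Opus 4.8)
The key structural fact is that for each $i\in\{1,\dots,p\}$ the moment map $\Prob\mapsto\E_\Prob[\Phi_i(\xib)]$ is \emph{affine} in $\Prob$ and that $\calP(\Xi)$ is convex. Concretely, for $\Prob,\Q\in\calP(\Xi)$ and $\alpha\in[0,1]$ the mixture $\Prob_\alpha:=(1-\alpha)\Prob+\alpha\Q$ again lies in $\calP(\Xi)$, and $\E_{\Prob_\alpha}[\Phi_i(\xib)]=(1-\alpha)\E_\Prob[\Phi_i(\xib)]+\alpha\E_\Q[\Phi_i(\xib)]$, the integrals being finite whenever $\Prob,\Q\in\calP_N$. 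The plan is to use this identity to transport star-shapedness back and forth between the space of measures and the moment spaces $\R^{d_i\times d_i}$; throughout I write $\widehat{M}_i:=\E_{\Probh_N}[\Phi_i(\xib)]$ and recall that $\Probh_N\in\calP(\Xi)$.

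\emph{Sufficiency.} Assume that for every $i$, $\calK_i$ is star-shaped on $\calS_i$ with star center $\widehat{M}_i$, i.e.\ $(1-\alpha)\widehat{M}_i+\alpha\Psi\in\calK_i$ for all $\alpha\in[0,1]$ and $\Psi\in\calS_i$. Taking $\alpha=0$ shows $\widehat{M}_i\in\calK_i$ for every $i$, so $\Probh_N\in\calP_N$ and hence $\widehat{M}_i\in\calS_i$; thus $\Probh_N$ is an admissible candidate for the star center. Now fix $\Q\in\calP_N$ and $\alpha\in[0,1]$ and set $\Prob_\alpha=(1-\alpha)\Probh_N+\alpha\Q$. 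Then $\Prob_\alpha\in\calP(\Xi)$ by convexity, and applying the affine identity with $\Psi=\E_\Q[\Phi_i(\xib)]\in\calS_i$ gives $\E_{\Prob_\alpha}[\Phi_i(\xib)]=(1-\alpha)\widehat{M}_i+\alpha\E_\Q[\Phi_i(\xib)]\in\calK_i$ for every $i$. Hence $\Prob_\alpha\in\calP_N$, which is precisely condition~\eqref{eqn:starshaped} with $\M=\Probh_N$.

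\emph{Necessity.} Assume $\calP_N$ is star-shaped with star center $\Probh_N$; in particular $\Probh_N\in\calP_N$, so $\widehat{M}_i\in\calS_i$ for every $i$. Fix $i$, $\Psi\in\calS_i$, and $\alpha\in[0,1]$. By definition of $\calS_i$ there exists $\Q\in\calP_N$ with $\E_\Q[\Phi_i(\xib)]=\Psi$ (the values $\E_\Q[\Phi_j(\xib)]$ for $j\ne i$ play no role). Star-shapedness of $\calP_N$ yields $\Prob_\alpha:=(1-\alpha)\Probh_N+\alpha\Q\in\calP_N$, hence $\E_{\Prob_\alpha}[\Phi_i(\xib)]\in\calK_i$, and by the affine identity this equals $(1-\alpha)\widehat{M}_i+\alpha\Psi$. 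Since $\Psi\in\calS_i$ and $\alpha\in[0,1]$ were arbitrary, $\calK_i$ is star-shaped on $\calS_i$ with star center $\widehat{M}_i$, as claimed.

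I do not expect a genuine obstacle here --- both directions collapse to the affine identity above --- but two points warrant care. First, the statement is deliberately phrased with ``$\calK_i$ star-shaped \emph{on} $\calS_i$'' rather than on all of $\R^{d_i\times d_i}$: the pull-back through the moment map only constrains $\calK_i$ on the attainable moment set, so in the necessity direction one genuinely needs each $\Psi\in\calS_i$ to be realized as $\E_\Q[\Phi_i(\xib)]$ for some $\Q\in\calP_N$. Second, one must note that the conditions decouple across $i$: membership in $\calK_i$ involves only the $i$-th moment, so there is no joint constraint linking the $\Phi_j$'s that would obstruct choosing a single $\Q$ realizing a prescribed value of $\E_\Q[\Phi_i(\xib)]$ while leaving the remaining moments unconstrained.
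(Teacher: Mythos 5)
Your proof is correct and follows essentially the same route as the paper's: both directions reduce to the affine identity $\E_{(1-\alpha)\Probh_N+\alpha\Q}[\Phi_i(\xib)]=(1-\alpha)\E_{\Probh_N}[\Phi_i(\xib)]+\alpha\E_{\Q}[\Phi_i(\xib)]$, transported through the definitions of $\calP_N$ and $\calS_i$. Your added remarks (the $\alpha=0$ check that $\Probh_N\in\calP_N$ and the decoupling across $i$) are sound refinements of details the paper leaves implicit.
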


\begin{example}  \label{eg:moment_ambig_set}
Consider the following moment ambiguity set 
\begin{equation} \label{eqn:moment_ambig_set}
    \calP_N=\Big\{ \Prob\in\calP(\Xi) \,\,\Big|\,\, \E_{\Prob}[\Phi_i(\xib)] = \mub^N_i,\,\, \forall i\in\{1,\dots,p\},\,\, \E_{\Prob}[\Phi_i(\xib)] \preceq \mub^N_i,\,\, \forall i\in\{p+1,\dots,q\}\Big\},  
\end{equation}
where $\Phi_i:\Xi\rightarrow\R^{d_i\times d_i}$ for $i\in\{1,\dots,q\}$ is a symmetric matrix (or a scalar when the dimension $d_i$ is one) with measurable entries and $\E_{\Prob}[\Phi_i(\xib)] \preceq \mub^N_i$ means that $\E_{\Prob}[\Phi_i(\xib)] - \mub^N_i$ is negative semidefinite \citep{Sun_Xu:2016, Xu_et_al:2018}. Here, $\mub_i^N$ for $i\in\{1,\dots,q\}$ is the sample estimate of $\E_{\Prob^\star}[\Phi_i(\xib)]$ based on the given data, i.e., $\mub^N_i=(1/N)\sum_{i=1}^N \Phi(\xibh_i)$. The ambiguity set \eqref{eqn:moment_ambig_set} includes several popular ambiguity sets adopted in the literature \citep{Delage_Ye:2010, So:2011}. It is straightforward to verify that $\calK_i:=\{\Psi_i\in\R^{d_i\times d_i}\mid \Psi_i=\mub^N_i\}$ for $i\in\{1,\dots,p\}$ and $\calK_i:=\{\Psi_i\in\R^{d_i\times d_i}\mid \Psi_i-\mub^N_i \preceq 0\}$ for $i\in\{p+1,\dots,q\}$ are convex with $\E_{\Probh_N}[\Phi_i(\xib)]\in\calK_i$. Thus, $\calK_i$ is star-shaped with a star center $\E_{\Probh_N}[\Phi_i(\xib)]$ for all $i\in\{1,\dots,q\}$.  It follows from  Proposition~\ref{prop:star_shape_moment} that $\calP_N$ is star-shaped with a star center $\Probh_N$.
\end{example}

\begin{proposition} \label{prop:star_shape_distance}
Let $\sfd:\calP(\Xi)\times\calP(\Xi)\rightarrow\R_+$ be a statistical distance satisfying $\sfd(\Prob_1,\Prob_2)=0$ if and only if $\Prob_1=\Prob_2$. Consider the distance-based ambiguity set of the form $\calP_N(\varepsilon)=\{\Prob\in\calP(\Xi)\mid \sfd(\Prob,\Probh_N) \leq\varepsilon\}$. Then, $\calP_N(\varepsilon)$ is star-shaped with a star center $\Probh_N$ for all $\varepsilon\geq 0$ if and only if the map $\Q \mapsto \sfd(\Q, \Probh_N)$ is quasi-convex about $\Probh_N$, i.e.,
$$\sfd\Big((1-\alpha)\Probh_N+\alpha\Q,\Probh_N\Big) \leq \max\Big\{\sfd(\Probh_N,\Probh_N),\,\sfd(\Q,\Probh_N)\Big\} = \sfd(\Q,\Probh_N)$$
for any $\alpha\in[0,1]$ and $\Q\in\calP(\Xi)$.
\end{proposition}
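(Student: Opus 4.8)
The plan is to unwind both sides against Definition~\ref{def:starshaped}. First I would record two trivial observations that strip away everything but the essential inequality. (a) Since $\sfd(\Probh_N,\Probh_N)=0\le\varepsilon$, we always have $\Probh_N\in\calP_N(\varepsilon)$, so the membership requirement in Definition~\ref{def:starshaped} is automatic and the only content of ``$\Probh_N$ is a star center of $\calP_N(\varepsilon)$'' is that $(1-\alpha)\Probh_N+\alpha\Prob\in\calP_N(\varepsilon)$ whenever $\alpha\in[0,1]$ and $\Prob\in\calP_N(\varepsilon)$. (b) Because $\sfd$ takes values in $\R_+$ and $\sfd(\Probh_N,\Probh_N)=0$, the equality $\max\{\sfd(\Probh_N,\Probh_N),\sfd(\Q,\Probh_N)\}=\sfd(\Q,\Probh_N)$ in the displayed quasi-convexity condition is immediate; hence quasi-convexity of $\sfd$ about $\Probh_N$ in its first argument is simply the statement that $\sfd\big((1-\alpha)\Probh_N+\alpha\Q,\Probh_N\big)\le\sfd(\Q,\Probh_N)$ for all $\alpha\in[0,1]$ and all $\Q\in\calP(\Xi)$.

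For sufficiency, assume $\sfd$ is quasi-convex about $\Probh_N$ in the first argument. Fix $\varepsilon\ge 0$, take any $\Prob\in\calP_N(\varepsilon)$ and any $\alpha\in[0,1]$. Applying the quasi-convexity inequality with $\Q=\Prob$ gives $\sfd\big((1-\alpha)\Probh_N+\alpha\Prob,\Probh_N\big)\le\sfd(\Prob,\Probh_N)\le\varepsilon$, so $(1-\alpha)\Probh_N+\alpha\Prob\in\calP_N(\varepsilon)$. By observation (a), $\calP_N(\varepsilon)$ is star-shaped with star center $\Probh_N$, and since $\varepsilon\ge 0$ was arbitrary this holds for all $\varepsilon\ge 0$.

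For necessity, assume $\calP_N(\varepsilon)$ is star-shaped with star center $\Probh_N$ for every $\varepsilon\ge 0$. Fix an arbitrary $\Q\in\calP(\Xi)$ and $\alpha\in[0,1]$, and set $\varepsilon:=\sfd(\Q,\Probh_N)$, which is a well-defined nonnegative real since $\sfd$ is $\R_+$-valued. Then $\Q\in\calP_N(\varepsilon)$ by construction, so star-shapedness with star center $\Probh_N$ yields $(1-\alpha)\Probh_N+\alpha\Q\in\calP_N(\varepsilon)$, i.e.\ $\sfd\big((1-\alpha)\Probh_N+\alpha\Q,\Probh_N\big)\le\varepsilon=\sfd(\Q,\Probh_N)$. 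Since $\Q$ and $\alpha$ were arbitrary, $\sfd$ is quasi-convex about $\Probh_N$ in the first argument by observation (b).

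There is no real analytic obstacle here; the proof is a direct translation of definitions. The one point that needs care is the logical structure of the quantifier over $\varepsilon$: the hypothesis in the necessity direction must be used for the \emph{specific} radius $\varepsilon=\sfd(\Q,\Probh_N)$ (this is where ``for all $\varepsilon\ge 0$'' is essential), and it is the fact that $\calP(\Xi)$ is the \emph{entire} set of probability measures on $\Xi$ that lets an arbitrary $\Q$ serve as a legitimate element of $\calP_N(\varepsilon)$ for that radius. I would also note in passing that the identity-of-indiscernibles hypothesis $\sfd(\Prob_1,\Prob_2)=0\iff\Prob_1=\Prob_2$ is not actually needed for this equivalence—only $\sfd(\Probh_N,\Probh_N)=0$ and $\sfd\ge 0$ are used—though it is natural to keep it in the statement since it is what makes $\calP_N(0)=\{\Probh_N\}$.
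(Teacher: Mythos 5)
Your proof is correct and follows essentially the same route as the paper's: the sufficiency direction is identical, and the necessity direction uses the same key move of instantiating the star-shapedness hypothesis at the specific radius $\varepsilon=\sfd(\Q,\Probh_N)$ (the paper merely wraps this in a proof by contradiction, whereas you state it directly). Your side remarks — that only $\sfd\ge 0$ and $\sfd(\Probh_N,\Probh_N)=0$ are actually used, not the full identity-of-indiscernibles hypothesis — are accurate.
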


\begin{example}\label{eg:phi_div_ambig_set}  % [$\phi$-divergence ambiguity set] 
Consider the following $\phi$-divergence ambiguity set 
\begin{equation} \label{eqn:phi_div_ambig_set}
  \calP_N=\bigg\{ \Prob = \sum_{i=1}^N p_i \delta_{\xibh_i} \,\bigg|\, \pb=(p_1,\dots,p_N)\in\R^N_+,\,\pb^\tp\one = 1,\, \frac{1}{N}\sum_{i=1}^N \phi(Np_i) \leq \varepsilon \bigg\}   
\end{equation}
for some  $\varepsilon>0$ and convex function $\phi:\R_+\rightarrow\R_+$ with $\phi(1)=0$. This contains all the distribution with support $\{\xibh_1,\dots,\xibh_N\}$ such that the $\phi$-divergence defined by $\sfd(\Prob,\Probh_N)=(1/N)\sum_{i=1}^N \phi(Np_i)$ is no greater than the radius $\varepsilon$. It is easy to verify that $\sfd$  is convex in $\Prob$ (identified with $\pb\in\R^N$). Thus, it follows from Proposition~\ref{prop:star_shape_distance} that  ambiguity set \eqref{eqn:phi_div_ambig_set} is star-shaped with a star center $\Probh_N$.
\end{example}

\begin{example}\label{eg:Wass_ambig_set} % [$p$-Wasserstein ambiguity set] 
Consider the following $p$-Wasserstein ambiguity set  
\begin{equation} \label{eqn:Wass_ambig_set}
  \calP_N=\big\{ \Prob\in\calP(\Xi)\mid W_p(\Prob,\Probh_N) \leq \varepsilon\big\}     
\end{equation}
for some $p\in[1,\infty)$ and $\varepsilon>0$, where $W_p(\Prob,\Probh_N)$ is the $p$-Wasserstein distance between probability measures $\Prob$ and $\Probh_N$ (see \citealp{Villani:2009, Mohajerin-Esfahani_Kuhn:2018}). Since $\sfd=W_p$ is $p$-convex in $\Prob$ (see Lemma~2.10 of \citealp{Pflug_Pichler:2014}), we have 
$$W_p\Big((1-\alpha)\Probh_N+\alpha\Q,\Probh_N\Big)\leq \bigg\{ (1-\alpha)W^p_p(\Probh_N,\Probh_N)+\alpha W^p_p(\Q,\Probh_N) \bigg\}^{\frac{1}{p}} \leq \alpha^\frac{1}{p} W_p(\Q,\Probh_N) \leq W_p(\Q,\Probh_N) $$
for any $\alpha\in[0,1]$ and $\Q\in\calP_N$. It follows from Proposition~\ref{prop:star_shape_distance} that ambiguity set \eqref{eqn:Wass_ambig_set} is star-shaped with a star center $\Probh_N$.
\end{example}

Finally, in the following example, we show that when the shape parameter $\calP_N$ consists only of Dirac measures (as in the classical robust optimization approach), the sequence of TRO ambiguity sets $\{\calP'_{N,\theta}\mid\theta\in[0,1]\}$ satisfies the hierarchical property under some mild assumptions.

\begin{example} \label{eg:RO_ambig_set} % [Robust optimization]
The robust optimization (RO) counterpart of problem \eqref{prob:SO}, defined as $\inf_{\xb\in\calX} \sup_{\xib\in\calU} f(\xb,\xib)$ for some uncertainty set $\calU\subseteq\Xi$, can be recast as a DRO model with ambiguity set $\calP_N=\big\{ \delta_{\xib}\mid \xib\in\calU\big\}$. It is obvious that $\calP_N$ contains only Dirac measure and thus, does not contain $\Probh_N$. However, if for any $\xb\in\calX$, there exists $\xib\in\calU$ such that $f(\xb,\xib)\geq f(\xb,\xibh_i)$ for all $i\in\{1,\dots,N\}$, then the RO problem can be written as a DRO problem with ambiguity set  
\begin{equation} \label{eqn:RO_ambig_set}
  \calP_N=\conv\Big( \Probh_N\cup\big\{ \delta_{\xib}\mid \xib\in\calU\big\} \Big)  
\end{equation}
(see \ref{apdx:RO_ambig_set} for a proof). By construction, the ambiguity set in \eqref{eqn:RO_ambig_set} is convex with $\Probh_N\in\calP_N$. Hence, it is also star-shaped with a star center $\Probh_N$, and thus the hierarchical property of $\{\calP'_{N,\theta}\mid\theta\in[0,1]\}$ follows from Theorem~\ref{thm:calP_nondecreasing}.
\end{example}

\subsection{Properties of the TRO Model's Optimal Value and Solutions}\label{sec:conservatism}

In this section, we analyze properties of the optimal value $\upsilonh_N(\theta)$ and the set of optimal solutions $\calXh_N(\theta)$ of the TRO model for a fixed sample through the lens of quantitative stability analysis. Let us first introduce some additional notation to lay the foundation for subsequent discussions. We refer to \ref{apdx:known_QSA_DRO} for background results relevant to our analysis. We define the distance between a point $\yb\in\R^n$ and a set $\calY\subseteq\R^n$ as $d(\yb,\calY)=\inf_{\yb'\in\calY} \norm{\yb-\yb'}$ and the distance between two sets $\calY_1\subseteq\R^n$ and $\calY_2\subseteq\R^n$ as $D(\calY_1,\calY_2)=\sup_{\yb_1\in\calY_1} \inf_{\yb_2\in\calY_2} \norms{\yb_1-\yb_2}$. For two probability distributions $\Prob_1$ and $\Prob_2$, we define the pseudometric $\bbmd(\Prob_1,\Prob_2)$ as 
\begin{equation} \label{eqn:dist_btw_prob_measures}
    \bbmd(\Prob_1,\Prob_2) = \sup_{x\in\calX} \Big| \E_{\Prob_1}[f(\xb,\xib)] - \E_{\Prob_2}[f(\xb,\xib)]\Big| 
\end{equation}
\citep{Liu_Xu:2013, Romisch:2003, Sun_Xu:2016}. Using the pseudometric $\bbmd$, we define the distance between a single probability distribution $\Prob$ and a set $\calP$ of distributions as $\bbmD(\Prob,\calP) = \inf_{\Q\in\calP} \bbmd(\Prob,\Q)$. Finally, we define the Hausdorff distance between two sets of probability distributions, $\calP_1$ and $\calP_2$, as 
\begin{equation} \label{def:Hausdorff_distance}
    \bbmH(\calP_1,\calP_2)=\max\bigg\{ \sup_{\Q\in\calP_1} \bbmD(\Q,\calP_2),\, \sup_{\Q\in\calP_2} \bbmD(\Q,\calP_1) \bigg\}. 
\end{equation}

We make the following technical assumptions to ensure that the TRO model is well-defined.

\begin{assumption} \label{assumption:loss_and_ambig_set_boundedness}
The feasible set $\calX$ and the function $f$ satisfy the following conditions:
\begin{enumerate}[itemsep=0mm,topsep=1mm]
    \item [(a)] the feasible set $\calX$ is compact;
    \item [(b)] given an ambiguity set $\calP_N$, (i) $f(\cdot,\xib)$ is Lipschitz continuous on $\calX$ for any fixed $\xib\in\Xi$ with Lipschitz modulus bounded by $\kappa(\xib)$, where $\sup_{\Prob\in\calP_N} \E_{\Prob}[\kappa(\xib)]<\infty$; and (ii)  there exists $\xb_0\in\calX$ such that $\max\big\{ \E_{\Probh_N} \big|f(\xb_0,\xib)\big|,\, \sup_{\Prob\in\calP_N} \E_{\Prob}\big| f(\xb_0,\xib) \big| \big\}<\infty$.
\end{enumerate}

\end{assumption}

Assumption~\ref{assumption:loss_and_ambig_set_boundedness}(a) is a standard assumption in the stochastic optimization literature \citep{Duchi_et_al:2021, Shapiro_et_al:2014, Van-Parys_et_al:2021, Zhang_et_al:2016}. Assumption \ref{assumption:loss_and_ambig_set_boundedness}(b) ensures the smoothness of the objective function of \eqref{model:trade-off_model} and that the objective is finite at some point $\xb_0\in\calX$ \citep{Gao:2022, Pichler_Xu:2022, Sun_Xu:2016}. Essentially, Assumption \ref{assumption:loss_and_ambig_set_boundedness} ensures that
\begin{equation} \label{eqn:def_of_CN}
    C_N:=\max\Bigg\{\sup_{\xb\in\calX} \Big| \E_{\Probh_N}[f(\xb,\xib)] \Big|,\,  \sup_{\xb\in\calX}\sup_{\Prob\in\calP_N}\Big|\E_{\Prob}[f(\xb,\xib)]\Big|\Bigg\}<\infty,
\end{equation}
implying that our trade-off model \eqref{model:trade-off_model} is well-defined, i.e., has a finite optimal value and optimal solution for any $\theta\in[0,1]$. These assumptions hold valid in many applications (e.g., scheduling, inventory control, facility location, etc.).

First, in Theorem \ref{thm:sensitivity_in_theta}, we quantify the impact of a perturbation of the trade-off parameter $\theta$ on the optimal value and the set of optimal solutions to our TRO model.  Specifically, we show the Lipschitz continuity of the optimal value function $\upsilonh_N(\theta)$ and the stability of the set of optimal solutions $\calXh_N(\theta)$ to our TRO model.

\begin{theorem} \label{thm:sensitivity_in_theta}
Under Assumption \ref{assumption:loss_and_ambig_set_boundedness}, the following assertions hold for any $\{\theta_1,\theta_2\}\subset[0,1]$. 
\begin{enumerate}[itemsep=0mm,topsep=1mm]
    \item [(i)] $|\upsilonh_N(\theta_1)-\upsilonh_N(\theta_2)| \leq 2C_N |\theta_1-\theta_2|$.
    \item [(ii)] If, in addition, $\sup_{\Prob\in\calP'_{N,\theta_1}} \E_{\Prob}[f(\xb,\xib)]$ satisfies the second-order growth condition at  $\calXh_N(\theta_1)$, i.e., there exists $\tau>0$ such that
    \begin{equation} \label{eqn:second_order_growth}
        \sup_{\Prob\in\calP'_{N,\theta_1}} \E_{\Prob}[f(\xb,\xib)] \geq \upsilonh_N(\theta_1)+\tau \Big[d\Big(\xb,\calXh_N(\theta_1)\Big)\Big]^2
    \end{equation}
    for all $\xb\in\calX$, then $D\big(\calXh_N(\theta_2),\calXh_N(\theta_1)\big)\leq \sqrt{6C_N\tau^{-1}\, |\theta_1-\theta_2|}.$
\end{enumerate}
\end{theorem}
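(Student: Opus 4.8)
The plan is to prove part (i) by a direct estimate on the worst-case objective as a function of $\theta$, then deduce part (ii) from part (i) together with the quantitative stability machinery for parametric optimization problems satisfying a second-order growth condition. For part (i), fix $\xb\in\calX$ and write $g_\theta(\xb):=\sup_{\Prob\in\calP'_{N,\theta}}\E_\Prob[f(\xb,\xib)]$. Using the explicit form \eqref{eqn:trade-off_ambig_set} of the TRO ambiguity set, every $\Prob\in\calP'_{N,\theta}$ decomposes as $(1-\theta)\Probh_N+\theta\Q$ with $\Q\in\calP_N$, so
\begin{equation*}
    g_\theta(\xb) = (1-\theta)\,\E_{\Probh_N}[f(\xb,\xib)] + \theta\,\sup_{\Q\in\calP_N}\E_{\Q}[f(\xb,\xib)].
\end{equation*}
Hence $g_\theta(\xb)$ is \emph{affine} in $\theta$ for each fixed $\xb$, with slope equal to $\sup_{\Q\in\calP_N}\E_{\Q}[f(\xb,\xib)] - \E_{\Probh_N}[f(\xb,\xib)]$, whose absolute value is bounded by $2C_N$ by the definition \eqref{eqn:def_of_CN} of $C_N$ (and Assumption~\ref{assumption:loss_and_ambig_set_boundedness}, which guarantees $C_N<\infty$). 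Therefore $|g_{\theta_1}(\xb)-g_{\theta_2}(\xb)|\le 2C_N|\theta_1-\theta_2|$ uniformly in $\xb$, and since $\upsilonh_N(\theta)=\inf_{\xb\in\calX}g_\theta(\xb)$, taking infima and using that the infimum is $1$-Lipschitz with respect to the sup-norm on the objective gives $|\upsilonh_N(\theta_1)-\upsilonh_N(\theta_2)|\le 2C_N|\theta_1-\theta_2|$.

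For part (ii), I would combine the uniform bound $\sup_{\xb\in\calX}|g_{\theta_1}(\xb)-g_{\theta_2}(\xb)|\le 2C_N|\theta_1-\theta_2|$ from the previous step with the second-order growth condition \eqref{eqn:second_order_growth} at $\calXh_N(\theta_1)$. This is the standard route: for any $\xb_2\in\calXh_N(\theta_2)$, the growth condition gives $\tau\,[d(\xb_2,\calXh_N(\theta_1))]^2 \le g_{\theta_1}(\xb_2) - \upsilonh_N(\theta_1)$. Now bound the right-hand side by inserting $g_{\theta_2}(\xb_2)=\upsilonh_N(\theta_2)$: we get $g_{\theta_1}(\xb_2)-\upsilonh_N(\theta_1) \le |g_{\theta_1}(\xb_2)-g_{\theta_2}(\xb_2)| + |\upsilonh_N(\theta_2)-\upsilonh_N(\theta_1)| \le 2C_N|\theta_1-\theta_2| + 2C_N|\theta_1-\theta_2| = 4C_N|\theta_1-\theta_2|$. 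Hmm — this yields the constant $4C_N$ rather than the stated $6C_N$; I would re-examine whether the intended argument instead bounds $g_{\theta_1}(\xb_2)-\upsilonh_N(\theta_1)$ via $g_{\theta_1}(\xb_2) \le g_{\theta_2}(\xb_2) + 2C_N|\theta_1-\theta_2|$ and $\upsilonh_N(\theta_2) \le g_{\theta_2}(\xb_2)$ combined with $\upsilonh_N(\theta_2) - \upsilonh_N(\theta_1) \le$ (something), or whether the factor $6$ comes from invoking a ready-made stability lemma in \ref{apdx:known_QSA_DRO} with a slightly different normalization; either way, taking the square root gives $d(\xb_2,\calXh_N(\theta_1)) \le \sqrt{C\,C_N\tau^{-1}|\theta_1-\theta_2|}$ for an absolute constant $C$, and taking the supremum over $\xb_2\in\calXh_N(\theta_2)$ yields the bound on $D(\calXh_N(\theta_2),\calXh_N(\theta_1))$.

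The conceptual core is easy: the key observation that makes everything work is that $g_\theta(\xb)$ is affine in $\theta$ for fixed $\xb$, so uniform Lipschitz continuity of the objective in $\theta$ is essentially immediate from the structure of the TRO ambiguity set — no delicate argument about how $\sup_{\Prob\in\calP'_{N,\theta}}$ varies with $\theta$ is needed, unlike in general quantitative stability of DRO with respect to perturbations of the ambiguity set. The main obstacle I anticipate is purely bookkeeping: getting the exact constant $6C_N$ in part (ii) requires chaining the right inequalities in the right order (and possibly pairing the $D(\calXh_N(\theta_2),\calXh_N(\theta_1))$ bound with an appeal to a stability estimate from \ref{apdx:known_QSA_DRO} whose constant I'd need to track), plus verifying that the second-order growth condition is only imposed at $\calXh_N(\theta_1)$ and that $\calXh_N(\theta_2)$ is nonempty so the supremum defining $D$ is over a nonempty set — the latter follows from Assumption~\ref{assumption:loss_and_ambig_set_boundedness}(a) (compactness of $\calX$) and continuity of $g_{\theta_2}$.
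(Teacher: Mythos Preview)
Your proposal is correct, and in fact sharper than the paper's own argument for part~(ii). The paper takes a more modular but less direct route: it first proves the Hausdorff bound $\bbmH(\calP'_{N,\theta_1},\calP'_{N,\theta_2})\le 2C_N|\theta_1-\theta_2|$ between the TRO ambiguity sets (via an explicit computation on $\bbmd$), and then invokes a ready-made DRO stability proposition from \ref{apdx:known_QSA_DRO} that gives $|\upsilon_1-\upsilon_2|\le\bbmH(\calP_1,\calP_2)$ for part~(i) and $D(\calX_2^\star,\calX_1^\star)\le\sqrt{3\tau^{-1}\bbmH(\calP_1,\calP_2)}$ for part~(ii). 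Chaining these yields exactly $2C_N$ and $\sqrt{6C_N\tau^{-1}|\theta_1-\theta_2|}$, which explains the constant $6$ you were puzzling over: the factor $3$ in the generic stability lemma (which itself carries some slack from a proof-by-contradiction argument) is multiplied by the $2C_N$ from the Hausdorff bound.

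Your direct approach exploits the affine structure of $g_\theta(\xb)$ in $\theta$ and bypasses the Hausdorff computation entirely. For part~(i) this is cleaner and lands at the same bound. For part~(ii), your chain $\tau\,[d(\xb_2,\calXh_N(\theta_1))]^2\le g_{\theta_1}(\xb_2)-\upsilonh_N(\theta_1)\le|g_{\theta_1}(\xb_2)-g_{\theta_2}(\xb_2)|+|\upsilonh_N(\theta_2)-\upsilonh_N(\theta_1)|\le 4C_N|\theta_1-\theta_2|$ is valid and yields the tighter constant $4C_N$; there is no error in your bookkeeping, and you should not second-guess it. The paper's $6C_N$ is simply the price of routing through a general-purpose lemma rather than arguing from the specific affine structure. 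What the paper's approach buys is modularity (the Hausdorff bound \eqref{eqn:Hausdorff_Lip_cont_in_theta} is of independent interest and can be plugged into other stability results), whereas your approach buys simplicity and a better constant.
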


Theorem \ref{thm:sensitivity_in_theta} establishes mechanisms to quantify the difference in  $\upsilonh_N(\theta)$ and  $\calXh_N(\theta)$ (and hence conservatism) incurred by perturbation in $\theta$. In particular, it shows that both the optimal value and the set of optimal solutions change gradually with $\theta\in[0,1]$; $\upsilonh_N(\theta)$ is Lipschitz continuous in $\theta$ and $\calXh_N(\theta)$ is H\"{o}lder continuous with H\"{o}lder exponent $1/2$ under distance $D$. Moreover, if $\theta$ is sufficiently close to zero (resp. one), the optimal value and the set of optimal solutions to our TRO model are close to the SAA (resp. DRO) counterparts. Therefore, in practice, to generate a spectrum of optimal solutions with various extents of conservatism, it suffices to consider multiple disjoint values of $\theta$ given the continuity of $\upsilonh_N(\theta)$ and $\calXh_N(\theta)$. We remark that the second-order growth condition assumption in \eqref{eqn:second_order_growth} is standard in stochastic optimization literature \citep{Liu_et_al:2019, Pichler_Xu:2022, Shapiro:1994}. For example, as discussed in \cite{Pichler_Xu:2022},  the second-order growth condition \eqref{eqn:second_order_growth} holds when the function $f(\xb,\xib)$ is $\mu(\xib)$-strongly convex in $\xb$ for each $\xib\in\Xi$ and $\inf_{\Prob\in\calP}\E_{\Prob}[\mu(\xib)]>0$.

Note that directly combining SAA and DRO optimal solutions, e.g., via a convex combination after solving each separately, may not yield a feasible solution to the TRO problem \eqref{model:trade-off_model}.  This is particularly true in many practical applications where $\calX$ is not convex, for example, in problems that involve integer variables such as facility location and scheduling problems. Thus, one needs to solve the TRO model to obtain decisions with different levels of conservatism. In Theorem \ref{thm:conservatism}, we leverage the results in Theorem \ref{thm:sensitivity_in_theta} to quantify the difference between our TRO model's optimal value (set of optimal solutions) and the convex combination of optimal values (sets of optimal solutions) to the SAA ($\theta=0$) and DRO ($\theta=1$) problems resulting from solving each separately.

\begin{theorem} \label{thm:conservatism}
Under Assumption \ref{assumption:loss_and_ambig_set_boundedness}, the following assertion holds.
\begin{enumerate}[itemsep=0mm,topsep=1mm]
    \item [(i)] $\upsilonh_N:[0,1]\rightarrow\R$ is concave with 
    \begin{equation} \label{eqn:expansion_in_theta}
        \upsilonh_N(\theta) = (1-\theta)\cdot\upsilonh_N(0)+\theta\cdot\upsilonh_N(1)+\rh_N(\theta), 
    \end{equation}
    where $\rh_N(\theta)\in[0,4C_N\theta(1-\theta)]$.
    
    \item [(ii)] If, in addition, the second order growth rate condition \eqref{eqn:second_order_growth} holds, then 
    $$D\Big(\calXh_N(\theta),(1-\theta)\calXh_N(0)+\theta\calXh_N(1)\Big) \leq \sqrt{6C_N\tau^{-1}\theta(1-\theta)}\Big( \sqrt{\theta} + \sqrt{1-\theta}\Big), $$
    % \frac{6C_N\theta(1-\theta)}{\tau}}
    %
    where the set $(1-\theta)\calXh_N(0)+\theta\calXh_N(1):=\{(1-\theta)\xb_1+\theta\xb_2\mid \xb_1\in\calXh_N(0),\,\xb_2\in\calXh_N(1)\}$ contains convex combinations of the SAA and DRO optimal solutions.
\end{enumerate}
\end{theorem}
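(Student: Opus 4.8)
The plan is to reduce both parts to the single structural fact that, for each fixed $\xb\in\calX$, the inner worst-case expectation over the TRO ambiguity set is \emph{affine} in $\theta$. Since every element of $\calP'_{N,\theta}$ is a mixture $(1-\theta)\Probh_N+\theta\Q$ with $\Q\in\calP_N$, linearity of expectation under mixtures together with $\theta\ge 0$ gives
\begin{equation*}
\sup_{\Prob\in\calP'_{N,\theta}}\E_\Prob[f(\xb,\xib)] \;=\; (1-\theta)\,\E_{\Probh_N}[f(\xb,\xib)] \;+\; \theta\,\sup_{\Q\in\calP_N}\E_\Q[f(\xb,\xib)] \;=:\; (1-\theta)\,g_0(\xb)+\theta\,g_1(\xb),
\end{equation*}
which is the same reduction underlying Theorem~\ref{thm:sensitivity_in_theta}. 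Consequently $\upsilonh_N(\theta)=\inf_{\xb\in\calX}\big[(1-\theta)g_0(\xb)+\theta g_1(\xb)\big]$ is a pointwise infimum of affine (hence concave) functions of $\theta$, so $\upsilonh_N$ is concave on $[0,1]$; evaluating at the endpoints recovers $\upsilonh_N(0)=\inf_\xb g_0(\xb)$ and $\upsilonh_N(1)=\inf_\xb g_1(\xb)$, i.e.\ the SAA and DRO optimal values.

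For part (i), with $\rh_N(\theta):=\upsilonh_N(\theta)-(1-\theta)\upsilonh_N(0)-\theta\upsilonh_N(1)$, the lower bound $\rh_N(\theta)\ge0$ is concavity evaluated at $\theta=(1-\theta)\cdot 0+\theta\cdot 1$. For the upper bound I would use the algebraic identity $\rh_N(\theta)=(1-\theta)\big[\upsilonh_N(\theta)-\upsilonh_N(0)\big]+\theta\big[\upsilonh_N(\theta)-\upsilonh_N(1)\big]$ and then invoke the Lipschitz estimate of Theorem~\ref{thm:sensitivity_in_theta}(i), which yields $\upsilonh_N(\theta)-\upsilonh_N(0)\le 2C_N\theta$ and $\upsilonh_N(\theta)-\upsilonh_N(1)\le 2C_N(1-\theta)$; substituting,
\begin{equation*}
\rh_N(\theta)\;\le\;(1-\theta)\,(2C_N\theta)+\theta\,(2C_N(1-\theta))\;=\;4C_N\,\theta(1-\theta).
\end{equation*}

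For part (ii), fix any $\xb\in\calXh_N(\theta)$ and apply Theorem~\ref{thm:sensitivity_in_theta}(ii) twice — once with base point $\theta_1=0$, $\theta_2=\theta$, and once with base point $\theta_1=1$, $\theta_2=\theta$, using that the second-order growth condition \eqref{eqn:second_order_growth} holds at $\calXh_N(0)$ and at $\calXh_N(1)$ — to obtain $\yb_0\in\calXh_N(0)$ and $\yb_1\in\calXh_N(1)$ with $\norm{\xb-\yb_0}\le\sqrt{6C_N\tau^{-1}\theta}$ and $\norm{\xb-\yb_1}\le\sqrt{6C_N\tau^{-1}(1-\theta)}$. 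Setting $\yb:=(1-\theta)\yb_0+\theta\yb_1\in(1-\theta)\calXh_N(0)+\theta\calXh_N(1)$ and using convexity of the norm,
\begin{equation*}
\norm{\xb-\yb}\;\le\;(1-\theta)\norm{\xb-\yb_0}+\theta\norm{\xb-\yb_1}\;\le\;\sqrt{6C_N\tau^{-1}}\Big[(1-\theta)\sqrt{\theta}+\theta\sqrt{1-\theta}\Big]\;=\;\sqrt{6C_N\tau^{-1}\theta(1-\theta)}\,\big(\sqrt{\theta}+\sqrt{1-\theta}\big),
\end{equation*}
and taking the supremum over $\xb\in\calXh_N(\theta)$ bounds $D\big(\calXh_N(\theta),(1-\theta)\calXh_N(0)+\theta\calXh_N(1)\big)$ by the same quantity. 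I do not expect a deep obstacle; the only point requiring care is bookkeeping — getting the \emph{direction} of the one-sided distance $D(\cdot,\cdot)$ right (it is points of $\calXh_N(\theta)$ that must be approximated, so Theorem~\ref{thm:sensitivity_in_theta}(ii) must be invoked with $\theta_2=\theta$) and, correspondingly, applying the growth-condition hypothesis of Theorem~\ref{thm:conservatism}(ii) at the base points $\theta=0$ and $\theta=1$. Non-emptiness and compactness of $\calXh_N(0),\calXh_N(1)$ and finiteness of $C_N$ are already supplied by Assumption~\ref{assumption:loss_and_ambig_set_boundedness}.
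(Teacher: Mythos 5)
Your proposal is correct and follows essentially the same route as the paper: concavity via the pointwise infimum of the affine-in-$\theta$ functions $(1-\theta)g_0(\xb)+\theta g_1(\xb)$, the upper bound on $\rh_N(\theta)$ from the decomposition $\rh_N(\theta)=(1-\theta)[\upsilonh_N(\theta)-\upsilonh_N(0)]+\theta[\upsilonh_N(\theta)-\upsilonh_N(1)]$ combined with the Lipschitz estimate of Theorem~\ref{thm:sensitivity_in_theta}(i), and for part (ii) the bound $D\big(\calXh_N(\theta),(1-\theta)\calXh_N(0)+\theta\calXh_N(1)\big)\le(1-\theta)D\big(\calXh_N(\theta),\calXh_N(0)\big)+\theta D\big(\calXh_N(\theta),\calXh_N(1)\big)$ followed by two applications of Theorem~\ref{thm:sensitivity_in_theta}(ii). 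Your passage through explicit near-minimizers $\yb_0,\yb_1$ is just a rephrasing of the paper's sup--inf manipulation, and your attention to the direction of $D$ and to where the growth condition must hold matches the paper's usage.
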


Part (i) of Theorem \ref{thm:conservatism} establishes that the optimal value $\upsilonh_N(\theta)$ to our TRO model is not less than the convex combination $(1-\theta)\upsilonh_N(0)+\theta\upsilonh_N(1)$ of the SAA and DRO optimal values. In addition, if $\Probh_N\in\calP_N$, then $\upsilonh_N(\theta)$ is the minimum of the non-decreasing functions $\big\{g(\theta;\xb):=\E_{\Probh_N}[f(\xb,\xib)]+\theta\big[\sup_{\Prob\in\calP_N}\E_{\Prob}[f(\xb,\xib)]-\E_{\Probh_N}[f(\xb,\xib)]\big]\,\big|\,\xb\in\calX\big\}$ and thus, is non-decreasing in $\theta$ as illustrated in Figure~\ref{fig:conservatism_plot}. Therefore, by solving our TRO model with different $\theta\in[0,1]$, we can obtain a spectrum of decisions that spans $[\upsilonh_N(0),\upsilonh_N(1)]$, representing decisions with different levels of conservatism. Moreover, since $\upsilonh_N$ is concave, the rate of change in conservatism  (i.e., slope of $\upsilonh_N$) is larger for smaller values of $\theta$. In other words, the increase in conservatism $\upsilonh_N(\theta+\Delta)-\upsilonh_N(\theta)$ is more significant when $\theta$ is small, where $\Delta>0$ is the perturbation on $\theta$. To obtain a decision with a specific target level of conservatism, say $\overline{\upsilon}=(1-\lambda)\upsilonh_N(0) + \lambda\upsilonh_N(1)$ for some $\lambda\in(0,1)$, one should pick a value of $\theta$ less than $\lambda$ (see Figure \ref{fig:conservatism_plot}). Part (ii) of Theorem \ref{thm:conservatism} indicates that the set of optimal solutions $\calXh_N(\theta)$ to our TRO model can be approximated by $\overline{\calX}_N(\theta):=(1-\theta)\calXh_N(0)+\theta\calXh_N(1)$ only when $\theta$ is close to zero or one; however, the difference could be huge for intermediate values of $\theta \in (0, 1)$. 
\begin{figure}
    \centering
    \includegraphics[scale=0.9]{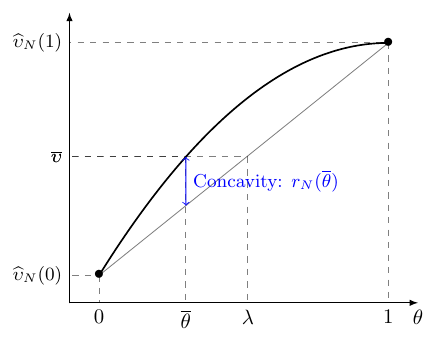}
    \caption{Illustration of $\upsilonh_N(\theta)$ when $\calP_N$ is convex and $\Probh_N\in\calP_N$. For a given target level of conservatism $\overline{\upsilon}=(1-\lambda)\upsilonh_N(0) + \lambda\upsilonh_N(1)$ with $\lambda\in(0,1)$, one should pick $\overline{\theta}<\lambda$ in the TRO model.}   \label{fig:conservatism_plot}
\end{figure}

\section{Finite-Sample Properties} \label{sec:finite_sample_property}

In this section, we investigate the finite-sample properties of the TRO model. First, in Section~\ref{subsec:bias_analysis}, we analyze the bias of the optimal value of our TRO model $\upsilonh_N(\theta)$ as an estimator of the true optimal value $\upsilon^\star$. Then, in Section~\ref{subsec:generalization}, we derive the generalization bound for our TRO model.

\subsection{Bias Analysis} \label{subsec:bias_analysis}

The optimal value of our TRO model $\widehat{\upsilon}_N(\theta)$ represents an estimator of the optimal value $ \upsilon^\star$ of problem \eqref{prob:SO}. In this section, we analyze the bias of $\upsilonh_N(\theta)$, i.e., $\E_{\Prob^N}[\upsilonh_N(\theta)]-\upsilon^\star$, where $\Prob^N$ is the joint distribution of $\{\xibh_1,\dots,\xibh_N\}$. Recall that when $\theta=0$, our TRO estimator reduces to the SAA estimator $\upsilonh_N(0)$, which suffers from downward bias \citep{Mak_et_al:1999, Norkin_et_al:1998, Shapiro_et_al:2014}, i.e.,
\begin{equation} \label{eqn:SAA_downward_bias}
    \E_{\Prob^N}\big[\upsilonh_N(0)\big]=\E_{\Prob^N}\Bigg[\min_{\xb\in\calX} \frac{1}{N} \sum_{i=1}^N f(\xb,\xibh_i)\Bigg] \leq \min_{\xb\in\calX} \E_{\Prob^N}\Bigg[\frac{1}{N} \sum_{i=1}^N f(\xb,\xibh_i)\Bigg] =\min_{\xb\in\calX} \E_{\Prob^\star}[f(\xb,\xib)] = \upsilon^\star.
\end{equation}
Although the bias $\upsilon^\star-\E_{\Prob^N}\big[\upsilonh_N(0)\big]$ decreases monotonically with $N$ and goes to zero as $N\rightarrow\infty$ \citep{Homem-de-Mello_Bayraksan:2014}, it may diminish slowly and could remain significant even for large $N$ \citep{Dentcheva_Lin:2022}.  Thus, constructing an accurate estimator of $\upsilon^\star$, and ideally, an unbiased estimator of $\upsilon^\star$ is of practical interest. In this section, we show that our TRO model could produce estimators with a smaller bias than the SAA estimator. Moreover, under mild assumptions, we show that there exists $\theta^\text{u}_N\in[0,1]$ such that $\upsilonh_N(\theta^\text{u}_N)$ is an unbiased estimator.  We also derive the asymptotic convergence rate of $\theta^\text{u}_N$ as $N\rightarrow\infty$. Additionally, we derive an upper bound on the difference between the TRO and SAA estimators’ standard errors, showing that the TRO estimator may not have a significantly larger variance than the SAA estimator.

In what follows, we say Assumption~\ref{assumption:loss_and_ambig_set_boundedness} holds if Assumption~\ref{assumption:loss_and_ambig_set_boundedness}(a) holds and Assumption~\ref{assumption:loss_and_ambig_set_boundedness}(b) holds for almost every ambiguity set $\calP_N$.

First, in Proposition~\ref{prop:bias_non_neg_UB}, we derive an upper bound on the bias of the TRO estimator $\upsilonh_N(\theta)$. 

\begin{proposition} \label{prop:bias_non_neg_UB}
Under Assumption \ref{assumption:loss_and_ambig_set_boundedness}, we have
\begin{equation} \label{eqn:bias_non_neg_UB}
    \E_{\Prob^N}[\upsilonh_N(\theta)]-\upsilon^\star \leq \theta \Big\{\E_{\Prob^N}\big[\upsilonh_N(1)\big]-\E_{\Prob^N}\big[\upsilonh_N(0)\big]\Big\} + R_N(\theta),
\end{equation}
where $R_N(\theta)\in[0,4\Cbar_N\theta(1-\theta)]$ and $\Cbar_N:=\E_{\Prob^\star}(C_N)<\infty$, where $C_N$ is as defined in \eqref{eqn:def_of_CN}. If $\Probh_N\in\calP_N$ almost surely, then the upper bound in \eqref{eqn:bias_non_neg_UB} is non-negative.
\end{proposition}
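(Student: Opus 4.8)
The plan is to lift the pathwise decomposition of Theorem~\ref{thm:conservatism}(i) to an inequality in expectation and then absorb the SAA contribution using the classical downward bias in~\eqref{eqn:SAA_downward_bias}. Fix $\theta\in[0,1]$. By the running hypothesis of this section, Assumption~\ref{assumption:loss_and_ambig_set_boundedness}(a) holds and Assumption~\ref{assumption:loss_and_ambig_set_boundedness}(b) holds for $\Prob^N$-almost every realization of the sample $\{\xibh_1,\dots,\xibh_N\}$; for each such realization Theorem~\ref{thm:conservatism}(i) applies to the induced ambiguity set $\calP_N$ and gives
$$\upsilonh_N(\theta)=(1-\theta)\,\upsilonh_N(0)+\theta\,\upsilonh_N(1)+\rh_N(\theta),\qquad 0\le \rh_N(\theta)\le 4C_N\,\theta(1-\theta),$$
where $C_N$ is the sample-dependent constant of~\eqref{eqn:def_of_CN}.

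Next I would take expectation with respect to $\Prob^N$ on both sides, which needs two bookkeeping facts. \emph{Measurability}: the objective of~\eqref{model:trade-off_model}, viewed jointly in $(\xb,\text{sample})$, is a normal integrand, so by standard measurable-selection arguments $\upsilonh_N(0)$, $\upsilonh_N(1)$, and hence $\rh_N(\theta)$, are measurable functions of the sample. \emph{Integrability}: I claim $\Cbar_N:=\E_{\Prob^N}[C_N]<\infty$. By the Lipschitz bound in Assumption~\ref{assumption:loss_and_ambig_set_boundedness}(b), for every $\xb\in\calX$ and every $\Prob\in\{\Probh_N\}\cup\calP_N$,
$$\big|\E_{\Prob}[f(\xb,\xib)]\big|\le \E_{\Prob}|f(\xb,\xib)|\le \E_{\Prob}|f(\xb_0,\xib)|+\diam(\calX)\,\E_{\Prob}[\kappa(\xib)],$$
so $C_N\le \max\{\E_{\Probh_N}|f(\xb_0,\xib)|,\ \sup_{\Prob\in\calP_N}\E_{\Prob}|f(\xb_0,\xib)|\}+\diam(\calX)\max\{\E_{\Probh_N}[\kappa(\xib)],\ \sup_{\Prob\in\calP_N}\E_{\Prob}[\kappa(\xib)]\}$. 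Taking $\E_{\Prob^N}[\cdot]$, using $\E_{\Prob^N}\E_{\Probh_N}[g(\xib)]=\E_{\Prob^\star}[g(\xib)]$ for $g\in\{|f(\xb_0,\cdot)|,\kappa(\cdot)\}$ together with $\E_{\Prob^\star}|f(\xb_0,\xib)|<\infty$, $\E_{\Prob^\star}[\kappa(\xib)]<\infty$, and the integrable bounds on the $\calP_N$-suprema supplied by Assumption~\ref{assumption:loss_and_ambig_set_boundedness}(b), yields $\Cbar_N<\infty$. Setting $R_N(\theta):=\E_{\Prob^N}[\rh_N(\theta)]$, the pathwise bound $0\le\rh_N(\theta)\le 4C_N\theta(1-\theta)$ integrates to $R_N(\theta)\in[0,\,4\Cbar_N\theta(1-\theta)]$.

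Taking expectations in the displayed identity then gives $\E_{\Prob^N}[\upsilonh_N(\theta)]=\E_{\Prob^N}[\upsilonh_N(0)]+\theta\big(\E_{\Prob^N}[\upsilonh_N(1)]-\E_{\Prob^N}[\upsilonh_N(0)]\big)+R_N(\theta)$. Subtracting $\upsilon^\star$ and using the SAA downward bias $\E_{\Prob^N}[\upsilonh_N(0)]\le \upsilon^\star$ from~\eqref{eqn:SAA_downward_bias} to discard the non-positive term $\E_{\Prob^N}[\upsilonh_N(0)]-\upsilon^\star$ yields exactly~\eqref{eqn:bias_non_neg_UB}. For the final assertion, if $\Probh_N\in\calP_N$ $\Prob^N$-a.s., then pathwise $\sup_{\Prob\in\calP_N}\E_{\Prob}[f(\xb,\xib)]\ge \E_{\Probh_N}[f(\xb,\xib)]$ for all $\xb$, so $\upsilonh_N(1)\ge\upsilonh_N(0)$ and hence $\E_{\Prob^N}[\upsilonh_N(1)]-\E_{\Prob^N}[\upsilonh_N(0)]\ge 0$; since $\theta\ge0$ and $R_N(\theta)\ge0$, the right-hand side of~\eqref{eqn:bias_non_neg_UB} is non-negative.

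Given Theorems~\ref{thm:sensitivity_in_theta}--\ref{thm:conservatism}, the argument is essentially an interchange of expectation with the pathwise decomposition, so the only genuinely nontrivial step is the integrability bookkeeping establishing $\Cbar_N<\infty$ — in particular controlling the $\Prob^N$-expectation of the $\calP_N$-suprema $\sup_{\Prob\in\calP_N}\E_{\Prob}|f(\xb_0,\xib)|$ and $\sup_{\Prob\in\calP_N}\E_{\Prob}[\kappa(\xib)]$, which is where Assumption~\ref{assumption:loss_and_ambig_set_boundedness}(b) is used in full. Everything else is a direct consequence of Theorem~\ref{thm:conservatism}(i) and the SAA bias inequality~\eqref{eqn:SAA_downward_bias}.
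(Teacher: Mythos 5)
Your proposal is correct and follows essentially the same route as the paper: apply the pathwise decomposition $\upsilonh_N(\theta)=(1-\theta)\upsilonh_N(0)+\theta\upsilonh_N(1)+\rh_N(\theta)$ from Theorem~\ref{thm:conservatism}(i), take expectations, and discard the non-positive SAA bias term $\E_{\Prob^N}[\upsilonh_N(0)]-\upsilon^\star$ via \eqref{eqn:SAA_downward_bias}, with the non-negativity claim following from $\upsilonh_N(0)\le\upsilonh_N(1)$ when $\Probh_N\in\calP_N$. The extra measurability and integrability bookkeeping you include is sound (and your $\E_{\Prob^N}[C_N]$ is arguably the more precise reading of the paper's $\Cbar_N$), but it does not change the argument.
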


Proposition~\ref{prop:bias_non_neg_UB} shows that the bias of the TRO estimator $\upsilonh_N(\theta)$ is upper bounded by two terms in \eqref{eqn:bias_non_neg_UB}. The first term $\theta \{\E_{\Prob^N}\big[\upsilonh_N(1)\big]-\E_{\Prob^N}\big[\upsilonh_N(0)\big]\}$ is due to the DRO objective component in the TRO model, which is increasing with $\theta$ and is zero when $\theta=0$. The second term $R_N(\theta)$ is a consequence of the concavity of $\upsilonh_N$ (see Theorem \ref{thm:conservatism}). Thanks to the concavity, $R(\theta)$ is non-negative when $\theta\in(0,1)$. Thus, Proposition \ref{prop:bias_non_neg_UB} suggests that the bias of $\upsilonh_N(\theta)$ may not be a downward bias as that of the SAA estimator. Indeed, in the next theorem, we show that $\upsilonh_N(\theta)$ is an unbiased estimator of $\upsilon^\star$ for some choice of $\theta$ under a mild assumption.

\begin{theorem} \label{thm:debias_theta}
Suppose that Assumption \ref{assumption:loss_and_ambig_set_boundedness} holds and $\E_{\Prob^N}[\upsilonh_N(1)] \geq \upsilon^\star$. Then, there exists $\theta^\textup{u}_N\in[0,1]$ such that  $\E_{\Prob^N}[\upsilonh_N(\theta^\textup{u}_N)] = \upsilon^\star$, i.e., $\upsilonh_N(\theta^\textup{u}_N)$ is an unbiased estimator of $\upsilon^\star$.
\end{theorem}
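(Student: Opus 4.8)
The plan is to invoke the intermediate value theorem for the real-valued function $\theta \mapsto \E_{\Prob^N}[\upsilonh_N(\theta)]$ on $[0,1]$. First I would establish that this function is continuous on $[0,1]$. By Theorem~\ref{thm:sensitivity_in_theta}(i), for (almost) every realization of the sample $\{\xibh_1,\dots,\xibh_N\}$ we have $|\upsilonh_N(\theta_1)-\upsilonh_N(\theta_2)| \leq 2C_N|\theta_1-\theta_2|$, where $C_N$ is the sample-dependent constant from \eqref{eqn:def_of_CN}. Taking expectations with respect to $\Prob^N$ and using $\Cbar_N = \E_{\Prob^\star}(C_N) < \infty$ (as noted in Proposition~\ref{prop:bias_non_neg_UB}), the map $\theta\mapsto\E_{\Prob^N}[\upsilonh_N(\theta)]$ is Lipschitz continuous on $[0,1]$ with modulus $2\Cbar_N$; in particular it is continuous. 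The interchange of expectation and the pathwise bound is justified because $|\upsilonh_N(\theta)| \leq C_N$ for all $\theta\in[0,1]$ (this follows from \eqref{eqn:def_of_CN} and the fact that $\calP'_{N,\theta}$ consists of mixtures of $\Probh_N$ and elements of $\calP_N$), so $\upsilonh_N(\theta)$ is dominated by the integrable envelope $C_N$.

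Next I would evaluate the function at the two endpoints. At $\theta=0$ the TRO model reduces to the SAA problem, and \eqref{eqn:SAA_downward_bias} gives $\E_{\Prob^N}[\upsilonh_N(0)] \leq \upsilon^\star$. At $\theta=1$ the TRO model reduces to the DRO problem, and the hypothesis of the theorem is precisely $\E_{\Prob^N}[\upsilonh_N(1)] \geq \upsilon^\star$. Hence the continuous function $\theta\mapsto\E_{\Prob^N}[\upsilonh_N(\theta)]$ takes a value $\leq \upsilon^\star$ at $\theta=0$ and a value $\geq\upsilon^\star$ at $\theta=1$, so by the intermediate value theorem there exists $\theta^\textup{u}_N\in[0,1]$ with $\E_{\Prob^N}[\upsilonh_N(\theta^\textup{u}_N)] = \upsilon^\star$, which is exactly the assertion that $\upsilonh_N(\theta^\textup{u}_N)$ is an unbiased estimator of $\upsilon^\star$.

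The main obstacle is not the intermediate value argument itself, which is routine, but the measurability and integrability bookkeeping needed to make $\E_{\Prob^N}[\upsilonh_N(\theta)]$ a well-defined, finite, continuous function of $\theta$. Specifically, I need $\upsilonh_N(\theta)$ to be a measurable function of the sample for each fixed $\theta$ (so the expectation makes sense), I need the uniform-in-$\theta$ domination by $C_N$ together with $\E_{\Prob^\star}(C_N)<\infty$ to pass the pathwise Lipschitz estimate through the expectation, and I need to handle the ``almost every ambiguity set'' caveat under which Assumption~\ref{assumption:loss_and_ambig_set_boundedness} is assumed to hold in this section — i.e., the Lipschitz bound of Theorem~\ref{thm:sensitivity_in_theta} holds $\Prob^N$-almost surely rather than surely, which is enough since a null set does not affect the expectation. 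Once these technical points are in place, continuity at the endpoints and the sign conditions there deliver the result immediately.
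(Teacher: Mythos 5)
Your proposal matches the paper's proof: both establish Lipschitz continuity of $\E_{\Prob^N}[\upsilonh_N(\theta)]$ in $\theta$ via Theorem~\ref{thm:sensitivity_in_theta}(i) and the integrability of $C_N$, then apply the intermediate value theorem using $\E_{\Prob^N}[\upsilonh_N(0)]\leq\upsilon^\star\leq\E_{\Prob^N}[\upsilonh_N(1)]$. Your additional remarks on domination and measurability only make explicit what the paper leaves implicit, so the argument is correct and essentially identical.
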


\begin{corollary} \label{cor:debias_theta}
Under the same assumptions as in Theorem \ref{thm:debias_theta}, there exists $\theta_N^\textup{u}\in[0,1]$ such that $|\E_{\Prob^N}[\upsilonh_N(\theta)]-\upsilon^\star| \leq |\E_{\Prob^N}[\upsilonh_N(0)]-\upsilon^\star|$ for all $\theta \in[0,\theta_N^\textup{u}]$, i.e., the bias of $\upsilonh_N(\theta)$ is not greater than the bias of the SAA estimator $\upsilonh_N(0)$.
\end{corollary}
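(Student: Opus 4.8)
The plan is to reduce the statement to an elementary property of concave functions. Write $g(\theta):=\E_{\Prob^N}[\upsilonh_N(\theta)]$ for $\theta\in[0,1]$. First I would record three structural facts about $g$. \emph{Concavity:} by Theorem~\ref{thm:conservatism}(i), $\upsilonh_N(\cdot)$ is concave on $[0,1]$ for every realization of the sample, and taking expectations preserves concavity (apply the defining inequality pathwise and integrate over $\Prob^N$), so $g$ is concave on $[0,1]$. \emph{Continuity:} by Theorem~\ref{thm:sensitivity_in_theta}(i), $|\upsilonh_N(\theta_1)-\upsilonh_N(\theta_2)|\leq 2C_N|\theta_1-\theta_2|$ pathwise, so taking expectations and using $\Cbar_N<\infty$ (cf. Proposition~\ref{prop:bias_non_neg_UB}) shows $g$ is Lipschitz, hence continuous, on $[0,1]$. \emph{Sign of the SAA bias:} from \eqref{eqn:SAA_downward_bias}, $g(0)=\E_{\Prob^N}[\upsilonh_N(0)]\leq\upsilon^\star$.

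Next I would choose the right candidate for $\theta_N^\textup{u}$. By Theorem~\ref{thm:debias_theta} the level set $S:=\{\theta\in[0,1]\mid g(\theta)=\upsilon^\star\}$ is nonempty; since $g$ is continuous and $[0,1]$ is compact, $S$ is closed, so $\theta_N^\textup{u}:=\min S$ is well defined and attained (the degenerate case $\theta_N^\textup{u}=0$ is trivial since then $[0,\theta_N^\textup{u}]=\{0\}$). The main point — and the step I expect to be the real obstacle — is that \emph{minimality} of $\theta_N^\textup{u}$ forces $g(\theta)\leq\upsilon^\star$ throughout $[0,\theta_N^\textup{u}]$: if $g(\theta')>\upsilon^\star$ for some $\theta'\in(0,\theta_N^\textup{u})$, then since $g(0)\leq\upsilon^\star<g(\theta')$ the intermediate value theorem produces a zero of $g-\upsilon^\star$ in $(0,\theta')$, contradicting $\theta_N^\textup{u}=\min S$. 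This is exactly where the argument would break down if one simply picked an arbitrary unbiasedness threshold, because a generic root of $g-\upsilon^\star$ could be preceded by an ``overshoot'' region on which $g$ exceeds $\upsilon^\star$ by more than $\upsilon^\star-g(0)$.

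It then remains only to bound $g$ from below on $[0,\theta_N^\textup{u}]$, which is the chord inequality: writing $\theta=(1-t)\cdot 0+t\cdot\theta_N^\textup{u}$ with $t=\theta/\theta_N^\textup{u}\in[0,1]$, concavity of $g$ gives $g(\theta)\geq(1-t)g(0)+t\,g(\theta_N^\textup{u})=(1-t)g(0)+t\upsilon^\star\geq g(0)$, the last step using $\upsilon^\star\geq g(0)$. Combining the two bounds, for every $\theta\in[0,\theta_N^\textup{u}]$ one has $g(0)\leq g(\theta)\leq\upsilon^\star$, hence $|g(\theta)-\upsilon^\star|=\upsilon^\star-g(\theta)\leq\upsilon^\star-g(0)=|g(0)-\upsilon^\star|$, which is precisely $|\E_{\Prob^N}[\upsilonh_N(\theta)]-\upsilon^\star|\leq|\E_{\Prob^N}[\upsilonh_N(0)]-\upsilon^\star|$. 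Everything besides the selection of $\theta_N^\textup{u}=\min S$ (and the continuity of $g$ used to make that selection meaningful) is routine.
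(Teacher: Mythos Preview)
Your proof is correct and follows essentially the same approach as the paper: both define $\theta_N^\textup{u}$ as the smallest root of $g(\theta)-\upsilon^\star$, use concavity of $g$ (inherited from Theorem~\ref{thm:conservatism}) together with $g(0)\leq\upsilon^\star$, and conclude $g(0)\leq g(\theta)\leq\upsilon^\star$ on $[0,\theta_N^\textup{u}]$. The only cosmetic difference is that the paper invokes the monotonicity structure of concave functions (a concave $g$ with $g(0)\leq g(1)$ is non-decreasing on an initial segment) to get both bounds at once, whereas you obtain the upper bound by an IVT contradiction and the lower bound by the chord inequality---a slightly more elementary route that avoids the external reference.
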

%,

We illustrate the results of Theorem \ref{thm:debias_theta} and Corollary \ref{cor:debias_theta} in Figure \ref{fig:bias_reduction}. First, in line with Theorem~\ref{thm:debias_theta}, this figure shows that when  $\upsilon^\star$ is between $\E_{\Prob^N}[\upsilonh_N(0)]$ and $\E_{\Prob^N}[\upsilonh_N(1)]$, one can find $\theta_N^\text{u} \in [0,1]$ for which $\E_{\Prob^N}[\upsilonh_N(\theta_N^\text{u})]= \upsilon^\star $. Here, $\theta_N^\text{u}$ is a constant that depends only on the sample size $N$. Second, any choice of $\theta\in[0,\theta_N^\text{u}]$ leads to an estimator $\upsilonh_N(\theta)$ that has a smaller bias than the SAA estimator as indicated by Corollary \ref{cor:debias_theta}. Finally,  this figure shows that $\E_{\Prob^N}[\upsilonh_N(\theta)]$  is the sum of three terms (see Proposition \ref{prop:bias_non_neg_UB}) (a) the expected value of the SAA estimator $\E_{\Prob^N}[\upsilonh_N(0)]$; (b) the DRO effect $\theta\big\{\E_{\Prob^N}\big[\upsilonh_N(1)\big]-\E_{\Prob^N}\big[\upsilonh_N(0)\big]\big\}$; and (c)  the concavity effect $R_N(\theta)$. Specifically, the DRO effect adjusts $\E_{\Prob^N}[\upsilonh_N(0)]$ up to the linear combination of the two endpoints of $\E_{\Prob^N}[\upsilonh_N(\theta)]$, while the concavity effect accounts for the remaining difference. In practice, finding $\theta_N^\text{u}$ that leads to the unbiased estimator $\upsilonh_N(\theta_N^\text{u})$ as shown in Theorem \ref{thm:debias_theta} is difficult. However, Corollary \ref{cor:debias_theta} suggests that the bias of $\upsilonh_N(\theta_N^\text{u})$ is smaller than that of the SAA estimator for sufficiently small $\theta$. Therefore, we could always choose a small $\theta$ in our TRO model to produce an estimator with a smaller bias.

\begin{figure}
    \centering
    \includegraphics[scale=0.9]{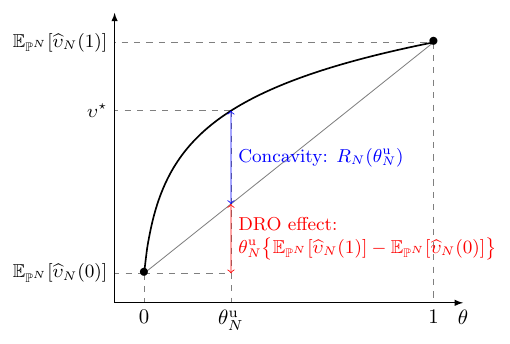}
    \caption{Illustration of the SAA bias reduction effect. The solid curve represents the concave function $\E_{\Prob^N}[\upsilonh_N(\theta)]$.}
    \label{fig:bias_reduction}
\end{figure}

\begin{remark} \label{rem:assumption_in_debias_theta_thm}

In Theorem \ref{thm:debias_theta}, we assume that $\E_{\Prob^N}[\upsilonh_N(1)] \geq \upsilon^\star$. This assumption will likely hold when the ambiguity set $\calP_N$ is rich enough such that it contains a wide range of distributions. For example, if the (data-driven) ambiguity set $\calP_{N,\alpha}$ satisfies $\Prob^N(\Prob^\star\in\calP_{N,\alpha})\geq 1-\alpha$ for some $\alpha\in(0,1)$ (see, e.g., \citealp{Delage_Ye:2010, Mohajerin-Esfahani_Kuhn:2018}), the assumption would probably hold by choosing $\calP_{N,\alpha}$ with $\alpha$ close to zero; see \ref{apdx:add_discuss:remark} for a detailed discussion.
\end{remark}

Note that under Assumption~\ref{assumption:loss_and_ambig_set_boundedness}, if $\E_{\Prob^N}[\upsilonh_N(1)]>\upsilon^\star$, then $\theta_N^\text{u}$ is unique by concavity of $\E_{\Prob^N}[\upsilonh_N(\theta)]$. Therefore, without loss of generality, we assume that $\theta_N^\text{u}$ is unique in the following discussions. In the case when $\Theta_N:=\big\{\theta\in[0,1]\mid \E_{\Prob^N}[\upsilonh_N(\theta)]=\upsilon^\star\big\}$ is not a singleton, we consider $\theta_N^\text{u}$ as one of the elements in $\Theta_N$. Next, we analyze the asymptotic behavior of $\theta_N^\text{u}$. In Theorem~\ref{thm:rate_of_theta_LIL}, we prove the convergence of $\theta_N^\text{u}$ as $N\rightarrow\infty$ and derive the rate of convergence under mild assumptions.

\begin{theorem} \label{thm:rate_of_theta_LIL}
Suppose that Assumption \ref{assumption:loss_and_ambig_set_boundedness} and the following hold.
\begin{enumerate}[itemsep=0mm,topsep=1mm]
    \item [(a)] The samples $\{\xibh_i\}_{i=1}^N$ are i.i.d. following the true distribution $\Prob^\star$.
    \item [(b)] There exists $\xb_0\in\calX$ such that $\E_{\Prob^\star}[f(\xb_0,\xib)^2]<\infty$.
    \item [(c)] There exists a measurable function $\kappa:\xib\rightarrow\R_+$ such that $\E_{\Prob^\star}[\kappa(\xib)^2]<\infty$ and 
    $$\Big| f(\xb_1,\xib)- f(\xb_2,\xib) \Big| \leq\kappa(\xib)\norms{\xb_1-\xb_2} $$
    for any $\{\xb_1,\xb_2\}\subseteq\calX$ and $\Prob^\star$-almost everywhere $\xib$.
\end{enumerate}
If $\E_{\Prob^N}[\upsilonh_N(1)] \geq \upsilon^\star$ for all $N\in\N$ and $\inf_{N\in\N} \big\{\E_{\Prob^N}[\upsilonh_N(1)]-\E_{\Prob^N}[\upsilonh_N(0)]\big\}>0$, then $\theta_N^\text{u}=o(\sqrt{\log\log N}/\sqrt{N})$.
\end{theorem}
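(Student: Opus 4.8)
The plan is to bound $\theta_N^\textup{u}$ from above by a constant multiple of the sample-average-approximation bias $\upsilon^\star-\E_{\Prob^N}[\upsilonh_N(0)]$, and then to argue that this bias is of order $N^{-1/2}$, which is $o(\sqrt{\log\log N}/\sqrt N)$ because $\sqrt{\log\log N}\to\infty$.

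First I would invoke Theorem~\ref{thm:debias_theta} to guarantee that, under the stated hypotheses, some $\theta_N^\textup{u}\in[0,1]$ with $\E_{\Prob^N}[\upsilonh_N(\theta_N^\textup{u})]=\upsilon^\star$ exists. Next, applying the expansion \eqref{eqn:expansion_in_theta} of Theorem~\ref{thm:conservatism}(i) for a fixed sample, taking expectations with respect to $\Prob^N$ (legitimate since Assumption~\ref{assumption:loss_and_ambig_set_boundedness} ensures $\Cbar_N<\infty$, hence integrability), and discarding the nonnegative remainder term in that expansion, I obtain
\begin{equation*}
\upsilon^\star=\E_{\Prob^N}[\upsilonh_N(\theta_N^\textup{u})]\ \ge\ (1-\theta_N^\textup{u})\,\E_{\Prob^N}[\upsilonh_N(0)]+\theta_N^\textup{u}\,\E_{\Prob^N}[\upsilonh_N(1)].
\end{equation*}
Rearranging gives $\theta_N^\textup{u}\big(\E_{\Prob^N}[\upsilonh_N(1)]-\E_{\Prob^N}[\upsilonh_N(0)]\big)\le \upsilon^\star-\E_{\Prob^N}[\upsilonh_N(0)]$. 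Setting $c:=\inf_{N\in\N}\big\{\E_{\Prob^N}[\upsilonh_N(1)]-\E_{\Prob^N}[\upsilonh_N(0)]\big\}>0$ (the second hypothesis) and using the SAA downward bias \eqref{eqn:SAA_downward_bias} to know $\upsilon^\star-\E_{\Prob^N}[\upsilonh_N(0)]\ge 0$, I conclude
\begin{equation*}
0\ \le\ \theta_N^\textup{u}\ \le\ \frac1c\Big(\upsilon^\star-\E_{\Prob^N}[\upsilonh_N(0)]\Big).
\end{equation*}

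It then remains to show $\upsilon^\star-\E_{\Prob^N}[\upsilonh_N(0)]=O(N^{-1/2})$. Writing $g(\xb)=\E_{\Prob^\star}[f(\xb,\xib)]$ and $g_N(\xb)=\tfrac1N\sum_{i=1}^N f(\xb,\xibh_i)$, one has pathwise $\upsilon^\star-\upsilonh_N(0)=\inf_{\xb\in\calX}g(\xb)-\inf_{\xb\in\calX}g_N(\xb)\le\sup_{\xb\in\calX}|g_N(\xb)-g(\xb)|$, hence $\upsilon^\star-\E_{\Prob^N}[\upsilonh_N(0)]\le\E_{\Prob^N}\big[\sup_{\xb\in\calX}|g_N(\xb)-g(\xb)|\big]$. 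I would then show this uniform deviation is $O(N^{-1/2})$ in expectation: by Assumption~\ref{assumption:loss_and_ambig_set_boundedness}(a) ($\calX$ is a compact subset of $\R^n$), hypothesis (c) (Lipschitzness in $\xb$ with $\Prob^\star$-square-integrable modulus $\kappa$) and hypothesis (b) ($\E_{\Prob^\star}[f(\xb_0,\xib)^2]<\infty$), the class $\mathcal F=\{\xib\mapsto f(\xb,\xib):\xb\in\calX\}$ admits the square-integrable envelope $F(\xib)=|f(\xb_0,\xib)|+\kappa(\xib)\diam(\calX)$ and has $L_2(\Prob^\star)$-bracketing numbers growing only polynomially in the inverse radius, so it is $\Prob^\star$-Donsker with finite bracketing integral; the empirical-process maximal inequality then yields $\sup_{N}\E_{\Prob^N}\big[\sqrt N\sup_{\xb\in\calX}|g_N(\xb)-g(\xb)|\big]<\infty$. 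Feeding this into the displayed bound gives $\theta_N^\textup{u}=O(N^{-1/2})=o(\sqrt{\log\log N}/\sqrt N)$. An equivalent route, which explains the $\log\log N$ appearing in the statement, is to bound $\sup_{\xb\in\calX}|g_N-g|$ almost surely via the functional law of the iterated logarithm and pass to the expectation using uniform integrability supplied by the second-moment hypotheses; the stated rate is then simply weaker than the true $N^{-1/2}$ decay of the bias by precisely the $\sqrt{\log\log N}$ factor separating the almost-sure fluctuations of $\upsilonh_N(0)$ from its bias.

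I expect the last step — showing the SAA bias (equivalently $\E_{\Prob^N}[\sup_{\xb\in\calX}|g_N-g|]$) is $O(N^{-1/2})$ using only second-moment and Lipschitz information — to be the main obstacle, since a naive union bound over an $\varepsilon$-net of $\calX$ is too lossy without sub-Gaussian tails and one genuinely needs a bracketing-entropy maximal inequality; by contrast, the reduction in the earlier steps is essentially algebraic once Theorems~\ref{thm:debias_theta} and~\ref{thm:conservatism} are available.
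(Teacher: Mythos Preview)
Your reduction is essentially identical to the paper's: both use Theorem~\ref{thm:debias_theta} for existence, then apply the expansion~\eqref{eqn:expansion_in_theta} of Theorem~\ref{thm:conservatism}, drop (or keep) the nonnegative remainder $R_N$, and divide by $\Delta:=\inf_N\{\E_{\Prob^N}[\upsilonh_N(1)]-\E_{\Prob^N}[\upsilonh_N(0)]\}>0$ to bound $\theta_N^\textup{u}$ by a constant times the SAA bias $\upsilon^\star-\E_{\Prob^N}[\upsilonh_N(0)]$. The difference lies in how that bias is controlled. The paper simply invokes Theorem~6 of \cite{Banholzer_et_al:2022}, which uses the functional law of the iterated logarithm under exactly hypotheses (a)--(c) to obtain $\upsilon^\star-\E_{\Prob^N}[\upsilonh_N(0)]=o(\sqrt{\log\log N}/\sqrt{N})$ directly. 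You instead bound the bias by $\E_{\Prob^N}\big[\sup_{\xb\in\calX}|g_N(\xb)-g(\xb)|\big]$ and control this via a bracketing-entropy maximal inequality for the Lipschitz parametric class, obtaining the stronger rate $O(N^{-1/2})$; since $N^{-1/2}/(\sqrt{\log\log N}/\sqrt{N})=1/\sqrt{\log\log N}\to 0$, this indeed implies the stated conclusion. Your route is self-contained and in fact proves more than the theorem asserts (it gives $O(N^{-1/2})$ without the extra uniform-integrability hypothesis the paper later imposes in Theorem~\ref{thm:rate_of_theta_AN}), at the cost of invoking the maximal inequality machinery; the paper's route is a one-line citation that explains the presence of the $\sqrt{\log\log N}$ factor but yields only the weaker rate stated. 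Your anticipated ``main obstacle'' is correctly identified, and the bracketing argument you sketch (polynomial bracketing numbers from Theorem~2.7.11 in van der Vaart--Wellner, finite bracketing integral, then Theorem~2.14.2) is precisely the right tool.
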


Assumptions (a)--(c) are widely adopted in the stochastic optimization literature for deriving asymptotic distributions (see, e.g., \citealp{Guigues_et_al:2018, Lam:2019, Shapiro_et_al:2014}). The condition $\inf_{N\in\N}\big\{\E_{\Prob^N}[\upsilonh_N(1)]-\E_{\Prob^N}[\upsilonh_N(0)]\}>0$ typically holds when $\Probh_N\in\calP_N$ and $\calP_N$ contains a wide range of distributions such that $\upsilonh_N(1)\geq \upsilonh_N(0)+\Phi_N$ for some $\Phi_N\geq 0$ with $\inf_{N\in\N}\E_{\Prob^N}(\Phi_N)>0$. For example, suppose we choose $\calP_N$ as the $1$-Wasserstein ambiguity set with radius $\varepsilon>0$ (see Example~\ref{eg:Wass_ambig_set}), $\Xi=\R^\ell$, and $f(\xb,\cdot)$ is convex and piecewise linear for any $\xb\in\calX$, then $\sup_{\Prob\in\calP_N}\E_{\Prob}[f(\xb,\xib)]=\E_{\Probh_N}[f(\xb,\xib)]+ \kappa\varepsilon$ for any $\xb\in\calX$ with $\kappa\geq 0$ \citep{Mohajerin-Esfahani_Kuhn:2018}. Therefore, the condition $\inf_{N\in\N}\big\{\E_{\Prob^N}[\upsilonh_N(1)]-\E_{\Prob^N}[\upsilonh_N(0)]\}>0$ holds when $\kappa>0$. The assumption that $\E_{\Prob^N}[\upsilonh_N(1)]\geq\upsilon^\star$ for all $N\in\N$ is a technical assumption to ensure the existence of $\theta^\text{u}_N$ as shown in Theorem~\ref{thm:debias_theta} (see Remark~\ref{rem:assumption_in_debias_theta_thm}). 

Theorem~\ref{thm:rate_of_theta_LIL} shows that $\theta_N^\text{u}$ converges to zero at a rate of $o(\sqrt{\log\log N}/\sqrt{N})$. This is not surprising since when the size parameter $\theta$ converges to zero, the objective function of our TRO model reduces to the SAA objective, and the bias of the SAA estimator $|\E_{\Prob^N}[\upsilonh(0)]-\upsilon^\star|$ is of order $o(\sqrt{\log\log N}/\sqrt{N})$ \citep{Banholzer_et_al:2022}. Thus, to construct an unbiased estimator $\upsilonh_N(\theta_N)$ of $\upsilon^\star$, one could use the convergence order as a criterion for choosing the size parameter $\theta_N$. Next, under a stronger assumption widely adopted in the stochastic optimization literature (see, e.g., \citealp{Homem-de-Mello_Bayraksan:2014, Kim_et_al:2015, Shapiro_et_al:2014}), we can derive an improved convergence rate for $\theta_N^\text{u}$ in Theorem \ref{thm:rate_of_theta_AN}.

\begin{theorem} \label{thm:rate_of_theta_AN}
In addition to the assumptions in Theorem \ref{thm:rate_of_theta_LIL}, suppose that the sequence $\{X_N:=\sqrt{N}(\upsilonh_N(0)-\upsilon^\star)\}_{N\in\N}$ is asymptotic uniformly integrable, i.e., $\lim_{M\rightarrow\infty} \limsup_{N\rightarrow\infty} \E\big(|X_N|\one (|X_N|>M)\big) = 0$. If $\E_{\Prob^N}[\upsilonh_N(1)] \geq \upsilon^\star$ for all $N\in\N$ and $\inf_{N\in\N} \big\{\E_{\Prob^N}[\upsilonh_N(1)]-\E_{\Prob^N}[\upsilonh_N(0)]\big\}>0$, then
\begin{enumerate}
    \item $\theta_N^\text{u}=o(1/\sqrt{N})$ when the optimal solution to \eqref{prob:SO} is unique;
    \item $\theta_N^\text{u}=O(1/\sqrt{N})$ when there are multiple optimal solutions to \eqref{prob:SO}.
\end{enumerate}
\end{theorem}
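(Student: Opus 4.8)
The plan is to build on the structure already exposed by Proposition~\ref{prop:bias_non_neg_UB} and Theorem~\ref{thm:rate_of_theta_LIL}. Recall from the proof of Theorem~\ref{thm:rate_of_theta_LIL} that $\theta_N^\textup{u}$ satisfies the fixed-point relation obtained from $\E_{\Prob^N}[\upsilonh_N(\theta_N^\textup{u})]=\upsilon^\star$, which combined with the expansion \eqref{eqn:expansion_in_theta} gives
\begin{equation*}
  \upsilon^\star - \E_{\Prob^N}[\upsilonh_N(0)] = \theta_N^\textup{u}\big\{\E_{\Prob^N}[\upsilonh_N(1)]-\E_{\Prob^N}[\upsilonh_N(0)]\big\} + \E_{\Prob^N}[\rh_N(\theta_N^\textup{u})],
\end{equation*}
with $\E_{\Prob^N}[\rh_N(\theta_N^\textup{u})]\in[0,4\Cbar_N\theta_N^\textup{u}(1-\theta_N^\textup{u})]$. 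Since $\inf_N\{\E_{\Prob^N}[\upsilonh_N(1)]-\E_{\Prob^N}[\upsilonh_N(0)]\}=:c>0$ and the left-hand side equals the (nonnegative) SAA bias $b_N:=\upsilon^\star-\E_{\Prob^N}[\upsilonh_N(0)]$, we get $\theta_N^\textup{u}\,c \le b_N$, hence $\theta_N^\textup{u}=O(b_N)$. The whole theorem therefore reduces to sharpening the rate at which the SAA bias $b_N\to 0$ under the added asymptotic-uniform-integrability hypothesis.

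For that sharpening I would invoke the classical asymptotic theory of the SAA optimal value (e.g., Shapiro et al.~2014, Theorem 5.7, or the references cited in the paper). Under assumptions (a)--(c) of Theorem~\ref{thm:rate_of_theta_LIL}, a functional CLT applies to the empirical process $\xb\mapsto \sqrt{N}(\E_{\Probh_N}[f(\xb,\xib)]-\E_{\Prob^\star}[f(\xb,\xib)])$, and by the delta method for the min functional, $X_N=\sqrt{N}(\upsilonh_N(0)-\upsilon^\star)$ converges in distribution to $\inf_{\xb\in\calX^\star} Z(\xb)$, where $Z$ is a mean-zero Gaussian process and $\calX^\star$ is the set of true optimizers. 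Asymptotic uniform integrability of $\{X_N\}$ upgrades this to convergence of expectations, so $\sqrt{N}\,b_N = -\E[X_N]\to -\E[\inf_{\xb\in\calX^\star}Z(\xb)]$. When $\calX^\star=\{\xb^\star\}$ is a singleton, the limit process degenerates to the single random variable $Z(\xb^\star)$ which is mean-zero, so the limit is $0$ and $b_N=o(1/\sqrt{N})$; when $\calX^\star$ has multiple points, $\E[\inf_{\xb\in\calX^\star}Z(\xb)]\le 0$ in general (strictly negative unless the process is a.s.\ constant on $\calX^\star$), giving $b_N=O(1/\sqrt{N})$. Feeding these two rates into $\theta_N^\textup{u}=O(b_N)$ yields the two claimed conclusions.

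The main obstacle I anticipate is not the reduction $\theta_N^\textup{u}=O(b_N)$ — that is immediate from the already-proven concavity expansion — but rather making the invocation of the SAA value asymptotics fully rigorous under exactly assumptions (a)--(c): one needs the Lipschitz-plus-square-integrability condition (c) together with compactness of $\calX$ (Assumption~\ref{assumption:loss_and_ambig_set_boundedness}(a)) and the second-moment condition (b) to guarantee both the functional CLT in $C(\calX)$ and Hadamard directional differentiability of the $\inf$ functional, and then one must verify that asymptotic uniform integrability is precisely the missing ingredient that turns distributional convergence of $X_N$ into convergence of $\E[X_N]$. A secondary, more routine point is handling the case where $\Theta_N$ is not a singleton: since we only need an upper bound on \emph{some} element of $\Theta_N$ and the bound $\theta c\le b_N$ holds for every $\theta\in\Theta_N$, this is harmless. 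I would also remark that the degeneracy argument in the unique-minimizer case — that a mean-zero Gaussian random variable has expectation zero — is what produces the strict improvement from $O$ to $o$, and this should be spelled out.
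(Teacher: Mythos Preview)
Your proposal is correct and follows essentially the same route as the paper: the reduction $\theta_N^\textup{u}\,c\le b_N$ from the concavity expansion, then Theorem~5.7 of \cite{Shapiro_et_al:2014} for $X_N\Rightarrow\inf_{\xb\in\calX^\star}\G(\xb)$, then asymptotic uniform integrability to convert this to convergence of means, and finally the singleton/non-singleton case split on whether $\E[\inf_{\xb\in\calX^\star}\G(\xb)]=0$. The only cosmetic difference is that you package the argument as a single bound $\theta_N^\textup{u}=O(b_N)$ before splitting cases, whereas the paper rewrites the inequality separately in each case; your anticipated ``obstacles'' are exactly the technical points the paper takes for granted by citing \cite{Shapiro_et_al:2014} and \cite{van_der_Vaart:2000}.
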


Note that while our TRO estimator has a smaller bias than the classical SAA estimator, its variance may be smaller or larger. Quantifying the variance (or the standard error) of the TRO estimator $\upsilonh_N(\theta)$ (i.e., the optimal value of the TRO model) is challenging, in part because $\upsilonh_N(\theta)$ does not admit a closed-form expression. According to recent surveys on DRO \citep{Kuhn_et_al:2024, Rahimian_Mehrotra:2022}, analyzing the variance of the optimal value of data-driven DRO problems is an open question. In the following theorem, we establish an upper bound on the difference between the TRO and SAA estimators' standard errors in the following theorem.

\begin{theorem} \label{thm:TRO_estimator_variance}
In addition to Assumption~\ref{assumption:loss_and_ambig_set_boundedness}, suppose that (a) $\Xi$ is compact, (b) $f(\xb,\cdot)$ is uniformly Lipschitz in $\xib$, i.e., there exists $L>0$ such that $|f(\xb,\xib_1)-f(\xb,\xib_2)|\leq L\norms{\xib_1-\xib_2}$ for all $\xb\in\calX$, (c) there exists $\xib_0$ such that $\sup_{\xb\in\calX}|f(\xb,\xib_0)|\leq M<\infty$, and (d) $\Probh_N\in\calP_N$ almost surely. Then, we have
$$\sqrt{\Var_{\Prob^N}\big(\upsilonh_N(\theta)\big)}\leq\sqrt{\Var_{\Prob^N}\big(\upsilonh_N(0)\big)}+2\theta(3-2\theta)\big(L\cdot\diam(\Xi)+M\big),$$
where $\diam(\Xi)$ is the diameter of $\Xi$.
\end{theorem}

Theorem~\ref{thm:TRO_estimator_variance} suggests that our TRO estimator may not have a significantly larger variance than the SAA estimator, especially when $\theta$ is small. Our numerical results in Section~\ref{sec:numerics} show that the variance of the TRO estimator will not increase substantially and could be even smaller than the SAA estimator.

We close this section by highlighting that some recent studies in the stochastic optimization literature have also proposed approaches to construct estimators of the true optimal value $\upsilon^\star$ with smaller bias than the SAA estimator, see, e.g.,  \cite{Blanchet_et_al:2019a, Dentcheva_Lin:2022, Norton_Royset:2023}. Other studies focused on obtaining confidence intervals for  $\upsilon^\star$ (see, e.g., \citealp{Bertsimas_et_al:2018, Duchi_et_al:2021, Lam:2022}). Our proposed TRO model and results in this section add to the first stream of literature on alternative approaches to constructing estimators with smaller biases.

\subsection{Generalization Bound} \label{subsec:generalization}

In this section, we analyze the generalization error (also known as the out-of-sample disappointment) of the TRO estimator. This error quantifies how much the model generalizes in the (unseen) out-of-sample setting. Specifically, let $\xb_N(\theta)\in\calXh_N(\theta)$ be an optimal solution to the TRO model. The generalization error (GE) of the TRO model is given by $\E_{\Prob^\star}[f(\xb_N(\theta),\xib)]-\sup_{\Prob\in\calP'_{N,\theta}}\E_{\Prob}[f(\xb_N(\theta),\xib)]$, i.e., the difference between the actual expected cost associated with implementing $\xb_N(\theta)$ under the true unknown distribution $\Prob^\star$ and the cost estimated by the TRO model. A  positive GE indicates that the TRO model is overly optimistic. As in the literature, we quantify the GE of our TRO model via the following probability:
\begin{equation} \label{eqn:generalization_error}
    \Prob^N\Bigg(\sup_{\xb\in\calX}\bigg\{ \E_{\Prob^\star}[f(\xb,\xib)]- \sup_{\Prob\in\calP'_{N,\theta}}\E_{\Prob}[f(\xb,\xib)] \bigg\}>\delta\Bigg)
\end{equation}
for some $\delta>0$ \citep{Duchi_Namkoong:2021, Gao:2022, Liu_et_al:2023b}. Note that \eqref{eqn:generalization_error} provides an upper bound on the probability that the GE is greater than $\delta$. Thus, our goal is to derive an upper bound on \eqref{eqn:generalization_error} that decays rapidly (e.g., exponentially) when $N$ increases. To this end,  in Theorem~\ref{thm:generalization_error}, we first derive an upper bound on \eqref{eqn:generalization_error}.

\begin{theorem} \label{thm:generalization_error}
Suppose that the generalization error of the SAA model satisfies
\begin{equation} \label{eqn:generalization_error_bound_SAA}
    \Prob^N\Bigg(\sup_{\xb\in\calX}\bigg\{ \E_{\Prob^\star}[f(\xb,\xib)]- \E_{\Probh_N}[f(\xb,\xib)] \bigg\}>\delta\Bigg) \leq \alpha_{N,1}(\delta)
\end{equation}
and the generalization error of the DRO model with ambiguity set $\calP_N$ satisfies
\begin{equation} \label{eqn:generalization_error_bound_DRO}
    \Prob^N\Bigg(\sup_{\xb\in\calX}\bigg\{ \E_{\Prob^\star}[f(\xb,\xib)]- \sup_{\Prob\in\calP_N}\E_{\Prob}[f(\xb,\xib)] \bigg\}>\delta\Bigg) \leq \alpha_{N,2}(\delta)
\end{equation}
for some constants $\alpha_{N,1}(\delta)>0$ and $\alpha_{N,2}(\delta)>0$ (which may depend on the complexity of the function class $\calH=\{f(\xb,\cdot)\mid\xb\in\calX\}$). Then, for any $\theta\in(0,1)$, we have
\begin{equation} \label{eqn:generalization_error_bound}
    \Prob^N\Bigg(\sup_{\xb\in\calX}\bigg\{ \E_{\Prob^\star}[f(\xb,\xib)]- \sup_{\Prob\in\calP'_{N,\theta}}\E_{\Prob}[f(\xb,\xib)] \bigg\}>\delta\Bigg)\leq \inf_{\gamma\in[0,\delta]}\bigg\{\alpha_{N,1}\bigg(\frac{\gamma}{1-\theta}\bigg)+\alpha_{N,2}\bigg(\frac{\delta-\gamma}{\theta}\bigg)\bigg\}.
\end{equation}
\end{theorem}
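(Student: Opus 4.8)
The plan is to exploit the affine structure of the TRO ambiguity set to reduce the TRO generalization event to the union of the SAA and DRO generalization events, and then apply a union bound.

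First I would use linearity of expectation: for any $\xb\in\calX$ and any $\Q\in\calP_N$, we have $\E_{(1-\theta)\Probh_N+\theta\Q}[f(\xb,\xib)] = (1-\theta)\E_{\Probh_N}[f(\xb,\xib)] + \theta\E_{\Q}[f(\xb,\xib)]$. Since the first summand does not depend on $\Q$, taking the supremum over $\Q\in\calP_N$ (equivalently over $\Prob\in\calP'_{N,\theta}$ by the defining form \eqref{eqn:trade-off_ambig_set}) gives
$$\sup_{\Prob\in\calP'_{N,\theta}}\E_{\Prob}[f(\xb,\xib)] = (1-\theta)\,\E_{\Probh_N}[f(\xb,\xib)] + \theta\sup_{\Q\in\calP_N}\E_{\Q}[f(\xb,\xib)].$$
Consequently, writing $g_1(\xb):=\E_{\Prob^\star}[f(\xb,\xib)]-\E_{\Probh_N}[f(\xb,\xib)]$ and $g_2(\xb):=\E_{\Prob^\star}[f(\xb,\xib)]-\sup_{\Q\in\calP_N}\E_{\Q}[f(\xb,\xib)]$, the TRO generalization integrand satisfies the exact identity
$$\E_{\Prob^\star}[f(\xb,\xib)]-\sup_{\Prob\in\calP'_{N,\theta}}\E_{\Prob}[f(\xb,\xib)] = (1-\theta)\,g_1(\xb)+\theta\, g_2(\xb),\quad\forall\,\xb\in\calX.$$

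Next I would take the supremum over $\xb\in\calX$ and use subadditivity of the supremum, $\sup_{\xb\in\calX}\{(1-\theta)g_1(\xb)+\theta g_2(\xb)\} \leq (1-\theta)\sup_{\xb\in\calX} g_1(\xb) + \theta\sup_{\xb\in\calX} g_2(\xb)$. If the left-hand side exceeds $\delta$, then — since a $\theta$-convex combination of two real numbers can exceed $\delta$ only if at least one of them does — we must have $\sup_{\xb\in\calX} g_1(\xb)>\delta$ or $\sup_{\xb\in\calX} g_2(\xb)>\delta$. Equivalently, the TRO generalization event in \eqref{eqn:generalization_error_bound} is contained in the union of the SAA event in \eqref{eqn:generalization_error_bound_SAA} and the DRO event in \eqref{eqn:generalization_error_bound_DRO}. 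A union bound together with the two hypotheses then yields $\Prob^N(\cdot)\leq\alpha_{N,1}+\alpha_{N,2}$, which is exactly \eqref{eqn:generalization_error_bound}.

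There is no serious obstacle here; the argument is essentially bookkeeping built on the affine decomposition of $\calP'_{N,\theta}$. The only points requiring care are (i) the identity for $\sup_{\Prob\in\calP'_{N,\theta}}\E_{\Prob}[f]$, which is immediate from linearity of expectation and \eqref{eqn:trade-off_ambig_set} given that the relevant expectations are finite under the standing assumptions, and (ii) the direction of the supremum inequality and the resulting event inclusion — one uses $\sup$ of a sum $\le$ sum of $\sup$'s, and then the contrapositive "both $\le\delta$ implies the convex combination $\le\delta$." Note that the bound $\alpha_{N,1}+\alpha_{N,2}$ is independent of $\theta$, so it holds uniformly over $\theta\in[0,1]$; this is what lets the TRO model inherit the same (e.g., exponential) decay rate as the SAA and DRO models when such rates are available.
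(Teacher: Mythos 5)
Your proposal is correct and follows essentially the same route as the paper's proof: the affine decomposition of the TRO objective into $(1-\theta)$ times the SAA gap plus $\theta$ times the DRO gap, subadditivity of the supremum, the event inclusion $\{(1-\theta)A_1+\theta A_2>\delta\}\subseteq\{A_1>\delta\}\cup\{A_2>\delta\}$, and a union bound. No gaps.
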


Theorem~\ref{thm:generalization_error} has the following important implications.  First, note that the generalization bound~\eqref{eqn:generalization_error_bound_SAA} for the SAA model is well-known in the literature. Specifically, under some mild assumptions, there exist constants $D_1>0$ and $\beta_1(\delta)>0$ such that \eqref{eqn:generalization_error_bound_SAA} holds with $\alpha_{N,1}(\delta)=D_1\exp\{-N\beta_1(\delta)\}$ (see, e.g., Theorem~7.73 of \citealp{Shapiro_et_al:2014}). Similarly, the generalization bound~\eqref{eqn:generalization_error_bound_DRO} for the DRO model has been widely studied in the literature under various choices of the shape parameter $\calP_N$. For example, for some moment-based ambiguity sets \citep{Delage_Ye:2010} and distance-based ambiguity sets \citep{Duchi_Namkoong:2021, Liu_et_al:2023b, Mohajerin-Esfahani_Kuhn:2018, Van-Parys_et_al:2021}, there exist constants $D_2>0$ and $\beta_2(\delta)>0$ such that \eqref{eqn:generalization_error_bound_DRO} holds with $\alpha_{N,2}(\delta)=D_2\exp\{-N\beta_2(\delta)\}$. Hence, suppose we construct the TRO ambiguity set using one of these shape parameters. We can then apply Theorem~\ref{thm:generalization_error} to obtain the following exponentially decaying bound on the probability
\begin{align*}
&\quad\,\,\Prob^N\Bigg(\sup_{\xb\in\calX}\bigg\{ \E_{\Prob^\star}[f(\xb,\xib)]- \sup_{\Prob\in\calP'_{N,\theta}}\E_{\Prob}[f(\xb,\xib)] \bigg\}>\delta\Bigg) \\
    &\leq \inf_{\gamma\in[0,\delta]} \Bigg\{D_1\exp\bigg\{-N\beta_1\bigg(\frac{\gamma}{1-\theta}\bigg)\bigg\}+D_2\exp\bigg\{-N\beta_2\bigg(\frac{\delta-\gamma}{\theta}\bigg)\bigg\}\Bigg\}\\
    &\leq D_1\exp\{-N\beta_1(\delta)\}+ D_2\exp\{-N\beta_2(\delta)\} \\ % \qquad\text{$\big($Choose $\gamma=\delta(1-\theta)\big)$}
    &\leq D \exp\{-N\beta(\delta)\},
\end{align*}
where the second inequality follows from choosing $\gamma=\delta(1-\theta)$, and the third inequality follows from defining $D=D_1+D_2$ and $\beta(\delta)=\min\{\beta_1(\delta),\beta_2(\delta)\}$. This shows that when the TRO ambiguity set is constructed using some specific choice of the shape parameter, the generalization error of our TRO model has an exponentially decaying tail, which is a desirable property for data-driven stochastic optimization models (see, e.g., \citealp{Liu_et_al:2023b, Van-Parys_et_al:2021}).

Second, the GE bound in \eqref{eqn:generalization_error_bound} involves an optimization over $\gamma$ that effectively balances between the two terms $\alpha_{N,1} (\cdot)$ (from the SAA component) and  $\alpha_{N,2} (\cdot)$ (from the DRO component), where each of these terms depends on the choice of $\theta$. By adjusting $\theta$ and optimizing over $\gamma$, the TRO model might have a tighter GE bound than the bounds of the SAA or DRO models. The exact improvement depends on the behavior of the functions $\alpha_{N,1} (\cdot)$ and  $\alpha_{N,2}(\cdot)$, as well as the choice of $\theta$.   Since the probabilities \eqref{eqn:generalization_error_bound_SAA} and \eqref{eqn:generalization_error_bound_DRO} are non-increasing in $\delta$, both $\alpha_{N,1}(\delta)$ and $\alpha_{N,2}(\delta)$ are non-increasing functions of $\delta$. As $\theta\rightarrow 1$, if $\alpha_{N,1}(\delta)\rightarrow 0$ as $\delta\rightarrow\infty$ (see, e.g., \citealp{Liu_et_al:2023, Wainwright:2019}),  then the upper bound in~\eqref{eqn:generalization_error_bound} reduces to $\alpha_{N,2}(\delta)$, i.e., the GE of the DRO model. In contrast, when $\theta\rightarrow0$, (2) recovers the GE of the SAA model. Since  $\alpha_{N,1}\big(\gamma/(1-\theta)\big)$ in \eqref{eqn:generalization_error_bound} is non-increasing in $\theta$ and  $\alpha_{N,2}\big((\delta-\gamma)/\theta\big)$ is non-decreasing in $\theta$, there exists a balance between the two terms that can be optimized for a carefully chosen $\theta$. Optimization over $\gamma$ (controls the relative contribution of the two terms to the bound) allows for finding a point where this balance is best achieved, resulting in the smallest possible upper bound on the generalization error. Hence, the TRO model can potentially achieve a tighter GE bound than both the SAA and DRO models, by leveraging the faster decay of $\alpha_{N,1}$ compared with the growth of $\alpha_{N,2}$. We leave the analysis related to finding the value of $\theta$ under which the TRO model can achieve a tighter GE for future work. This is in part because this investigation requires analyzing the generalization bounds of DRO models with  $\delta>0$, which is an open question. Prior studies have primarily focused on the generalization bound \eqref{eqn:generalization_error_bound_DRO} of DRO with $\delta=0$; see \cite{Kuhn_et_al:2024} for detailed discussions. Analyzing the generalization bounds of DRO models with  $\delta>0$ is beyond the scope of our paper.

\section{Asymptotic Properties} \label{sec:asymptotic}

In this section, we analyze the asymptotic properties of our TRO model. Specifically, in Section~\ref{subsec:asy_convergence}, we show the almost sure convergence of the optimal value $\upsilonh_N(\theta_N)$ and the set of optimal solutions $\calXh_N(\theta_N)$ of the TRO problem \eqref{model:trade-off_model} to their true counterparts when $N\rightarrow\infty$. In Section \ref{subsec:asy_dist}, we derive the asymptotic distribution of $\upsilonh_N(\theta_N)$ when $N\rightarrow\infty$.

\subsection{Asymptotic Convergence} \label{subsec:asy_convergence}

In this section, we show the almost sure convergence of the optimal value $\upsilonh_N(\theta_N)$  and the set of optimal solutions $\calXh_N(\theta_N)$ of our TRO model respectively to the true optimal value $v^\star$ and set of optimal solutions $\calX^*$ of problem \eqref{prob:SO} when $N\rightarrow\infty$ (Theorem \ref{thm:asymptotic_convergence}). Our proof idea of these convergence properties leverages the fact that the objective function of our TRO model has two components:  $\E_{\Probh_N}[f(\xb,\xib)]$ and  $\sup_{\Prob\in\calP_N}\E_{\Prob}[f(\xb,\xib)]$. Hence, to establish the desired convergence results, we want to ensure that $\E_{\Probh_N}[f(\xb,\xib)]$ converges to $\E_{\Prob^\star}[f(\xb,\xib)]$  and $\sup_{\Prob\in\calP_N}\E_{\Prob}[f(\xb,\xib)]$ does not diverge (to infinity) as $N\rightarrow\infty$ (Lemma \ref{lem:DRO_component_asym_boundedness}). First, we adopt the following standard assumption that guarantees the uniform convergence of  $\E_{\Probh_N}[f(\xb,\xib)]$ over $\xb\in\calX$. In Example \ref{eg:GC_class}, we provide two sufficient conditions commonly employed in the literature for this assumption to hold.

\begin{assumption}\label{assumption:GC-class}
    The function class $\calH:=\{f(\xb,\cdot):\Xi\rightarrow\R\mid \xb\in\calX\}$ is $\Prob^\star$-Glivenko-Cantelli, i.e., 
    $$\norms{\Probh_N - \Prob^\star}_{\calH} :=\sup_{h\in\calH} \Bigg| \frac{1}{N} \sum_{i=1}^N h(\xibh_i) - \E_{\Prob^\star}[h(\xib)] \Bigg| \rightarrow 0 $$
    almost surely as $N\rightarrow\infty$.
\end{assumption}

\begin{example} \label{eg:GC_class}
    Suppose the samples $\{\xibh_i\}_{i=1}^N$ are i.i.d. generated from the true distribution $\Prob^\star$. The following are two sufficient conditions for $\calH$ to be $\Prob^\star$-Glivenko-Cantelli.
    \begin{enumerate}[itemsep=0mm,topsep=1mm]
        \item [(a)] $\calX$ is compact; $f(\cdot,\xib)$ is continuous for $\Prob^\star$-almost every $\xib\in\Xi$; $f(\xb,\xib)$ is dominated by an integrable function for all $\xb\in\calX$ (see Theorem 7.53 of \citealp{Shapiro_et_al:2014}).
        \item[(b)] $\calX$ is compact; $f(\cdot,\xib)$ is equicontinuous, i.e., for any $\varepsilon>0$, there exists $\delta>0$ such that $|f(\xb,\xib)-f(\xb',\xib)|\leq\varepsilon$ for all $\xib\in\Xi$ and $\xb'\in\calX$ with $\norms{\xb-\xb'}\leq \delta$ (see Lemma EC.5 of \citealp{Bertsimas_Kallus:2020}).
    \end{enumerate}
\end{example}

The second step toward {deriving the desired convergence results is to establish a finite upper bound on the DRO component $\sup_{\Prob\in\calP_N}\E_{\Prob}[f(\xb,\xib)]$. Note that under Assumption \ref{assumption:loss_and_ambig_set_boundedness}, we have 
\begin{align} 
    \sup_{\Prob\in\calP_N} \E_{\Prob}|f(\xb,\xib)| &\leq \sup_{\Prob\in\calP_N} \E_{\Prob}|f(\xb,\xib) - f(\xb_0,\xib)| +  \sup_{\Prob\in\calP_N}\E_{\Prob}|f(\xb_0,\xib)| \nonumber \\
    &\leq \sup_{\Prob\in\calP_N} \E_{\Prob}[\kappa(\xib)] \cdot \diam(\calX) + \sup_{\Prob\in\calP_N} \E_{\Prob}|f(\xb_0,\xib)|, \label{eqn:DRO_exp_UB_w_data}
\end{align}
where $\diam(\calX)=\sup_{\xb\in\calX,\,\xb'\in\calX} \norms{\xb-\xb'}<\infty$. From \eqref{eqn:DRO_exp_UB_w_data}, we observe that the choice of ambiguity set $\calP_N$ plays a critical role in the finiteness of $\sup_{\Prob\in\calP_N}\E_{\Prob}[f(\xb,\xib)]$. 
In particular,  $\calP_N$ should be carefully chosen to ensure that $\sup_{\Prob\in\calP_N} \E_{\Prob}[\kappa(\xib)]$ and $\sup_{\Prob\in\calP_N} \E_{\Prob}|f(\xb_0,\xib)|$ in \eqref{eqn:DRO_exp_UB_w_data} are finite for all $N\in\N$. To this end, we make the following assumptions on the objective function and/or the sequence of ambiguity sets $\{\calP_N\}_{N\in\N}$.

\begin{assumption}\label{assumption:ambig_set_regularity}
Either the objective function is uniformly bounded, i.e., $\sup_{\xb\in\calX}\sup_{\xib\in\Xi} |f(\xb,\xib)|<\infty$, or the sequence of ambiguity sets $\{\calP_N\}_{N\in\N}$ satisfies one of the following conditions.
\begin{enumerate}[itemsep=0mm,topsep=1mm]
    \item [(a)] There exists $\calPh\subseteq\calP(\Xi)$ (independent of the data) such that $\calP_N\subset\calPh$ almost surely for sufficiently large $N$ with  $\sup_{\Prob\in\calPh}\E_{\Prob}|f(\xb_0,\xib)|<\infty$ and $\sup_{\Prob\in\calPh}\E_{\Prob}|\kappa(\xib)|<\infty$. 
    \item [(b)] There exists $\calPh\subseteq\calP(\Xi)$ (independent of the data) such that $\bbmH(\calP_N,\calPh)\rightarrow 0$ almost surely as $N\rightarrow\infty$ with  $\sup_{\Prob\in\calPh}\E_{\Prob}|f(\xb_0,\xib)|<\infty$ and $\sup_{\Prob\in\calPh}\E_{\Prob}|\kappa(\xib)|<\infty$.
    \item [(c)] For ambiguity sets $\calP_N$ containing only distributions with support on the i.i.d. sample $\{\xibh_i\}_{i=1}^N$, which can be identified as a vector $\pb\in\R^N$, we have $\limsup_{N\rightarrow\infty} \sup_{\Prob\in\calP_N}\norms{N\pb-\one}_\infty <\infty$ almost surely with $\E_{\Prob^\star}|f(\xb_0,\xib)|<\infty$ and $\E_{\Prob^\star}|\kappa(\xib)|<\infty$.
\end{enumerate}
\end{assumption}

The uniform boundedness assumption on $f$ has been widely adopted in the literature and holds valid in various real applications, for example, when $f(\xb,\xib)$ represents the cost of action $\xb$ under scenario $\xib$, and both the sets $\calX$ and $\Xi$ are compact. Next, in Examples~\ref{eg:ambig_set_seq_1}--\ref{eg:ambig_set_seq_5}, we provide sequences $\{\calP_N\}_{N\in\N}$ that satisfy Assumptions~\ref{assumption:ambig_set_regularity}(a)--(c).

\begin{example} \label{eg:ambig_set_seq_1}
If $\calP_N=\calPh$ for some fixed $\calPh\subseteq\calP(\Xi)$, then Assumption~\ref{assumption:ambig_set_regularity}(a) holds immediately. For example,  in robust optimization models, $\calPh$ may contain Dirac measures on a set of scenarios $\Xi'\subseteq \Xi$, i.e., $\calPh=\{\delta_{\xib'}\mid \xib'\in\Xi'\}$. 
\end{example}

\begin{example} \label{eg:ambig_set_seq_2}
Consider the moment-based ambiguity set of the form \eqref{eqn:moment_ambig_set} with dimension $d=1$. If $\mu_i^N$ for $i\in\{1,\dots,q\}$ are the sample estimates, then  $\bbmH(\calP_N,\calPh)\rightarrow 0$ almost surely under mild conditions, where $\calPh$ is the moment-based ambiguity set with true moments $\mu_i$ (see, e.g., \citealp{Sun_Xu:2016}); thus, Assumption~\ref{assumption:ambig_set_regularity}(b) holds.
\end{example}

\begin{example} \label{eg:ambig_set_seq_3}
Consider the distance-based ambiguity set $\calP_N=\{\Prob\in\calP(\Xi)\mid \sfd(\Prob,\Probh_N)\leq r\}$ for some radius $r>0$. Here, $\sfd$ is a statistical distance satisfying (i) $\sfd(\Prob,\Prob)=0$ for any $\Prob\in\calP(\Xi)$, (ii) $\sfd(\Prob_1,\Prob_2)=\sfd(\Prob_2,\Prob_1)$ for any $\{\Prob_1,\Prob_2\}\subseteq\calP(\Xi)$, (iii) $\sfd(\Prob_1,\Prob_2)\leq\sfd(\Prob_1,\Prob_3)+\sfd(\Prob_3,\Prob_2)$ for any $\{\Prob_1,\Prob_2,\Prob_3\}\subseteq\calP(\Xi)$, and (iv) $\sfd$ is convex in the first argument. These properties hold, for example, if $\sfd$ is the Wasserstein metric. Let $\calPh=\{\Prob\in\calP(\Xi)\mid \sfd(\Prob,\Prob^\star)\leq r\}$. If $\sup_{\Prob_1\in\calP(\Xi),\,\Prob_2\in\calP(\Xi)} \bbmd(\Prob_1,\Prob_2)<\infty$ and $\sfd(\Probh_N,\Prob^\star)\rightarrow0$ almost surely, then $\bbmH(\calP_N,\calPh)\rightarrow 0$ almost surely as $N\rightarrow\infty$ (see \ref{apdx:distance_ambig_set_convergence} for a proof); thus, Assumption~\ref{assumption:ambig_set_regularity}(b) holds.
\end{example}

\begin{example} \label{eg:ambig_set_seq_4}
Consider the $\phi$-divergence ambiguity set $\calP_N$ in \eqref{eqn:phi_div_ambig_set} with radius $r/N$, where we note that $\calP_N$ only consists of distributions with support $\{\xibh_i\}_{i=1}^N$. Under some smoothness conditions on $\phi$ such as differentiability, Lemma 13 in \cite{Duchi_et_al:2021} shows that $\sup_{N\in\N}\sup_{\Prob\in\calP_N}\norms{N\pb-\one}_2 \leq \sqrt{rC_\phi}$ almost surely for some constant $C_\phi$ that depends on $\phi$ only. Since $\norms{\cdot}_\infty\leq\norms{\cdot}_2$, our Assumption~\ref{assumption:ambig_set_regularity}(c) holds.
\end{example}

\begin{example} \label{eg:ambig_set_seq_5}
 Consider the total variation ambiguity set $\calP_N$ by setting $\phi(t)$ as the the non-differentiable function $|t-1|$ in \eqref{eqn:phi_div_ambig_set} with radius $r/N$, i.e., $\calP_N=\big\{\Prob=\sum_{i=1}^N p_i \delta_{\xibh_i}\mid \pb\in\R_+^N,\, \pb^\tp\one=1,\, \norms{\pb-N^{-1}\one}_1\leq r/N \big\}$.  From the definition of $\calP_N$, we immediately have $\sup_{\Prob\in\calP_N} \norms{N\pb-\one}_1\leq r$ almost surely for all $N\in\N$, implying that Assumption~\ref{assumption:ambig_set_regularity}(c) holds.
\end{example}

In Lemma \ref{lem:DRO_component_asym_boundedness}, we show that $\sup_{\Prob\in\calP_N}\E_{\Prob}[f(\xb,\xib)]$ is upper bounded for sufficiently large $N$ under our Assumptions \ref{assumption:loss_and_ambig_set_boundedness} and \ref{assumption:ambig_set_regularity}.

\begin{lemma} \label{lem:DRO_component_asym_boundedness}
Under Assumptions \ref{assumption:loss_and_ambig_set_boundedness} and \ref{assumption:ambig_set_regularity}, we have $\limsup_{N\rightarrow\infty} \sup_{\xb\in\calX} \sup_{\Prob\in\calP_N} \E_{\Prob}|f(\xb,\xib)|\leq M$ almost surely for some constant $M$.
\end{lemma}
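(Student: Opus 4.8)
The plan is to start from the decomposition already recorded in \eqref{eqn:DRO_exp_UB_w_data}, which is written entirely in terms of absolute values and therefore targets exactly the quantity $\E_\Prob|f(\xb,\xib)|$ appearing in the lemma (rather than the signed $|\E_\Prob f(\xb,\xib)|$). Taking $\sup_{\xb\in\calX}$ on both sides of \eqref{eqn:DRO_exp_UB_w_data} and using $\diam(\calX)<\infty$ (Assumption~\ref{assumption:loss_and_ambig_set_boundedness}(a)) removes the dependence on $\xb$, giving
\[
\sup_{\xb\in\calX}\sup_{\Prob\in\calP_N}\E_\Prob|f(\xb,\xib)| \;\leq\; \diam(\calX)\cdot\sup_{\Prob\in\calP_N}\E_\Prob[\kappa(\xib)] \;+\; \sup_{\Prob\in\calP_N}\E_\Prob|f(\xb_0,\xib)|.
\]
Hence it suffices to show that, for each fixed nonnegative measurable $g\in\{\kappa,\,|f(\xb_0,\cdot)|\}$, one has $\limsup_{N\rightarrow\infty}\sup_{\Prob\in\calP_N}\E_\Prob[g(\xib)]<\infty$ almost surely; the constant $M$ is then assembled from these two limits and $\diam(\calX)$. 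I would prove this reduced claim separately under each alternative of Assumption~\ref{assumption:ambig_set_regularity}.

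The three easy branches go through directly in terms of the absolute integral $\E_\Prob[g(\xib)]$. If $f$ is uniformly bounded I bypass the display above, since $\sup_{\xb}\sup_{\Prob}\E_\Prob|f(\xb,\xib)|\leq\sup_{\xb\in\calX}\sup_{\xib\in\Xi}|f(\xb,\xib)|<\infty$, which serves as $M$. Under alternative (a) there is a (random) index beyond which $\calP_N\subseteq\calPh$, so $\sup_{\Prob\in\calP_N}\E_\Prob[g(\xib)]\leq\sup_{\Prob\in\calPh}\E_\Prob[g(\xib)]$, and the right-hand side is a finite deterministic constant by the hypotheses $\sup_{\Prob\in\calPh}\E_\Prob|f(\xb_0,\xib)|<\infty$ and $\sup_{\Prob\in\calPh}\E_\Prob|\kappa(\xib)|<\infty$; the $\limsup$ then inherits this bound. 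Under alternative (c) every $\Prob\in\calP_N$ is a reweighting $\sum_i p_i\delta_{\xibh_i}$ of the i.i.d. sample, and the hypothesis $\limsup_N\sup_{\Prob\in\calP_N}\norms{N\pb-\one}_\infty\leq B<\infty$ almost surely yields $Np_i\leq 1+B$ eventually, whence $\E_\Prob[g(\xib)]=\sum_i p_i g(\xibh_i)\leq (1+B)\cdot\tfrac1N\sum_{i=1}^N g(\xibh_i)$; the strong law of large numbers, applicable because $\E_{\Prob^\star}[g(\xib)]<\infty$ for both choices of $g$, forces $\limsup_N\sup_{\Prob\in\calP_N}\E_\Prob[g(\xib)]\leq(1+B)\E_{\Prob^\star}[g(\xib)]<\infty$ almost surely.

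Alternative (b) is the delicate case, and I expect it to be the main obstacle. Here only $\bbmH(\calP_N,\calPh)\rightarrow0$ is available, and the pseudometric $\bbmd$ underlying $\bbmH$ (defined in \eqref{eqn:dist_btw_prob_measures}) controls only the \emph{signed} discrepancies $\sup_\xb|\E_\Prob f(\xb,\xib)-\E_\Q f(\xb,\xib)|$. Choosing, for each $\Prob\in\calP_N$, a near-minimizer $\Q\in\calPh$ with $\bbmd(\Prob,\Q)\leq\bbmH(\calP_N,\calPh)+1$ therefore only transfers a bound on $|\E_\Prob f(\xb_0,\xib)|$ — not on the absolute integral $\E_\Prob|f(\xb_0,\xib)|$ that the display above requires — and gives no control at all on $\E_\Prob[\kappa(\xib)]$, since $\kappa\notin\{f(\xb,\cdot)\mid\xb\in\calX\}$. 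Thus a naive Hausdorff transfer is insufficient, and I would instead exploit the structural description of the sets to which alternative (b) applies: in the moment-based setting of Example~\ref{eg:ambig_set_seq_2}, when $\kappa$ and $|f(\xb_0,\cdot)|$ are dominated by the moment functions $\Phi_i$, the values $\sup_{\Prob\in\calP_N}\E_\Prob[g(\xib)]$ are pinned by the sample moment bounds $\mu_i^N$, which converge to $\mu_i$ and hence remain uniformly bounded; in the distance-based setting of Example~\ref{eg:ambig_set_seq_3}, the standing finite-pseudometric-diameter hypothesis $\sup_{\Prob_1,\Prob_2}\bbmd(\Prob_1,\Prob_2)<\infty$, combined with the Lipschitz decomposition of \eqref{eqn:DRO_exp_UB_w_data} applied inside $\calPh$, supplies the required uniform absolute-integral bound. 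In both regimes the uniform bound on $\sup_{\Prob\in\calP_N}\E_\Prob[g(\xib)]$ persists in the limit, completing the reduced claim and thus the proof. The crux, and the step demanding the most care, is precisely that $\E_\Prob|f|$ cannot be recovered from $\bbmH\rightarrow0$ alone and must be extracted from the constraints defining $\calP_N$.
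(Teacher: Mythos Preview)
Your handling of the uniformly bounded case and of alternatives (a) and (c) matches the paper's proof essentially line for line; the only cosmetic difference is that the paper, under (c), writes the comparison $|\E_\Prob[\kappa(\xib)]-\E_{\Probh_N}[\kappa(\xib)]|\leq\norms{N\pb-\one}_\infty\cdot\E_{\Probh_N}[\kappa(\xib)]$ and then takes $\limsup$, which is exactly your $(1+B)\E_{\Prob^\star}[g]$ bound unpacked.

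Where you diverge from the paper is alternative (b), and here your criticism is sharper than the paper's own argument. The paper does \emph{precisely} the Hausdorff transfer you flagged as insufficient: it writes
\[
\sup_{\Prob\in\calP_N}\E_\Prob[f(\xb,\xib)] \;\leq\; \bbmH(\calP_N,\calPh) \;+\; \Big\{\sup_{\Prob\in\calPh}\E_\Prob[\kappa(\xib)]\cdot\diam(\calX)+\sup_{\Prob\in\calPh}\E_\Prob|f(\xb_0,\xib)|\Big\},
\]
and then states the conclusion with $\E_\Prob|f(\xb,\xib)|$ on the left. As you noted, the pseudometric $\bbmd$ only controls signed integrals $\E_\Prob f(\xb,\xib)$, so this displayed chain proves $\limsup_N\sup_\xb\sup_{\Prob\in\calP_N}\E_\Prob[f(\xb,\xib)]<\infty$ (and, symmetrically, the same for $-f$, hence a bound on $\sup_{\Prob}|\E_\Prob f|$), but not on $\sup_\Prob\E_\Prob|f|$. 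Your diagnosis is correct; the paper simply does not address the gap you identified.

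That said, your proposed fix --- falling back on the concrete structure of Examples~\ref{eg:ambig_set_seq_2} and~\ref{eg:ambig_set_seq_3} --- does not prove the lemma under the abstract hypothesis (b) of Assumption~\ref{assumption:ambig_set_regularity}; it only handles those particular instances. If you want the statement as written, you would need to strengthen (b) so that the Hausdorff convergence is measured in a pseudometric that also tracks $|f(\xb_0,\cdot)|$ and $\kappa$, or else to observe that every downstream use of the lemma in the paper can in fact be run with the weaker bound $\limsup_N\sup_\xb\sup_{\Prob\in\calP_N}|\E_\Prob f(\xb,\xib)|<\infty$, which the paper's (b)-argument does deliver.
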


With Lemma \ref{lem:DRO_component_asym_boundedness}, we are ready to prove the asymptotic convergence of our TRO model. Specifically, in Theorem \ref{thm:asymptotic_convergence}, we show that, for any sequence $\{\theta_N\}_{N\in\N}$ converging to zero, the optimal value $\upsilonh_N(\theta_N)$ and the set of optimal solutions $\calXh_N(\theta_N)$ of our TRO model converges almost surely to the true optimal value $\upsilon^\star$ and the set of optimal solutions $\calX^\star$ to \eqref{prob:SO}, respectively.

\begin{theorem}  \label{thm:asymptotic_convergence}
In addition to Assumptions \ref{assumption:loss_and_ambig_set_boundedness}--\ref{assumption:ambig_set_regularity}, suppose that $\theta_N=o(1)$. Then, almost surely, as $N\rightarrow\infty$, we have (i) $\upsilonh_N(\theta_N)\rightarrow\upsilon^\star$, and (ii) $D\big(\calXh_N(\theta_N),\calX^\star\big)\rightarrow 0$.
\end{theorem}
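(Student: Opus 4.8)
The plan is to reduce the theorem to the classical consistency result for $M$-estimators (see, e.g., Theorem~5.3 of \citealp{Shapiro_et_al:2014}) by establishing that the TRO objective converges uniformly on $\calX$, along the sequence $\{\theta_N\}_{N\in\N}$, to the true objective. The one structural ingredient is that, since $\calP'_{N,\theta_N}=\{(1-\theta_N)\Probh_N+\theta_N\Q\mid\Q\in\calP_N\}$ and expectation is linear in the measure (all relevant expectations being finite under Assumption~\ref{assumption:loss_and_ambig_set_boundedness}), the inner supremum splits:
\[
F_N(\xb):=\sup_{\Prob\in\calP'_{N,\theta_N}}\E_{\Prob}[f(\xb,\xib)]=(1-\theta_N)\,\E_{\Probh_N}[f(\xb,\xib)]+\theta_N\,\sup_{\Q\in\calP_N}\E_{\Q}[f(\xb,\xib)].
\]
Writing $g_N(\xb):=\E_{\Probh_N}[f(\xb,\xib)]$, $h_N(\xb):=\sup_{\Q\in\calP_N}\E_{\Q}[f(\xb,\xib)]$, and $g(\xb):=\E_{\Prob^\star}[f(\xb,\xib)]$, this yields $F_N(\xb)-g(\xb)=(1-\theta_N)\bigl(g_N(\xb)-g(\xb)\bigr)+\theta_N\bigl(h_N(\xb)-g(\xb)\bigr)$ for every $\xb\in\calX$.

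I would then bound the two terms uniformly over $\xb\in\calX$ on a probability-one event. First, $\sup_{\xb\in\calX}|g_N(\xb)-g(\xb)|\to0$ almost surely by Assumption~\ref{assumption:GC-class}; on this event $g$ is the uniform limit of the functions $g_N=\tfrac1N\sum_{i=1}^N f(\cdot,\xibh_i)$, each continuous in $\xb$, so $g$ is continuous and hence bounded on the compact set $\calX$. Second, by Lemma~\ref{lem:DRO_component_asym_boundedness}, $\limsup_{N\to\infty}\sup_{\xb\in\calX}|h_N(\xb)|\le M$ almost surely, so $\sup_{\xb\in\calX}|h_N(\xb)-g(\xb)|$ is almost surely bounded by a finite constant for all large $N$; since $\theta_N=o(1)$, it follows that $\theta_N\sup_{\xb\in\calX}|h_N(\xb)-g(\xb)|\to0$ almost surely. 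Combining, $\sup_{\xb\in\calX}|F_N(\xb)-g(\xb)|\to0$ almost surely.

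Given this uniform convergence, part~(i) is immediate from
\[
|\upsilonh_N(\theta_N)-\upsilon^\star|=\Bigl|\inf_{\xb\in\calX}F_N(\xb)-\inf_{\xb\in\calX}g(\xb)\Bigr|\le\sup_{\xb\in\calX}|F_N(\xb)-g(\xb)|\longrightarrow0\quad\text{a.s.}
\]
For part~(ii) I would argue by contradiction on the good event: if $D(\calXh_N(\theta_N),\calX^\star)\not\to0$, there exist $\varepsilon>0$ and a subsequence with $\xb_{N_k}\in\calXh_{N_k}(\theta_{N_k})$ and $d(\xb_{N_k},\calX^\star)\ge\varepsilon$ (the sets $\calXh_N(\theta_N)$ are nonempty because the TRO model is well posed under Assumption~\ref{assumption:loss_and_ambig_set_boundedness}, and $\calX^\star\ne\emptyset$ by the standing assumption on \eqref{prob:SO}). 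By compactness of $\calX$, pass to a further subsequence with $\xb_{N_k}\to\bar\xb\in\calX$. Then, using continuity of $g$ together with $g(\xb_{N_k})\le F_{N_k}(\xb_{N_k})+\sup_{\xb\in\calX}|F_{N_k}-g|=\upsilonh_{N_k}(\theta_{N_k})+o(1)$, letting $k\to\infty$ gives $g(\bar\xb)\le\upsilon^\star$, hence $\bar\xb\in\calX^\star$, contradicting $d(\xb_{N_k},\calX^\star)\ge\varepsilon$.

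The genuinely substantive step is Lemma~\ref{lem:DRO_component_asym_boundedness}, which is already available; the remaining work is to make the $\theta_N h_N$ contribution vanish uniformly --- precisely where $\theta_N=o(1)$ and the lemma's almost-sure boundedness are combined --- and to record continuity and boundedness of $g$ on $\calX$ so that the standard ``uniform convergence implies epi-convergence'' conclusion applies. I do not foresee a deeper obstacle.
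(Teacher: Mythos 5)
Your proposal is correct and follows essentially the same route as the paper's proof: the same split of the TRO objective into the $(1-\theta_N)$-weighted SAA term (handled by Assumption~\ref{assumption:GC-class}) and the $\theta_N$-weighted DRO term (handled by Lemma~\ref{lem:DRO_component_asym_boundedness} together with $\theta_N=o(1)$), yielding uniform convergence of the objective, followed by the standard inf-comparison for part~(i) and the same compactness-plus-contradiction argument for part~(ii).
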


\subsection{Asymptotic Distribution} \label{subsec:asy_dist}

We now derive the asymptotic distribution of $\upsilonh_N(\theta_N)$. We use the following additional notation in our derivations. Let $L^2(\Prob^\star)$ be the set of $\Prob^\star$-square-integrable functions equipped with the norm $\norms{h}_{L^2(\Prob^\star)}=\sqrt{\E_{\Prob^\star}[h(\xib)^2]}$ for $h\in L^2(\Prob^\star)$. For a class of functions $\calH\subseteq L^2(\Prob^\star)$, let $\ell^\infty(\calH)=\{g:\calH\rightarrow\R\mid \norms{g}_\calH <\infty \}$ be the space of bounded functions, where $\norms{g}_\calH:=\sup_{h\in\calH} |g(h)|$ is the sup-norm of $g\in\ell^\infty(\calH)$. Following the convention in empirical process theory, we use the shorthand notation $\Prob (h):= \E_{\Prob}(h)$ for $h\in\calH$. We adopt the following standard assumption on the complexity of the objective function class (see, e.g., \citealp{Eichhorn_Romisch:2007, Lam:2019, Lam:2022}).
%  (known as the Donsker condition)

\begin{assumption}\label{assumption:Donsker}
The function class $\calH:=\{f(\xb,\cdot):\Xi\rightarrow\R\mid \xb\in\calX\}$ is $\Prob^\star$-Donsker, i.e., $\sqrt{N} (\Probh_N-\Prob^\star) \Rightarrow \G$ in $\ell^\infty(\calH)$, where ``$\Rightarrow$'' denotes weak convergence and $\G$ is a tight Gaussian process indexed by $\calH$ with mean zero and covariance function $\Cov(\G(h_1),\G(h_2))=\Cov_{\Prob^\star}(h_1(\xib),h_2(\xib))$.
\end{assumption}

In Assumption~\ref{assumption:Donsker}, the measures $\Probh_N$ and $\Prob^\star$ are considered as elements in $\ell^\infty(\calH)$. Example \ref{eg:Donsker_Lipschitz_suff} provides a sufficient condition for Assumption~\ref{assumption:Donsker} to hold.

\begin{example} \label{eg:Donsker_Lipschitz_suff}
Suppose that $\{\xibh_1,\xibh_2,\dots\}$ is i.i.d. following $\Prob^\star$, $\calX$ is compact, $\E_{\Prob^\star}[f(\xb,\xib)]<\infty$, and $\Var_{\Prob^\star}(f(\xb,\xib))<\infty$ for all $\xb\in\calX$. If there exists a measurable function $\kappa:\xib\rightarrow\R_+$ such that $f(\cdot,\xib)$ is Lipschitz with modulus $\kappa(\xib)$ almost surely with  $\E_{\Prob^\star}[\kappa(\xib)^2]<\infty$, then $\calH$ is $\Prob^\star$-Donsker \citep{Lam:2019}. This sufficient condition is widely adopted in the literature to derive asymptotic distribution in stochastic optimization problems \citep{Guigues_et_al:2018, Shapiro_et_al:2014}.
% (see, e.g., Lemma 2 of \citealp{Lam:2019})
\end{example}

A key step toward deriving the asymptotic distribution of $\upsilonh_N(\theta_N)$ is to show that $\S_N:=\sqrt{N}\big[(1-\theta_N)\Probh_N+\theta_N\Prob_N-\Prob^\star\big]$ converges weakly to some tight Gaussian process $\G'$ in $\ell^\infty(\calH)$ for any $\Prob_N\in\calP_N$. Here,  $(1-\theta_N)\Probh_N+\theta_N\Prob_N$ is a probability measure in the TRO ambiguity set.  Lemma~\ref{lem:asy_tight_tradeoff_ambig_set} establishes the desired convergence under some mild assumptions.

\begin{lemma} \label{lem:asy_tight_tradeoff_ambig_set}
Let $\{\Prob_N\}_{N\in\N}$ be any sequence of probability measure $\Prob_N\in\calP_N$. In addition to Assumptions~\ref{assumption:loss_and_ambig_set_boundedness} and \ref{assumption:ambig_set_regularity}, suppose that
\begin{enumerate}[itemsep=0mm,topsep=1mm]
    \item [(a)] there exists a square-integrable envelope $H$ of the function class $\calH$, i.e., $h(\xib)\leq H(\xib)$ for all $h\in\calH$ with $\E_{\Prob^\star}[H(\xib)^2]<\infty$, and
    \item [(b)] $\theta_N=o(N^{-1/2})$.
\end{enumerate}
Then, as $N\rightarrow\infty$, the process $\S_N:=\sqrt{N}[(1-\theta_N)\Probh_N + \theta_N\Prob_N-\Prob^\star]\Rightarrow\G'$ in $\ell^\infty(\calH)$, where $\G'$ is a tight Gaussian process indexed by $\calH$ with mean zero and covariance function $\Cov(\G'(h_1),\G'(h_2))=\Cov_{\Prob^\star}(h_1(\xib),h_2(\xib))$.
\end{lemma}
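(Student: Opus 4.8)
The plan is to decompose $\S_N$ into the Donsker-type fluctuation coming from the empirical measure and a vanishing remainder driven by the size parameter $\theta_N$. Write
\begin{equation*}
    \S_N = \sqrt{N}\big(\Probh_N - \Prob^\star\big) + \sqrt{N}\,\theta_N\big(\Prob_N - \Probh_N\big),
\end{equation*}
viewing both terms as elements of $\ell^\infty(\calH)$ via $\Prob\mapsto(h\mapsto\E_\Prob[h])$. The first term converges weakly to the tight Gaussian process $\G'$ with the stated covariance; this does not follow from Assumption~\ref{assumption:Donsker} directly (which is not imposed here) but from the square-integrable envelope condition (a) together with compactness of $\calX$ and the Lipschitz/regularity structure already assumed, which make $\calH$ a $\Prob^\star$-Donsker class by a standard bracketing or uniform-entropy argument. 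I would either invoke a known Donsker criterion (e.g., the Lipschitz-in-parameter criterion as in Example~\ref{eg:Donsker_Lipschitz_suff}) or, more cleanly, note that condition (a) plus Assumptions~\ref{assumption:loss_and_ambig_set_boundedness}--\ref{assumption:ambig_set_regularity} already force the Donsker property, so the first term $\Rightarrow\G'$.

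The second term is the crux, and the key step is to show $\sqrt{N}\,\theta_N\big(\Prob_N - \Probh_N\big)\to 0$ in $\ell^\infty(\calH)$, i.e. $\sqrt{N}\,\theta_N\,\sup_{h\in\calH}\big|\E_{\Prob_N}[h] - \E_{\Probh_N}[h]\big|\to 0$ in probability (in fact almost surely). Here I would bound
\begin{equation*}
    \sup_{h\in\calH}\big|\E_{\Prob_N}[h] - \E_{\Probh_N}[h]\big|
    \;=\; \bbmd(\Prob_N,\Probh_N)
    \;\leq\; \sup_{\xb\in\calX}\E_{\Prob_N}|f(\xb,\xib)| + \sup_{\xb\in\calX}\E_{\Probh_N}|f(\xb,\xib)|.
\end{equation*}
By Assumption~\ref{assumption:loss_and_ambig_set_boundedness} the term $\sup_{\xb\in\calX}\E_{\Probh_N}|f(\xb,\xib)|$ is bounded (it is at most $C_N$, and under the i.i.d.\ structure $C_N$ has an almost-sure finite limsup via the strong law), while Lemma~\ref{lem:DRO_component_asym_boundedness} gives $\limsup_{N\to\infty}\sup_{\xb\in\calX}\sup_{\Prob\in\calP_N}\E_\Prob|f(\xb,\xib)|\leq M$ almost surely, so $\sup_{\xb\in\calX}\E_{\Prob_N}|f(\xb,\xib)|$ is almost surely bounded for large $N$. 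Hence $\bbmd(\Prob_N,\Probh_N)$ is $O(1)$ almost surely, and multiplying by $\sqrt{N}\,\theta_N = o(1)$ (assumption (b)) shows the second term vanishes uniformly over $\calH$, almost surely.

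Finally I would assemble the two pieces: the sum of a sequence converging weakly to $\G'$ and a sequence converging to $0$ in $\ell^\infty(\calH)$ converges weakly to $\G'$ by Slutsky's theorem in the metric space $\ell^\infty(\calH)$ (the $o(1)$ term converges in probability to the constant $0$, so the joint convergence and the continuous mapping theorem applied to addition give the result). This yields $\S_N\Rightarrow\G'$ with the claimed mean-zero Gaussian law and covariance $\Cov_{\Prob^\star}(h_1(\xib),h_2(\xib))$, since the limit is exactly the limit of $\sqrt{N}(\Probh_N-\Prob^\star)$. The main obstacle is the first step: confirming that the envelope condition (a), rather than an explicitly assumed Donsker property, suffices to get $\sqrt{N}(\Probh_N-\Prob^\star)\Rightarrow\G'$ — this requires either citing the right empirical-process theorem or re-deriving the Donsker property from the Lipschitz structure, and care is needed because $\calH$ is indexed by the (compact but possibly high-dimensional) set $\calX$. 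A secondary subtlety is that $\Prob_N$ may depend on the data, so the bound on $\bbmd(\Prob_N,\Probh_N)$ must be uniform over $\Prob_N\in\calP_N$, which is precisely why Lemma~\ref{lem:DRO_component_asym_boundedness} is invoked rather than a pointwise estimate.
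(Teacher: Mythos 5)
There is a genuine gap in your handling of the first term. You assert that the square-integrable envelope in condition (a), together with Assumptions~\ref{assumption:loss_and_ambig_set_boundedness}--\ref{assumption:ambig_set_regularity}, already forces $\calH$ to be $\Prob^\star$-Donsker, so that $\sqrt{N}(\Probh_N-\Prob^\star)\Rightarrow\G'$ comes for free. This is not true: a square-integrable envelope is a necessary condition for the Donsker property but is far from sufficient (one still needs an entropy or bracketing bound controlling the complexity of $\calH$). The Lipschitz-in-parameter criterion you allude to (Example~\ref{eg:Donsker_Lipschitz_suff}) requires $\E_{\Prob^\star}[\kappa(\xib)^2]<\infty$, a \emph{second}-moment condition on the Lipschitz modulus, whereas Assumption~\ref{assumption:loss_and_ambig_set_boundedness}(b) only provides $\sup_{\Prob\in\calP_N}\E_{\Prob}[\kappa(\xib)]<\infty$, a first-moment bound (and not even under $\Prob^\star$). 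The paper does not attempt this derivation: its proof simply invokes Assumption~\ref{assumption:Donsker} for the marginal convergence of $\sqrt{N}(\Probh_N-\Prob^\star)$, and Theorem~\ref{thm:asy_dist} explicitly adds Assumption~\ref{assumption:Donsker} on top of the lemma's hypotheses, which makes clear the Donsker property is meant to be assumed, not derived. You correctly flag this as the "main obstacle," but then claim it is resolved without supplying the missing entropy argument; as written, that step fails. In the paper, condition (a) is used for something else entirely: it makes $(\calH,\norms{\cdot}_{L^2(\Prob^\star)})$ totally bounded, which is needed for the asymptotic-equicontinuity verification of the remainder process.

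Your treatment of the second term is essentially sound and takes a somewhat different, more streamlined route than the paper. The paper writes $\S_N=\sqrt{N}(1-\theta_N)(\Probh_N-\Prob^\star)+\sqrt{N}\theta_N(\Prob_N-\Prob^\star)$ and proves weak convergence by combining finite-dimensional convergence (Step 1) with a separate asymptotic-tightness argument for the remainder (Step 2, via Theorem~1.5.7 of van der Vaart and Wellner). You instead show the remainder converges to zero \emph{in sup-norm} over $\calH$ almost surely, using Lemma~\ref{lem:DRO_component_asym_boundedness} exactly as the paper does in its equicontinuity estimate, and then conclude by Slutsky in $\ell^\infty(\calH)$. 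That shortcut is legitimate (the limit of the remainder is a constant, so Slutsky applies in the metric space) and, if made rigorous, would render the paper's Step 2 unnecessary; the one point to tighten is that the a.s.\ boundedness of $\sup_{\xb\in\calX}\E_{\Probh_N}|f(\xb,\xib)|$ needs first moments of $\kappa$ and $|f(\xb_0,\cdot)|$ under $\Prob^\star$, which hold under Assumption~\ref{assumption:ambig_set_regularity}(c) but should be argued rather than read off from $C_N<\infty$.
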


With Lemma \ref{lem:asy_tight_tradeoff_ambig_set}, we are ready to derive the asymptotic distribution of $\upsilonh_N(\theta_N)$. Specifically, in Theorem~\ref{thm:asy_dist}, we show that $\sqrt{N}(\upsilonh_N(\theta_N)-\upsilon^\star)$ converges to the infimum of some Gaussian process indexed by $\calX$. For notational simplicity, we write $\upsilonh_N=\upsilonh_N(\theta_N)$.

\begin{theorem} \label{thm:asy_dist}
In addition to assumptions in Lemma \ref{lem:asy_tight_tradeoff_ambig_set}, suppose that Assumption~\ref{assumption:Donsker} holds and there exists a worst-case distribution $\Prob_N^\star \in\argmax_{\Prob\in\calP_N} \E_{\Prob}[f(\xb,\xib)]$ such that $\Prob^\star_N\in\calP_N$ for any $\xb\in\calX$ and $N\in\N$. Then, as $N\rightarrow\infty$,
\begin{enumerate}[itemsep=0mm,topsep=1mm]
    \item [(i)] $\sqrt{N}(\upsilonh_N-\upsilon^\star) \Rightarrow \inf_{\xb\in \calX^\star} \G(\xb)$, 
    where $\G$ is a tight Gaussian process indexed by $\calX$ with mean zero and covariance function $\Cov(\G(\xb_1)),\G(\xb_2))=\Cov_{\Prob^\star}(f(\xb_1,\xib),f(\xb_2,\xib))$;
    \item[(ii)] $\upsilonh_N-\upsilon^\star = \inf_{\xb\in\calX^\star}  \Big\{ (1-\theta_N)\E_{\Probh_N}[f(\xb,\xib)] + \theta_N \sup_{\Prob\in\calP_N}\E_{\Prob}[f(\xb,\xib)] - \E_{\Prob^\star}[f(\xb,\xib)] \Big\} + o_{\Prob^\star}(N^{-1/2})$.
\end{enumerate}
In particular, $\upsilonh_N= \inf_{\xb\in\calX^\star}\Big\{ (1-\theta_N)\E_{\Probh_N}[f(\xb,\xib)] + \theta_N \sup_{\Prob\in\calP_N}\E_{\Prob}[f(\xb,\xib)]\Big\} +o_{\Prob^\star}(N^{-1/2})$.
\end{theorem}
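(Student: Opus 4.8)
The plan is to rewrite the TRO optimal value as the infimum over the compact set $\calX$ of a perturbed continuous function, to prove that the suitably rescaled perturbation converges weakly (uniformly in $\xb$) to the Donsker limit $\G$, and then to localize onto the solution set $\calX^\star$ via the classical argmin/functional-delta-method argument. First I would use linearity of $\Q\mapsto\E_\Q[f(\xb,\xib)]$ over the TRO ambiguity set to write, for each $\xb\in\calX$,
\begin{equation*}
\sup_{\Prob\in\calP'_{N,\theta_N}}\E_\Prob[f(\xb,\xib)]=(1-\theta_N)\,\E_{\Probh_N}[f(\xb,\xib)]+\theta_N\,\sup_{\Q\in\calP_N}\E_\Q[f(\xb,\xib)]=:g_N(\xb),
\end{equation*}
and I would use the assumed attainment of the inner supremum at some $\Prob_N^\star(\xb)\in\calP_N$ to write $g_N(\xb)=\E_{(1-\theta_N)\Probh_N+\theta_N\Prob_N^\star(\xb)}[f(\xb,\xib)]$, so that Lemma~\ref{lem:asy_tight_tradeoff_ambig_set} applies pointwise in $\xb$. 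Under Assumption~\ref{assumption:loss_and_ambig_set_boundedness}(b), $g_N$ is Lipschitz on $\calX$, and the square-integrable envelope makes $\psi_0(\xb):=\E_{\Prob^\star}[f(\xb,\xib)]$ continuous (dominated convergence); consequently $\upsilonh_N=\inf_{\xb\in\calX}g_N(\xb)$, $\upsilon^\star=\inf_{\xb\in\calX}\psi_0(\xb)$, and $\calX^\star=\argmin_{\xb\in\calX}\psi_0(\xb)$ is nonempty and compact.

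Next I would establish the uniform-in-$\xb$ strengthening of Lemma~\ref{lem:asy_tight_tradeoff_ambig_set}: $Z_N:=\sqrt N\,(g_N-\psi_0)\Rightarrow\G$ in $\ell^\infty(\calH)$ (equivalently in $\ell^\infty(\calX)$ under $\xb\mapsto f(\xb,\cdot)$). Following the decomposition behind that lemma, write $Z_N(\xb)=(1-\theta_N)\,\G_N(\xb)+\sqrt N\,\theta_N\,B_N(\xb)$, where $\G_N(\xb):=\sqrt N\big(\E_{\Probh_N}[f(\xb,\xib)]-\psi_0(\xb)\big)$ is the empirical process and $B_N(\xb):=\sup_{\Q\in\calP_N}\E_\Q[f(\xb,\xib)]-\psi_0(\xb)$. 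Assumption~\ref{assumption:Donsker} gives $\G_N\Rightarrow\G$ (so $\norms{\G_N}_\calH=O_{\Prob^\star}(1)$ and $\G_N$ is asymptotically equicontinuous), and $\theta_N=o(1)$ gives $(1-\theta_N)\to1$; Lemma~\ref{lem:DRO_component_asym_boundedness} together with $\sup_{\xb\in\calX}|\psi_0(\xb)|<\infty$ gives $\limsup_{N\to\infty}\norms{B_N}_\calH\le M<\infty$ almost surely, while $\sqrt N\theta_N\to0$ because $\theta_N=o(N^{-1/2})$, so $\sqrt N\theta_N\norms{B_N}_\calH=o_{\Prob^\star}(1)$. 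Hence $Z_N=\G_N+o_{\Prob^\star}(1)$ in $\norms{\cdot}_\calH$, and Slutsky's lemma gives $Z_N\Rightarrow\G$.

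The core step is the localization onto $\calX^\star$. I would take $\xb_N\in\argmin_{\xb\in\calX}g_N(\xb)$; comparing $g_N(\xb_N)$ with $g_N(\xb^\star)$ for an arbitrary $\xb^\star\in\calX^\star$ yields $0\le\psi_0(\xb_N)-\upsilon^\star\le N^{-1/2}\big(Z_N(\xb^\star)-Z_N(\xb_N)\big)\le2N^{-1/2}\norms{Z_N}_\calH=o_{\Prob^\star}(1)$, whence $\psi_0(\xb_N)\to\upsilon^\star$ in probability and so $d(\xb_N,\calX^\star)\to0$ in probability, by compactness of $\calX$ and continuity of $\psi_0$. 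Letting $\overline{\xb}_N\in\calX^\star$ attain $\norms{\xb_N-\overline{\xb}_N}=d(\xb_N,\calX^\star)$, the asymptotic equicontinuity of $\G_N$ (together with the $L^2(\Prob^\star)$-continuity of $\xb\mapsto f(\xb,\cdot)$ supplied by the envelope, and $Z_N=\G_N+o_{\Prob^\star}(1)$) gives $|Z_N(\xb_N)-Z_N(\overline{\xb}_N)|=o_{\Prob^\star}(1)$; using $\psi_0\equiv\upsilon^\star$ on $\calX^\star$,
\begin{equation*}
\sqrt N(\upsilonh_N-\upsilon^\star)=\sqrt N\big(g_N(\xb_N)-\upsilon^\star\big)\ge Z_N(\xb_N)=Z_N(\overline{\xb}_N)+o_{\Prob^\star}(1)\ge\inf_{\xb\in\calX^\star}Z_N(\xb)+o_{\Prob^\star}(1).
\end{equation*}
Combined with the reverse bound $\sqrt N(\upsilonh_N-\upsilon^\star)\le\sqrt N\big(\inf_{\xb\in\calX^\star}g_N(\xb)-\upsilon^\star\big)=\inf_{\xb\in\calX^\star}Z_N(\xb)$, this gives $\sqrt N(\upsilonh_N-\upsilon^\star)=\inf_{\xb\in\calX^\star}Z_N(\xb)+o_{\Prob^\star}(1)$, which rescaled is part (ii) and, since $\psi_0=\upsilon^\star$ on $\calX^\star$, the ``in particular'' identity. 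Finally, as $\psi\mapsto\inf_{\xb\in\calX^\star}\psi(\xb)$ is $1$-Lipschitz on $\ell^\infty(\calX)$, the continuous mapping theorem gives $\inf_{\xb\in\calX^\star}Z_N(\xb)\Rightarrow\inf_{\xb\in\calX^\star}\G(\xb)$, and Slutsky's lemma yields part (i).

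I expect the main obstacle to be the localization step: one must first prove consistency of the near-minimizers, $d(\xb_N,\calX^\star)\to0$ in probability, and then upgrade the crude $O_{\Prob^\star}(N^{-1/2})$ bound on $Z_N(\xb_N)-Z_N(\overline{\xb}_N)$ to $o_{\Prob^\star}(N^{-1/2})$ — precisely where the Donsker hypothesis (asymptotic equicontinuity), rather than mere Glivenko--Cantelli, is indispensable. A secondary complication is that the worst-case measure $\Prob_N^\star(\xb)$ depends on $\xb$, so Lemma~\ref{lem:asy_tight_tradeoff_ambig_set} cannot be invoked uniformly; this is harmless because that term enters $Z_N$ only through the vanishing factor $\sqrt N\theta_N$ multiplying the uniformly bounded $B_N$ (Lemma~\ref{lem:DRO_component_asym_boundedness}), so the delicate process-level behavior is carried entirely by the classical empirical process $\G_N$.
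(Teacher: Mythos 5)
Your argument is correct and reaches the same conclusion, but it travels a genuinely different route from the paper's in two places. First, for the weak convergence of the perturbed process: the paper plugs the worst-case distribution into Lemma~\ref{lem:asy_tight_tradeoff_ambig_set} and treats $h\mapsto\sup_{\Prob\in\calP_N}\E_{\Prob}(h)$ as if it were a single element of the TRO ambiguity set, which is slightly informal since that lemma is stated for a fixed sequence $\Prob_N\in\calP_N$ while the maximizer depends on $\xb$; you instead isolate the DRO perturbation as $\sqrt{N}\theta_N B_N(\xb)$ with $\norms{B_N}_{\calH}$ uniformly bounded by Lemma~\ref{lem:DRO_component_asym_boundedness} and killed by $\sqrt{N}\theta_N\to0$, so that $Z_N=\G_N+o_{\Prob^\star}(1)$ in sup-norm and the delicate process-level behavior rests entirely on the classical empirical process. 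This sidesteps the $\xb$-dependence issue cleanly and is, if anything, a tighter justification of the same display. Second, for the passage from $Z_N\Rightarrow\G$ to the infimum over $\calX^\star$: the paper invokes the Hadamard directional derivative of $\psi\mapsto\inf_{\xb\in\calX}\psi(\xb)$ and the functional delta method (citing \citealp{Carcamo_et_al:2020}), whereas you re-derive that step by hand via consistency of near-minimizers, asymptotic equicontinuity of $\G_N$, and a two-sided sandwich. Your version is more elementary and self-contained but does require the $L^2(\Prob^\star)$-continuity of $\xb\mapsto f(\xb,\cdot)$, which you attribute to the envelope alone; strictly it comes from the Lipschitz modulus $\kappa$ of Assumption~\ref{assumption:loss_and_ambig_set_boundedness}(b) combined with the square-integrable envelope and dominated convergence. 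That is a one-line patch, not a gap, and both (i), (ii), and the ``in particular'' identity follow as you state.
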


It follows from Theorem~\ref{thm:asy_dist} that when set of optimal solutions to \eqref{prob:SO} is a singleton, say $\calX^\star=\{\xb^\star\}$, $\sqrt{N}(\upsilonh_N-\upsilon^\star) \Rightarrow N\big(0,\Var_{\Prob^\star}(f(\xb^\star,\xib))\big)$, i.e., a normal distribution with mean zero and variance $\Var_{\Prob^\star}(f(\xb^\star,\xib))$. 

Our asymptotic convergence results hold for TRO models with TRO ambiguity sets constructed using general shape parameters $\calP_N$, such as moment- and distance-based ambiguity sets. This differs from results in the existing literature focusing on a specific ambiguity set. In the special case where the shape parameter is chosen as a distance-based ambiguity set $\calP_{N,r_N}=\{\Prob\in\calP(\Xi)\mid \sfd(\Prob,\Probh_N)\leq r_N\}$, we can recover the asymptotics of the optimal value of classical distance-based DRO models (see, e.g., \citealp{Blanchet_et_al:2021, Blanchet_Shapiro:2023}). Specifically, in Theorem~\ref{thm:asy_dist_distance_based}, we derive the asymptotic distribution of $\upsilonh_N$ under three different convergence rates of the size parameter $\theta_N$ and the radius $r_N$ in the shape parameter $\calP_{N,r_N}$.

\begin{theorem} \label{thm:asy_dist_distance_based}
Let $\G$ be a tight Gaussian process indexed by $\calX$ with mean zero and covariance function $\Cov(\G(\xb_1)),\G(\xb_2))=\Cov_{\Prob^\star}(f(\xb_1,\xib),f(\xb_2,\xib))$. Also, let $\varepsilon_N(\xb)$ represent any term that converges to zero in probability uniformly over $\xb\in\calX$, i.e., $\sup_{\xb\in\calX} |\varepsilon_N(\xb)|=o_{\Prob^\star}(1)$. In addition to Assumptions~\ref{assumption:loss_and_ambig_set_boundedness} and \ref{assumption:Donsker}, suppose we construct the TRO ambiguity set using a distance-based shape parameter $\calP_{N,r_N}=\{\Prob\in\calP(\Xi)\mid \sfd(\Prob,\Probh_N)\leq r_N\}$ and that $\sup_{\Prob\in\calP_{N,r_N}} \E_{\Prob}[f(\xb,\xib)]$ exhibits the following expansion
\begin{equation} \label{eqn:DRO_expansion}
    \sup_{\Prob\in\calP_{N,r_N}} \E_{\Prob}[f(\xb,\xib)] = \E_{\Probh_N}[f(\xb,\xib)] + r^\gamma_N g_N(\xb)+r^\gamma_N \varepsilon_N(\xb) 
\end{equation}
for some $\gamma>0$, where $g_N(\xb)$ satisfies $g_N(\xb)=h(\xb)+\varepsilon_N(\xb)$ for some continuous deterministic process $h(\xb)$. Then, the following assertions hold.
\begin{itemize}[itemsep=0mm,topsep=1mm]
    \item [(i)] If $\theta_N r^\gamma_N=o(N^{-1/2})$, then $\sqrt{N}(\upsilonh_N-\upsilon^\star)\Rightarrow \inf_{\xb\in\calX^\star} \G(\xb)$. 
    \item[(ii)] If $\theta_N r^\gamma_N= N^{-1/2}$, then $\sqrt{N}(\upsilonh_N-\upsilon^\star)\Rightarrow \inf_{\xb\in\calX^\star}\{\G(\xb)+h(\xb)\}$.
    \item[(iii)] If $o(\theta_N r^\gamma_N)=N^{-1/2}$, then $\theta_N^{-1} r^{-\gamma}_N (\upsilonh_N-\upsilon^\star)\Rightarrow  \inf_{\xb\in\calX^\star} h(\xb)$.
\end{itemize}
\end{theorem}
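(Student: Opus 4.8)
The plan is to first rewrite $\upsilonh_N$ as the optimal value of an explicitly perturbed SAA objective, and then feed it through an optimal-value delta method in each of the three scaling regimes. Since the TRO ambiguity set is the convex combination $\calP'_{N,\theta_N}=\{(1-\theta_N)\Probh_N+\theta_N\Q\mid\Q\in\calP_{N,r_N}\}$, for every $\xb\in\calX$ we have (exactly the identity behind the function $g(\theta;\xb)$ used in the proof of Theorem~\ref{thm:conservatism})
\[
    \sup_{\Prob\in\calP'_{N,\theta_N}}\E_{\Prob}[f(\xb,\xib)]=(1-\theta_N)\,\E_{\Probh_N}[f(\xb,\xib)]+\theta_N\sup_{\Prob\in\calP_{N,r_N}}\E_{\Prob}[f(\xb,\xib)].
\]
Substituting the DRO expansion \eqref{eqn:DRO_expansion} and writing $a_N:=\theta_N r_N^\gamma$, $\psi_N(\xb):=\E_{\Probh_N}[f(\xb,\xib)]$, and $\psi^\star(\xb):=\E_{\Prob^\star}[f(\xb,\xib)]$, the two $\E_{\Probh_N}[f]$ terms merge and
\[
    \upsilonh_N=\inf_{\xb\in\calX}\Big\{\psi_N(\xb)+a_N h(\xb)+a_N\,\bar{\varepsilon}_N(\xb)\Big\},\qquad \sup_{\xb\in\calX}|\bar{\varepsilon}_N(\xb)|=o_{\Prob^\star}(1),
\]
where $\bar{\varepsilon}_N:=(g_N-h)+\varepsilon_N$ and $h$, being continuous, is bounded on the compact set $\calX$. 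This representation is what lets us bypass the restriction $\theta_N=o(N^{-1/2})$ of Lemma~\ref{lem:asy_tight_tradeoff_ambig_set}, which would otherwise fail in regimes (ii)--(iii) when $r_N\to0$.

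By Assumption~\ref{assumption:Donsker} (identifying $\calH$ with $\calX$), the empirical process $\G_N:=\sqrt{N}(\psi_N-\psi^\star)$ satisfies $\G_N\Rightarrow\G$ in $\ell^\infty(\calX)$, so $\G$ has a.s.\ bounded, uniformly continuous sample paths and $g\mapsto\inf_{\xb\in\calX^\star}g(\xb)$ is (Lipschitz-)continuous on $\ell^\infty(\calX)$. The engine common to all three parts is the following optimal-value delta method: if $Z_N\Rightarrow Z$ in $\ell^\infty(\calX)$, if $p_N\ge0$ satisfies $\inf\{p_N(\xb)\mid d(\xb,\calX^\star)\ge\epsilon\}\to\infty$ for every $\epsilon>0$, and if $u_N\to u$ uniformly in probability with $u$ continuous, then $\inf_{\xb\in\calX}\{p_N(\xb)+Z_N(\xb)+u_N(\xb)\}\Rightarrow\inf_{\xb\in\calX^\star}\{Z(\xb)+u(\xb)\}$. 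This is established by a sandwich: restricting the infimum to $\calX^\star$ and applying the continuous mapping theorem gives the upper bound; for the lower bound, compactness of $\calX$ and continuity of $\psi^\star$ give $\psi^\star(\xb)-\upsilon^\star\ge\eta(\epsilon)>0$ whenever $d(\xb,\calX^\star)\ge\epsilon$, so with probability tending to one the infimum is attained inside the $\epsilon$-neighbourhood of $\calX^\star$, after which $\epsilon\downarrow0$ together with uniform continuity finishes the claim. For part (i) this reproduces the argument already used in the proof of Theorem~\ref{thm:asy_dist}(i).

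Now substitute $\psi_N=\psi^\star+N^{-1/2}\G_N$ into the representation above:
\[
    \upsilonh_N-\upsilon^\star=\inf_{\xb\in\calX}\Big\{\big(\psi^\star(\xb)-\upsilon^\star\big)+N^{-1/2}\G_N(\xb)+a_N\big(h(\xb)+\bar{\varepsilon}_N(\xb)\big)\Big\}.
\]
(i) If $a_N=o(N^{-1/2})$, multiply by $\sqrt{N}$: the term $\sqrt{N}a_N(h+\bar{\varepsilon}_N)$ vanishes uniformly in probability, and the delta method with $p_N=\sqrt{N}(\psi^\star-\upsilon^\star)$, $Z_N=\G_N$, $u_N\equiv0$ yields $\sqrt{N}(\upsilonh_N-\upsilon^\star)\Rightarrow\inf_{\xb\in\calX^\star}\G(\xb)$. (ii) If $a_N=N^{-1/2}$, multiply by $\sqrt{N}$ and apply the delta method with $p_N=\sqrt{N}(\psi^\star-\upsilon^\star)$, $Z_N=\G_N$, $u_N=h+\bar{\varepsilon}_N$, giving $\sqrt{N}(\upsilonh_N-\upsilon^\star)\Rightarrow\inf_{\xb\in\calX^\star}\{\G(\xb)+h(\xb)\}$. (iii) If $N^{-1/2}=o(a_N)$ (so $a_N\to0$ whenever $\theta_N\to0$ and $r_N$ stays bounded), divide by $a_N$: then $(a_N\sqrt{N})^{-1}\to0$, so $(a_N\sqrt{N})^{-1}\G_N\Rightarrow0$ uniformly, and the delta method with $p_N=a_N^{-1}(\psi^\star-\upsilon^\star)$, $Z_N=(a_N\sqrt{N})^{-1}\G_N$, $u_N=h+\bar{\varepsilon}_N$ gives the deterministic limit $a_N^{-1}(\upsilonh_N-\upsilon^\star)\Rightarrow\inf_{\xb\in\calX^\star}h(\xb)$. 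Finally, if $\calX^\star=\{\xb^\star\}$ then $\inf_{\calX^\star}\G=\G(\xb^\star)\sim N\big(0,\Var_{\Prob^\star}(f(\xb^\star,\xib))\big)$, which is the stated normal limit in part (i).

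The main obstacle is the lower-bound half of the delta method, performed uniformly across the three normalizations: one must show that the infimum of the perturbed objective over all of $\calX$ is, with probability tending to one, attained in an arbitrarily small neighbourhood of $\calX^\star$, and only then interchange the limit with the infimum. This requires a uniform-in-probability bound on $\bar{\varepsilon}_N$, and in regime (iii) on $(a_N\sqrt{N})^{-1}\G_N$, against the growth rate $\eta(\epsilon)$ of $\psi^\star-\upsilon^\star$ away from $\calX^\star$ --- which is where compactness of $\calX$, continuity of $\psi^\star$, and tightness of $\G$ come in. A secondary point is that regime (iii) genuinely needs $a_N=\theta_N r_N^\gamma\to0$: otherwise the penalty $a_N^{-1}(\psi^\star-\upsilon^\star)$ need not dominate off $\calX^\star$, and the limit would instead involve $\inf_{\xb\in\calX}\{\lim_N a_N^{-1}(\psi^\star(\xb)-\upsilon^\star)+h(\xb)\}$. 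The remaining bookkeeping (continuity of $g\mapsto\inf_{\calX^\star}g$, and a Slutsky/extended-continuous-mapping step to combine $\G_N\Rightarrow\G$ with the uniformly convergent perturbations) is routine.
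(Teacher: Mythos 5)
Your proposal is correct and follows essentially the same route as the paper: the key step in both is to merge the two empirical-measure terms so that the TRO objective becomes $\E_{\Probh_N}[f(\xb,\xib)]+(\theta_N r_N^\gamma)g_N(\xb)+(\theta_N r_N^\gamma)\varepsilon_N(\xb)$, i.e., the expansion \eqref{eqn:DRO_expansion} with $r_N^\gamma$ replaced by $\theta_N r_N^\gamma$. The only difference is that the paper then simply invokes the proof techniques of Theorem~1 of \cite{Blanchet_Shapiro:2023} for the three scaling regimes, whereas you write out that localization/delta-method argument explicitly (and correctly flag the implicit requirement $\theta_N r_N^\gamma\to0$ in regime (iii)).
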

\color{black}

Theorem \ref{thm:asy_dist_distance_based}  demonstrates that when the TRO ambiguity set is constructed using a distance-based shape parameter $\calP_{N,r}$ such that $\sup_{\Prob\in\calP_{N,r_N}} \E_{\Prob}[f(\xb,\xib)]$ satisfies \eqref{eqn:DRO_expansion}, the asymptotic distribution of $\upsilonh_N$ depends on the convergence rate of $\theta_N r^\gamma_N$. In particular, when $r_N=r$ is fixed, Theorem~\ref{thm:asy_dist_distance_based} implies that  $\upsilonh_N$ converges to three different distributions depending on the convergence rate of the size parameter $\theta_N$. This resembles the asymptotics of distance-based DRO models, where different convergence rates of the radius $r_N$ lead to different asymptotic distributions of the DRO optimal value (see, e.g., \citealp{Blanchet_et_al:2021, Blanchet_Shapiro:2023}). These results indicate that the size parameter, $\theta_N$, of the TRO model with TRO ambiguity set constructed using the shape parameter $\calP_{N,r}$ and the radius $r_N$ in classical distance-based DRO models play a similar role in controlling the asymptotic distribution of the model's optimal value. Finally, we highlight that the expansion~\eqref{eqn:DRO_expansion} is commonly used in the DRO literature to derive asymptotics of the DRO optimal value and is satisfied by popular distance-based ambiguity sets such as $\phi$-divergence and Wasserstein ambiguity sets \citep{Blanchet_Shapiro:2023}.

\section{Numerical Examples} \label{sec:numerics}

In this section, we illustrate our theoretical results in the context of two stylized optimization problems:  an inventory control problem (Section~\ref{subsec:inventory_control_problem}) and a portfolio optimization problem (Section~\ref{subsec:portfolio_optimization_problem}).

\subsection{Inventory Control} \label{subsec:inventory_control_problem}

Consider the classical inventory control problem, where the decision-maker needs to decide the order quantity $x$ of a single item before observing the random demand $\xi\in\R$ \citep{Gallego_Moon:1993}. If $x$ units are ordered, then $\min\{x,\xi\}$ units are sold and  $(x-\xi)_+$ units are salvaged. Let  $c$ be the per-unit ordering cost, $p$ be the per-unit selling price, and $h$ be the per-unit salvage value. The goal is to minimize the expected ordering cost minus the expected profit and salvage value: 
$$\underset{x\geq 0}{\text{minimize}}\quad  \E_{\Prob^\star}[f(x,\xi)]= \E_{\Prob^\star}[cx - p\min\{x,\xi\} - h(x-\xi)_+], $$
where $\Prob^\star$ is the true distribution of $\xi$. Writing $f(x,\xi)=(c-h)x+(p-h)\big[(\xi-x)_+-\xi\big]$ (see \citealp{Gallego_Moon:1993}), we formulate the following TRO model for this problem 
\begin{equation} \label{eqn:inventory_contol_trade_off}
    \underset{x\geq 0}{\text{minimize}}\quad (c-h)x + (p-h) \bigg\{ (1-\theta) \frac{1}{N}\sum_{i=1}^N \big[(\xih_i-x)_+ -\xih_i\big] + \theta \sup_{\Prob\in\calP_N} \E_{\Prob}\big[(\xi-x)_+-\xi\big] \bigg\}, 
\end{equation}
where $\{\xih_i\}_{i=1}^N$ is the set of samples. Note that $f(x,\xi)$ is Lipschitz in $x$ with Lipschitz constant $c+p-2h$ (independent of $\xi$). Moreover, one can impose a practical upper bound on $x$ so that the feasible set $\calX$ is compact, and thus, Assumption \ref{assumption:loss_and_ambig_set_boundedness} holds.

We consider the following shape parameters (ambiguity sets) $\calP_N$ in our experiment: (a) the mean-variance ambiguity set \citep{Gallego_Moon:1993}, (b) the 1-Wasserstein ambiguity set \citep{Mohajerin-Esfahani_Kuhn:2018}, (c) the empirical Burg-entropy divergence ball \citep{Lam:2019}, (d) a set of Dirac measures on points that lie in the confidence interval of $\xi$ \citep{Fabozzi_et_al:2007}. Table \ref{table:inventory_control_reformulations} summarizes these sets and the parameter settings we used for each.  It is easy to verify that ambiguity sets (a)--(d) satisfy Assumption~\ref{assumption:ambig_set_regularity} (see \ref{apdx:num_expt_IC_check_assumption} for a proof for ambiguity set (d)). In \ref{apdx:num_expt_IC_reform}, we provide tractable reformulations of the TRO model \eqref{eqn:inventory_contol_trade_off} under each set. We adopt similar parameter settings as in \cite{Gotoh_et_al:2021}. Specifically, we set $p =30$, $c=2$, and $h=1$. For illustrative purposes, we use the exponential distribution with mean $50$ as the true distribution of the demand $\xi$ for generating the data. Under this distribution, we can compute the true optimal value $\upsilon^\star=-1,232$ \citep{Gallego_Moon:1993}.
\begin{table}[t]\centering\small 
\footnotesize
\caption{Shape parameters for the inventory control problem. \textit{Notation:} $\muh_N$ is the sample mean; $\sigmah_N^2$ is the sample variance; $\Delta_N\subseteq\R^N$ is the probability simplex; $t_{\nu,\alpha/2}$ is the upper $(1-\alpha)/2$-th quantile of a Student's $t$-distribution with degree of freedom $\nu$.} \label{table:inventory_control_reformulations}
\ra{0.8}  % strech parameter
\begin{tabular}{@{}l|c|c@{}} \toprule
& Ambiguity Set $\calP_N$ & Parameters\\ \midrule
(a) Mean-Variance & $\calP_N=\{\Prob\in\calP(\R_+)\mid \E_{\Prob}(\xi)=\mu,\, \Var_{\Prob}(\xi)=\sigma^2\}$ & $(\mu,\sigma^2)=(\muh_N,\sigmah_N^2)$  \\ [.5ex]
(b) 1-Wasserstein & $\calP_N=\{\Prob\in\calP(\R_+)\mid W_1(\Prob,\Probh_N) \leq r\}$ & $r=100$ \\ [.5ex]
(c) Burg-Divergence & $\calP_N=\{\pb\in\Delta_N \mid -\frac{1}{N}\sum_{i=1}^N \log(Np_i) \leq \frac{r}{N}\}$ & $r=10$ \\ [.5ex]
(d) Confidence Interval & $\calP_N=\{\delta_{\xi}\mid \xi\geq 0,\, |\xi-\muh_N| \leq t_{N-1,\alpha/2}\sigmah_N/\sqrt{N}\}$ & $\alpha = 0.95$\\
\bottomrule
\end{tabular}
\end{table}

First, to demonstrate the effect of $\theta$ on the optimal solution and value, we solve our TRO model with different values of $\theta\in\{0,0.01,0.02,\dots,1\}$ and $N=100$. Figure \ref{fig:expt_IC_opt_change} illustrates the resulting optimal solutions and objective values. Clearly, the optimal value function is concave under shape parameters (a)--(d), which is consistent with Theorem~\ref{thm:conservatism}. The optimal value function is increasing on the entire interval [0,1] under sets (a)--(c). This is because sets (a)--(c) are star-shaped with a star center $\Probh_N\in\calP_N$, and thus, the sequence of TRO ambiguity sets constructed using sets (a)--(c) satisfy the hierarchical properties in Theorem~\ref{thm:calP_nondecreasing}. In contrast, the optimal value function first increases on $\theta\in[0,0.85]$ and then decreases. This is because, by the construction of the set (d), $\Probh_N\not\in\calP_N$, and thus, $\Probh_N$ is not a star center of $\calP_N$. It follows that the sequence of TRO ambiguity sets constructed using set (d) does not satisfy the hierarchical property. We also observe that different choices of the shape parameter $\calP_N$  result in different spectra of optimal solutions and values. For example, the TRO model with shape parameter (a) suggests ordering more under a larger $\theta$ and suggests ordering less when using shape parameters (c) and~(d). In \ref{apdx:add_expt_results:conservatism}, we assess the conservatism of these optimal solutions by measuring how much the estimated optimal value, $\upsilonh_N(\theta)$, exceeds the actual expected objective function value (cost), $\E_{\Prob^\star}[f(\xb_N(\theta),\xib)]$, associated with implementing the optimal solution $\xb_N(\theta)$. The results in  \ref{apdx:add_expt_results:conservatism} show that the conservatism of the TRO optimal solution generally increases with $\theta$. However, as demonstrated in \ref{apdx:add_expt_results:out_of_sample}, adopting solutions on the spectrum of TRO optimal solutions can yield lower out-of-sample costs than the SAA and DRO solutions. This highlights the practical benefits of the TRO approach.

\begin{figure}[t]
    \centering
    \includegraphics[scale=0.7]{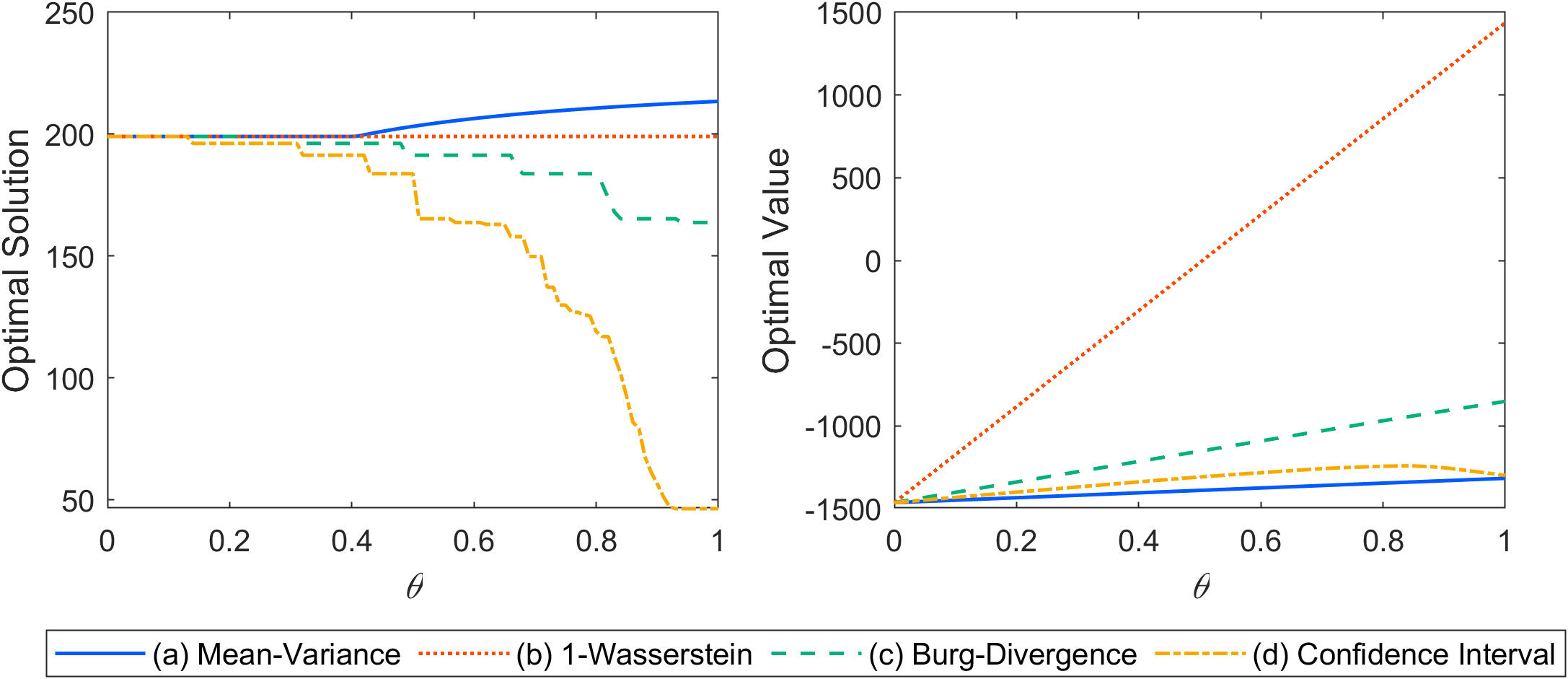}
    \caption{Optimal solution and optimal value for different values of $\theta$ in the inventory control problem.}
    \label{fig:expt_IC_opt_change}
\end{figure}

Let us now investigate the bias and standard deviation of the TRO estimator $\upsilonh_N(\theta)$. We estimate these quantities as follows. First, for each $\theta\in\{0,0.01,0.02,\dots,1\}$, we solve our TRO model with $R=1,000$ samples, each consisting of $N=10$ scenarios.  Let  $\{\upsilonh_{N,r}(\theta)\}_{r=1}^R$ denote the resulting optimal values of the TRO model from the $R$ replications under each $\theta$. We compute the mean of $\{\upsilonh_{N,r}(\theta)\}_{r=1}^R$ as $\texttt{mean}_N(\theta)=R^{-1} \sum_{r=1}^R \upsilonh_{N,r}(\theta)$. Then, we estimate the bias as $\texttt{bias}_N(\theta)=\texttt{mean}_N(\theta)-\upsilon^\star$ and the standard deviation as $\texttt{std}_N(\theta)=R^{-1}\sum_{r=1}^R[\upsilonh_{N,r}(\theta)-\texttt{mean}_N(\theta)]^2$. Figure~\ref{fig:expt_IC_mean_variance} presents the estimated bias and standard deviation. As expected, the SAA estimator $\upsilonh_N(0)$ exhibits a downward bias. In contrast, the TRO estimator $\upsilonh_N(\theta)$ is an unbiased estimator for some $\theta$.  Moreover, when $\theta$ is sufficiently small, the absolute bias of  $\upsilonh_N(\theta)$ is smaller than that of the SAA estimator $\upsilonh_N(0)$. Note that the absolute bias of $\upsilonh_N(\theta)$ varies across different shape parameters employed in the model. For example, when using set (a), the absolute bias of $\upsilonh_N(\theta)$ is smaller than that of the SAA estimator for $\theta\in(0,1]$. In contrast, when using sets (b)--(d), the absolute bias of $\upsilonh_N(\theta)$ is smaller for $\theta\in(0,0.09]$. These results are consistent with Theorem~\ref{thm:debias_theta} and Corollary~\ref{cor:debias_theta}. Figure~\ref{fig:expt_IC_mean_variance} shows that the standard deviation of $\upsilonh_N(\theta)$ decreases when $\theta$ increases from zero, demonstrating that the TRO estimator's variability reduces with  $\theta$. Moreover, using sets (a), (c), or (d) as the shape parameter produces TRO estimators that achieve both lower bias and standard deviation than the SAA estimator. Furthermore,  we observe that none of the shape parameters consistently produce a TRO estimator with the best bias-variance trade-off; see \ref{apdx:add_expt_results:bias_variance} for further discussions.
\begin{figure}[t]
    \centering
    \includegraphics[scale=0.7]{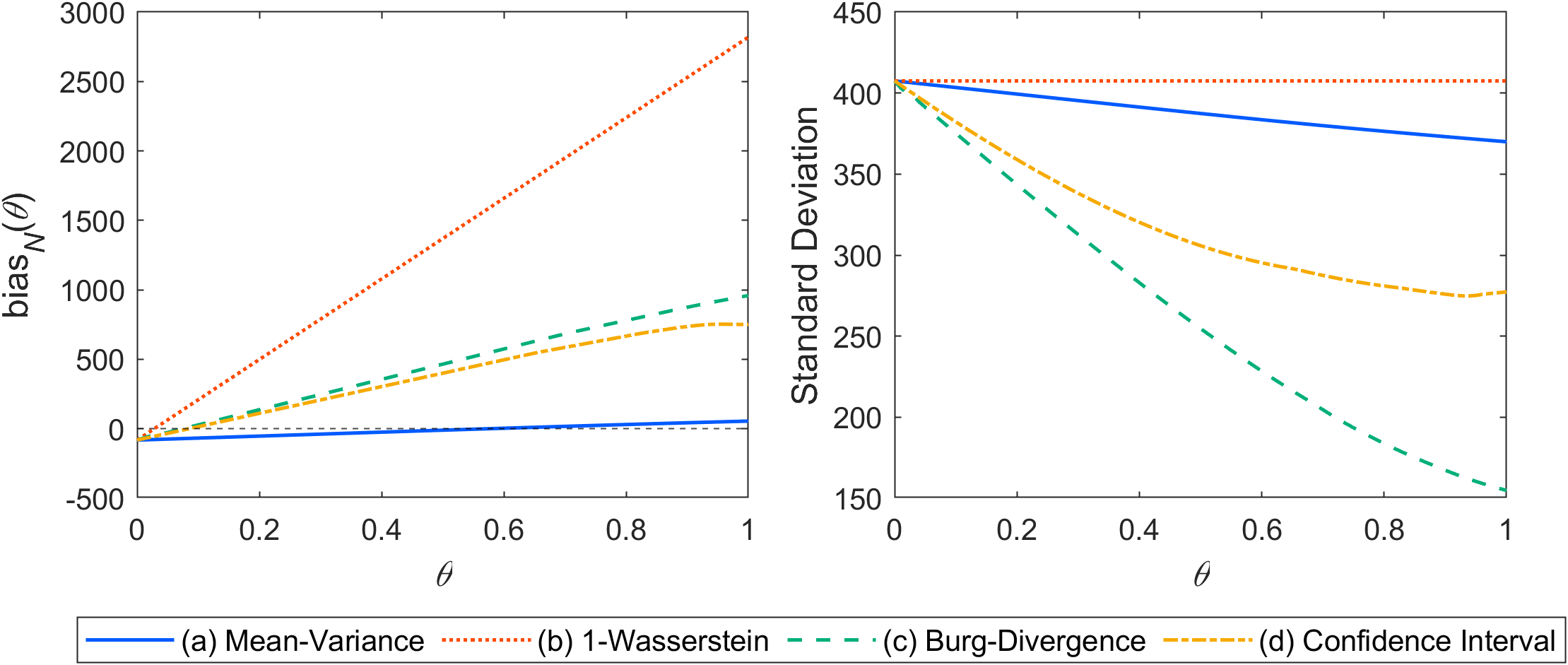}
    \caption{Bias and standard deviation of $\upsilonh_N(\theta)$ for different values of $\theta$ with $N=10$ in the inventory control problem.}
    \label{fig:expt_IC_mean_variance}
\end{figure}

Finally, we demonstrate the asymptotic properties of $\upsilonh_N(\theta_N)$. First, to show the asymptotic convergence of $\upsilonh_N(\theta_N)$, we solve the TRO model with $N\in\{10,50,100,500,1000\}$ and $\theta_N=10/N=o(N^{-1/2})$ (consistent with the rate suggested in Lemma~\ref{lem:asy_tight_tradeoff_ambig_set}) and  compute the absolute difference $|\upsilonh_N(\theta_N)-\upsilon^\star|$ between the optimal value of the TRO model $\upsilonh_N(\theta_N)$ and the true optimal value $\upsilon^\star$. Table~\ref{table:expt_IC_absolute_diff} presents $|\upsilonh_N(\theta_N)-\upsilon^\star|$ for $N\in\{10,50,100,500,1000\}$. It is clear that $|\upsilonh_N(\theta_N)-\upsilon^\star|$  decreases with $N$, illustrating the asymptotic convergence of $\upsilonh_N(\theta_N)$ to $\upsilon^\star$ as shown in Theorem~\ref{thm:asymptotic_convergence}.  Next, to demonstrate the asymptotic distribution of $\upsilonh_N(\theta_N)$, we solve the TRO model with $R=1,000$ sets of data, each consisting of $N$ scenarios, to obtain the optimal values $\{\upsilonh_{N,r}(\theta_N)\}_{r=1}^R$. We then compute the Kolmogorov–Smirnov (KS) statistic that quantifies the difference between the standard normal distribution and the empirical distribution of $\sqrt{N}\big[\upsilonh_N(\theta)-\upsilon^\star\big]/V^\star$ based on the samples $\{\upsilonh_{N,r}(\theta_N)\}_{r=1}^R$, where $V^\star=\Var_{\Prob^\star}\big(f(\xb^\star,\xib)\big)$ and $\xb^\star$ is the (unique) true optimal solution. Table~ \ref{table:expt_IC_KS_stat} presents the KS statistics for $N\in\{10,100,1000\}$. The KS statistic converges to zero when $N$ increases, suggesting that $\sqrt{N}\big[\upsilonh_N(\theta)-\upsilon^\star\big]/V^\star$ converges weakly to the standard normal distribution. These observations are consistent with Theorem~\ref{thm:asy_dist}. These results also emphasize that the asymptotic properties of $\upsilonh_N(\theta_N)$ hold for general shape parameters.

\begin{table}[t]\centering
\footnotesize
\caption{Absolute difference between $\upsilonh_N(\theta_N)$ and $\upsilon^\star$ for different $N$ in the inventory control problem.} \label{table:expt_IC_absolute_diff}
\ra{0.8}  % strech parameter
\begin{tabular}{@{}l|rrrrr@{}} \toprule
                        & $N = 10$ & $N = 50$ & $N = 100$ & $N = 500$ & $N = 1000$ \\ \midrule
(a) Mean-Variance       & 150.31   & 96.11    & 140.18    & 24.42     & 2.79       \\
(b) 1-Wasserstein       & 2535.99  & 457.37   & 137.80    & 31.65     & 25.24      \\
(c) Burg-Divergence     & 1004.17  & 29.20    & 95.42     & 21.28     & 2.03       \\
(d) Confidence Interval & 998.49   & 35.05    & 123.69    & 24.41     & 3.34       \\
\bottomrule
\end{tabular}
\end{table}
\begin{table}[t]\centering 
\footnotesize
\caption{Kolmogorov–Smirnov statistics for $\sqrt{N}\big[\upsilonh_N(\theta)-\upsilon^\star\big]/V^\star$ with $V^\star=\Var_{\Prob^\star}\big(f(\xb^\star,\xib)\big)$ for different $N$ in the inventory control problem.} \label{table:expt_IC_KS_stat}
\ra{0.8}  % strech parameter
\begin{tabular}{@{}l|rrr@{}} \toprule
                        & $N = 10$ & $N = 100$ & $N = 1000$ \\ \midrule
(a) Mean-Variance       & 0.1121   & 0.0666    & 0.0312     \\
(b) 1-Wasserstein       & 0.9986   & 0.7328    & 0.2743     \\
(c) Burg-Divergence     & 0.9071   & 0.1796    & 0.0365     \\
(d) Confidence Interval & 0.7422   & 0.1103    & 0.0269     \\
\bottomrule
\end{tabular}
\end{table}

\subsection{Portfolio Optimization} \label{subsec:portfolio_optimization_problem}

Consider the classical mean-risk portfolio optimization problem, where there are $n$ trading assets, and an investor needs to decide the proportion $\xb=(x_1,\dots,x_n)^\tp$ of the total investment amount allocated to each trading asset before observing the random return $\xib=(\xi_1,\dots,\xi_n)^\tp$.  Given the portfolio $\xb$, the corresponding portfolio loss is $-\xb^\tp\xib$. The goal is to minimize the sum of the expected portfolio loss and the portfolio risk measured by conditional value-at-risk (CVaR): 
\begin{equation} \label{eqn:mean-CVaR_model}
    \underset{\xb\in\calX}{\text{minimize}} \quad \beta\,\E_{\Prob^\star}(-\xb^\tp\xib) + (1-\beta)\,\Prob^\star\mhyphen\CVaR_{\alpha}(-\xb^\tp\xib), 
\end{equation}
where $\Prob^\star$ is the true distribution of $\xib$, $\beta\in(0,1)$, $\calX=\{\xb\in\R^n\mid \one^\tp\xb=1,\,\xb\geq0\}$, and 
\begin{equation} \label{eqn:CVaR}
    \Prob\mhyphen\CVaR_\alpha(-\xb^\tp\xib)=\min_t \bigg\{ t + \frac{1}{1-\alpha}\E_{\Prob}\big[(-\xb^\tp\xib-t)_+\big]\bigg\} 
\end{equation}
for some $\alpha\in(0,1)$ \citep{Rockafellar_Uryasev:2000}. Using \eqref{eqn:CVaR}, we can reformulate \eqref{eqn:mean-CVaR_model} into
\begin{equation} \label{eqn:mean-CVaR_model_linear}
    \underset{\xb\in\calX,\,t\in\R}{\text{minimize}} \quad \E_{\Prob^\star}\bigg[(1-\beta)t +\beta(-\xb^\tp\xib)+\frac{1-\beta}{1-\alpha} (-\xb^\tp\xib-t)_+\bigg]=\E_{\Prob^\star}[f(\xb,t,\xib)],
\end{equation}
where $f(\xb,t,\xib)=(1-\beta)t+\beta (-\xb^\tp\xib) + [(1-\beta)/(1-\alpha)](-\xb^\tp\xib-t)^+$. Using \eqref{eqn:mean-CVaR_model_linear}, we formulate the following TRO model for this problem 
\begin{equation} \label{eqn:mean-CVaR_model_linear_TRO}
    \underset{\xb\in\calX,\,t\in\R}{\text{minimize}} \quad (1-\theta)\, \frac{1}{N}\sum_{i=1}^Nf(\xb,t,\xibh_i)+\theta\sup_{\Prob\in\calP_N}\E_{\Prob}[f(\xb,t,\xib)], 
\end{equation}
where $\{\xibh_i\}_{i=1}^N$ is the set of samples.

Note that for any $\xb\in\calX$, a minimizer of \eqref{eqn:CVaR} over $t\in\R$ is the value-at-risk $\Prob^\star\mhyphen\VaR_\alpha(-\xb^\tp\xib)=\inf\{t\in\R\mid \Prob^\star(-\xb^\tp\xib\leq t)\geq1-\alpha\}$ \citep{Shapiro_et_al:2014}, which is finite whenever $\E_{\Prob^\star}|\xi_j|<\infty$ for all $j\in\{1,\dots,n\}$. Thus, we can impose a large upper bound on $|t|$ so that the feasible set is compact.  Moreover, $f(\xb,t,\xib)$ is Lipschitz in $(\xb,t)$ with Lipschitz constant depending on $\norms{\xib}_1$ (see \ref{apdx:num_expt_PO_Lip}). Thus, Assumption~\ref{assumption:loss_and_ambig_set_boundedness} holds if  $\sup_{\Prob\in\calP_N} \E_{\Prob}\norms{\xib}_1<\infty$. This is true for the shape parameters (ambiguity sets) $\calP_N$ used in our experiment: (a) the mean-variance ambiguity set, (b) the 1-Wassestein ambiguity set with $\ell_1$-norm on $\R^n$, and (c) a $\phi$-divergence ball based on total variational distance \citep{Huang_et_al:2021}. Table~\ref{table:portfolio_optimization_reformulations} summarizes these sets and the parameter settings we used for each. It is straightforward to verify that ambiguity sets (a)--(c) satisfy Assumption \ref{assumption:ambig_set_regularity}. In \ref{apdx:num_expt_PO_reform}, we provide tractable reformulations of the TRO model \eqref{eqn:mean-CVaR_model_linear_TRO} under each set. For illustrative purposes, we consider four major indices (S\&P 500, DAX, HSI, and FTSE 1000). As in \cite{Yam_et_al:2016}, we use the multivariate normal distribution as the true distribution of the (monthly) return $\xib$ with mean $\mub$ and covariance $\Sigmab$ given by
$$\mub = \begin{pmatrix}   0.06116 \\ 0.109547 \\ 0.090358 \\ 0.040923\end{pmatrix} \quad \text{and}  \quad \Sigmab = \begin{pmatrix}
0.018632 & 0.020056 & 0.020646 & 0.015213 \\
0.020056 & 0.034507 & 0.027412 & 0.020652 \\
0.020646 & 0.027412 & 0.048680 & 0.021663 \\
0.015213 & 0.020652 & 0.021663 & 0.018791
\end{pmatrix}.$$
These quantities are estimated based on ten-year historical data. Under this distribution,  we can compute the true optimal value $\upsilon^\star=0.0719$, where we set $\beta=0.5$ and $\alpha=0.95$.
\begin{table}[t]\centering\small 
\footnotesize
\caption{Shape parameters for the portfolio optimization problem. \textit{Notation:} $\widehat{\mub}_N$ is the sample mean; $\widehat{\Sigmab}_N$ is the sample covariance matrix; $\Delta_N\subseteq\R^N$ is the probability simplex.} \label{table:portfolio_optimization_reformulations}
\ra{0.8}  % strech parameter
\begin{tabular}{@{}l|c|c@{}} \toprule
& Ambiguity Set $\calP_N$ & Parameters \\ \midrule
(a) Mean-Variance & $\calP_N=\{\Prob\in\calP(\R^n)\mid \E_{\Prob}(\xib)=\mub,\, \Var_{\Prob}(\xib)=\Sigmab\}$ & $(\mub,\Sigmab)=(\widehat{\mub}_N,\widehat{\Sigmab}_N)$ \\ [1ex]
(b) 1-Wasserstein & $\calP_N=\{\Prob\in\calP(\R^n)\mid W_1(\Prob,\Probh_N) \leq r\}$ & $r=0.1$  \\ [1ex]
(c) Total Variation & $\calP_N=\{\pb\in\Delta_N \mid \norms{\pb-\frac{1}{N}\one}_1 \leq \frac{r}{N}\}$ & $r=100$  \\
\bottomrule
\end{tabular}
\end{table}

Let us first analyze the optimal value and solution to the TRO model \eqref{eqn:mean-CVaR_model_linear_TRO} for this problem. Figure~\ref{fig:expt_PO_opt_val_change} illustrates the optimal value for different values of $\theta$. Clearly, the optimal value function is concave, which is consistent with Theorem~\ref{thm:conservatism}. Also, the optimal value function is increasing. This is because sets (a)--(c) are star-shaped with a star center $\Probh_N\in\calP_N$, and thus, the sequence of the TRO ambiguity sets constructed using these sets satisfies the hierarchical properties in Theorem~\ref{thm:calP_nondecreasing}. Figure~\ref{fig:expt_PO_opt_sol_change} illustrates the optimal proportion invested in each asset obtained from solving the TRO model with different values of $\theta\in\{0,0.01,0.02,\dots,1\}$ and $N=500$. We again observe that different choices of the shape parameter $\calP_N$ result in different spectra of optimal solutions. The TRO model with shape parameter (b) suggests investing equally in each asset under larger $\theta$; in particular, the optimal proportion invested in each asset converges to $1/4$ for sufficiently large $\theta$. In contrast, the TRO model with shape parameters (a) and (c) suggests investing more in assets with smaller variances under a larger $\theta$. For example, the optimal proportion invested in asset 1, which has the smallest variance among the four assets, increases with $\theta$. The optimal proportion invested in asset 4, which has a similar variance but a smaller return compared with asset~1, remains zero for most values of~$\theta$. Finally, the optimal proportion invested in assets~2 and 3, which have higher returns and variances compared with asset 1, decreases under a larger $\theta$.
\begin{figure}[t]
    \centering
    \includegraphics[scale=0.6]{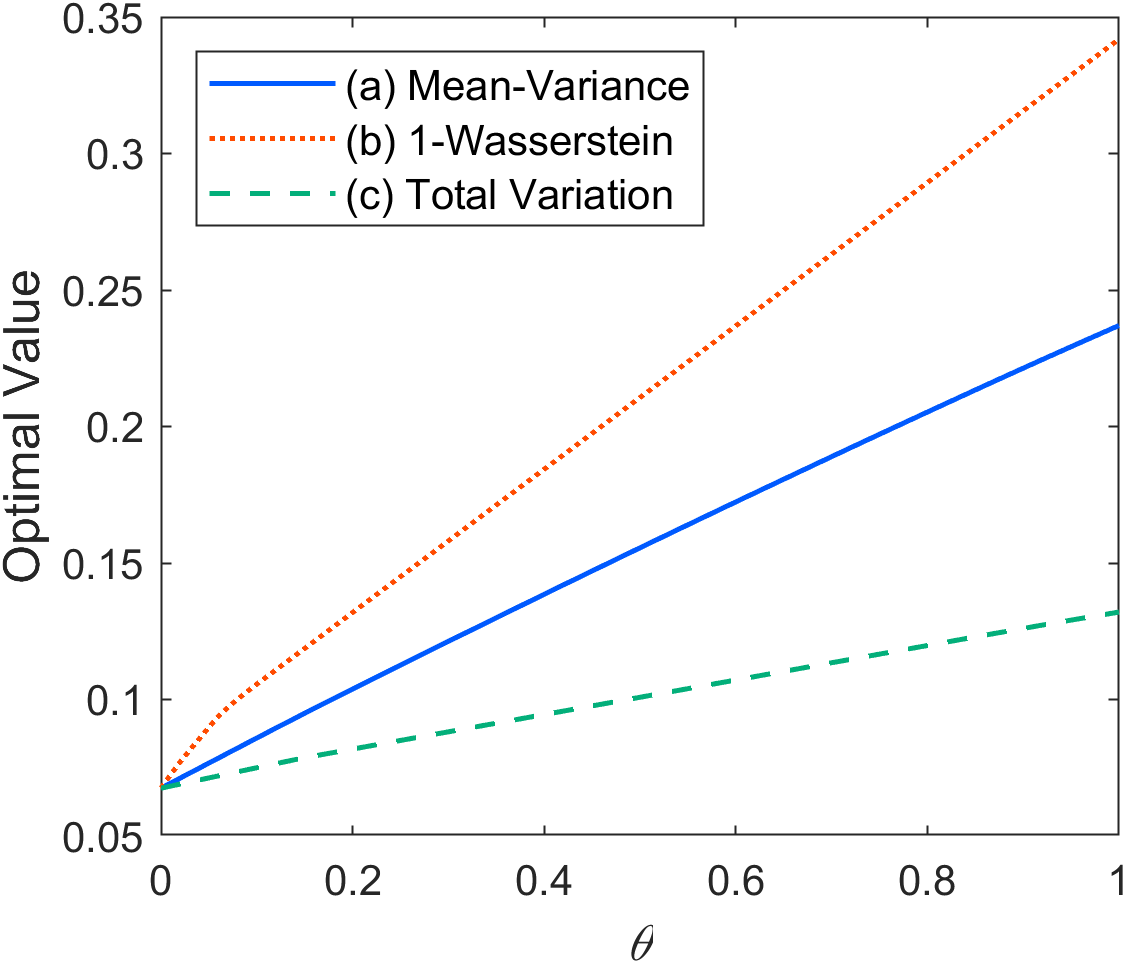}
    \caption{Optimal value for different values of $\theta$ in the portfolio optimization problem.}
    \label{fig:expt_PO_opt_val_change}
\end{figure}
\begin{figure}[t]
    \centering
    \includegraphics[scale=0.65]{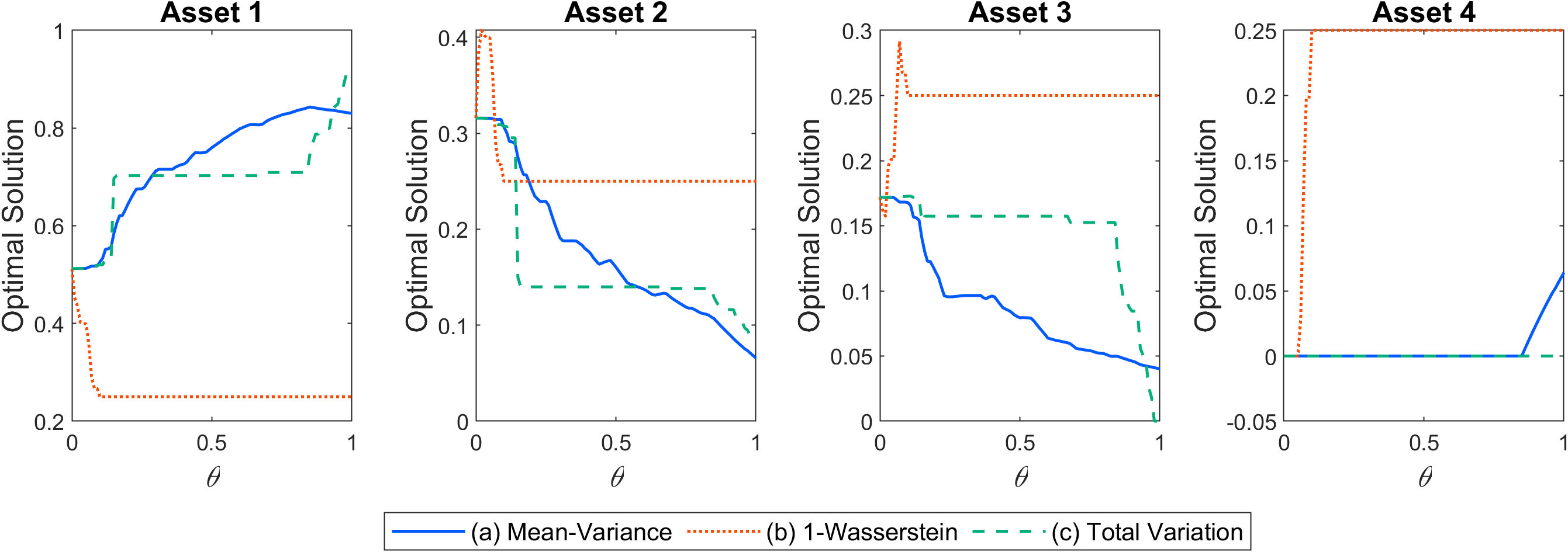}
    \caption{Optimal solution for different values of $\theta$ in the portfolio optimization problem.}
    \label{fig:expt_PO_opt_sol_change}
\end{figure}

As shown in \ref{apdx:add_expt_results:distance_based_DRO}, the spectra obtained from our TRO model with a TRO ambiguity set characterized by a distance-based shape parameter and the DRO model equipped with that shape parameter could be different for some choices of the statistical distance in the distance-based shape parameter. For example, the spectra of solutions obtained from the TRO model employing the total variation ambiguity set as the shape parameter in the TRO ambiguity set and the DRO model with the total variation ambiguity set are different. However, the two spectra using 1-Wasserstein ambiguity set are approximately the same. Moreover, similar to the inventory control problem, we observe that the conservatism of the TRO optimal solution increases with $\theta$; see \ref{apdx:add_expt_results:conservatism}. However, as demonstrated in \ref{apdx:add_expt_results:out_of_sample}, adopting solutions on the spectrum of TRO optimal solutions can yield lower out-of-sample costs than the SAA and DRO solutions.

Next, we analyze the bias and standard deviation of the TRO estimator $\upsilonh_N(\theta)$ presented in Figure~\ref{fig:expt_PO_mean_variance}. We estimate these quantities as discussed in Section~\ref{subsec:inventory_control_problem} with $N=10$.  Again, the SAA estimator $\upsilonh_N(0)$ exhibits a downward bias. We also observe that our TRO estimator $\upsilonh_N(\theta)$ is an unbiased estimator for some $\theta$. Moreover, the absolute bias of $\upsilonh_N(\theta)$ with sufficiently small $\theta>0$ is smaller than that of $\upsilonh_N(0)$. For example, when using set (c), the absolute bias of $\upsilonh_N(\theta)$ is smaller for $\theta\in(0,1]$. These results are consistent with Theorem~\ref{thm:debias_theta} and Corollary~\ref{cor:debias_theta}. In contrast to the inventory optimization problem, where the standard deviation of $\upsilonh_N(\theta)$ decreases with $\theta$, we observe that the standard deviation may increase or decrease, depending on the choice of the shape parameter $\calP_N$. With shape parameter (c), the TRO estimator $\upsilonh_N(\theta)$ has a larger standard deviation compared with the SAA estimator $\upsilonh_N(0)$ for $\theta\in(0,1]$. In contrast, with shape parameter (b), $\upsilonh_N(\theta)$ has a smaller standard deviation than $\upsilonh_N(0)$ for $\theta\in(0,1]$. Finally, with shape parameter (a), the standard deviation of $\upsilonh_N(\theta)$ is smaller than that of $\upsilonh_N(0)$ for $\theta\in(0,0.21]$. These results show that for this problem, our TRO model could produce estimators with a smaller bias when the TRO ambiguity set is constructed using shape parameters (a)--(c) and a smaller standard deviation when constructed using shape parameters (a) for small $\theta$ and (b) for $\theta\in(0,1]$. Again, we observe that none of the shape parameters consistently produce a TRO estimator with the best bias-variance trade-off; see \ref{apdx:add_expt_results:bias_variance} for further discussions.
\begin{figure}
    \centering
    \includegraphics[scale=0.7]{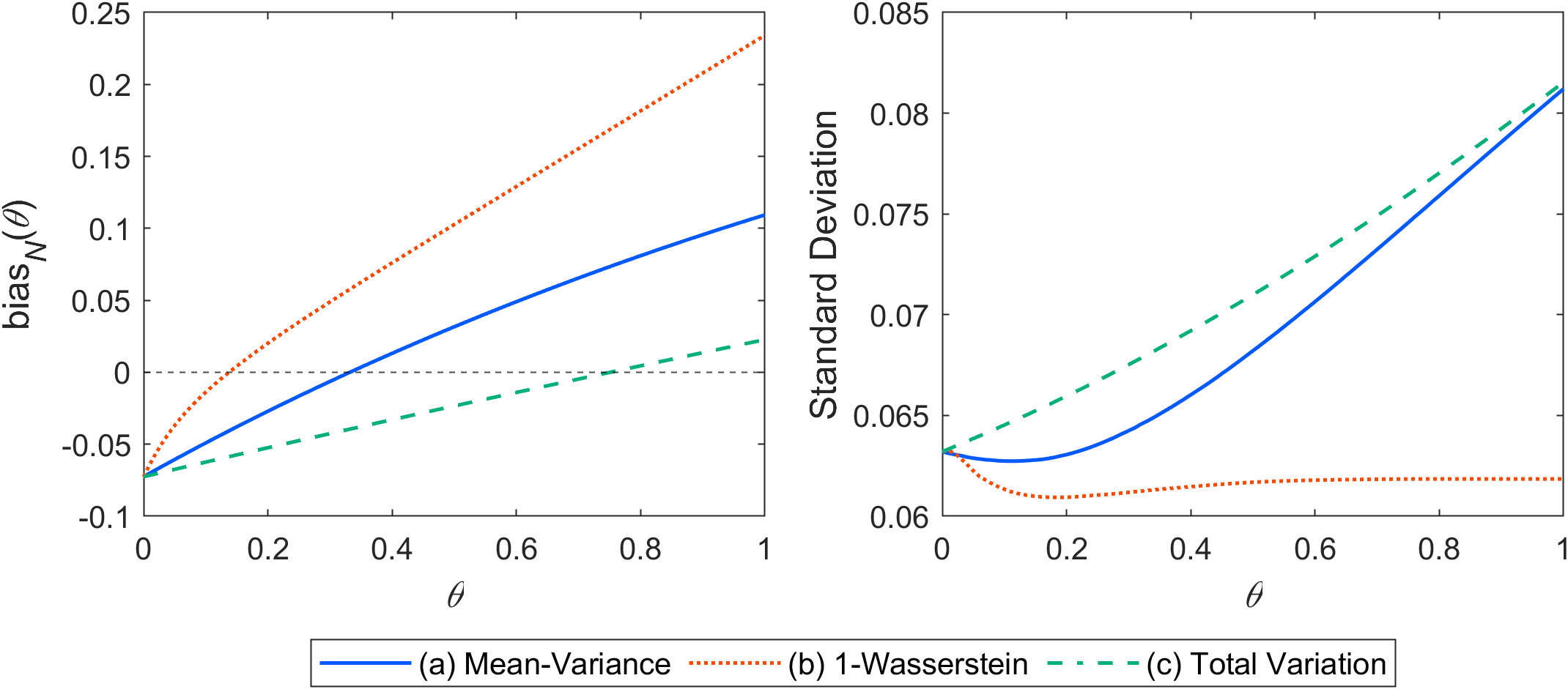}
    \caption{Bias and standard deviation of $\upsilonh_N(\theta)$ for different values of $\theta$ with $N=10$ in the portfolio optimization problem.}
    \label{fig:expt_PO_mean_variance}
\end{figure}

Finally, we demonstrate the asymptotic properties of $\upsilonh_N(\theta_N)$. First, we compute the absolute difference $|\upsilonh_N(\theta_N)-\upsilon^\star|$ between the optimal value of the TRO model $\upsilonh_N(\theta_N)$ and the true optimal value $\upsilon^\star$ as discussed in Section~\ref{subsec:inventory_control_problem}. Table~\ref{table:expt_PO_absolute_diff} presents $|\upsilonh_N(\theta_N)-\upsilon^\star|$ for $N\in\{10,50,100,500,1000\}$. Clearly, $|\upsilonh_N(\theta_N)-\upsilon^\star|$  decreases with $N$, illustrating the asymptotic convergence of $\upsilonh_N(\theta_N)$ to $\upsilon^\star$ as shown in Theorem~\ref{thm:asymptotic_convergence}. Second, to demonstrate the asymptotic distribution of $\upsilonh_N(\theta_N)$, we compute the KS statistic between the standard normal distribution and the empirical distribution of $\sqrt{N}\big[\upsilonh_N(\theta)-\upsilon^\star\big]/V^\star$ as discussed in Section~\ref{subsec:inventory_control_problem}.  Table~\ref{table:expt_PO_KS_stat} presents the KS statistics for $N\in\{10,100,1000\}$. The KS statistic converges to zero when $N$ increases, suggesting that $\sqrt{N}\big[\upsilonh_N(\theta)-\upsilon^\star\big]/V^\star$ converges weakly to the standard normal distribution. These observations are consistent with Theorem~\ref{thm:asy_dist}.
\begin{table}[t]\centering
\footnotesize
\caption{Absolute difference between $\upsilonh_N(\theta_N)$ and $\upsilon^\star$ for different $N$ in the portfolio optimization problem.} \label{table:expt_PO_absolute_diff}
\ra{0.8}  % strech parameter
\begin{tabular}{@{}l|rrrrr@{}} \toprule
                      & $N = 10$ & $N = 50$ & $N = 100$ & $N = 500$ & $N = 1000$ \\ \midrule
(a) Mean-Variance     & 0.0953   & 0.0391   & 0.0239    & 0.0059    & 0.0002     \\
(b) 1-Wasserstein     & 0.2792   & 0.0117   & 0.0422    & 0.0010    & 0.0035     \\
(c) Total Variation   & 0.0699   & 0.0442   & 0.0204    & 0.0070    & 0.0007     \\
\bottomrule
\end{tabular}
\end{table}
\begin{table}[t]\centering
\footnotesize
\caption{Kolmogorov–Smirnov statistics for $\sqrt{N}\big[\upsilonh_N(\theta)-\upsilon^\star\big]/V^\star$ with $V^\star=\Var_{\Prob^\star}\big(f(\xb^\star,\xib)\big)$ for different $N$ in the portfolio optimization problem.} \label{table:expt_PO_KS_stat}
\ra{0.8}  % strech parameter
\begin{tabular}{@{}l|rrr@{}} \toprule
                    & $N = 10$ & $N = 100$ & $N = 1000$ \\ \midrule
(a) Mean-Variance   & 0.5040 & 0.1725 & 0.0683 \\
(b) 1-Wasserstein   & 0.9275 & 0.5103 & 0.2674 \\
(c) Total Variation & 0.1144 & 0.0995 & 0.0319 \\
\bottomrule
\end{tabular}
\end{table}

\section{Conclusion} \label{sec:conclusion}

In this paper, we propose and analyze a new TRO approach for modeling uncertainty in optimization problems that serves as a middle ground between the optimistic approach that adopts a distributional belief and the pessimistic approach that protects against distributional ambiguity. We equip the TRO model with a TRO ambiguity set $\calP'_{N,\theta}$ characterized by a size parameter $\theta$ controlling the level of optimism and a shape parameter  $\calP_N$ representing distributional ambiguity, which could be any ambiguity set satisfying mild assumptions. Our theoretical investigations and results include the following. First, we investigate the conservatism of the TRO model by analyzing the properties of the TRO ambiguity set and the characteristics of the model's optimal value and solutions. We derive necessary and sufficient conditions for $\calP'_{N,\theta}$ to satisfy the hierarchical property and quantify the difference in the optimal value and the set of optimal solutions (and hence conservatism) incurred by perturbation in $\theta$. Moreover, the TRO model enables decision-makers to explore a spectrum of optimal solutions, from optimistic to conservative solutions.  Second, we show that our TRO model could produce an unbiased estimator of the true optimal value. Additionally, we derive an upper bound on the generalization error (GE) of the TRO estimator and discuss how this bound might be tighter than the GE bounds for the SAA or DRO models. Moreover, we show that the GE of our TRO model has an exponentially decaying tail under specific choices of the shape parameter. Finally, we prove the almost sure convergence of the optimal value and the set of optimal solutions of the TRO model to their true counterparts. In addition, we derive the asymptotic distribution of the optimal value to the TRO model. These properties hold for TRO models with TRO ambiguity sets constructed using general shape parameters, such as moment- and distance-based ambiguity sets. We numerically demonstrate these theoretical results using an inventory control problem and a portfolio optimization problem.

Our work opens avenues for further research in various directions. These include extending and identifying properties of the TRO approach for risk-averse settings. Moreover, developing computationally efficient algorithms aimed at obtaining the complete spectrum of optimal solutions of the TRO model would be of significant value in many application domains. Another interesting area is investigating the characteristics of the spectrum of TRO optimal solutions of the TRO model under different shape parameters. 

\appendix
\newpage

\section{Proofs} \label{apdx:math_proof}

\subsection{Proof of Theorem~\ref{thm:calP_nondecreasing}}

\begin{proof}
First, we prove part (i). Suppose that $\calP_N$ is star-shaped and $\Probh_N\in\calP_N$ is a star center of $\calP_N$. We show that $\{\calP_{N,\theta}\mid\theta\in[0,1]\}$ satisfies the hierarchical property. For any $0\leq \theta_1<\theta_2\leq 1$ and $\Q\in\calP_N$, we have
\begin{align}
  (1-\theta_1)\, \Probh_N + \theta_1\,\Q  &= (1-\theta_1)\, \Probh_N + \theta_2\,\Probh_N - \theta_2\,\Probh_N + \theta_1\,\Q \nonumber \\
  &=   (1-\theta_2)\, \Probh_N + (\theta_2-\theta_1)\,\Probh_N + \theta_1\,\Q   \nonumber\\
  &=  (1-\theta_2)\Probh_N + \theta_2\Bigg[ \Bigg(1-\frac{\theta_1}{\theta_2}\Bigg)\Probh_N+\frac{\theta_1}{\theta_2}\Q\Bigg]  \label{eqn_pf:prop:calP_nondecreasing_1},
\end{align}
where $\theta_1/\theta_2\in[0,1)$. Since $\calP_N$ is star-shaped and $\Probh_N\in\calP_N$ is a star center of $\calP_N$, the measure $(1-\theta_1/\theta_2)\Probh_N+(\theta_1/\theta_2)\Q$ in the second term of \eqref{eqn_pf:prop:calP_nondecreasing_1} belongs to $\calP_N$. It follows from the definition of $\calP'_{N,\theta_2}$ that $(1-\theta_1)\, \Probh_N + \theta_1\,\Q\in\calP'_{N,\theta_2}$ for any $\Q\in\calP_N$, and thus, $\calP'_{N,\theta_1}\subseteq\calP'_{N,\theta_2}$. This shows that $\{\calP_{N,\theta}\mid\theta\in[0,1]\}$ satisfies the hierarchical property.

Now, suppose that $\{\calP_{N,\theta}\mid\theta\in[0,1]\}$ satisfies the hierarchical property, i.e., $\calP'_{N,\theta_1}\subseteq\calP'_{N,\theta_2}$ for all $0\leq\theta_1<\theta_2\leq 1$. We show that $\calP_N$ is star-shaped with a star center $\Probh_N$. For any $\Q\in\calP_N$, by \eqref{eqn_pf:prop:calP_nondecreasing_1}, we have that
\begin{equation} \label{eqn_pf:prop:calP_nondecreasing_2}
    (1-\theta_1)\, \Probh_N + \theta_1\,\Q = (1-\theta_2)\Probh_N + \theta_2\Bigg[ \Bigg(1-\frac{\theta_1}{\theta_2}\Bigg)\Probh_N+\frac{\theta_1}{\theta_2}\Q\Bigg] \in\calP'_{N,\theta_2}
\end{equation}
since $(1-\theta_1)\, \Probh_N + \theta_1\,\Q\in\calP'_{N,\theta_1}\subseteq\calP'_{N,\theta_2}$. By definition of $\calP'_{N,\theta_2}$, the inclusion in \eqref{eqn_pf:prop:calP_nondecreasing_2} implies that $(1-\theta_1/\theta_2)\Probh_N+(\theta_1/\theta_2)\Q\in\calP_N$. Since $\theta_1/\theta_2\in[0,1)$ is arbitrary, we have $(1-\alpha)\Probh_N+\alpha\Q\in\calP_N$ for all $\alpha\in[0,1]$ and $\Q\in\calP_N$. (Note that when $\alpha=1$, we have $(1-\alpha)\Probh_N+\alpha\Q=\Q\in\calP_N$.) This shows that $\calP_N$ is star-shaped and $\Probh_N$ is a star center.

Next, we prove part (ii). Suppose that $\{\calP_{N,\theta}\mid\theta\in[0,1]\}$ satisfies the strict hierarchical property, i.e., $\calP'_{N,\theta}$ is increasing in $\theta$. We show that $\calP_N$ is star-shaped with a star center $\Probh_N$ and $\calP_N\ne\{\Probh_N\}$. From part (i), we have that $\calP_N$ is star-shaped with a star center $\Probh_N$. Suppose, on the contrary, that $\calP_N=\{\Prob_N\}$. Then,  we have $\Prob'_{N,\theta}=\{\Probh_N\}$ for all $\theta\in[0,1]$, contradicting that $\calP'_{N,\theta}$ is increasing in $\theta$. This shows that $\calP_N\ne\{\Prob_N\}$.

Now, suppose that $\calP_N$ is star-shaped with a star center $\Probh_N\in\calP_N$ and $\calP_N\ne\{\Probh_N\}$. We show  that $\{\calP_{N,\theta}\mid\theta\in[0,1]\}$ satisfies the strict hierarchical property. From part (i),  we have $\calP'_{N,\theta_1}\subseteq\calP'_{N,\theta_2}$, for any $0\leq \theta_1<\theta_2\leq 1$. To show the strict inclusion $\calP'_{N,\theta_1}\subset\calP'_{N,\theta_2}$, we need to show that there exists a probability measure $\M$ such that $\M\in\calP'_{N,\theta_2}$ but $\M\not\in\calP'_{N,\theta_1}$. We first claim that this is equivalent to
\begin{equation}\label{eqn:pf_calP_increasing1}
    \calP_N\setminus\Bigg\{\Bigg(1-\frac{\theta_1}{\theta_2}\Bigg)\,\Probh_N + \frac{\theta_1}{\theta_2}\,\Q \,\Bigg|\, \Q\in\calP_N \Bigg\}\ne\emptyset.
\end{equation}
Indeed, \eqref{eqn:pf_calP_increasing1} is equivalent to the existence of $\Q'\in\calP_N$ such that $\Q'\ne (1-\theta_1/\theta_2)\Probh_N+(\theta_1/\theta_2)\Q$ for all $\Q\in\calP_N$. Note that $\Q'\ne (1-\theta_1/\theta_2)\Probh_N+(\theta_1/\theta_2)\Q$ for all $\Q\in\calP_N$ is equivalent to $\theta_2\Q' + (1-\theta_2)\Probh_N \ne \theta_1\Q + (1-\theta_1)\Probh_N$ for all $\Q\in\calP_N$. Thus, we have that $\theta_2\Q'+(1-\theta_2)\Probh_N\in\calP'_{N,\theta_2}$ cannot be expressed as $\theta_1\Q + (1-\theta_1)\Probh_N$ for any $\Q\in\calP_N$, i.e., $\theta_2\Q'+(1-\theta_2)\Probh_N\not\in\calP'_{N,\theta_1}$. This completes the proof of the claim.

Next, suppose, for the sake of contradiction, that condition \eqref{eqn:pf_calP_increasing1} does not hold, i.e., 
\begin{equation}\label{eqn:pf_calP_increasing2}
    \calP_N\subseteq \Bigg\{\Bigg(1-\frac{\theta_1}{\theta_2}\Bigg)\,\Probh_N + \frac{\theta_1}{\theta_2}\,\Q \,\Bigg|\, \Q\in\calP_N \Bigg\}.
\end{equation}
We claim that \eqref{eqn:pf_calP_increasing2} is equivalent to $\calP_N=\{\Probh_N\}$ is a singleton, which contradicts with $\calP_N\ne\{\Probh_N\}$. Consider an arbitrary probability measure $\M\in\calP_N$. By \eqref{eqn:pf_calP_increasing2}, we can write $\M=(1-\theta_1/\theta_2)\Probh_N+(\theta_1/\theta_2)\Q_1$ for some $\Q_1\in\calP_N$. Moreover, note that for any $i\in\N$ and probability measure $\Q_i\in\calP_N$, by \eqref{eqn:pf_calP_increasing2}, we can write $\Q_i=(1-\theta_1/\theta_2)\Probh_N+(\theta_1/\theta_2)\Q_{i+1}$ for some $\Q_{i+1}\in\calP_N$. Using this recursion, we have
\begin{align*}
    \M = \Bigg(1-\frac{\theta_1}{\theta_2}\Bigg)\,\Probh_N + \frac{\theta_1}{\theta_2}\,\Q_1 
    &= \Bigg(1-\frac{\theta_1}{\theta_2}\Bigg)\,\Probh_N\cdot \Bigg(1+\frac{\theta_1}{\theta_2}\Bigg) + \Bigg(\frac{\theta_1}{\theta_2}\Bigg)^2\,\Q_2 \\
    &= \cdots \\
    &=  \Bigg(1-\frac{\theta_1}{\theta_2}\Bigg)\,\Probh_N\cdot \sum_{i=0}^{n-1} \Bigg(\frac{\theta_1}{\theta_2}\Bigg)^i + \Bigg(\frac{\theta_1}{\theta_2}\Bigg)^n\,\Q_n
\end{align*}
for any $n\in\N$. Since $\theta_1<\theta_2$, $\sum_{i=0}^{n-1} (\theta_1/\theta_2)^i \rightarrow 1/(1-\theta_1/\theta_2)$ and $(\theta_1/\theta_2)^n\rightarrow 0$ as $n\rightarrow\infty$. Thus, for any $\varepsilon>0$, there exists $n'\in\N$ such that $|(1-\theta_1/\theta_2) \sum_{i=0}^{n-1} (\theta_1/\theta_2)^i - 1|<\varepsilon/2$ and $|(\theta_1/\theta_2)^n|<\varepsilon/2$ for all $n>n'$. Therefore, for any $B\in\calB$, we have that
\begin{align*}
    |\M(B)-\Probh_N(B)| &= \Bigg|\Bigg(1-\frac{\theta_1}{\theta_2}\Bigg)\,\Probh_N(B)\cdot \sum_{i=0}^{n-1} \Bigg(\frac{\theta_1}{\theta_2}\Bigg)^i + \Bigg(\frac{\theta_1}{\theta_2}\Bigg)^n\,\Q_n(B) - \Probh_N(B) \Bigg|\\
    &\leq  \Bigg|\Bigg(1-\frac{\theta_1}{\theta_2}\Bigg) \sum_{i=0}^{n-1} \Bigg(\frac{\theta_1}{\theta_2}\Bigg)^i - 1 \Bigg| \cdot\Probh_N(B) + \Bigg|  \Bigg(\frac{\theta_1}{\theta_2}\Bigg)^n \Bigg| \cdot\Q_n(B) \\
    &\leq \frac{\varepsilon}{2} \,\Probh_N(B) + \frac{\varepsilon}{2} \,\Q_n(B) \\
    &\leq \varepsilon
\end{align*}
for all $n>n'$, where the first inequality follows from triangular inequality. Since $\varepsilon>0$ is arbitrary, it follows that $\M=\Probh_N$ for any $\M\in\calP_N$, implying that $\calP_N=\{\Probh_N\}$. This contradicts the assumption that $\calP_N\ne\{\Probh_N\}$ and completes the proof.  
\end{proof}

\subsection{Relationship between Star-Shapedness and Convexity} \label{apdx:lem:star_shaped_convex}

In Lemma~\ref{lem:star_shaped_convex}, we show that convexity of a set implies star-shapedness.

\begin{lemma} \label{lem:star_shaped_convex}
If $\calP_N$ is convex and $\Probh_N\in\calP_N$, then $\calP_N$ is star-shaped with a star center $\Probh_N$
\end{lemma}

\begin{proof}
Since $\Probh_N  \in\calP_N$, it follows from the convexity of $\calP_N$ that
$$(1-\alpha)\Probh_N+\alpha\Prob\in\calP_N,\quad\forall\alpha\in[0,1],\,\Prob\in\calP_N.$$
This shows that $\calP_N$ is star-shaped with a star center $\Probh_N$. 
\end{proof}

\subsection{Proof of Proposition~\ref{prop:star_shape_moment}}

\begin{proof}
Suppose that $\calK_i$ is star-shaped on $\calS_i$ with a star center $\E_{\Probh_N}[\Phi_i(\xib)]$ for all $i\in\{1,\dots,p\}$. We show that $\calP_N$ is star-shaped with a star center $\Probh_N$. Indeed, for all $i\in\{1,\dots,p\}$, by the star-shapedness of $\calK_i$, we have $(1-\alpha)\E_{\Probh_N}[\Phi_i(\xib)] + \alpha\E_{\Q}[\Phi_i(\xib)] =\E_{(1-\alpha)\Probh_N+\alpha\Q}[\Phi_i(\xib)]\in\calK_i$ for any $\alpha\in[0,1]$ and $\Q\in\calP_N$. It follows from the definition of $\calP_N$ that $(1-\alpha)\Probh_N+\alpha\Q\in\calP_N$ for any $\alpha\in[0,1]$ and $\Q\in\calP_N$, i.e., $\calP_N$ is star-shaped with a star center $\Probh_N$.

Now, suppose that $\calP_N$ is star-shaped with a star center $\Probh_N$. We show that $\calK_i$ is star-shaped on $\calS_i:=\{\E_{\Q}[\Phi_i(\xib)]\mid \Q\in\calP_N\}\subseteq\R^{d_i\times d_i}$ with a star center $\E_{\Probh_N}[\Phi_i(\xib)]\in\calS_i$ for all $i\in\{1,\dots,p\}$. Since $(1-\alpha)\Probh_N + \alpha\Q\in\calP_N$ for any $\alpha\in[0,1]$ and $\Q\in\calP_N$, we have $\E_{(1-\alpha)\Probh_N+\alpha\Q}[\Phi_i(\xib)]=(1-\alpha)\E_{\Probh_N}[\Phi_i(\xib)] + \alpha\E_{\Q}[\Phi_i(\xib)] \in\calK_i$ for all $i\in\{1,\dots,p\}$. This, in turn, implies that  $(1-\alpha)\E_{\Probh_N}[\Phi_i(\xib)] + \alpha\Psi \in\calK_i$ for any $\alpha\in[0,1]$ and $\Psi\in\calS_i$. This completes the proof. 
\end{proof}

\subsection{Proof of Proposition~\ref{prop:star_shape_distance}}

\begin{proof}
Suppose that $\sfd$ is quasi-convex about $\Probh_N$ in the first argument. We show that $\calP_N(\varepsilon)$ is star-shaped with a star center $\Probh_N$ for all $\varepsilon\geq 0$. Indeed, for any given $\varepsilon\geq 0$, by quasi-convexity, we have $\sfd\big((1-\alpha)\Probh_N+\alpha\Q,\Probh_N\big) \leq \sfd(\Q,\Probh_N)\leq \varepsilon$ for any $\alpha\in[0,1]$ and $\Q\in\calP_N(\varepsilon)$. It follows from the definition of $\calP_N(\varepsilon)$ that $(1-\alpha)\Probh_N+\alpha\Q\in\calP_N(\varepsilon)$ for any $\alpha\in[0,1]$ and $\Q\in\calP_N(\varepsilon)$, i.e., $\calP_N(\varepsilon)$ is star-shaped with a star-center $\Probh_N$.

Now, suppose that $\calP_N(\varepsilon)$ is star-shaped with a star center $\Probh_N$ for all $\varepsilon\geq 0$. We show that $\sfd$ is quasi-convex about $\Probh_N$. Since $\calP_N(\varepsilon)$ is star-shaped, we have $(1-\alpha)\Probh_N+\alpha\Q\in\calP_N(\varepsilon)$ for any $\alpha\in[0,1]$ and $\Q\in\calP_N(\varepsilon)$. It follows from the definition of $\calP_N(\varepsilon)$ that $\sfd\big((1-\alpha)\Probh_N+\alpha\Q,\Probh_N\big) \leq \varepsilon$ for any $\alpha\in[0,1]$ and $\Q\in\calP_N(\varepsilon)$. Suppose, for the sake of contradiction, that $\sfd$ is not quasi-convex about $\Probh_N$ in the first argument. That is, there exist $\alpha\in(0,1)$ and $\Q\in\calP(\Xi)$ such that $\sfd\big((1-\alpha)\Probh_N+\alpha\Q,\Probh_N\big) >\sfd(\Q,\Probh_N)$. Let $\bar{\varepsilon}:=\sfd(\Q,\Probh_N)\in[0,\infty)$. Since $\calP_N(\bar{\varepsilon})$ is star-shaped with a star center $\Probh_N$ and $\Q\in\calP_N(\bar{\varepsilon})$, we have $(1-\alpha)\Probh_N+\alpha\Q\in\calP_N(\bar{\varepsilon})$. Thus, $\sfd\big((1-\alpha)\Probh_N+\alpha\Q,\Probh_N\big)\leq \bar{\varepsilon} = \sfd(\Q,\Probh_N)$, contradicting the assumption that $\sfd$ is not quasi-convex. This completes the proof.  
\end{proof}

\subsection{Proof of Theorem~\ref{thm:sensitivity_in_theta}}

\begin{proof}
First, we show that for any $\{\theta_1,\theta_2\}\subset[0,1]$, we have
\begin{equation} \label{eqn:Hausdorff_Lip_cont_in_theta}
    \bbmH(\calP'_{N,\theta_1},\calP'_{N,\theta_2}) \leq  2C_N |\theta_1-\theta_2|.
\end{equation}
For any $\Prob_1\in\calP'_{N,\theta_1}$ and $\Prob_2\in\calP'_{N,\theta_2}$, we can write $\Prob_i=(1-\theta_i)\Probh_N + \theta_i\Q_i$ for some $\Q_i\in\calP_N$ and $i\in\{1,2\}$. Then,
\allowdisplaybreaks
\begin{align}
    \Big|\E_{\Prob_1}[f(\xb,\xib)] - \E_{\Prob_2}[f(\xb,\xib)]\Big|
    & = \Bigg| (\theta_2-\theta_1)\cdot\frac{1}{N}\sum_{i=1}^N f(\xb,\xibh_i) + \theta_1\cdot \E_{\Q_1}[f(\xb,\xib)] - \theta_2\cdot\E_{\Q_2}[f(\xb,\xib)] \Bigg| \nonumber \\
    &\leq |\theta_1-\theta_2| \Bigg|\frac{1}{N}\sum_{i=1}^N f(\xb,\xibh_i) \Bigg|  + \Big| \theta_1 \E_{\Q_1}[f(\xb,\xib)] - \theta_2 \E_{\Q_2}[f(\xb,\xib)] \Big| \nonumber \\
    &\leq C_N|\theta_1-\theta_2| + \Big| \theta_1 \E_{\Q_1}[f(\xb,\xib)] - \theta_2 \E_{\Q_2}[f(\xb,\xib)] \Big|, \label{eqn:pf_prop_Lip_cont_in_theta_5}
\end{align}
where the last inequality follows from Assumption \ref{assumption:loss_and_ambig_set_boundedness} and the definition of $C_N$ in \eqref{eqn:def_of_CN}. Note that
\begin{subequations}
\begin{align}
    \sup_{\Prob_1\in\calP'_{N,\theta_1}} \inf_{\Prob_2\in\calP'_{N,\theta_2}} \bbmd(\Prob_1,\Prob_2) 
    &=\sup_{\Prob_1\in\calP'_{N,\theta_1}} \inf_{\Prob_2\in\calP'_{N,\theta_2}} \sup_{\xb\in\calX} \Big|\E_{\Prob_1}[f(\xb,\xib)] - \E_{\Prob_2}[f(\xb,\xib)]\Big|  \nonumber\\
    &\leq  C_N|\theta_1-\theta_2| + \sup_{\Q_1\in\calP_N} \inf_{\Q_2\in\calP_N} \sup_{\xb\in\calX}  \Big| \theta_1 \E_{\Q_1}[f(\xb,\xib)] - \theta_2 \E_{\Q_2}[f(\xb,\xib)] \Big| \label{eqn:pf_prop_Lip_cont_in_theta_6}\\
    &\leq C_N|\theta_1-\theta_2| + \sup_{\Q_1\in\calP_N} \sup_{\xb\in\calX}  \Big| \theta_1 \E_{\Q_1}[f(\xb,\xib)] - \theta_2 \E_{\Q_1}[f(\xb,\xib)] \Big| \label{eqn:pf_prop_Lip_cont_in_theta_7}\\
    &\leq C_N|\theta_1-\theta_2| + \sup_{\Q_1\in\calP_N} \sup_{\xb\in\calX}  |\theta_1-\theta_2|\big|\E_{\Q_1}[f(\xb,\xib)]\big| \nonumber \\
    & \leq 2C_N|\theta_1-\theta_2|. \label{eqn:pf_prop_Lip_cont_in_theta_8}
\end{align}
\end{subequations}
Inequality \eqref{eqn:pf_prop_Lip_cont_in_theta_6} follows from \eqref{eqn:pf_prop_Lip_cont_in_theta_5}, where we note that the arguments in supremum and infimum are changed to $\Q_1\in\calP_N$ and $\Q_2\in\calP_N$, respectively; inequality \eqref{eqn:pf_prop_Lip_cont_in_theta_7} follows from the fact that choosing $\Q_2=\Q_1$ for the infimum problem yields an upper bound; inequality \eqref{eqn:pf_prop_Lip_cont_in_theta_8} follows from Assumption~\ref{assumption:loss_and_ambig_set_boundedness} and the definition of $C_N$ in \eqref{eqn:def_of_CN}. Interchanging the role of $\Prob_1$ and $\Prob_2$, as well as $\calP'_{N,\theta_1}$ and $\calP'_{N,\theta_2}$, we can obtain a similar inequality:
$$\sup_{\Prob_2\in\calP'_{N,\theta_2}} \inf_{\Prob_1\in\calP'_{N,\theta_1}} \bbmd(\Prob_1,\Prob_2) \leq 2C_N|\theta_1-\theta_2|.$$
Therefore, from the definition of $\bbmH$ in \eqref{def:Hausdorff_distance}, we have
$$\bbmH(\calP'_{N,\theta_1},\calP'_{N,\theta_2}) = \max\Bigg\{\sup_{\Prob_1\in\calP'_{N,\theta_1}} \inf_{\Prob_2\in\calP'_{N,\theta_2}} \bbmd(\Prob_1,\Prob_2) ,\,  \sup_{\Prob_2\in\calP'_{N,\theta_2}} \inf_{\Prob_1\in\calP'_{N,\theta_1}} \bbmd(\Prob_1,\Prob_2)\Bigg\} \leq  2C_N|\theta_1-\theta_2|.$$
Now, we prove assertion (i):
$$|\upsilonh_N(\theta_1)-\upsilonh_N(\theta_2)|\leq \bbmH(\calP'_{N,\theta_1},\calP'_{N,\theta_2}) \leq  2C_N |\theta_1-\theta_2|,$$
where the first inequality follows from the quantitative stability analysis; see Proposition~\ref{prop:known_QSA_DRO} in \ref{apdx:known_QSA_DRO}.

For (ii), under the second-order growth condition, we have 
$$D\Big(\calXh_N(\theta_2),\calXh_N(\theta_1)\Big)\leq \sqrt{\frac{3}{\tau}\bbmH(\calP'_{N,\theta_1},\calP'_{N,\theta_2}) } \leq \sqrt{\frac{6C_N}{\tau}\, |\theta_1-\theta_2|},$$
where the first inequality follows again from the quantitative stability analysis; see Proposition \ref{prop:known_QSA_DRO} in \ref{apdx:known_QSA_DRO}.
This completes the proof.  
\end{proof}

\subsection{Proof of Theorem~\ref{thm:conservatism}}

\begin{proof}

% For part (i), to show the concavity of $\upsilonh_N(\theta)$, consider $\{\theta_1,\theta_2\}\subset[0,1]$ and $\alpha\in[0,1]$. Then,
% %
% \allowdisplaybreaks
% \begin{align*}
%     &\quad\,\,\upsilonh_N(\alpha\theta_1+(1-\alpha)\theta_2) \\
%     &= \min_{\xb\in\calX} \Bigg\{[1-\alpha\theta_1-(1-\alpha)\theta_2]\cdot\frac{1}{N}\sum_{i=1}^N f(\xb,\xibh_i) + [\alpha\theta_1+(1-\alpha)\theta_2]\cdot \sup_{\Prob\in\calP_N} \E_{\Prob}[f(\xb,\xib)]\Bigg\} \\
%     &\geq \alpha \cdot \min_{\xb\in\calX} \Bigg\{(1-\theta_1) \cdot\frac{1}{N}\sum_{i=1}^N f(\xb,\xibh_i) + \theta_1\cdot  \sup_{\Prob\in\calP_N} \E_{\Prob}[f(\xb,\xib)] \Bigg\} \\
%     &\qquad\qquad  + (1-\alpha) \cdot \min_{\xb\in\calX} \Bigg\{(1-\theta_2) \cdot\frac{1}{N}\sum_{i=1}^N f(\xb,\xibh_i) + \theta_2\cdot  \sup_{\Prob\in\calP_N} \E_{\Prob}[f(\xb,\xib)] \Bigg\} \\
%     &= \alpha \upsilonh_N(\theta_1) + (1-\alpha)\upsilonh_N(\theta_2),
% \end{align*}
% %
% which implies that $\upsilonh_N(\theta)$ is concave. 
First, we prove part (i). To show the concavity of $\upsilonh_N(\theta)$, we define the function $g(\theta;\xb):=\E_{\Probh_N}[f(\xb,\xib)]+\theta\big\{\sup_{\Prob\in\calP_N}\E_{\Prob}[f(\xb,\xib)]-\E_{\Probh_N}[f(\xb,\xib)]\big\}$, which is linear in $\theta$ for any $\xb\in\calX$. Since $\upsilonh_N(\theta)$ is the pointwise minimum of the linear functions $\{g(\theta;\xb)\mid \xb\in\calX\}$, i.e., $\upsilonh_N(\theta)=\min_{\xb\in\calX} g(\theta;\xb)$, the function $\upsilonh_N(\theta)$ is concave. Next, to show \eqref{eqn:expansion_in_theta}, note that $\theta=(1-\theta)\cdot0 +\theta\cdot 1$. Thus, concavity of $\upsilonh_N$ implies
$$\rh_N(\theta) = \upsilonh_N(\theta) - \Big[(1-\theta)\cdot\upsilonh_N(0) + \theta\cdot\upsilonh_N(1)\Big] \geq 0.$$
Also, by Theorem \ref{thm:sensitivity_in_theta}, we have
\begin{align*}
    \rh_N(\theta) &= \upsilonh_N(\theta) - \Big[(1-\theta)\cdot\upsilonh_N(0) + \theta\cdot\upsilonh_N(1)\Big]  \\
    &\leq (1-\theta)\cdot \Big|\upsilonh_N(\theta)-\upsilonh_N(0)\big| + \theta\cdot \Big|\upsilonh_N(\theta)-\upsilonh_N(1)\big| \\
    &\leq (1-\theta)\cdot 2C_N\theta + \theta\cdot 2C_N(1-\theta)\\
    &\leq 4C_N\theta(1-\theta).
\end{align*}
This completes the proof of part (i).

Finally, for part (ii), we have
\allowdisplaybreaks
\begin{subequations}
\begin{align}
    &\quad\,\,D\Big(\calXh_N(\theta),(1-\theta)\calXh_N(0)+\theta\calXh_N(1)\Big) \nonumber \\
    &=\sup_{\xb\in\calXh_N(\theta)} \inf_{\xbbar\in (1-\theta)\calXh_N(0)+\theta\calXh_N(1)} \norms{\xb-\xbbar} \nonumber \\
    &=\sup_{\xb\in\calXh_N(\theta)} \inf_{\xb'\in\calXh_N(0),\,\xb''\in\calXh_N(1)} \norm{\xb-\big[(1-\theta)\xb'+\theta\xb''\big]}  \nonumber\\
    &\leq \sup_{\xb\in\calXh_N(\theta)} \inf_{\xb'\in\calXh_N(0),\,\xb''\in\calXh_N(1)} \Big\{ (1-\theta)\norms{\xb-\xb'} + \theta\norms{\xb-\xb''}\Big\} \label{eqn_pf:thm_conservatism_1}\\
    &\leq (1-\theta) \sup_{\xb\in\calXh_N(\theta)}\inf_{\xb'\in\calXh_N(0)} \norms{\xb-\xb'} + \theta \sup_{\xb\in\calXh_N(\theta)}\inf_{\xb''\in\calXh_N(1)} \norms{\xb-\xb''} \label{eqn_pf:thm_conservatism_2}\\ 
    &= (1-\theta) D\Big(\calXh_N(\theta),\calXh_N(0)\Big) + \theta  D\Big(\calXh_N(\theta),\calXh_N(1)\Big)  \nonumber \\
    &\leq (1-\theta) \sqrt{\frac{6C_N\theta}{\tau}}+\theta\sqrt{\frac{6C_N(1-\theta)}{\tau}} \label{eqn_pf:thm_conservatism_3}\\
    &= \sqrt{\frac{6C_N\theta(1-\theta)}{\tau}}\Big( \sqrt{\theta} + \sqrt{1-\theta}\Big) \nonumber.
\end{align}
\end{subequations}
Inequality \eqref{eqn_pf:thm_conservatism_1} follows from triangle inequality; inequality \eqref{eqn_pf:thm_conservatism_2} follows from separating the supremum operator to each summand; inequality \eqref{eqn_pf:thm_conservatism_3} follows from Theorem \ref{thm:sensitivity_in_theta}.  
\end{proof}

\subsection{Proof of Proposition~\ref{prop:bias_non_neg_UB}}

\begin{proof}
Since $\E_{\Prob^N}[\upsilonh_N(0)]\leq\upsilon^\star$, we have
\begin{align*}
    \E_{\Prob^N}[\upsilonh_N(\theta)]-\upsilon^\star &\leq  \E_{\Prob^N}[\upsilonh_N(\theta)]-\E_{\Prob^N}[\upsilonh_N(0)] \\
    &= \E_{\Prob^N}[ (1-\theta)\cdot\upsilonh_N(0)+\theta\cdot\upsilonh_N(1)+\rh_N(\theta) ] -\E_{\Prob^N}[\upsilonh_N(0)] \ \ (\text{by \eqref{eqn:expansion_in_theta} in Theorem \ref{thm:conservatism}}) \\
    &= \theta\Big\{\E_{\Prob^N}\big[\upsilonh_N(1)\big]-\E_{\Prob^N}\big[\upsilonh_N(0)\big]\Big\} + \E_{\Prob^N}[\rh_N(\theta)].
\end{align*}
 From Theorem \ref{thm:conservatism}, we have $\rh_N(\theta)\in[0,4C_N\theta(1-\theta)]$. Thus,  $R_N(\theta):=\E_{\Prob^N}[\rh_N(\theta)] \in[0,4\Cbar_N\theta(1-\theta)]$. Finally, if $\Probh_N\in\calP_N$, then $\upsilonh_N(0) \leq \upsilonh_N(1)$. It follows that the upper bound on the bias in \eqref{eqn:bias_non_neg_UB} is non-negative.  
\end{proof}

\subsection{Proof of Theorem~\ref{thm:debias_theta}}

\begin{proof}
From Theorem \ref{thm:sensitivity_in_theta}, we have that $\upsilonh_N(\theta)$ is Lipschitz continuous with Lipschitz constant $C_N<\infty$ (almost surely). Thus, $\E_{\Prob^N}[\upsilonh_N(\theta)]$ is also Lipschitz continuous with Lipschitz constant $\Cbar_N=\E_{\Prob^N}(C_N)<\infty$. Since $\E_{\Prob^N}[\upsilonh_N(0)]\leq \upsilon^\star\leq \E_{\Prob^N}[\upsilonh_N(1)]$, it follows by the intermediate value theorem (see Theorem 4.23 of \citealp{Rudin:1976}) that there exists $\theta^\textup{u}_N\in[0,1]$ for which $\E_{\Prob^N}[\upsilonh_N(\theta^\textup{u}_N)]=\upsilon^\star$. This 
 completes the proof.  
\end{proof}

\subsection{Proof of Corollary~\ref{cor:debias_theta}}

\begin{proof}
By Theorem \ref{thm:conservatism}, $\upsilonh_N(\theta)$ is concave on $[0,1]$, and so is $\E_{\Prob^N}[\upsilonh_N(\theta)]$. Since $\E_{\Prob^N}[\upsilonh_N(0)]\leq \E_{\Prob^N}[\upsilonh_N(1)]$, we have that $\E_{\Prob^N}[\upsilonh_N(\theta)]$ is either (a) non-decreasing on $[0,1]$ or (b) first non-decreasing and then non-increasing (see Lemma 1.1.4 of \citealp{Niculescu_Persson:2018}). By Theorem \ref{thm:debias_theta}, there exists $\theta_N^\text{u}$ such that $\E_{\Prob^N}[\upsilonh_N(\theta)]=\upsilon^\star$. When it is not unique, take $\theta_N^\text{u}=\inf\{\theta\in[0,1]\mid \E_{\Prob^N}[\upsilonh_N(\theta)]=\upsilon^\star\}$. It follows that $\E_{\Prob^N}[\upsilonh_N(\theta)]$ is non-decreasing on $[0,\theta_N^\text{u}]$ and thus, $|\E_{\Prob^N}[\upsilonh_N(\theta)]-\upsilon^\star| \leq |\E_{\Prob^N}[\upsilonh_N(0)]-\upsilon^\star|$ for all $\theta \in[0,\theta_N^\text{u}]$.  
\end{proof}

\subsection{Proof of Theorem~\ref{thm:rate_of_theta_LIL}}

\begin{proof}
First, note that $\theta_N^\text{u}$ satisfies $\E_{\Prob^N}[\upsilonh_N(\theta_N^\text{u})]=\upsilon^\star$, where Theorem \ref{thm:debias_theta} ensures the existence of $\theta_N^\text{u}$. Moreover, Theorem \ref{thm:conservatism} implies that
$$\E_{\Prob^N}[\upsilonh_N(\theta_N^\text{u})] = (1-\theta_N^\text{u}) \E_{\Prob^N}[\upsilonh_N(0)] + \theta_N^\text{u} \E_{\Prob^N}[\upsilonh_N(1)] + R_N(\theta_N^\text{u}),$$
where $R_N(\theta_N^\text{u})\in[0,4\Cbar_N\theta_N^\text{u}(1-\theta_N^\text{u})]$. Combining these two equations, we have
\begin{equation} \label{eqn_pf:thm_rate_of_theta_LIL_1}
    \upsilon^\star-\E_{\Prob^N}[\upsilonh_N(0)] = \theta_N^\text{u} \Big\{\E_{\Prob^N}\big[\upsilonh_N(1)\big]-\E_{\Prob^N}\big[\upsilonh_N(0)\big]\Big\} + R_N(\theta_N^\text{u}).
\end{equation}
Note that left-hand-side of \eqref{eqn_pf:thm_rate_of_theta_LIL_1} corresponds to the bias of the SAA estimator. Under assumptions (a)--(c), Theorem 6 of \cite{Banholzer_et_al:2022} gives $\upsilon^\star-\E_{\Prob^N}[\upsilonh_N(0)]=o(\sqrt{\log\log N}/\sqrt{N})$. For the ease of notation, let $\Delta_N=\E_{\Prob^N}\big[\upsilonh_N(1)\big]-\E_{\Prob^N}\big[\upsilonh_N(0)\big]$ and $b_N=\sqrt{\log\log N}/\sqrt{N}$. By the assumption that $\inf_{N\in\N} \Delta_N >0$, we know that $\Delta_N\geq \Delta$ for some $\Delta>0$. Then,
\begin{equation} \label{eqn_pf:thm_rate_of_theta_LIL_2}
    0\leq \theta_N^\text{u} \Delta \leq \theta_N^\text{u} \Delta_N + R_N(\theta_N^\text{u}) =o(b_N),
\end{equation}
where the second inequality follows from $R_N(\theta_N^\text{u})\geq 0$. In other words, \eqref{eqn_pf:thm_rate_of_theta_LIL_2} implies that
$$0 \leq \big(b_N^{-1} \theta_N^\text{u}\big)\, \Delta \leq o(1),$$
showing that $ b_N^{-1} \theta_N^\text{u}\rightarrow 0$ as $N\rightarrow \infty$. This completes the proof.  
\end{proof}

\subsection{Proof of Theorem~\ref{thm:rate_of_theta_AN}}

\begin{proof}
Theorem 5.7 of \cite{Shapiro_et_al:2014} gives the following asymptotic normality of $\upsilonh_N(0)$:
\begin{equation} \label{eqn_pf:thm_rate_of_theta_AN_1}
    X_N=\sqrt{N}\Big( \upsilonh_N(0)-\upsilon^\star\Big)\Rightarrow \inf_{\xb\in\calX^\star} \G(\xb),
\end{equation}
where ``$\Rightarrow$'' denotes the convergence in distribution, $\calX^\star$ is the set of optimal solutions to \eqref{prob:SO}, and $\G$ is a Gaussian process indexed by $\calX$ with mean zero and covariance function $\Cov(\G(\xb_1),\G(\xb_2)) = \Cov_{\Prob^\star}(f(\xb_1,\xib),f(\xb_2,\xib))$. Together with the asymptotic uniform integrability of $\{X_N\}_{N\in\N}$, \eqref{eqn_pf:thm_rate_of_theta_AN_1} implies that
\begin{equation*} 
    \E_{\Prob^N}\Big[\sqrt{N}\Big( \upsilonh_N(0)-\upsilon^\star\Big)\Big]= \E\Big[\inf_{\xb\in\calX^\star} \G(\xb)\Big] + o(1)
\end{equation*}
(see, e.g., Theorem 2.20 of \citealp{van_der_Vaart:2000}). Hence, the bias of the SAA estimator is given by
\begin{equation} \label{eqn_pf:thm_rate_of_theta_AN_2}
    \E_{\Prob^N}[\upsilonh_N(0)]-\upsilon^\star= \frac{1}{\sqrt{N}}\,\E\Big[\inf_{\xb\in\calX^\star} \G(\xb)\Big] + o(1/\sqrt{N}).
\end{equation}
If $\calX^\star$ is a singleton, then $\inf_{\xb\in\calX^\star} \G(\xb)$ reduces to a normal distribution with mean zero. Therefore, \eqref{eqn_pf:thm_rate_of_theta_AN_2} implies that the bias of the SAA estimator is of order $o(1/\sqrt{N})$. A similar  argument in the proof of Theorem~\ref{thm:rate_of_theta_LIL} shows that $\theta_N^\text{u}=o(1/\sqrt{N})$.

Next, we consider that $\calX^\star$ is not a singleton. In the trivial case when $\E_{\Prob^N}[\inf_{\xb\in\calX^\star} \G(\xb)]=0$, we immediately have $\theta_N^\text{u}=o(1/\sqrt{N})$, which directly implies $\theta_N^\text{u}=O(1/\sqrt{N})$. Now, consider the case when $\E_{\Prob^N}[\inf_{\xb\in\calX^\star} \G(\xb)]<0$. From \eqref{eqn_pf:thm_rate_of_theta_LIL_1} and \eqref{eqn_pf:thm_rate_of_theta_AN_2}, we have
\begin{equation} \label{eqn_pf:thm_rate_of_theta_AN_3}
    \theta_N^\text{u} \Big\{\E_{\Prob^N}\big[\upsilonh_N(1)\big]-\E_{\Prob^N}\big[\upsilonh_N(0)\big]\Big\} + R_N(\theta_N^\text{u}) = -\frac{1}{\sqrt{N}}\,\E\Big[\inf_{\xb\in\calX^\star} \G(\xb)\Big] + o(1/\sqrt{N}),
\end{equation}
where $R_N(\theta_N^\text{u})\in[0,4C\theta_N^\text{u}(1-\theta_N^\text{u})]$. Again, for the ease of notation, let $\Delta_N=\E_{\Prob^N}\big[\upsilonh_N(1)\big]-\E_{\Prob^N}\big[\upsilonh_N(0)\big]>0$. By the assumption that $\inf_{N\in\N} \Delta_N >0$, we know that $\Delta_N\geq \Delta$ for some $\Delta>0$. Since $R_N(\theta_N^\text{u})\geq 0$, \eqref{eqn_pf:thm_rate_of_theta_AN_3} implies
\begin{equation} \label{eqn_pf:thm_rate_of_theta_AN_4}
    0 \leq \big(\sqrt{N}\theta_N^\text{u}\big) \Delta \leq  -\E\Big[\inf_{\xb\in\calX^\star} \G(\xb)\Big] + o(1).
\end{equation}
Thus, \eqref{eqn_pf:thm_rate_of_theta_AN_4} shows that $\limsup_{N\rightarrow\infty} \sqrt{N} \theta_N^\text{u}$ is upper bounded, concluding that $\theta_N^\text{u}=O(1/\sqrt{N})$. 
\end{proof}

\subsection{Proof of Theorem~\ref{thm:TRO_estimator_variance}}

\begin{proof}
First, we show that $C_N$ defined in \eqref{eqn:def_of_CN} satisfies $C_N\leq L\cdot\diam(\Xi)+M$ almost surely. Note that
\begin{equation} \label{eqn_pf:thm:TRO_estimator_variance_1} 
    f(\xb,\xib)=\big|f(\xb,\xib)-f(\xb,\xib_0)\big| + \big|f(\xb,\xib_0)\big|\leq L\cdot  \norms{\xib-\xib_0} + M \leq L\cdot\diam(\Xi)+M,
\end{equation}
where the first inequality follows from assumptions (a) and (c), and the second inequality follows from assumption (b). Therefore,  we have $\E_{\Prob}[f(\xb,\xib)]\leq L\cdot\diam(\Xi)+M$ for any $\Prob\in\calP(\Xi)$. It follows from the definition of $C_N$ that $C_N\leq L\cdot\diam(\Xi)+M$ almost surely.

Next, we show the desired inequality. Note that almost surely, we have
\begin{subequations}
    \begin{align}\upsilonh_N(0)\leq\upsilonh_N(\theta)&\leq\upsilonh_N(0)+\Big\{\theta\big[\upsilonh_N(1)-\upsilonh_N(0)\big]+4\theta C_N(1-\theta)\Big\},  \label{eqn_pf:thm:TRO_estimator_variance_2} \\
    &\leq\upsilonh_N(0)+\Big[2\theta C_N+4\theta C_N(1-\theta)\Big],  \label{eqn_pf:thm:TRO_estimator_variance_3} \\
    &\leq \upsilonh_N(0)+2\theta(3-2\theta)\big(L\cdot\diam(\Xi)+M\big).  \label{eqn_pf:thm:TRO_estimator_variance_4} 
\end{align}
\end{subequations}
The first inequality in \eqref{eqn_pf:thm:TRO_estimator_variance_2} follows from assumption (d), the second inequality in \eqref{eqn_pf:thm:TRO_estimator_variance_2} follows from part (i) of Theorem~\ref{thm:conservatism}, \eqref{eqn_pf:thm:TRO_estimator_variance_3} follows from Theorem~\ref{thm:sensitivity_in_theta}, and \eqref{eqn_pf:thm:TRO_estimator_variance_4} follows from \eqref{eqn_pf:thm:TRO_estimator_variance_1}. 
Multiplying both sides of inequality \eqref{eqn_pf:thm:TRO_estimator_variance_4} by $-1$ and taking the expectation with respect to $\Prob^N$, we obtain
%Inequalities \eqref{eqn_pf:thm:TRO_estimator_variance_2}--\eqref{eqn_pf:thm:TRO_estimator_variance_4} imply that
\begin{equation} \label{eqn_pf:thm:TRO_estimator_variance_5} 
    -\E_{\Prob^N}[\upsilonh_N(0)]-2\theta(3-2\theta)\big(L\cdot\diam(\Xi)+M\big) \leq-\E_{\Prob^N}[\upsilonh_N(\theta)]\leq -\E_{\Prob^N}[\upsilonh_N(0)].
\end{equation}
Together with \eqref{eqn_pf:thm:TRO_estimator_variance_2}--\eqref{eqn_pf:thm:TRO_estimator_variance_4} and \eqref{eqn_pf:thm:TRO_estimator_variance_5}, we have
\begin{align}
    &\quad\,\,\upsilonh_N(0)-\E_{\Prob^N}[\upsilonh_N(0)]-2\theta(3-2\theta)\big(L\cdot\diam(\Xi)+M\big) \nonumber \\
    &\leq \upsilonh_N(\theta)-\E_{\Prob^N}[\upsilonh_N(\theta)] \nonumber \\
    &\leq  \upsilonh_N(0) -\E_{\Prob^N}[\upsilonh_N(0)]+2\theta(3-2\theta)\big(L\cdot\diam(\Xi)+M\big), \nonumber 
\end{align}
which implies that
\begin{equation} \label{eqn_pf:thm:TRO_estimator_variance_6}
    \Big| \Big\{\upsilonh_N(\theta)-\E_{\Prob^N}[\upsilonh_N(\theta)]\Big\} - \Big\{\upsilonh_N(0)-\E_{\Prob^N}[\upsilonh_N(0)]\Big\} \Big| \leq 2\theta(3-2\theta)\big(L\cdot\diam(\Xi)+M\big)
\end{equation}
almost surely. Therefore, we have
\begin{align*}
    &\quad\,\,\sqrt{\Var_{\Prob^N}\big(\upsilonh_N(\theta)\big)} \\
    &=\sqrt{\E_{\Prob^N} \Big[\big(\upsilonh_N(\theta)-\E_{\Prob^N}[\upsilonh_N(\theta)]\big)^2\Big]} \\
    &=\sqrt{\E_{\Prob^N} \Big[\Big(\big(\upsilonh_N(0)-\E_{\Prob^N}[\upsilonh_N(0)]\big)+\Big[\big(\upsilonh_N(\theta)-\E_{\Prob^N}[\upsilonh_N(\theta)]\big)-\big(\upsilonh_N(0)-\E_{\Prob^N}[\upsilonh_N(0)]\big)\Big]\Big)^2\Big]} \\
    &\leq \sqrt{\E_{\Prob^N} \Big[\big(\upsilonh_N(0)-\E_{\Prob^N}[\upsilonh_N(0)]\big)^2\Big]} + \sqrt{\E_{\Prob^N} \Big[\Big(\Big\{\upsilonh_N(\theta)-\E_{\Prob^N}[\upsilonh_N(\theta)]\Big\} - \Big\{\upsilonh_N(0)-\E_{\Prob^N}[\upsilonh_N(0)]\Big\}\Big)^2\Big]}\\
    &\leq\sqrt{\Var_{\Prob^N}\big(\upsilonh_N(0)\big)}+2\theta(3-2\theta)\big(L\cdot\diam(\Xi)+M\big),
\end{align*}
where the first inequality follows from Minkowski's inequality, and the second inequality follows from \eqref{eqn_pf:thm:TRO_estimator_variance_6}. This completes the proof. 
\end{proof}

\subsection{Proof of Theorem~\ref{thm:generalization_error}}

\begin{proof}

First, we prove the following lemma.
%First, we establish the following inequality:
\begin{lemma}\label{lem:generalization_error_inter}
Let $A_1$ and $A_2$ be random variables, $\Prob$ be any probability distribution, and $\theta \in (0,1)$.  For any $\delta>0$, the following inequality holds.
\begin{equation} \label{eqn_pf:thm:generalization_error_1}
    \Prob\big((1-\theta) A_1 + \theta A_2>\delta\big) \leq \inf_{\gamma\in[0,\delta]}\bigg\{\Prob\bigg(A_1>\frac{\gamma}{1-\theta}\bigg)+\Prob\bigg(A_2>\frac{\delta-\gamma}{\theta}\bigg) \bigg\}
\end{equation}
\end{lemma}

\begin{proof}{Proof of Lemma~\ref{lem:generalization_error_inter}.}
Consider the event $(1-\theta)A_1+\theta A_2>\delta$. We have either $A_1>\gamma/(1-\theta)$ or $A_2>(\delta-\gamma)/\theta$ for any $\gamma\in[0,\delta]$. It follows from the union bound (Boole's inequality) that
\begin{equation} \label{eqn_pf:thm:generalization_error_2}
   \Prob\big((1-\theta) A_1 + \theta A_2>\delta\big) \leq \Prob\bigg(A_1>\frac{\gamma}{1-\theta}\bigg)+\Prob\bigg(A_2>\frac{\delta-\gamma}{\theta}\bigg)
\end{equation}
for any $\gamma\in[0,\delta]$. Inequality \eqref{eqn_pf:thm:generalization_error_1} follows directly from \eqref{eqn_pf:thm:generalization_error_2}, as the right-hand side of \eqref{eqn_pf:thm:generalization_error_2} holds for  any $\gamma\in[0,\delta]$. This completes the proof of Lemma \ref{lem:generalization_error_inter}. 
\end{proof}

%
% Consider the event $(1-\theta)A_1+\theta A_2>\delta$. For any $\gamma\in[0,\delta]$, $\{A_1>\gamma/(1-\theta)\}\cup\{A_2>(\delta-\gamma)/\theta\}$. It follows from the union bound (Boole's inequality) that
%
%\begin{equation} \label{eqn_pf:thm:generalization_error_1}
%    \Prob\big((1-\theta) A_1 + \theta A_2>\delta\big) \leq \inf_{\gamma\in[0,\delta]}\bigg\{\Prob\bigg(A_1>\frac{\gamma}{1-\theta}\bigg)+\Prob\bigg(A_2>\frac{\delta-\gamma}{\theta}\bigg) \bigg\}
%\end{equation}
%
%for any $\delta>0$, random variables $A_1$ and $A_2$, and probability distribution  $\Prob$. Note that $\{(1-\theta)A_1+\theta A_2>\delta\}\subseteq\{A_1>\gamma/(1-\theta)\}\cup\{A_2>(\delta-\gamma)/\theta\}$ for any $\gamma\in[0,\delta]$. It follows from the union bound that

%immediately from the fact that \eqref{eqn_pf:thm:generalization_error_2} holds for any $\gamma\in[0,\delta]$.

With Lemma \ref{lem:generalization_error_inter}, we are ready to derive the desired generalization bound as follows:
\begin{subequations}
\begin{align}
    &\quad\,\, \Prob^N\Bigg(\sup_{\xb\in\calX}\bigg\{ \E_{\Prob^\star}[f(\xb,\xib)]- \sup_{\Prob\in\calP'_{N,\theta}}\E_{\Prob}[f(\xb,\xib)] \bigg\}>\delta\Bigg) \nonumber\\
    &=  \Prob^N\Bigg(\sup_{\xb\in\calX}\bigg\{ (1-\theta)\cdot\Big(\E_{\Prob^\star}[f(\xb,\xib)]-\E_{\Probh_N}[f(\xb,\xib)]\Big) + \theta\cdot \Big( \E_{\Prob^\star}[f(\xb,\xib)]-\sup_{\Prob\in\calP_N}\E_{\Prob}[f(\xb,\xib)]\Big) \bigg\}>\delta\Bigg) \nonumber \\
    &\leq  \Prob^N\Bigg( (1-\theta) \sup_{\xb\in\calX}\bigg\{\E_{\Prob^\star}[f(\xb,\xib)]-\E_{\Probh_N}[f(\xb,\xib)]\bigg\} + \theta \sup_{\xb\in\calX}\bigg\{ \E_{\Prob^\star}[f(\xb,\xib)]-\sup_{\Prob\in\calP_N}\E_{\Prob}[f(\xb,\xib)] \bigg\}>\delta\Bigg) \nonumber \\
    &\leq \inf_{\gamma\in[0,\delta]}\Bigg\{ \Prob^N\Bigg(\sup_{\xb\in\calX}\bigg\{ \E_{\Prob^\star}[f(\xb,\xib)]- \E_{\Probh_N}[f(\xb,\xib)] \bigg\}>\frac{\gamma}{1-\theta}\Bigg)  \nonumber \\
    &\hspace{43mm}+ \Prob^N\Bigg(\sup_{\xb\in\calX}\bigg\{ \E_{\Prob^\star}[f(\xb,\xib)]- \sup_{\Prob\in\calP_N}\E_{\Prob}[f(\xb,\xib)] \bigg\}>\frac{\delta-\gamma}{\theta}\Bigg)\Bigg\}  \label{eqn_pf:thm:generalization_error_3a} \\
    &\leq \inf_{\gamma\in[0,\delta]}\bigg\{\alpha_{N,1}\bigg(\frac{\gamma}{1-\theta}\bigg)+\alpha_{N,2}\bigg(\frac{\delta-\gamma}{\theta}\bigg)\bigg\}. \label{eqn_pf:thm:generalization_error_3b} 
\end{align}
\end{subequations}
Inequality~\eqref{eqn_pf:thm:generalization_error_3a} follows from \eqref{eqn_pf:thm:generalization_error_1} and inequality \eqref{eqn_pf:thm:generalization_error_3b}  follows from~\eqref{eqn:generalization_error_bound_SAA} and \eqref{eqn:generalization_error_bound_DRO}. 
\end{proof}

\subsection{Proof of Lemma~\ref{lem:DRO_component_asym_boundedness}}

\begin{proof}
For brevity, all convergence, equalities and inequalities hold almost surely in this proof. If $f$ is uniformly bounded, i.e., $\sup_{\xb\in\calX}\sup_{\xib\in\Xi}f(\xb,\xib)<\infty$, then the desired inequality follows immediately. Now, suppose that Assumption \ref{assumption:ambig_set_regularity}(a) holds. We can directly obtain an upper bound from \eqref{eqn:DRO_exp_UB_w_data}:
\begin{equation} \label{eqn_pf:DRO_component_asym_boundedness_1}
    \sup_{\Prob\in\calP_N} \E_{\Prob}|f(\xb,\xib)| \leq \sup_{\Prob\in\calPh} \E_{\Prob}[\kappa(\xib)] \cdot \diam(\calX) + \sup_{\Prob\in\calPh} \E_{\Prob}|f(\xb_0,\xib)|:= M<\infty
\end{equation}
for all sufficiently large $N$. Thus, we obtain $\limsup_{N\rightarrow\infty} \sup_{\xb\in\calX} \sup_{\Prob\in\calP_N} \E_{\Prob}|f(\xb,\xib)|\leq M$ by taking supremum over $\xb\in\calX$ on both sides of \eqref{eqn_pf:DRO_component_asym_boundedness_1}.

Now, suppose that Assumption \ref{assumption:ambig_set_regularity}(b) holds. Then, for any $\xb\in\calX$, we have
\begin{align} 
    \sup_{\Prob\in\calP_N} \E_{\Prob}[f(\xb,\xib)] &\leq \bigg|\sup_{\Prob\in\calP_N} \E_{\Prob}[f(\xb,\xib)] - \sup_{\Prob\in\calPh} \E_{\Prob}[f(\xb,\xib)] \Bigg| + \sup_{\Prob\in\calPh} \E_{\Prob}[f(\xb,\xib)] \nonumber \\
    &\leq \bbmH(\calP_N,\calPh) + \Bigg\{ \sup_{\Prob\in\calPh} \E_{\Prob}[\kappa(\xib)] \cdot \diam(\calX) + \sup_{\Prob\in\calPh} \E_{\Prob}|f(\xb_0,\xib)|\Bigg\}, \label{eqn_pf:DRO_component_asym_boundedness_2}
\end{align}
%
% \begin{equation} \label{eqn_pf:DRO_component_asym_boundedness_2}
%     \sup_{\Prob\in\calP_N} \E_{\Prob}[\kappa(\xib)] \leq \bigg|\sup_{\Prob\in\calP_N} \E_{\Prob}[\kappa(\xib)] - \sup_{\Prob\in\calPh} \E_{\Prob}[\kappa(\xib)] \Bigg| + \sup_{\Prob\in\calPh} \E_{\Prob}[\kappa(\xib)] \leq \bbmH(\calP_N,\calPh) + \sup_{\Prob\in\calPh} \E_{\Prob}[\kappa(\xib)],
% \end{equation}
%
where the first term in \eqref{eqn_pf:DRO_component_asym_boundedness_2} follows from the definition of $\bbmH(\calP_N,\calPh)$ and the second term in \eqref{eqn_pf:DRO_component_asym_boundedness_2} follows from the same argument in \eqref{eqn_pf:DRO_component_asym_boundedness_1}. Since $\bbmH(\calP_N,\calPh)\rightarrow 0$, \eqref{eqn_pf:DRO_component_asym_boundedness_2} implies that 
\begin{equation*}
    \limsup_{N\rightarrow\infty}\sup_{\xb\in\calX}\sup_{\Prob\in\calP_N} \E_{\Prob}|f(\xb,\xib)|  \leq  \sup_{\Prob\in\calPh} \E_{\Prob}[\kappa(\xib)] \cdot \diam(\calX) + \sup_{\Prob\in\calPh} \E_{\Prob}|f(\xb_0,\xib)| < \infty.
\end{equation*}

% where the last inequality follows from Proposition \ref{prop:known_QSA_Hausdorff_bound} in \ref{apdx:known_QSA_DRO}. Since $\bbmH(\calP_N,\calPh)\rightarrow 0$, \eqref{eqn_pf:DRO_component_asym_boundedness_2} implies that $\limsup_{N\rightarrow\infty} \sup_{\Prob\in\calP_N} \E_{\Prob}[\kappa(\xib)] \leq \sup_{\Prob\in\calPh} \E_{\Prob}[\kappa(\xib)]$. Following the same argument, we can obtain $\limsup_{N\rightarrow\infty} \sup_{\Prob\in\calP_N} \E_{\Prob}|f(\xb_0,\xib)|\leq \sup_{\Prob\in\calPh} \E_{\Prob}|f(\xb_0,\xib)|$. Hence, taking supremum over $\xb\in\calX$ on both sides of \eqref{eqn:DRO_exp_UB_w_data}, we have 
%
% \begin{align}
%     \limsup_{N\rightarrow\infty}\sup_{\xb\in\calX}\sup_{\Prob\in\calP_N} \E_{\Prob}|f(\xb,\xib)|     &\leq \limsup_{N\rightarrow\infty} \Bigg\{\sup_{\Prob\in\calP_N} \E_{\Prob}[\kappa(\xib)] \cdot \diam(\calX) +  \sup_{\Prob\in\calP_N} \E_{\Prob}|f(\xb_0,\xib)| \Bigg\} \nonumber \\
%     & \leq \limsup_{N\rightarrow\infty} \sup_{\Prob\in\calP_N} \E_{\Prob}[\kappa(\xib)] \cdot \diam(\calX) +  \limsup_{N\rightarrow\infty} \sup_{\Prob\in\calP_N} \E_{\Prob}|f(\xb_0,\xib)| \nonumber \\
%     & \leq \sup_{\Prob\in\calPh} \E_{\Prob}[\kappa(\xib)]\cdot\diam(\calX) + \sup_{\Prob\in\calPh} \E_{\Prob}|f(\xb_0,\xib)| <\infty. \label{eqn_pf:DRO_component_asym_boundedness_3} 
% \end{align}
%

Finally, suppose that Assumption \ref{assumption:ambig_set_regularity}(c) holds. For any $\Prob\in\calP_N$ identified as $\pb\in\R_+^N$, 
\begin{align} 
    \big|\E_{\Prob}[\kappa(\xib)] - \E_{\Probh_N}[\kappa(\xib)]\big|= \Bigg| \sum_{i=1}^N \bigg(p_i-\frac{1}{N}\bigg) \kappa(\xibh_i) \Bigg| &\leq \norm{\pb-\frac{1}{N}\one}_\infty \cdot \sum_{i=1}^N \kappa(\xibh_i) \nonumber \\
    &= \frac{1}{N} \norms{N\pb-\one}_\infty \cdot \sum_{i=1}^N \kappa(\xibh_i) \nonumber \\
    &= \norms{N\pb-\one}_\infty \cdot \E_{\Probh_N}[\kappa(\xib)]. \label{eqn_pf:DRO_component_asym_boundedness_4}
\end{align}
Hence, we have
\allowdisplaybreaks
\begin{align}
    \limsup_{N\rightarrow\infty} \sup_{\Prob\in\calP_N} \E_{\Prob}[\kappa(\xib)] &\leq \limsup_{N\rightarrow\infty} \sup_{\Prob\in\calP_N} \Big\{ \E_{\Probh_N}[\kappa(\xib)] + \big|\E_{\Prob}[\kappa(\xib)] - \E_{\Probh_N}[\kappa(\xib)]\big| \Big\} \nonumber \\
    &\leq \limsup_{N\rightarrow\infty} \E_{\Probh_N}[\kappa(\xib)] + \limsup_{N\rightarrow\infty} \Bigg\{ \sup_{\Prob\in\calP_N}  \norms{N\pb-\one}_\infty \cdot \E_{\Probh_N}[\kappa(\xib)] \Bigg\}  \label{eqn_pf:DRO_component_asym_boundedness_5} \\
    &\leq \limsup_{N\rightarrow\infty} \E_{\Probh_N}[\kappa(\xib)] + \Bigg(\limsup_{N\rightarrow\infty}  \sup_{\Prob\in\calP_N}  \norms{N\pb-\one}_\infty \Bigg)\Bigg( \limsup_{N\rightarrow\infty}   \E_{\Probh_N}[\kappa(\xib)]\Bigg) \nonumber \\ 
    &= \Bigg(1 + \limsup_{N\rightarrow\infty}  \sup_{\Prob\in\calP_N}  \norms{N\pb-\one}_\infty \Bigg) \Bigg( \limsup_{N\rightarrow\infty}   \E_{\Probh_N}[\kappa(\xib)]\Bigg), \nonumber %\label{eqn_pf:DRO_component_asym_boundedness_6}
\end{align}
where the second term in \eqref{eqn_pf:DRO_component_asym_boundedness_5} follows from \eqref{eqn_pf:DRO_component_asym_boundedness_4}. Since $\{\xibh_i\}_{i=1}^N$ are i.i.d. following $\Prob^\star$, the strong law of large numbers gives $ \limsup_{N\rightarrow\infty} \E_{\Probh_N}[\kappa(\xib)] =\E_{\Prob^\star}[\kappa(\xib)]<\infty$. Moreover, since $\limsup_{N\rightarrow\infty} \sup_{\Prob\in\calP_N}\norms{N\pb-\one}_\infty <\infty$, we have $\limsup_{N\rightarrow\infty} \sup_{\Prob\in\calP_N} \E_{\Prob}[\kappa(\xib)] <\infty$. Using a similar argument, we can obtain $\limsup_{N\rightarrow\infty} \sup_{\Prob\in\calP_N} \E_{\Prob}|f(\xb_0,\xib)| <\infty$. Therefore, we have
\begin{align}
    \limsup_{N\rightarrow\infty}\sup_{\xb\in\calX}\sup_{\Prob\in\calP_N} \E_{\Prob}|f(\xb,\xib)|     &\leq \limsup_{N\rightarrow\infty} \Bigg\{\sup_{\Prob\in\calP_N} \E_{\Prob}[\kappa(\xib)] \cdot \diam(\calX) +  \sup_{\Prob\in\calP_N} \E_{\Prob}|f(\xb_0,\xib)| \Bigg\} \nonumber \\
    & \leq \limsup_{N\rightarrow\infty} \sup_{\Prob\in\calP_N} \E_{\Prob}[\kappa(\xib)] \cdot \diam(\calX) +  \limsup_{N\rightarrow\infty} \sup_{\Prob\in\calP_N} \E_{\Prob}|f(\xb_0,\xib)| <\infty. \nonumber
\end{align}
This completes the proof. 
% Following the argument in \eqref{eqn_pf:DRO_component_asym_boundedness_3}, we have
% %
% $$\limsup_{N\rightarrow\infty}\sup_{\xb\in\calX}\sup_{\Prob\in\calP_N} \E_{\Prob}|f(\xb,\xib)|\leq \limsup_{N\rightarrow\infty} \sup_{\Prob\in\calP_N} \E_{\Prob}[\kappa(\xib)] \cdot \diam(\calX) +  \limsup_{N\rightarrow\infty} \sup_{\Prob\in\calP_N} \E_{\Prob}|f(\xb_0,\xib)|<\infty. $$
%
\end{proof}

\subsection{Proof of Theorem~\ref{thm:asymptotic_convergence}}

\begin{proof}
For brevity, all convergence, equalities and inequalities hold almost surely in this proof. First, we claim that the following uniform convergence holds:
\begin{equation} \label{eqn_pf:asymptotic_convergence_1}
    \sup_{\xb\in\calX} \Bigg| \Bigg\{ (1-\theta_N)\cdot \frac{1}{N} \sum_{i=1}^N f(\xb,\xibh_i) + \theta_N\cdot\sup_{\Prob\in\calP_N} \E_{\Prob}[f(\xb,\xib)] \Bigg\}  - \E_{\Prob^\star}[f(\xb,\xib)] \Bigg| \rightarrow 0
\end{equation}
as $N\rightarrow\infty$. To prove \eqref{eqn_pf:asymptotic_convergence_1}, note that
\begin{align}
    &\quad\, \sup_{\xb\in\calX} \Bigg| \Bigg\{ (1-\theta_N)\cdot \frac{1}{N} \sum_{i=1}^N f(\xb,\xibh_i) + \theta_N\cdot\sup_{\Prob\in\calP_N} \E_{\Prob}[f(\xb,\xib)] \Bigg\}  - \E_{\Prob^\star}[f(\xb,\xib)] \Bigg| \nonumber \\
    & \leq  (1-\theta_N)\cdot\sup_{\xb\in\calX} \Bigg| \frac{1}{N} \sum_{i=1}^N f(\xb,\xibh_i)  - \E_{\Prob^\star}[f(\xb,\xib)]  \Bigg| + \theta_N\cdot\sup_{\xb\in\calX} \Bigg| \sup_{\Prob\in\calP_N} \E_{\Prob}[f(\xb,\xib)] - \E_{\Prob^\star}[f(\xb,\xib)]  \Bigg|. \label{eqn_pf:prop_uniform_convergence_2}
\end{align}
Assumption \ref{assumption:GC-class} implies that the first term $\sup_{\xb\in\calX} \big| N^{-1} \sum_{i=1}^N f(\xb,\xibh_i)  - \E_{\Prob^\star}[f(\xb,\xib)] \big|$ converges to zero.
Next, for the second term, note that
\begin{equation} \label{eqn_pf:prop_uniform_convergence_3}
    \theta_N\cdot\sup_{\xb\in\calX} \Bigg| \sup_{\Prob\in\calP_N} \E_{\Prob}[f(\xb,\xib)] - \E_{\Prob^\star}[f(\xb,\xib)]  \Bigg| \leq \theta_N\Bigg(\sup_{\xb\in\calX} \sup_{\Prob\in\calP_N} \E_{\Prob} |f(\xb,\xib)| + \sup_{\xb\in\calX} \E_{\Prob^{\star}}|f(\xb,\xib)| \Bigg)
\end{equation}
Since $\limsup_{N\rightarrow\infty} \sup_{\xb\in\calX} \sup_{\Prob\in\calP_N} \E_{\Prob} |f(\xb,\xib)| \leq M$ by Lemma \ref{lem:DRO_component_asym_boundedness} and $\sup_{\xb\in\calX} \E_{\Prob^{\star}}|f(\xb,\xib)|<\infty$, the upper bound in \eqref{eqn_pf:prop_uniform_convergence_3} converges to zero by $\theta_N=o(1)$. Therefore, the upper bound in \eqref{eqn_pf:prop_uniform_convergence_2} converges to zero, which shows \eqref{eqn_pf:asymptotic_convergence_1}.

Now, with the use of \eqref{eqn_pf:asymptotic_convergence_1}, we can prove the desired asymptotic convergence results in a way similar to Theorem 5.3 of \cite{Shapiro_et_al:2014}. For completeness, we also provide the details here. To show assertion (i), note that
\begin{align*}
    |\upsilonh_N(\theta_N)-\upsilon^\star| &= \Bigg| \min_{\xb\in\calX} \Bigg\{ (1-\theta_N)\cdot \frac{1}{N} \sum_{i=1}^N f(\xb,\xibh_i) + \theta_N\cdot\sup_{\Prob\in\calP_N} \E_{\Prob}[f(\xb,\xib)] \Bigg\}  - \min_{\xb\in\calX} \E_{\Prob^\star}[f(\xb,\xib)]  \Bigg|\\
    &\leq \sup_{\xb\in\calX} \Bigg| \Bigg\{ (1-\theta_N)\cdot \frac{1}{N} \sum_{i=1}^N f(\xb,\xibh_i) + \theta_N\cdot\sup_{\Prob\in\calP_N} \E_{\Prob}[f(\xb,\xib)] \Bigg\}  - \E_{\Prob^\star}[f(\xb,\xib)] \Bigg|,
\end{align*}
which converges to zero by \eqref{eqn_pf:asymptotic_convergence_1}. Next, to show assertion (ii), suppose, on the contrary, that $D\big(\calXh_N(\theta_N),\calX^\star\big) \rightarrow 0$ does not hold almost surely, i.e., $\Prob^\infty\big(D\big(\calXh_N(\theta_N),\calX^\star\big) \not\rightarrow 0\big)>0$. Consider a data sequence such that the event $D\big(\calXh_N(\theta_N),\calX^\star\big) \not\rightarrow 0$ holds. Then, for some $\epsilon>0$, there exists a sequence $\{\xb_{N_j}\}_{j\in\N}$ such that $\xb_{N_j}\in\calXh_{N_j}(\theta_{N_j})$ and $d\big(\xb_{N_j},\calX^\star\big)>\epsilon$. Since $\calX$ is compact by Assumption \ref{assumption:loss_and_ambig_set_boundedness}, without loss of generality, we can assume that $\xb_{N_j}\rightarrow\xbbar$ for some $\xbbar\in\calX$. By continuity of the distance function $d$, we have $d(\xbbar,\calX^\star)=\lim_{j\rightarrow\infty}d(\xb_{N_j},\calX^\star)>\epsilon$. Thus, $\xbbar\not\in\calX^\star$, which implies that $\E_{\Prob^\star}[f(\xbbar,\xib)] > \upsilon^\star$. However, note that
$$\Big| \E_{\Prob^\star}[f(\xbbar,\xib)] - \upsilonh_{N_j}(\theta_{N_j})\Big| \leq  \Big| \E_{\Prob^\star}[f(\xbbar,\xib)] - \E_{\Prob^\star}[f(\xb_{N_j},\xib)]\Big| +  \Big| \E_{\Prob^\star}[f(\xb_{N_j},\xib)] - \upsilonh_{N_j}(\theta_{N_j})\Big|,$$
where the first term converges to zero by the continuity of $\E_{\Prob^\star}[f(\cdot,\xib)]$, and the second term converges to zero by \eqref{eqn_pf:asymptotic_convergence_1}. Therefore, we arrive at $\lim_{j\rightarrow\infty} \upsilonh_{N_j}(\theta_{N_j}) = \E_{\Prob^\star}[f(\xbbar,\xib)] > \upsilon^\star$, which contradicts assertion (i) that $\lim_{j\rightarrow\infty} \upsilonh_{N_j}(\theta_{N_j}) = \upsilon^\star$. 
\end{proof}

\subsection{Proof of Lemma~\ref{lem:asy_tight_tradeoff_ambig_set}}

\begin{proof}

Recall that, following the convention in empirical process theory, we view $\Prob\in\calP(\Xi)$ as an element in $\ell^\infty(\calH)$ defined as $\Prob (h) = \E_{\Prob}(h)$ for $h\in\calH$. We divide the proof of the desired weak convergence  $\S_N\Rightarrow\G'$ into two steps.

\textit{Step 1.} We first show that $\big(\S_N(h_1),\dots,\S_N(h_k)\big)\Rightarrow\big(\G'(h_1),\dots,\G'(h_k)\big)$ for any finite subset $\{h_1,\dots,h_k\}\subset\calH$. To prove this, we write $\S_N = \sqrt{N}(1-\theta_N)(\Probh_N-\Prob^\star) + \sqrt{N}\theta_N(\Prob_N-\Prob^\star)$. Note that $\sqrt{N}(\Probh_N-\Prob^\star)(h_1,\dots,h_k) \Rightarrow\big(\G'(h_1),\dots,\G'(h_k)\big)$ by Assumption~\ref{assumption:Donsker}. Also, from the assumption that $\theta_N=o(N^{-1/2})$, we have $(1-\theta_N)\rightarrow 1$. Hence, for the first term in $\S_N$, we have $(1-\theta_N)\cdot \sqrt{N}(\Probh_N-\Prob^\star)(h_1,\dots,h_k)\Rightarrow\big(\G'(h_1),\dots,\G'(h_k)\big)$ by Slutsky's Theorem (see, e.g., Example 1.4.7 of \citealp{van_der_Vaart_Wellner:1996}). Next, for the second term in $\S_N$, Lemma \ref{lem:DRO_component_asym_boundedness} implies that $\limsup_{N\rightarrow\infty}\sup_{h\in\calH}\sup_{\Prob\in\calP_N} \E_{\Prob}|h| \leq M$ almost surely for some constant $M$. Therefore, there exists constant $M'$ such that $\limsup_{N\rightarrow\infty}|\E_{\Prob_N}(h)-\E_{\Prob^\star}(h)| \leq \limsup_{N\rightarrow\infty} \sup_{h\in\calH} \sup_{\Prob\in\calP_N}|\E_{\Prob}(h)-\E_{\Prob^\star}(h)| \leq M'$ for any $\Prob_N\in\calP_N$ and $h\in\calH$ (since $\E_{\Prob^\star}|h|<\infty$). By assumption (b), $\sqrt{N}\theta_N$ converges to zero as $N\rightarrow\infty$ and hence, $\sqrt{N}\theta_N(\Prob_N-\Prob^\star)(h)$ also converges to zero almost surely. This implies that $\sqrt{N}\theta_N(\Prob_N-\Prob^\star)(h_1,\dots,h_k)\rightarrow (0,\dots,0)\in\R^k$ almost surely. Therefore, provoking Slutsky's Theorem again, we obtain the desired convergence, i.e., $\big(\S_N(h_1),\dots,\S_N(h_k)\big)\Rightarrow\big(\G'(h_1),\dots,\G'(h_k)\big)$. This completes step 1. 

\textit{Step 2.} We show that $\S_N$ is asymptotically tight. Note that for any $\varepsilon>0$,
\begin{subequations}
\begin{align}
    &\quad\,\limsup_{\delta\rightarrow 0}\limsup_{N\rightarrow\infty} \Prob^N\Bigg( \sup_{\norms{h-h'}_{L^2(\Prob^\star)}<\delta} \big| \S_N(h-h') \big| \geq \varepsilon \Bigg) \label{eqn_pf:thm_asy_dist_0} \\
    &\leq \limsup_{\delta\rightarrow 0}\limsup_{N\rightarrow\infty}  \Prob^N\Bigg( \sup_{\norms{h-h'}_{L^2(\Prob^\star)}<\delta} \big| (1-\theta_N)\sqrt{N}(\Probh_N-\Prob^\star)(h-h') \big| \geq \frac{\varepsilon}{2} \Bigg) \label{eqn_pf:thm_asy_dist_1}\\
    &\hspace{10mm}+  \limsup_{\delta\rightarrow 0}\limsup_{N\rightarrow\infty}\Prob^N\Bigg( \sup_{\norms{h-h'}_{L^2(\Prob^\star)}<\delta} \big| \theta_N\sqrt{N}(\Prob_N-\Prob^\star)(h-h') \big| \geq \frac{\varepsilon}{2} \Bigg) .  \label{eqn_pf:thm_asy_dist_2}
\end{align}
\end{subequations}
Since $\sqrt{N}(1-\theta_N)(\Probh_N-\Prob^\star)\Rightarrow\G'$, the sequence $\{\sqrt{N}(1-\theta_N)(\Probh_N-\Prob^\star)\}_{N\in\N}$ is asymptotically tight. Therefore, \eqref{eqn_pf:thm_asy_dist_1} equals zero by Theorem 1.5.7 of \cite{van_der_Vaart_Wellner:1996}. Now, we show that \eqref{eqn_pf:thm_asy_dist_2} also vanishes. It suffices to show that the sequence $\{\sqrt{N}\theta_N(\Prob_N-\Prob^\star)\}_{N\in\N}$ is asymptotically tight. Note that
$$\Q_N(h):=\sqrt{N}\theta_N(\Prob_N-\Prob^\star)(h)=\sqrt{N}\theta_N [\E_{\Prob_N}(h)-\E_{\Prob^\star}(h)].$$
By assumption (a), the metric space $(\calH,\norms{\cdot}_{L^2(\Prob^\star)})$ is totally bounded. Also, as shown earlier (in step~1), $\Q_N(h)$ converges to zero almost surely. It follows by Theorem~1.5.4 of \cite{van_der_Vaart_Wellner:1996} that $\Q_N(h)$ is asymptotically tight for any $h\in\calH$. Next, we claim that $\Q_N$ is asymptotically uniform equicontinuous in probability, i.e., for any $\varepsilon>0$, and $\eta>0$, there exists $\delta>0$ such that
$$\limsup_{N\rightarrow\infty}\Prob^N\bigg(\sup_{\norms{h-h'}_{L^2(\Prob^\star)}<\delta} |\Q_N(h-h')| >\varepsilon\bigg) < \eta.$$
For notational simplicity, we write $\norms{h-h'}=\norms{h-h'}_{L^2(\Prob^\star)}$. Note that
\begin{align*}
    \sup_{\norms{h-h'}<\delta}\big| \Q_N(h-h') \big| &= \sup_{\norms{h-h'}<\delta}\big| \sqrt{N}\theta_N (\Prob_N-\Prob^\star)(h-h') \big| \\
    &\leq \sqrt{N}\theta_N  \sup_{\norms{h-h'}<\delta} \Big\{ \big| \Prob_N(h-h') \big| + \big| \Prob^\star(h-h') \big| \Big\} \\
    &\leq  \sqrt{N}\theta_N  \sup_{\norms{h-h'}<\delta}  \big| \Prob_N(h-h') \big| +   \sqrt{N}\theta_N  \sup_{\norms{h-h'}<\delta}  \big| \Prob^\star(h-h') \big| .
\end{align*}
Therefore, we have
\begin{align} 
    &\quad\,\,\Prob^N\bigg(\sup_{\norms{h-h'}_{L^2(\Prob^\star)}<\delta} |\Q_N(h-h')| >\varepsilon\bigg) \nonumber \\
    &\leq \Prob^N\bigg(\sqrt{N}\theta_N \sup_{\norms{h-h'}<\delta} |\Prob_N(h-h')| >\frac{\varepsilon}{2}\bigg)  + \Prob^N\bigg(\sqrt{N}\theta_N \sup_{\norms{h-h'}<\delta} |\Prob^\star(h-h')| >\frac{\varepsilon}{2}\bigg) \nonumber \\
    &=: A_N + B_N. \label{eqn_pf:lem_asy_tight_tradeoff_ambig_set_1}
\end{align}
Consider the term $B_N$ in \eqref{eqn_pf:lem_asy_tight_tradeoff_ambig_set_1}. For any $h$ and $h'$ such that $\norms{h-h'}<\delta$, we have $|\Prob^\star(h-h')|=|\E_{\Prob^\star}(h-h')| \leq \E_{\Prob}|h-h'|\leq \sqrt{\E_{\Prob^\star}[(h-h')^2]}=\norms{h-h'}<\delta$, where the second inequality follows from Cauchy-Schwartz inequality. Also, by assumption (b), we have $\sqrt{N}\theta_N=o(1)$. Therefore,
\begin{equation}\label{eqn_pf:lem_asy_tight_tradeoff_ambig_set_2}
    \limsup_{N\rightarrow\infty} B_N \leq \Prob^\infty\bigg(\limsup_{N\rightarrow\infty} \big\{\sqrt{N}\theta_N \delta\big\} > \frac{\varepsilon}{2}\bigg) = 0.
\end{equation}
Consider the term $A_N$ in \eqref{eqn_pf:lem_asy_tight_tradeoff_ambig_set_1}. Note that for any $\Prob_N\in\calP_N$,
\begin{equation}  \label{eqn_pf:lem_asy_tight_tradeoff_ambig_set_3}
    \sup_{\norms{h-h'}<\delta} \E_{\Prob_N} |h-h'| \leq 2\sup_{h\in\calH} \E_{\Prob_N} |h| \leq  2\sup_{h\in\calH} \sup_{\Prob\in\calP_N} \E_{\Prob} |h|.
\end{equation}
By Lemma \ref{lem:DRO_component_asym_boundedness}, \eqref{eqn_pf:lem_asy_tight_tradeoff_ambig_set_3} implies that $\limsup_{N\rightarrow\infty} \sup_{\norms{h-h'}<\delta} \E_{\Prob_N} |h-h'| \leq M''$ almost surely for some constant $M''$. Since $\sqrt{N}\theta_N=o(1)$, we have  $\limsup_{N\rightarrow\infty}\big\{ \sqrt{N}\theta_N\sup_{\norms{h-h'}<\delta} \E_{\Prob_N} |h-h'|\big\} =0$. Therefore, we have
\begin{equation}\label{eqn_pf:lem_asy_tight_tradeoff_ambig_set_4}
    \limsup_{N\rightarrow\infty} A_N \leq \Prob^\infty\bigg(\limsup_{N\rightarrow\infty} \Big\{ \sqrt{N}\theta_N\sup_{\norms{h-h'}<\delta} \E_{\Prob_N} |h-h'| \Big\} > \frac{\varepsilon}{2}\bigg) = 0.
\end{equation}
Combining \eqref{eqn_pf:lem_asy_tight_tradeoff_ambig_set_2} and \eqref{eqn_pf:lem_asy_tight_tradeoff_ambig_set_4} with \eqref{eqn_pf:lem_asy_tight_tradeoff_ambig_set_1}, we have
\begin{equation}\label{eqn_pf:lem_asy_tight_tradeoff_ambig_set_5}
    0\leq \limsup_{N\rightarrow\infty} \Prob^N\bigg(\sup_{\norms{h-h'}_{L^2(\Prob^\star)}<\delta} |\Q_N(h-h')| >\varepsilon\bigg) \leq \limsup_{N\rightarrow\infty} A_N + \limsup_{N\rightarrow\infty} B_N \leq 0.
\end{equation}
Since \eqref{eqn_pf:lem_asy_tight_tradeoff_ambig_set_5} holds for any $\delta>0$, this shows that $\Q_N$ is asymptotically uniform continuous in probability. Since (a) the metric space $(\calH,\norms{\cdot}_{L^2(\Prob^\star)})$ is totally bounded, (b) $\Q_N(h)$ is asymptotically tight for any $h\in\calH$, and (c) $\Q_N$ is asymptotically uniform continuous in probability,  $\Q_N$ is asymptotically tight by Theorem~1.5.7 of \cite{van_der_Vaart_Wellner:1996}, implying that \eqref{eqn_pf:thm_asy_dist_2} equals zero. Since both \eqref{eqn_pf:thm_asy_dist_1} and \eqref{eqn_pf:thm_asy_dist_2} equal zero, \eqref{eqn_pf:thm_asy_dist_0} also equals zero, showing that the sequence $\{\S_N\}_{N\in\N}$ is asymptotically tight. This completes step~2.

Combining the two steps, we have (a) $\{\S_N\}$ is asymptotically tight and (b) the marginals $\big(\S_N(h_1),\dots,\S_N(h_k)\big)$ converge weakly to $\big(\G'(h_1),\dots,\G'(h_k)\big)$. It follows from Theorem 1.5.4 of \cite{van_der_Vaart_Wellner:1996} that $\S_N\Rightarrow\G'$. 
\end{proof}

\subsection{Proof of Theorem~\ref{thm:asy_dist}}

\begin{proof}

By our assumption, there exists  $\Prob_N^\star \in\argmax_{\Prob\in\calP_N} \E_{\Prob}[f(\xb,\xib)]$ such that $\Prob_N^\star\in\calP_N$ for any $\xb\in\calX$. Thus, by Lemma~\ref{lem:asy_tight_tradeoff_ambig_set}, we have
\begin{equation} \label{eqn_pf:thm_asy_dist_3}
    \sqrt{N}\bigg[(1-\theta_N)\Probh_N(\cdot)+\theta_N\sup_{\Prob\in\calP_N}\Prob(\cdot)-\Prob^\star(\cdot)\bigg] \Rightarrow \G'(\cdot) \,\,\, \text{in} \,\,\, \ell^\infty(\calH),
\end{equation}
where $\sup_{\Prob\in\calP_N}\Prob(\cdot)\in\calP_N$ denotes the worst-case distribution of the input function $h\in\calH$. Recall that $\calH=\{f(\xb,\cdot)\mid\xb\in\calX\}$. Note that the map from $\ell^\infty(\calH)$ to $\ell^\infty(\calX)$ given by $g(\cdot)\mapsto g(h(\cdot,\cdot))$ is continuous, where $\ell^\infty(\calX)$ is the Banach space of bounded functions $\psi:\calX\rightarrow\R$ equipped with the supremum norm $\norms{\psi}=\sup_{\xb\in\calX} |\psi(\xb)|$. By continuous mapping theorem (see Theorem 1.3.6 of \citealp{van_der_Vaart_Wellner:1996}), \eqref{eqn_pf:thm_asy_dist_3} implies that
\begin{equation} \label{eqn_pf:thm_asy_dist_4}
\sqrt{N}\bigg[(1-\theta_N)\E_{\Probh_N}[f(\cdot,\xib)]+\theta_N\sup_{\Prob\in\calP_N}\E_{\Prob}[f(\cdot,\xib)] - \E_{\Prob^\star}[f(\cdot,\xib)]\bigg] \Rightarrow \G(\cdot) \,\,\, \text{in} \,\,\, \ell^\infty(\calX).
\end{equation}
Consider the functional $V:\ell^\infty(\calX)\rightarrow\R$ by $V(\psi)=\inf_{\xb\in\calX} \psi(\xb)$. Since $\calX$ is compact by Assumption~\ref{assumption:loss_and_ambig_set_boundedness}, the Hadamard directional derivative of $V$ at $\psi$ is given by $V'_\psi(\phi)=\inf_{\xb\in B(\psi)} \phi(\xb)$, where $B(\psi)=\argmin_{\xb\in\calX} \psi(\xb)$ (see, e.g., Corollary 2.2 of \citealp{Carcamo_et_al:2020}).  Thus, together with \eqref{eqn_pf:thm_asy_dist_4}, applying the Delta's method (see, e.g., Theorem 2.2 of \citealp{Carcamo_et_al:2020}), we obtain the desired assertions: (i) $\sqrt{N}(\upsilonh_N-\upsilon^\star) \Rightarrow \inf_{\xb\in \calX^\star} \G(\xb)$ and (ii)
\begin{equation} \label{eqn_pf:thm_asy_dist_5}
    \upsilonh_N-\upsilon^\star = \inf_{\xb\in\calX^\star}  \bigg\{ (1-\theta_N)\E_{\Probh_N}[f(\xb,\xib)] + \theta_N \sup_{\Prob\in\calP_N}\E_{\Prob}[f(\xb,\xib)] - \E_{\Prob^\star}[f(\xb,\xib)] \bigg\} + o_{\Prob^\star}(N^{-1/2}).
\end{equation}
Finally, since $-\infty<\upsilon^\star=\E_{\Prob^\star}[f(\xb,\xib)]$ for any $\xb\in \calX^\star$, \eqref{eqn_pf:thm_asy_dist_5} directly implies $\upsilonh_N= \inf_{\xb\in \calX^\star}\big\{ (1-\theta_N)\E_{\Prob^\star}[f(\xb,\xib)] + \theta_N \sup_{\Prob\in\calP_N}\E_{\Prob}[f(\xb,\xib)]\big\} +o_{\Prob^\star}(N^{-1/2})$. This completes the proof. 
\end{proof}

% \footnote{Let $r_N=\sqrt{N}$, $\Q_N=(1-\theta_N)\E_{\Probh_N}[f(\cdot,\xib)]+\theta_N\sup_{\Prob\in\calP_N}\E_{\Prob}[f(\cdot,\xib)]$, $q=\E_{\Prob^\star}[f(\cdot,\xib)]$, $\Q=\G(\cdot)$, and $\phi=V$ in Theorem 2.2 of \cite{Carcamo_et_al:2020}.}

\subsection{Proof of Theorem~\ref{thm:asy_dist_distance_based}}

\begin{proof}
Using~\eqref{eqn:DRO_expansion}, we can rewrite the objective function of the TRO model with shape parameter $\calP_{N,r_N}$ as follows:
\begin{align*}
    \sup_{\Prob\in\calP'_{N,\theta}} \E_{\Prob}[f(\xb,\xib)]&=(1-\theta_N) \E_{\Probh_N}[f(\xb,\xib)]+\theta_N\sup_{\Prob\in\calP_{N,r_N}} \E_{\Prob}[f(\xb,\xib)] \\
    &=\E_{\Probh_N}[f(\xb,\xib)]+ \big(\theta_N r^\gamma_N\big) g_N(\xb)+  \big(\theta_N r^\gamma_N\big) \varepsilon_N(\xb),
\end{align*}
which resembles the expansion \eqref{eqn:DRO_expansion} with $r^\gamma_N$ replaced by $\theta_N r^\gamma_N$. Thus, we can prove the desired assertions by following the same proof techniques of Theorem~1 in  \cite{Blanchet_Shapiro:2023}. 
\end{proof}
\color{black}

\section{Additional Discussions} \label{apdx:add_discuss}

\subsection{Differences between the Huber Contamination Model and Our TRO Model} \label{apdx:add_discuss:Huber}

As mentioned in Section~\ref{sec:introduction}, the form of our TRO ambiguity set resembles those considered in robust statistics, particularly in Huber contamination models \citep{Huber:1964}. However, the underlying idea of the Huber contamination model fundamentally differs from that of our TRO model. In robust statistics, the Huber contamination model is used to address data contamination or outliers. Specifically,  in this model, the data $\{\xibh_i\}_{i=1}^N$ is assumed to be drawn from a mixture distribution $(1-\varepsilon)\Prob_0+\varepsilon\Q$, where $\Prob_0$ is the distribution of interest (e.g., the true distribution), $\Q\in\calP(\Xi)$ represents some arbitrary distribution, and $\varepsilon$ represents the contamination ratio (see, e.g., \citealp{Chen_et_al:2018, Copas:1988, Huber:1964, Mu_Xiong:2023}). This stream of literature often focuses on developing statistical procedures to estimate the distribution of interest $\Prob_0$. In contrast, as discussed in Section~\ref{sec:introduction}, we consider the case where the true distribution $\Prob^\star$ of $\xib$ is unknown. We assume that one has a (potentially small) set of historical observations $\{\xibh_i\}_{i=1}^N$ of $\xib$ from the unknown true distribution $\Prob^\star$ (i.e., we do not assume that the data is contaminated). Our TRO model is an alternative approach for modeling uncertainty in problem~\eqref{prob:SO} that serves as a middle ground between the optimistic approach, which adopts a distributional belief, and the pessimistic approach, which protects against distributional ambiguity. The TRO ambiguity set $\calP'_{N,\theta} = \big\{ (1-\theta)\Probh_N + \theta\Q \mid \Q\in\calP_N\big\}$  is a key ingredient of our TRO model in \eqref{model:trade-off_model}. The size parameter $\theta\in[0,1]$ controls the trade-off between solving the problem under a distributional belief and solving it under the worst-case distribution that resides in the shape parameter $\calP_N$. Hence, $\theta$ plays a different role in our model than $\varepsilon$ in the Huber contamination model (which quantifies the contamination level in the data).

\subsection{Additional Clarification Related to Remark~\ref{rem:assumption_in_debias_theta_thm}} \label{apdx:add_discuss:remark}

Suppose that the (data-driven) ambiguity set $\calP_{N,\alpha}$ satisfies $\Prob^N(\Prob^\star\in\calP_{N,\alpha})\geq 1-\alpha$ for some small $\alpha\in(0,1)$. In this case, the set $\calP_{N,\alpha}$ potentially consists of a wide range of distributions such that $\E_{\Prob^N}[\upsilonh_N(\theta)\mid \Prob^\star\in\calP_{N,\alpha}]\geq \upsilon^\star+\Delta(\alpha)$ for some $\Delta(\alpha)>0$, i.e., the expected value of  $\upsilonh_N(\theta)$ (conditional on $\Prob^\star\in\calP_{N,\alpha}$) is strictly greater than the true optimal value $\upsilon^\star$. If the function $f$ is bounded from below, say, $f(\xb,\xib)\geq M$ for all $\xb\in\calX$ and $\xib\in\Xi$, then
\begin{align}
    \E_{\Prob^N}[\upsilonh_N(\theta)]&=\Prob^N(\Prob^\star\in\calP_{N,\alpha})\E_{\Prob^N}[\upsilonh_N(\theta)\mid \Prob^\star\in\calP_{N,\alpha}] + \Prob^N(\Prob^\star\not\in\calP_{N,\alpha})\E_{\Prob^N}[\upsilonh_N(\theta)\mid \Prob^\star\not\in\calP_{N,\alpha}] \nonumber \\
    &\geq(1-\alpha)[\upsilon^\star+\Delta(\alpha)]+M \Prob^N(\Prob^\star\not\in\calP_{N,\alpha}). \label{eqn:TRO_mean_bound}
\end{align}
If we pick a sufficiently small $\alpha$, the probability $\Prob^N(\Prob^\star\not\in\calP_{N,\alpha})$ can be arbitrarily small since it satisfies $\Prob^N(\Prob^\star\not\in\calP_{N,\alpha})\leq\alpha$. As $\alpha$ decreases, the value of $\Delta(\alpha)$ is non-decreasing since $\calP_{N,\alpha}$ consists of a larger number of distributions. Thus, the first term in \eqref{eqn:TRO_mean_bound} increases when $\alpha$ decreases. Therefore, we can choose $\alpha$ sufficiently small such that $\E_{\Prob^N}[\upsilonh_N(\theta)]\geq\upsilon^\star$.

\section{An Example of a Sequence of TRO Ambiguity Sets} \label{apdx:example_star_center}

Let $\Probh_N=\delta_0$, i.e., the Dirac measure on $0$, and $\calP_N=\{(1-t)\delta_1 + t\delta_e\mid t\in[0,1],\,e\in\{0,2\}\}$. That is, $\calP_N$ contains the one-point distributions $\{\delta_0,\delta_1,\delta_2\}$, as well as all two-point distributions with support on either $\{0,1\}$ or $\{1,2\}$. Note that $\calP_N$ is star-shaped with a star center $\delta_1$. Indeed, for any $\alpha\in[0,1]$ and $\Q=(1-t)\delta_1 + t\delta_e\in\calP_N$, we have
\begin{equation*}
    (1-\alpha)\delta_1 + \alpha\Q = (1-\alpha)\delta_1 + \alpha \big[(1-t)\delta_1 + t\delta_e\big] = (1-\alpha t)\delta_1 + \alpha t \delta_e\in\calP_N 
\end{equation*}
since $\alpha t \in[0,1]$. However, $\Probh_N=\delta_0$ is not a star center. To see this, note that
$$\frac{1}{2}\delta_ 0 + \frac{1}{2}\bigg(\frac{1}{2}\delta_1 + \frac{1}{2}\delta_2\bigg)=\frac{1}{2}\delta_0+\frac{1}{4}\delta_1+\frac{1}{4}\delta_2\not\in\calP_N$$
since it is a three-point distribution. 

Now, we show that $\calP'_{N,\theta}$ is \textit{not} non-decreasing. In particular, we show that for any $0<\theta_1<\theta_2\leq 1$, there exists $\M\in\calP'_{N,\theta_1}$ but $\M\not\in\calP'_{N,\theta_2}$. Indeed, since $\frac{1}{2}\delta_1+\frac{1}{2}\delta_2\in\calP_N$, we construct the measure $\M\in\calP'_{N,\theta_1}$ as follows:
\begin{equation} \label{eqn:eg:example_star_center}
    \M=(1-\theta_1)\delta_0 + \theta_1\bigg(\frac{1}{2}\delta_1 + \frac{1}{2}\delta_2\bigg).
\end{equation}
We show that $\M$ defined in \eqref{eqn:eg:example_star_center} does not belong to $\calP'_{N,\theta_2}$. Note that 
\begin{align*}
   \M=(1-\theta_1)\delta_0 + \theta_1\bigg(\frac{1}{2}\delta_1 + \frac{1}{2}\delta_2\bigg)&= (1-\theta_2)\delta_0 + (\theta_2-\theta_1)\delta_0 + \theta_1\bigg(\frac{1}{2}\delta_1 + \frac{1}{2}\delta_2\bigg)\\
   &=(1-\theta_2)\delta_0 + \theta_2\Bigg\{\bigg(1-\frac{\theta_1}{\theta_2}\bigg)\delta_0 + \frac{\theta_1}{\theta_2}\bigg(\frac{1}{2}\delta_1 + \frac{1}{2}\delta_2\bigg)\Bigg\}.
\end{align*}
Since $(1-\frac{\theta_1}{\theta_2})\delta_0+\frac{\theta_1}{\theta_2}(\frac{1}{2}\delta_1 + \frac{1}{2}\delta_2)$ is a three-point distribution that does not belong to $\calP_N$, we have $\M\not\in\calP'_{N,\theta_2}$.

\section{Robust Optimization Ambiguity Set} \label{apdx:RO_ambig_set}

\begin{proposition}
For a fixed $\xb\in\calX$, if there exists $\xib_0\in\calU$ such that $f(\xb,\xib_0)\geq f(\xb,\xibh_i)$ for all $i\in\{1,\dots,N\}$, then $\sup_{\xib\in\calU} f(\xb,\xib) = \sup_{\Prob\in\calP_N} \E_{\Prob}[f(\xb,\xib)]$, where $\calP_N=\conv\big( \Probh_N\cup\big\{ \delta_{\xib}\mid \xib\in\calU\big\} \big)$.
\end{proposition}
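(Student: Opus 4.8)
The plan is to establish the claimed identity by proving the two inequalities $\sup_{\xib\in\calU} f(\xb,\xib)\le\sup_{\Prob\in\calP_N}\E_{\Prob}[f(\xb,\xib)]$ and $\sup_{\Prob\in\calP_N}\E_{\Prob}[f(\xb,\xib)]\le\sup_{\xib\in\calU} f(\xb,\xib)$ separately, with the second one being the substantive direction. Throughout, $\xb\in\calX$ is fixed.

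For the ``$\le$'' direction, I would simply observe that for every $\xib\in\calU$ the Dirac measure $\delta_{\xib}$ belongs to $\{\delta_{\xib}\mid\xib\in\calU\}\subseteq\calP_N$, and $\E_{\delta_{\xib}}[f(\xb,\xib)]=f(\xb,\xib)$. Hence $f(\xb,\xib)\le\sup_{\Prob\in\calP_N}\E_{\Prob}[f(\xb,\xib)]$ for all $\xib\in\calU$, and taking the supremum over $\xib\in\calU$ gives the inequality.

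For the reverse direction, I would take an arbitrary $\Prob\in\calP_N$ and use the definition of the convex hull to write $\Prob=\sum_{j=1}^{k}\alpha_j\Prob_j$ with $\alpha_j\ge 0$, $\sum_{j=1}^{k}\alpha_j=1$, and each $\Prob_j$ equal either to $\Probh_N$ or to some $\delta_{\xib_j}$ with $\xib_j\in\calU$. By linearity, $\E_{\Prob}[f(\xb,\xib)]=\sum_{j=1}^{k}\alpha_j\E_{\Prob_j}[f(\xb,\xib)]$. For any index $j$ with $\Prob_j=\delta_{\xib_j}$ we have $\E_{\Prob_j}[f(\xb,\xib)]=f(\xb,\xib_j)\le\sup_{\xib\in\calU}f(\xb,\xib)$. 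For any index $j$ with $\Prob_j=\Probh_N$ we have $\E_{\Probh_N}[f(\xb,\xib)]=\frac{1}{N}\sum_{i=1}^{N}f(\xb,\xibh_i)\le\max_{i\in\{1,\dots,N\}}f(\xb,\xibh_i)\le f(\xb,\xib_0)\le\sup_{\xib\in\calU}f(\xb,\xib)$, where the middle step is exactly where the hypothesis $f(\xb,\xib_0)\ge f(\xb,\xibh_i)$ for all $i$ together with $\xib_0\in\calU$ is used. Combining these bounds with the weights $\alpha_j$ gives $\E_{\Prob}[f(\xb,\xib)]\le\sup_{\xib\in\calU}f(\xb,\xib)$, and taking the supremum over $\Prob\in\calP_N$ completes the argument.

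I do not expect a genuine obstacle; the one point that needs care is precisely the handling of the $\Probh_N$ atom in the convex decomposition, since without the stated hypothesis the empirical average $\E_{\Probh_N}[f(\xb,\xib)]$ could exceed $\sup_{\xib\in\calU}f(\xb,\xib)$ and the identity would fail. The convex-combination decomposition automatically reduces the worst case over $\calP_N$ to its generating measures, so no separate argument about attainment of the suprema is needed; I may add a one-line remark noting that the hypothesis is what makes the $\Probh_N$ component harmless.
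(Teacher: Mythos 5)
Your proof is correct and follows essentially the same route as the paper: decompose an arbitrary element of the convex hull into its generating measures and use the hypothesis $f(\xb,\xib_0)\geq f(\xb,\xibh_i)$ to show the $\Probh_N$ atom cannot raise the supremum. The only cosmetic difference is that the paper phrases the key step as an optimization over the mixture weight $\lambda$ (showing $\lambda=1$ is optimal) and treats $\sup_{\xib\in\calU}f(\xb,\xib)=\infty$ as a separate case, whereas your two-inequality framing absorbs the infinite case automatically.
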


\begin{proof}
First, if $\sup_{\xib\in\calU} f(\xb,\xib)=\infty$, then there exists a sequence $\{\xib_j\}_{j\in\N}\subset\calU$ such that $f(\xb,\xib_j)\rightarrow\infty$ as $j\rightarrow\infty$. Since $\xib_j\in\calU$, we have $\delta_{\xib_j}\in\calP_N$ and $\E_{\delta_{\xib_j}}[f(\xb,\xib)]=f(\xb,\xib_j)\rightarrow\infty$ as $j\rightarrow\infty$. This shows that $\sup_{\Prob\in\calP_N}\E_{\Prob}[f(\xb,\xib)]=\infty$.

Now, assume that $\sup_{\xib\in\calU} f(\xb,\xib)<\infty$, and thus $f(\xb,\xib)<\infty$ for all $\xib\in\calU$. Note that for any $\Prob\in\calP_N$, by definition of $\calP_N$, there exists $K\in\N$, $\lambda\in[0,1]$, and $\{\alpha_k\}_{k=1}^K\subseteq[0,1]$ with $\alpha_k\geq 0$ and $\sum_{k=1}^K \alpha_k=1$ such that $\Prob=(1-\lambda)\Probh_N+\lambda \sum_{k=1}^K \alpha_i \delta_{\bar{\xib}_k}$ for some $\bar{\xib}_k\in\calU$, $k\in\{1,\dots,K\}$. Then,
\begin{subequations}
\begin{align}
    &\quad\, \sup_{\Prob\in\calP_N}\E_{\Prob}[f(\xb,\xib)]  \nonumber \\
    &= \sup\Bigg\{(1-\lambda)\cdot\frac{1}{N}\sum_{i=1}^N f(\xb,\xibh_i) + \lambda \sum_{k=1}^K \alpha_k f(\xb,\bar{\xib}_k) \,\Bigg|\, \begin{array}{l}
         K\in\N,\, \lambda\in[0,1],\, \alpha_k\in[0,1], \\ \sum_{k=1}^K \alpha_k = 1,\, \bar{\xib}_k\in\calU,\, k\in\{1,\dots,K\}    \end{array} \Bigg\} \label{eqn_pf:prop_RO_ambig_set_1} \\
    &= \sup_{\lambda\in[0,1]}\Bigg\{ (1-\lambda)\cdot\frac{1}{N}\sum_{i=1}^N f(\xb,\xibh_i) + \lambda\cdot\sup \Bigg\{\sum_{k=1}^K \alpha_k f(\xb,\bar{\xib}_k) \,\Bigg|\, \begin{array}{l}
         K\in\N,\, \alpha_k\in[0,1], \sum_{k=1}^K \alpha_k = 1, \\ \bar{\xib}_k\in\calU,\, k\in\{1,\dots,K\}    \end{array} \Bigg\} \Bigg\}  \nonumber\\
    &= \sup_{\lambda\in[0,1]}\Bigg\{ (1-\lambda)\cdot\frac{1}{N}\sum_{i=1}^N f(\xb,\xibh_i) + \lambda \cdot \sup_{\xib\in\calU} f(\xb,\xib)  \Bigg\} \label{eqn_pf:prop_RO_ambig_set_2}\\
    &= \sup_{\xib\in\calU} f(\xb,\xib). \label{eqn_pf:prop_RO_ambig_set_3}
\end{align}
\end{subequations}
Equality \eqref{eqn_pf:prop_RO_ambig_set_1} follows from $\Prob\in\calP_N$ and the definition of $\calP_N$. Equality \eqref{eqn_pf:prop_RO_ambig_set_2} follows from the fact that
\begin{equation} \label{eqn_pf:prop_RO_ambig_set_4}
    \sum_{k=1}^K \alpha_k f(\xb,\bar{\xib}_k) \leq \sup_{\xib\in\calU} f(\xb,\xib)\leq \sup\Bigg\{\sum_{k=1}^K \alpha_k f(\xb,\bar{\xib}_k) \,\Bigg|\, \begin{array}{l}
         K\in\N,\, \alpha_k\in[0,1], \sum_{k=1}^K \alpha_k = 1, \\ \bar{\xib}_k\in\calU,\, k\in\{1,\dots,K\}    \end{array} \Bigg\} 
\end{equation}
for any $k\in\N$, $\alpha_k\in[0,1]$ and $\bar{\xib_k}\in\calU$ for $k\in\{1,\dots,K\}$ with $\sum_{k=1}^K \alpha_k=1$. Here, the first inequality in \eqref{eqn_pf:prop_RO_ambig_set_4} follows from  $f(\xb,\bar{\xib})\leq \sup_{\xib\in\calU} f(\xb,\xib)$ for all $\bar{\xib}\in\calU$ while the second inequality in \eqref{eqn_pf:prop_RO_ambig_set_4} follows from letting $K=1$. Taking supremum over $K$, $\{\alpha_k\}_{k=1}^K$ and $\{\bar{\xib}_k\}_{k=1}^K$ in \eqref{eqn_pf:prop_RO_ambig_set_4}, we obtain the desired equality in \eqref{eqn_pf:prop_RO_ambig_set_2}. Finally, equality~\eqref{eqn_pf:prop_RO_ambig_set_3} follows from                             the fact that $\lambda=1$ is optimal to \eqref{eqn_pf:prop_RO_ambig_set_2} since
$$f(\xb,\xibh_i)\leq f(\xb,\xib_0)\leq \sup_{\xib\in\calU} f(\xb,\xib)$$
for all $i\in\{1,\dots,N\}$, where the first inequality follows from our assumption that $f(\xb,\xib_0)\geq f(\xb,\xibh_i)$ for all $i\in\{1,\dots,N\}$. This shows that $(1/N)\sum_{i=1}^N f(\xb,\xibh_i)\leq \sup_{\xib\in\calU} f(\xb,\xib)$. 
\end{proof}

\section{Convergence of Distance-Based Ambiguity Sets} \label{apdx:distance_ambig_set_convergence}

\begin{proposition} \label{prop:distance_ambig_set_convergence}
Consider the distance-based ambiguity set $\calP_N=\{\Prob\in\calP(\Xi)\mid \sfd(\Prob,\Probh_N)\leq r\}$ for some radius $r>0$, where $\sfd$ is any statistical distance satisfying 
\begin{enumerate}
    \item [(i)] (\textit{normalization}) $\sfd(\Prob,\Prob)=0$ for any $\Prob\in\calP(\Xi)$,
    \item [(ii)](\textit{symmetry}) $\sfd(\Prob_1,\Prob_2)=\sfd(\Prob_2,\Prob_1)$ for any $\{\Prob_1,\Prob_2\}\subseteq\calP(\Xi)$,
    \item [(iii)] (\textit{triangle inequality})  $\sfd(\Prob_1,\Prob_2)\leq\sfd(\Prob_1,\Prob_3)+\sfd(\Prob_3,\Prob_2)$ for any $\{\Prob_1,\Prob_2,\Prob_3\}\subseteq\calP(\Xi)$, and
    \item [(iv)]  (\textit{convexity}) $\sfd$ is convex in the first argument. 
\end{enumerate}
Let $\calPh=\{\Prob\in\calP(\Xi)\mid \sfd(\Prob,\Prob^\star)\leq r\}$. If $\Delta:=\sup_{\Prob_1\in\calP(\Xi),\,\Prob_2\in\calP(\Xi)} \bbmd(\Prob_1,\Prob_2)<\infty$ and $\sfd(\Probh_N,\Prob^\star)\rightarrow0$ almost surely, then $\bbmH(\calP_N,\calPh)\rightarrow 0$ almost surely as $N\rightarrow\infty$.
\end{proposition}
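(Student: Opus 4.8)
The plan is to bound the Hausdorff distance $\bbmH(\calP_N,\calPh)$ directly by exhibiting, for every member of one set, a nearby member of the other, with a displacement bound that is uniform over the set and tends to zero almost surely. Write $\epsilon_N:=\sfd(\Probh_N,\Prob^\star)$, which is finite-valued (since $\sfd$ takes values in $\R_+$) and satisfies $\epsilon_N\to0$ almost surely by hypothesis, and set $\lambda_N:=\epsilon_N/(r+\epsilon_N)\in[0,1)$, so $\lambda_N\to0$ almost surely.

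First I would control $\sup_{\Q\in\calP_N}\bbmD(\Q,\calPh)$. Fix $\Q\in\calP_N$, i.e.\ $\sfd(\Q,\Probh_N)\le r$, and consider the ``contracted'' measure $\Q':=(1-\lambda_N)\Q+\lambda_N\Prob^\star\in\calP(\Xi)$. Using convexity of $\sfd$ in the first argument (property (iv)) together with the normalization $\sfd(\Prob^\star,\Prob^\star)=0$, then the triangle inequality (property (iii)), then the defining inequality $\sfd(\Q,\Probh_N)\le r$ and the definition of $\epsilon_N$, one obtains
$$\sfd(\Q',\Prob^\star)\le(1-\lambda_N)\,\sfd(\Q,\Prob^\star)\le(1-\lambda_N)\big(\sfd(\Q,\Probh_N)+\epsilon_N\big)\le(1-\lambda_N)(r+\epsilon_N)=r$$
by the choice of $\lambda_N$; hence $\Q'\in\calPh$. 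On the other hand, since $\E_{\Q'}[f(\xb,\xib)]=(1-\lambda_N)\E_{\Q}[f(\xb,\xib)]+\lambda_N\E_{\Prob^\star}[f(\xb,\xib)]$ for every $\xb\in\calX$, one has $\bbmd(\Q,\Q')=\lambda_N\,\bbmd(\Q,\Prob^\star)\le\lambda_N\Delta$. Thus $\bbmD(\Q,\calPh)\le\bbmd(\Q,\Q')\le\lambda_N\Delta$, and since this bound is independent of $\Q$, $\sup_{\Q\in\calP_N}\bbmD(\Q,\calPh)\le\lambda_N\Delta$.

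For the symmetric term $\sup_{\Q\in\calPh}\bbmD(\Q,\calP_N)$, I would repeat the construction with the roles of $\Probh_N$ and $\Prob^\star$ swapped, using symmetry (property (ii)) to write $\sfd(\Prob^\star,\Probh_N)=\epsilon_N$: for $\Q\in\calPh$ the measure $\Q'':=(1-\lambda_N)\Q+\lambda_N\Probh_N$ satisfies $\sfd(\Q'',\Probh_N)\le(1-\lambda_N)(r+\epsilon_N)=r$, so $\Q''\in\calP_N$, while $\bbmd(\Q,\Q'')=\lambda_N\,\bbmd(\Q,\Probh_N)\le\lambda_N\Delta$. Combining the two one-sided bounds yields $\bbmH(\calP_N,\calPh)\le\lambda_N\Delta\to0$ almost surely, since $\Delta<\infty$ and $\lambda_N\to0$ almost surely.

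I do not expect a serious analytic obstacle here; the proof hinges on the right construction. The crucial observation is that convexity of $\sfd$ in its first argument lets one contract any $\sfd$-feasible distribution toward the reference measure at a quantifiable cost to its radius, and that finiteness of $\Delta=\sup\bbmd$ turns the small contraction weight $\lambda_N$ into a uniformly small $\bbmd$-displacement. The only points needing care are that $r>0$ makes $\lambda_N<1$ well defined (with the degenerate case $\epsilon_N=0$ handled trivially by $\lambda_N=0$), and that when invoking $\Delta<\infty$ every measure involved — including $\Probh_N$ and the convex combinations $\Q',\Q''$ — indeed lies in $\calP(\Xi)$, over which the supremum defining $\Delta$ is taken.
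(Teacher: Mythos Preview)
Your proposal is correct and follows essentially the same approach as the paper: both proofs contract each measure in one ball toward the center of the other via the convex combination $(1-\lambda_N)\Q+\lambda_N\cdot(\text{center})$ with $\lambda_N=\epsilon_N/(r+\epsilon_N)$, use convexity and the triangle inequality for $\sfd$ to show the contracted measure lands in the target ball, and then bound the $\bbmd$-displacement by $\lambda_N\Delta$. The only cosmetic difference is that the paper further relaxes $\lambda_N\Delta\le(\Delta/r)\epsilon_N$ before passing to the limit, whereas you use $\lambda_N\to0$ directly.
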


\begin{proof}
The idea of the proof follows from the proof of Hoffman's Lemma for moment problems (see Theorem 2 in \citealp{Liu_et_al:2019}). First, we claim that for any $\Prob_1\in\calP_N$, $(1-\rho)\Prob_1+\rho\Prob^\star\in\calPh$, where $\rho=\sfd(\Probh_N,\Prob^\star)/\big(r+\sfd(\Probh_N,\Prob^\star)\big)$. To see this, note that 
\begin{align}
    \sfd\big((1-\rho)\Prob_1+\rho\Prob^\star,\Prob^\star\big)&\leq (1-\rho)\sfd(\Prob_1,\Prob^\star) \nonumber\\ %&=\frac{r}{r+\sfd(\Probh_N,\Prob^\star)} \sfd(\Prob_1,\Prob^\star)\\
    &\leq \frac{r}{r+\sfd(\Probh_N,\Prob^\star)} \big[\sfd(\Prob_1,\Probh_N)+\sfd(\Probh_N,\Prob^\star)\big]  \nonumber\\
    &\leq \frac{r}{r+\sfd(\Probh_N,\Prob^\star)} \big[r+\sfd(\Probh_N,\Prob^\star)\big] = r, \label{eqn_pf:prop_distance_ambig_set_convergence_1}
\end{align}
where the first inequality follows from properties (iv) and (i), the second inequality follows from the definition of $\rho$ and property (iii). Then, we have
\begin{align}
    \bbmD(\Prob_1,\calPh) = \inf_{\Prob\in\calPh} \bbmd(\Prob_1,\Prob) &\leq \bbmd\big(\Prob_1, (1-\rho)\Prob_1+\rho\Prob^\star\big)  \nonumber\\
    &= \sup_{\xb\in\calX} \bigg| \E_{\Prob_1}[f(\xb,\xib)] - \Big\{ (1-\rho)\E_{\Prob_1}[f(\xb,\xib)] + \rho\E_{\Prob^\star}[f(\xb,\xib)]\Big\} \bigg| \nonumber \\
    &= \rho \sup_{\xb\in\calX} \bigg| \E_{\Prob_1}[f(\xb,\xib)] - \E_{\Prob^\star}[f(\xb,\xib)]\bigg|  \nonumber\\
    &= \frac{\sfd(\Probh_N,\Prob^\star)}{r+\sfd(\Probh_N,\Prob^\star)}\cdot \bbmd(\Prob_1,\Prob^\star) \leq \frac{\Delta}{r} \cdot \sfd(\Probh_N,\Prob^\star), \label{eqn_pf:prop_distance_ambig_set_convergence_2}
\end{align}
where the last inequality follows from the fact that $\sfd(\Probh_N,\Prob^\star)\geq0$ and $\bbmd(\Prob_1,\Prob^\star)\leq\Delta$. This implies that $\sup_{\Prob_1\in\calP_N}  \bbmD(\Prob_1,\calPh) \leq (\Delta/r) \sfd(\Probh_N,\Prob^\star)$. Similarly,
following a similar argument in \eqref{eqn_pf:prop_distance_ambig_set_convergence_1}, we can show that for any $\Prob_2\in\calPh$, $(1-\tau)\Prob_2+\tau\Probh_N\in\calP_N$, where $\tau=\sfd(\Prob^\star,\Probh_N)/\big(r+\sfd(\Prob^\star,\Probh_N)\big)$. A similar argument as in \eqref{eqn_pf:prop_distance_ambig_set_convergence_2} shows that 
$$\bbmD(\Prob_2,\calPh_N) \leq \frac{\Delta}{r}\cdot\sfd(\Prob^\star,\Probh_N)=\frac{\Delta}{r}\cdot\sfd(\Probh_N,\Prob^\star),$$
where the last inequality follows from property (ii). This implies that $\sup_{\Prob_2\in\calPh}  \bbmD(\Prob_2,\calPh_N) \leq (\Delta/r) \sfd(\Probh_N,\Prob^\star)$. Therefore, we have
$$\bbmH(\calP_N,\calPh)=\max\Bigg\{ \sup_{\Prob_1\in\calP_N} \bbmD(\Prob_1,\calPh),\, \sup_{\Prob_2\in\calPh} \bbmD(\Prob_2,\calP_N) \Bigg\} \leq \frac{\Delta}{r}\cdot\sfd(\Probh_N,\Prob^\star) \rightarrow 0$$
almost surely as $N\rightarrow\infty$. 
\end{proof}

\section{Some Quantitative Stability Analysis Results} \label{apdx:known_QSA_DRO}

Consider two DRO models with two different ambiguity sets:
$$\upsilon_i=\inf_{\xb\in\calX}\sup_{\Prob\in\calP_i} \E_{\Prob}[f(\xb,\xib)]$$
for $i\in\{1,2\}$, where we assume that $\upsilon_i$ is finite and
the set of optimal solutions $\calX^\star_i$ is non-empty. In quantitative stability analysis, we analyze how the change in the ambiguity set would affect the optimal value and the set of optimal solutions to the DRO model. In this appendix, we summarize some relevant results in the existing literature, in particular, the upper bounds on the differences between the optimal values and the set of optimal solutions from the two DRO models (see, e.g., \citealp{Liu_Xu:2013, Pichler_Xu:2022, Sun_Xu:2016}). For the sake of completeness, we also provide the proof of these results.

\begin{proposition}\label{prop:known_QSA_Hausdorff_bound}
The (pointwise) absolute difference between the two objective functions is upper bounded by the Hausdorff distance between the two ambiguity sets, i.e., for any $\xb\in\calX$,
\begin{equation} \label{eqn:known_QSA_Hausdorff_bound}
     \bigg|\sup_{\Q\in\calP_1} \E_{\Q}[f(\xb,\xib)] - \sup_{\Q'\in\calP_2} \E_{\Q'}[f(\xb,\xib)]\bigg| \leq \bbmH(\calP_1,\calP_2).
\end{equation}
\end{proposition}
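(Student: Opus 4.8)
The plan is to prove the bound \eqref{eqn:known_QSA_Hausdorff_bound} by a standard ``two-sided'' argument, controlling the difference of the two worst-case values by approximating any distribution in one ambiguity set by a suitably close distribution in the other. Fix $\xb\in\calX$. Without loss of generality, assume $\sup_{\Q\in\calP_1}\E_{\Q}[f(\xb,\xib)] \geq \sup_{\Q'\in\calP_2}\E_{\Q'}[f(\xb,\xib)]$, so that the absolute value can be dropped. The goal is then to show $\sup_{\Q\in\calP_1}\E_{\Q}[f(\xb,\xib)] - \sup_{\Q'\in\calP_2}\E_{\Q'}[f(\xb,\xib)] \leq \bbmH(\calP_1,\calP_2)$.

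The key steps, in order, are as follows. First, fix an arbitrary $\varepsilon>0$ and choose $\Q_\varepsilon\in\calP_1$ such that $\E_{\Q_\varepsilon}[f(\xb,\xib)] \geq \sup_{\Q\in\calP_1}\E_{\Q}[f(\xb,\xib)] - \varepsilon$. Second, invoke the definition of $\bbmD(\Q_\varepsilon,\calP_2) = \inf_{\Q'\in\calP_2}\bbmd(\Q_\varepsilon,\Q')$ to pick $\Q'_\varepsilon\in\calP_2$ with $\bbmd(\Q_\varepsilon,\Q'_\varepsilon) \leq \bbmD(\Q_\varepsilon,\calP_2) + \varepsilon \leq \bbmH(\calP_1,\calP_2) + \varepsilon$, where the last inequality uses that $\bbmD(\Q_\varepsilon,\calP_2) \leq \sup_{\Q\in\calP_1}\bbmD(\Q,\calP_2) \leq \bbmH(\calP_1,\calP_2)$ by the definition \eqref{def:Hausdorff_distance}. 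Third, recall from the definition \eqref{eqn:dist_btw_prob_measures} of the pseudometric that $|\E_{\Q_\varepsilon}[f(\xb,\xib)] - \E_{\Q'_\varepsilon}[f(\xb,\xib)]| \leq \sup_{\xb'\in\calX}|\E_{\Q_\varepsilon}[f(\xb',\xib)] - \E_{\Q'_\varepsilon}[f(\xb',\xib)]| = \bbmd(\Q_\varepsilon,\Q'_\varepsilon)$. Fourth, chain these together:
\begin{align*}
    \sup_{\Q\in\calP_1}\E_{\Q}[f(\xb,\xib)] - \sup_{\Q'\in\calP_2}\E_{\Q'}[f(\xb,\xib)]
    &\leq \E_{\Q_\varepsilon}[f(\xb,\xib)] + \varepsilon - \E_{\Q'_\varepsilon}[f(\xb,\xib)] \\
    &\leq \bbmd(\Q_\varepsilon,\Q'_\varepsilon) + \varepsilon \\
    &\leq \bbmH(\calP_1,\calP_2) + 2\varepsilon.
\end{align*}
Since $\varepsilon>0$ was arbitrary, letting $\varepsilon\downarrow 0$ yields the claim in the case we assumed; the symmetric case is identical after swapping the roles of $\calP_1$ and $\calP_2$, and taking the maximum of the two one-sided bounds recovers \eqref{eqn:known_QSA_Hausdorff_bound} with the absolute value.

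I do not expect any serious obstacle here — the statement is essentially a restatement of the $1$-Lipschitz dependence of a supremum on its index set with respect to the Hausdorff metric. The only point requiring mild care is the handling of suprema that may not be attained (hence the two-$\varepsilon$ approximation argument above rather than picking exact maximizers), and making sure the finiteness guaranteed by Assumption~\ref{assumption:loss_and_ambig_set_boundedness} (via $C_N<\infty$ in \eqref{eqn:def_of_CN}) is in force so that all the quantities manipulated are finite real numbers and the subtractions are well-defined. If one instead assumes the worst-case distributions are attained, the argument shortens to a single line, but the $\varepsilon$-version is cleaner and fully general.
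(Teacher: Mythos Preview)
Your proof is correct and follows essentially the same approach as the paper's: both bound the pointwise difference of the two suprema via the pseudometric $\bbmd$ and then pass to the Hausdorff distance. The only cosmetic difference is that the paper writes the one-sided bound directly as $\sup_{\Q\in\calP_1}\inf_{\Q'\in\calP_2}\{\E_{\Q}[f(\xb,\xib)]-\E_{\Q'}[f(\xb,\xib)]\}\leq \sup_{\Q\in\calP_1}\inf_{\Q'\in\calP_2}\bbmd(\Q,\Q')$ without an explicit $\varepsilon$-argument, whereas you spell out the approximation step to handle possible non-attainment of the suprema.
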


\begin{proof}
By definition of the pseudometric $\bbmd$ in \eqref{eqn:dist_btw_prob_measures}, for any $\Q\in\calP_1$ and $\Q'\in\calP_2$, we have $\big| \E_{\Q}[f(\xb,\xib)] - \E_{\Q'}[f(\xb,\xib)] \big| \leq \bbmd(\Q,\Q')$ for all $\xb\in\calX$. Since 
\begin{align*}
    \sup_{\Q\in\calP_1} \E_{\Q}[f(\xb,\xib)] - \sup_{\Q'\in\calP_2} \E_{\Q'}[f(\xb,\xib)] &= \sup_{\Q\in\calP_1} \inf_{\Q'\in\calP_2} \Big\{  \E_{\Q}[f(\xb,\xib)] - \E_{\Q'}[f(\xb,\xib)]\Big\} \\
    &\leq  \sup_{\Q\in\calP_1} \inf_{\Q'\in\calP_2}  \bbmd(\Q,\Q')
\end{align*}
and
\begin{align*}
    \sup_{\Q'\in\calP_2} \E_{\Q'}[f(\xb,\xib)] -  \sup_{\Q\in\calP_1}  \E_{\Q}[f(\xb,\xib)] &= \sup_{\Q'\in\calP_2} \inf_{\Q\in\calP_1} \Big\{  \E_{\Q'}[f(\xb,\xib)] - \E_{\Q}[f(\xb,\xib)]\Big\} \\
    &\leq \sup_{\Q'\in\calP_2} \inf_{\Q\in\calP_1} \bbmd(\Q,\Q'),
\end{align*}
we obtain
\begin{align}
    \bigg|\sup_{\Q\in\calP_1} \E_{\Q}[f(\xb,\xib)] - \sup_{\Q'\in\calP_2} \E_{\Q'}[f(\xb,\xib)]\bigg| &\leq \max\Bigg\{ \sup_{\Q\in\calP_1} \inf_{\Q'\in\calP_2}  \bbmd(\Q,\Q'),\,   \sup_{\Q'\in\calP_2} \inf_{\Q\in\calP_1} \bbmd(\Q,\Q')\Bigg\} \nonumber\\
    &=\bbmH(\calP_1,\calP_2). \nonumber
\end{align}
This completes the proof.
\end{proof}

\begin{proposition} \label{prop:known_QSA_DRO}
The following assertions hold.
\begin{enumerate}
    \item [(i)] $|\upsilon_1-\upsilon_2| \leq \bbmH(\calP_1,\calP_2)$.
    \item [(ii)] If, in addition, $\sup_{\Prob\in\calP_1} \E[f(\xb,\xib)]$ satisfies the second order growth condition at  $\calX^\star_1$, i.e., there exists $\tau>0$ such that
    $$\sup_{\Prob\in\calP_1} \E[f(\xb,\xib)] \geq \upsilon_1+\tau [d(\xb,\calX^\star_1)]^2$$
    for all $\xb\in\calX$, then 
    $$D(\calX^\star_2,\calX^\star_1)\leq \sqrt{\frac{3}{\tau}\bbmH(\calP_1,\calP_2)}.$$
\end{enumerate}
\end{proposition}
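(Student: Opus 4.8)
The plan is to pass to the worst-case objective functions $F_i(\xb) := \sup_{\Prob\in\calP_i}\E_{\Prob}[f(\xb,\xib)]$, so that $\upsilon_i = \inf_{\xb\in\calX} F_i(\xb)$ and $\calX^\star_i = \argmin_{\xb\in\calX} F_i(\xb)$. The entire argument rests on the pointwise estimate $|F_1(\xb) - F_2(\xb)| \leq \bbmH(\calP_1,\calP_2)$ valid for every $\xb\in\calX$, which is exactly Proposition~\ref{prop:known_QSA_Hausdorff_bound}. So the real work is just to turn this uniform closeness of the two objective functions into closeness of their infima and of their argmin sets.

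For part (i) I would invoke the elementary fact that if two functions on a common domain are uniformly within $\varepsilon$ of each other, then their infima differ by at most $\varepsilon$. Concretely, setting $\varepsilon := \bbmH(\calP_1,\calP_2)$, the inequality $F_1(\xb) \leq F_2(\xb) + \varepsilon$ for all $\xb\in\calX$ gives $\upsilon_1 = \inf_{\xb\in\calX} F_1(\xb) \leq \inf_{\xb\in\calX} F_2(\xb) + \varepsilon = \upsilon_2 + \varepsilon$; interchanging the roles of $\calP_1$ and $\calP_2$ yields $\upsilon_2 \leq \upsilon_1 + \varepsilon$, hence $|\upsilon_1 - \upsilon_2| \leq \varepsilon = \bbmH(\calP_1,\calP_2)$.

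For part (ii), fix an arbitrary $\xb_2 \in \calX^\star_2$. Since $F_1(\cdot)$ satisfies the second-order growth condition at $\calX^\star_1$, we have $\tau\,[d(\xb_2,\calX^\star_1)]^2 \leq F_1(\xb_2) - \upsilon_1$, so it remains to bound the right-hand side. I would split it as
$$F_1(\xb_2) - \upsilon_1 = \big(F_1(\xb_2) - F_2(\xb_2)\big) + \big(F_2(\xb_2) - \upsilon_2\big) + \big(\upsilon_2 - \upsilon_1\big).$$
The first bracket is at most $\bbmH(\calP_1,\calP_2)$ by Proposition~\ref{prop:known_QSA_Hausdorff_bound}; the second bracket vanishes because $\xb_2 \in \calX^\star_2$ is an exact minimizer of $F_2$ (and is at most $\bbmH(\calP_1,\calP_2)$ if one only assumes $\xb_2$ is $\bbmH$-optimal, which is what produces the constant $3$); the third bracket is at most $|\upsilon_1 - \upsilon_2| \leq \bbmH(\calP_1,\calP_2)$ by part (i). Adding up gives $[d(\xb_2,\calX^\star_1)]^2 \leq 3\tau^{-1}\bbmH(\calP_1,\calP_2)$, and taking the supremum over $\xb_2\in\calX^\star_2$ yields $D(\calX^\star_2,\calX^\star_1) \leq \sqrt{3\tau^{-1}\bbmH(\calP_1,\calP_2)}$.

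There is no deep obstacle here: the essential estimate is handed to us by Proposition~\ref{prop:known_QSA_Hausdorff_bound}, and the remainder is bookkeeping. The one point that deserves care is the asymmetry of the one-sided distance $D(\cdot,\cdot)$ together with the one-sided nature of the hypothesis. The second-order growth condition is imposed only at $\calX^\star_1$, so the decomposition can only control $d(\xb_2,\calX^\star_1)$ for $\xb_2\in\calX^\star_2$, i.e. $D(\calX^\star_2,\calX^\star_1)$, and not the reverse direction; one must set up the decomposition in that orientation and make sure it is the optimality of $\xb_2$ (rather than of some point of $\calX^\star_1$) that annihilates the middle term.
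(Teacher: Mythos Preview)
Your proposal is correct and follows essentially the same route as the paper: both arguments reduce to the uniform pointwise bound from Proposition~\ref{prop:known_QSA_Hausdorff_bound} and then control $F_1(\xb_2)-\upsilon_1$ via the same two pieces $(F_1(\xb_2)-F_2(\xb_2))+(\upsilon_2-\upsilon_1)$ together with $F_2(\xb_2)=\upsilon_2$. The only cosmetic difference is that the paper frames part~(ii) as a proof by contradiction whereas you argue directly; in fact both arguments really yield the sharper constant $2$ rather than $3$, as you noticed.
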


\begin{proof}
Part (i) follows directly from
\begin{align*}
    |\upsilon_1-\upsilon_2| &= \bigg| \inf_{\xb\in\calX} \sup_{\Prob\in\calP_1} \E_{\Prob}[f(\xb,\xib)] - \inf_{\xb\in\calX} \sup_{\Prob\in\calP_2} \E_{\Prob}[f(\xb,\xib)] \bigg| \\ 
    &\leq \sup_{\xb\in\calX} \bigg|\sup_{\Q\in\calP_1} \E_{\Q}[f(\xb,\xib)] - \sup_{\Q'\in\calP_2} \E_{\Q'}[f(\xb,\xib)]\bigg| \\
    &\leq \bbmH(\calP_1,\calP_2),
\end{align*}
where the last inequality follows from Proposition \ref{prop:known_QSA_Hausdorff_bound}. For part (ii), suppose, on the contrary, that 
$$D(\calX^\star_2,\calX^\star_1)=\sup_{\xb\in\calX^\star_2} d(\xb,\calX^\star_1)>\sqrt{\frac{3}{\tau}\bbmH(\calP_1,\calP_2)}.$$
Then, there exists $\xbbar\in\calX^\star_2$ such that $d(\xbbar,\calX^\star_1)>\sqrt{3\tau^{-1}\bbmH(\calP_1,\calP_2)}$. The second order growth condition implies that
\begin{equation} \label{eqn_pf:prop_known_QSA_DRO_1}
    \sup_{\Prob\in\calP_1}\E[f(\xbbar,\xib)] - \upsilon_1 \geq \tau\big[d(\xbbar,\calX^\star_1)\big]^2 > 3\,\bbmH(\calP_1,\calP_2). 
\end{equation}
However, by Proposition \ref{prop:known_QSA_Hausdorff_bound} and part (i), we have
\begin{align*}
    \sup_{\Prob\in\calP_1}\E[f(\xbbar,\xib)] - \upsilon_1 &\leq \bigg\{\sup_{\Prob\in\calP_2}\E[f(\xbbar,\xib)] + \bbmH(\calP_1,\calP_2)\bigg\} - \big[\upsilon_2 - \bbmH(\calP_1,\calP_2) \big] \\
    &= \sup_{\Prob\in\calP_2}\E[f(\xbbar,\xib)] - \upsilon_2 + 2\,\bbmH(\calP_1,\calP_2) \\
    &= 2\,\bbmH(\calP_1,\calP_2),
\end{align*}
where the first equality follows from $\xbbar\in\calX^\star_2$. Thus, this contradicts with \eqref{eqn_pf:prop_known_QSA_DRO_1}. 
\end{proof}

\section{Details of Numerical Experiments} \label{apdx:num_expt}

\subsection{Inventory Control -- Reformulations} \label{apdx:num_expt_IC_reform}

Recall the TRO model \eqref{eqn:inventory_contol_trade_off} under shape parameters (a)--(d) in Table~\ref{table:inventory_control_reformulations}. This model cannot be solved directly because of the inner supremum problem $\sup_{\Prob\in\calP_N} \E_{\Prob}\big[(\xi-x)_+-\xi\big]$. In this section, we provide tractable reformulation to this inner supremum problem, and thus our TRO model.

First, consider ambiguity set (a) in Table \ref{table:inventory_control_reformulations}. Using the worst-case distribution derived in \cite{Gallego_Moon:1993}, we have
$$
\sup_{\Prob\in\calP_N} \E_{\Prob}\big[(\xi-x)_+-\xi\big]=
\begin{cases}
     \displaystyle \frac{1}{2}\bigg[-(x-\muh_N)+\sqrt{\sigmah_N^2 + (x-\muh_N)^2}\bigg]-\muh_N & \text{if }\,  \displaystyle x\geq \frac{\sigmah_N^2+\muh_N^2}{2\muh_N}, \vspace{2mm}\\ 
     \displaystyle -\frac{\muh_N^2}{\sigmah_N^2+\muh_N^2}x & \text{if } \, \displaystyle 0\leq x\leq \frac{\sigmah_N^2+\muh_N^2}{2\muh_N}.
\end{cases}
$$
Therefore, the TRO model under set (a) is equivalent to
\begin{align*}
    \underset{x\geq 0}{\text{minimize}}\,\, &(c-h)x + (p-h) \Bigg\{ (1-\theta) \frac{1}{N}\sum_{i=1}^N \big[(\xih_i-x)_+ -\xih_i\big] \\ 
    &\quad+ \theta\Bigg[\bigg\{\frac{1}{2}\bigg[\sqrt{\sigmah_N^2 + (x-\muh_N)^2}-(x-\muh_N)\bigg]-\muh_N\bigg\}\cdot \one_{x\geq \frac{\muh_N^2+\sigmah_N^2}{2\muh_N}} -\frac{\muh_N^2}{\muh_N^2+\sigmah_N^2}x\cdot \one_{0\leq x\leq \frac{\muh_N^2+\sigmah_N^2}{2\muh_N}}\Bigg]\Bigg\}.
\end{align*}

Next, consider ambiguity set (b) in Table \ref{table:inventory_control_reformulations}. Applying the reformulation in \cite{Lee_et_al:2021}, we have
$$\sup_{\Prob\in\calP_N} \E_{\Prob}\big[(\xi-x)_+-\xi\big]=   r + \frac{1}{N}\sum_{i=1}^N \big[(\xih_i-x)_+-\xih_i\big].$$
Therefore, the TRO model under set (b) is equivalent to
$$\underset{x\geq 0}{\text{minimize}}\quad (c-h)x + (p-h) \bigg\{ (1-\theta) \frac{1}{N}\sum_{i=1}^N \big[(\xih_i-x)_+ -\xih_i\big] + \theta\bigg\{  r + \frac{1}{N}\sum_{i=1}^N \big[(\xih_i-x)_+-\xih_i\big]\bigg\}\bigg\}.$$

Now, consider ambiguity set (c) in Table \ref{table:inventory_control_reformulations}. By the strong duality result for $\phi$-divergence DRO problems in \cite{Bayraksan_Love:2015}, we have
$$\sup_{\Prob\in\calP_N} \E_{\Prob}\big[(\xi-x)_+-\xi\big]= \inf_{\lambda\geq 0,\, \tau} \Bigg\{ \tau+ \lambda r - \frac{\lambda}{N}\sum_{i=1}^N \log\bigg( 1-\frac{(\xih_i-x)_+-\xih_i-\tau}{\lambda} \bigg)\Bigg\},$$
where $-0\log(1-s/0)=0$ for $s\leq 0$ and $-0\log(1-s/0)=\infty$ for $s>0$.
Therefore, the TRO model under set (c) is equivalent to
\begin{align*}
    \underset{x\geq 0,\,\lambda\geq 0,\,\tau}{\text{minimize}}\quad & (c-h)x + (p-h) \bigg\{ (1-\theta) \frac{1}{N}\sum_{i=1}^N \big[(\xih_i-x)_+ -\xih_i\big] \\
    &\quad + \theta\bigg\{  \tau+ \lambda r - \frac{\lambda}{N}\sum_{i=1}^N \log\bigg( 1-\frac{(\xih_i-x)_+-\xih_i-\tau}{\lambda} \bigg)\bigg\}\bigg\}.
\end{align*}

Finally, consider the ambiguity set (d) in Table \ref{table:inventory_control_reformulations}. For notational simplicity, let $\gamma := t_{N-1,\alpha/2} \sigmah_N/\sqrt{N}$. Then, we have
$$\sup_{\Prob\in\calP_N} \E_{\Prob}\big[(\xi-x)_+-\xi\big]= \sup_{\xi\geq 0: |\xi-\muh_N|\leq \gamma} \big\{(\xi-x)_+-\xi\big\}= \max\big\{-x, -(\muh_N -\gamma)_+ \big\},$$
where the last equality follows from the fact that $(\xi-x)_+-\xi=\max\{-x,-\xi\}$ and the objective function is non-increasing in $\xi$. Thus, the supremum over $\xi\in[(\muh_N-\gamma)_+,\muh_N+\gamma]$ is attained at the lower bound. Therefore, the TRO model under set (d) is equivalent to
$$\underset{x\geq 0}{\text{minimize}}\quad (c-h)x + (p-h) \bigg\{ (1-\theta) \frac{1}{N}\sum_{i=1}^N \big[(\xih_i-x)_+ -\xih_i\big] + \theta\max\big\{-x, -(\muh_N -\gamma)_+ \big\}\bigg\}.$$

\subsection{Inventory Control -- Ambiguity Set (d)} \label{apdx:num_expt_IC_check_assumption}

We show that ambiguity set (d) in Table~\ref{table:inventory_control_reformulations}, i.e., $\calP_N=\{\delta_{\xi}\mid |\xi-\muh_N| \leq t_{N-1,\alpha/2}\sigmah_N/\sqrt{N}\}$, satisfies Assumption \ref{assumption:ambig_set_regularity}(b). Recall that $f(x,\xi)=(\xi-x)_+-\xi$, which is Lipschitz in $\xi$ with modulus $1$. We claim that $\bbmH(\calP_N,\calPh)\rightarrow0$ as $N\rightarrow\infty$ almost surely, where $\calPh=\{\delta_{\mu^\star}\}$ with $\mu^\star=\E_{\Prob^\star}(\xi)$. Indeed,
\begin{subequations}
\begin{align}
    \bbmH(\calP_N,\calPh)=\sup_{\Prob\in\calP_N} \bbmD(\Prob,\calPh)&=\sup_{\xi\geq 0:|\xi-\muh_N|\leq t_{N-1,\alpha/2}\sigmah_N/\sqrt{N}} \,\, \sup_{x\geq 0} \big|f(x,\xi)-f(x,\mu^\star)| \label{eqn:inventory_control_ambig_set_d_1} \\
    &\leq \sup_{\xi\geq 0:|\xi-\muh_N|\leq t_{N-1,\alpha/2}\sigmah_N/\sqrt{N}} |\xi - \mu^\star| \label{eqn:inventory_control_ambig_set_d_2} \\
    &= \max\Bigg\{ \bigg|\bigg(\muh_N-t_{N-1,\alpha/2}\frac{\sigmah_N}{\sqrt{N}}\bigg)_+-\mu^\star\Bigg|,\, \bigg|\muh_N+t_{N-1,\alpha/2}\frac{\sigmah_N}{\sqrt{N}}-\mu^\star\Bigg|\Bigg\} \label{eqn:inventory_control_ambig_set_d_3} \\
    &\rightarrow 0 \nonumber
\end{align}
\end{subequations}
almost surely. Here, \eqref{eqn:inventory_control_ambig_set_d_1} follows from the definitions in \eqref{def:Hausdorff_distance} and \eqref{eqn:dist_btw_prob_measures}, and that $\sup_{\Prob\in\calPh} \bbmD(\Prob,\calPh) \leq \sup_{\Prob\in\calP_N} \bbmD(\Prob,\calPh)$; \eqref{eqn:inventory_control_ambig_set_d_2} follows from the Lipschitz continuity of $f(x,\xi)$ in $\xi$; \eqref{eqn:inventory_control_ambig_set_d_3} follows from the fact that the supremum over a compact interval of the absolute value function $|\xi-\mu^\star|$ is attained either at the lower or the upper bound of the interval. Finally, by the strong law of large numbers, $|\muh_N-\mu^\star|\rightarrow0$ and $|\sigmah_N-\sigma^\star|\rightarrow0$ almost surely, where $\sigma^\star=\Var_{\Prob^\star}(\xi)<\infty$. Also, $t_{N-1,\alpha/2}\rightarrow z_{\alpha/2}<\infty$, where $z_{\alpha/2}$ is the upper $(1-\alpha)/2$-th quantile of a standard normal distribution. Thus, we have \eqref{eqn:inventory_control_ambig_set_d_3} converges to zero almost surely.  Thus, ambiguity set (d) satisfies Assumption \ref{assumption:ambig_set_regularity}(b).

\subsection{Portfolio Optimization -- Reformulations} \label{apdx:num_expt_PO_reform}

Recall the TRO model \eqref{eqn:mean-CVaR_model_linear_TRO} under shape paraemters (a)--(c) in Table~\ref{table:portfolio_optimization_reformulations}. Again, this model cannot be solved directly because of the inner supremum problem $\sup_{\Prob\in\calP_N}\E_{\Prob}[f(\xb,t,\xib)]$. In this section, we provide tractable reformulation to this inner supremum problem, and thus our TRO model.

First, consider ambiguity set (a) in Table \ref{table:portfolio_optimization_reformulations}. By Lemma~2.2 and Lemma~2.4 of \cite{Chen_et_al:2011}, we have 
$$\sup_{\Prob\in\calP_N}\E_{\Prob}[f(\xb,t,\xib)]=(1-\beta)t-\beta\xb^\tp\muh_N+\frac{1-\beta}{1-\alpha}\cdot\frac{1}{2}\bigg[-\xb^\tp\widehat{\mub}_N-t+\sqrt{\xb^\tp\widehat{\Sigmab}_N\xb+(-\xb^\tp\widehat{\mub}_N-t)^2} \bigg].$$
Therefore, the TRO model under set (a) is equivalent to
\begin{align*}
    \underset{\xb\in\calX,\,t\in\R}{\text{minimize}}\quad & (1-\theta)\cdot \frac{1}{N}\sum_{i=1}^N\Bigg\{ (1-\beta)t+\beta (-\xb^\tp\xibh_i) + \frac{1-\beta}{1-\alpha}(-\xb^\tp\xibh_i-t)^+\Bigg\} \\
    &\quad + \theta\bigg\{(1-\beta)t-\beta\xb^\tp\muh_N+\frac{1-\beta}{1-\alpha}\cdot\frac{1}{2}\bigg[-\xb^\tp\widehat{\mub}_N-t+\sqrt{\xb^\tp\widehat{\Sigmab}_N\xb+(-\xb^\tp\widehat{\mub}_N-t)^2} \bigg]\bigg\}.
\end{align*}

Now, consider ambiguity set (b) in Table  \ref{table:portfolio_optimization_reformulations}. By Theorem 6.3 of \cite{Mohajerin-Esfahani_Kuhn:2018}, we have
$$\sup_{\Prob\in\calP_N}\E_{\Prob}[f(\xb,t,\xib)]=(1-\beta)t + r\bigg(\beta + \frac{1-\beta}{1-\alpha}\bigg)\norms{\xb}_\infty + \frac{1}{N}\sum_{i=1}^N \bigg[\beta(-\xb^\tp\xibh_i) +\frac{1-\beta}{1-\alpha}(-\xb^\tp\xibh_i-t)_+\bigg].$$
Therefore, the TRO model under set (b) is equivalent to
\begin{align*}
    \underset{\xb\in\calX,\,t\in\R}{\text{minimize}}\quad & (1-\theta)\cdot \frac{1}{N}\sum_{i=1}^N\Bigg\{ (1-\beta)t+\beta (-\xb^\tp\xibh_i) + \frac{1-\beta}{1-\alpha}(-\xb^\tp\xibh_i-t)^+\Bigg\} \\
    &\quad + \theta\bigg\{ (1-\beta)t + r\bigg(\beta + \frac{1-\beta}{1-\alpha}\bigg)\norms{\xb}_\infty + \frac{1}{N}\sum_{i=1}^N \bigg[\beta(-\xb^\tp\xibh_i) +\frac{1-\beta}{1-\alpha}(-\xb^\tp\xibh_i-t)_+\bigg]\bigg\}.
\end{align*}

Finally, consider ambiguity set (c). Note that ambiguity set (c) is characterized by the $\ell_1$ norm constraint. Thus, as in Lemma~3.1 of \cite{Huang_et_al:2021}, using linear programming duality, $\sup_{\Prob\in\calP_N}\E_{\Prob}[f(\xb,t,\xib)]$ is equivalent to
\begin{subequations}
\begin{align}
 \underset{\tau\in\R,\,\lambda\in\R}{\text{minimize}} & \quad  (1-\beta)t + \tau+ r\lambda+\frac{1}{N}\sum_{i=1}^N(u_i^+ - u_i^-)\\
 \text{subject to} \hspace{0mm} & \quad \tau \geq \beta(-\xb^\tp\xibh_i) + \frac{1-\beta}{1-\alpha}\big(-\xb^\tp\xibh_i-t\big)_+ - u_i^+ + u_i^-,\quad\forall i\in\{1,\dots,n\}, \\
 & \quad \lambda\geq u_i^+ + u_i^-,\quad\forall i\in\{1,\dots,n\}, \\
 & \quad \lambda \geq 0,\, u_i^+\geq 0,\, u_i^- \geq 0,\quad\forall i\in\{1,\dots,n\}.
\end{align} %
\end{subequations}
Therefore, the TRO model under set (c) is equivalent to
\begin{subequations}
\begin{align}
\underset{\xb\in\calX,\,t,\,\tau,\,\lambda,\,u^+,\,u^-}{\text{minimize}} & \quad  (1-\theta)\cdot \frac{1}{N}\sum_{i=1}^N\Bigg\{ (1-\beta)t+\beta (-\xb^\tp\xibh_i) + \frac{1-\beta}{1-\alpha}(-\xb^\tp\xibh_i-t)^+\Bigg\} \nonumber \\
&\qquad \theta\Bigg\{(1-\beta)t + \tau+ r\lambda+\frac{1}{N}\sum_{i=1}^N(u_i^+ - u_i^-)\Bigg\}\\
\text{subject to} \hspace{5mm} & \quad \tau \geq \beta(-\xb^\tp\xibh_i) + \frac{1-\beta}{1-\alpha}\big(-\xb^\tp\xibh_i-t\big)_+ - u_i^+ + u_i^-,\quad\forall i\in\{1,\dots,n\}, \\
& \quad \lambda\geq u_i^+ + u_i^-,\quad\forall i\in\{1,\dots,n\}, \\
& \quad \lambda \geq 0,\, u_i^+\geq 0,\, u_i^- \geq 0,\quad\forall i\in\{1,\dots,n\}.
\end{align} %
\end{subequations}

\subsection{Portfolio Optimization -- Lipschitz Continuity} \label{apdx:num_expt_PO_Lip}

In this section, we show that the function  $f(\xb,t,\xib)=(1-\beta)t+\beta (-\xb^\tp\xib) + [(1-\beta)/(1-\alpha)](-\xb^\tp\xib-t)^+$ is Lipschitz continuous in $(\xb,t)$. Indeed, for any $\{(\xb_1,t_1),(\xb_2,t_2)\}\subseteq \calX\times\R$, we have
%
% \allowdisplaybreaks
\begin{subequations}
\begin{align}
   &\quad\,\,|f(\xb_1,t_1,\xib)-f(\xb_2,t_2,\xib)|\nonumber\\
   &=\Bigg|\bigg[(1-\beta)t_1+\beta (-\xb_1^\tp\xib) + \frac{1-\beta}{1-\alpha}(-\xb_1^\tp\xib-t_1)^+\bigg]-\bigg[(1-\beta)t_2+\beta (-\xb_2^\tp\xib) + \frac{1-\beta}{1-\alpha}(-\xb_2^\tp\xib-t_2)^+\bigg]\Bigg|\nonumber\\
   &\leq (1-\beta)\big|t_1-t_2\big|+\beta\big|\xb_1^\tp\xib - \xb_2^\tp\xib\big|+\frac{1-\beta}{1-\alpha}\Big|(-\xb_1^\tp\xib-t_1)^+-(-\xb_2^\tp\xib-t_2)^+\Big|\nonumber\\
   &\leq (1-\beta)\big|t_1-t_2\big|+\beta\big|\xb_1^\tp\xib - \xb_2^\tp\xib\big|+\frac{1-\beta}{1-\alpha}\Big( \big|t_1-t_2\big|+\big|\xb_1^\tp\xib - \xb_2^\tp\xib\big|\Big) \label{eqn:expt_PO_Lips_1}\\
   &\leq (1-\beta)\bigg(1+\frac{1}{1-\alpha}\bigg) |t_1-t_2| + \frac{1-\alpha\beta}{1-\alpha}\norms{\xib}_1 \norms{\xb_1-\xb_2}_\infty \label{eqn:expt_PO_Lips_2} \\
   &\leq \Bigg[(1-\beta)\bigg(1+\frac{1}{1-\alpha}\bigg)  + \frac{1-\alpha\beta}{1-\alpha}\norms{\xib}_1\Bigg] \Big(|t_1-t_2|+ \norms{\xb_1-\xb_2}_\infty\Big) \nonumber \\
   &\leq  \Bigg[(1-\beta)\bigg(1+\frac{1}{1-\alpha}\bigg)  + \frac{1-\alpha\beta}{1-\alpha}\norms{\xib}_1\Bigg]\norm{\begin{pmatrix} \xb_1-\xb_2 \\ t_1-t_2 \end{pmatrix}}_1=:\kappa(\xib)\,\norm{\begin{pmatrix} \xb_1-\xb_2 \\ t_1-t_2 \end{pmatrix}}_1 \label{eqn:expt_PO_Lips_3}.
\end{align}
\end{subequations}
Here, \eqref{eqn:expt_PO_Lips_1} follows from the fact that $|(a_1)^+-(a_2)^+|\leq |a_1-a_2|$ for any $\{a_1,a_2\}\subset\R$; \eqref{eqn:expt_PO_Lips_2} follows from $\big|\xb_1^\tp\xib - \xb_2^\tp\xib\big|\leq\norms{\xib}_1 \norms{\xb_1-\xb_2}_\infty$; the inequality in \eqref{eqn:expt_PO_Lips_3} follows from $\norms{\xb_1-\xb_2}_\infty\leq\norms{\xb_1-\xb_2}_1$.

\section{Additional Results} \label{apdx:add_expt_results}

In this section, we provide additional experiment results. Specifically, in Section~\ref{apdx:add_expt_results:distance_based_DRO}, we analyze the spectra of solutions obtained from the DRO model with distance-based ambiguity sets and those obtained from our TRO model with the TRO ambiguity set constructed using these distance-based ambiguity sets as its shape parameter. In Section~\ref{apdx:add_expt_results:bias_variance}, we analyze the bias-variance trade-off of the TRO estimator. In Section~\ref{apdx:add_expt_results:conservatism}, we analyze the conservatism of our TRO model. In Section~\ref{apdx:add_expt_results:out_of_sample}, we analyze the out-of-sample performance of our TRO model.

\subsection{Spectra of Solutions Obtained from Distance-Based DRO Models} \label{apdx:add_expt_results:distance_based_DRO}

As noted by a reviewer of our paper, one could obtain a spectrum of optimal solutions to DRO models equipped with distance-based ambiguity sets by changing the radii of such sets.  In this section, we compare the spectra of solutions obtained from the DRO model with distance-based ambiguity sets, namely the 1-Wasserstein ambiguity set and the total variation ambiguity set (i.e., set (b) and set (c) in Table~\ref{table:portfolio_optimization_reformulations}), and those obtained from our TRO model with TRO ambiguity set constructed using these distance-based ambiguity sets as its shape parameter. For illustrative purposes, we focus on the portfolio optimization problem discussed in Section~\ref{subsec:portfolio_optimization_problem}.  We follow the same experimental settings detailed in Section~\ref{subsec:portfolio_optimization_problem} to obtain the spectra of optimal solution to the DRO and TRO models for this problem.  Specifically, we solve the TRO model with different size parameters ($\theta\in\{0,0.01,0.02,\dots,1\}$), the 1-Wasserstein DRO model with different radii ($r\in\{0, 0.002, 0.004, \dots, 0.1\}$), and the total variation DRO model with different radii ($r\in\{0, 0.002, 0.004, \dots, 0.2\}$).
\begin{figure}
    \centering
    \includegraphics[scale=0.7]{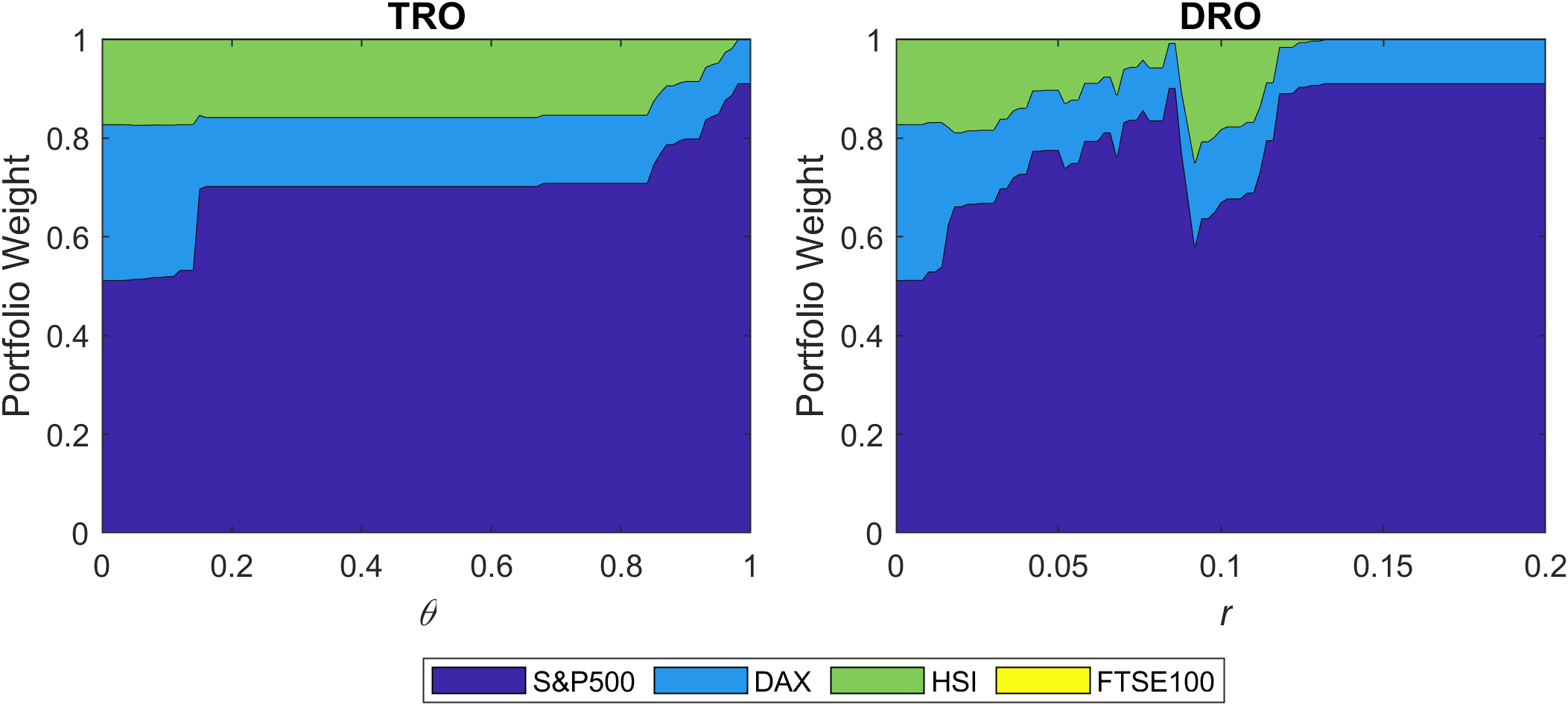}
    \caption{Spectrum of optimal solutions from the TRO model with TRO ambiguity set constructed using total variation ambiguity set as its shape parameter and the total variation DRO model in the portfolio optimization problem.}
    \label{fig:opt_sol_change_vs_DRO_TV_PO}
\end{figure}
\begin{figure}
    \centering
    \includegraphics[scale=0.7]{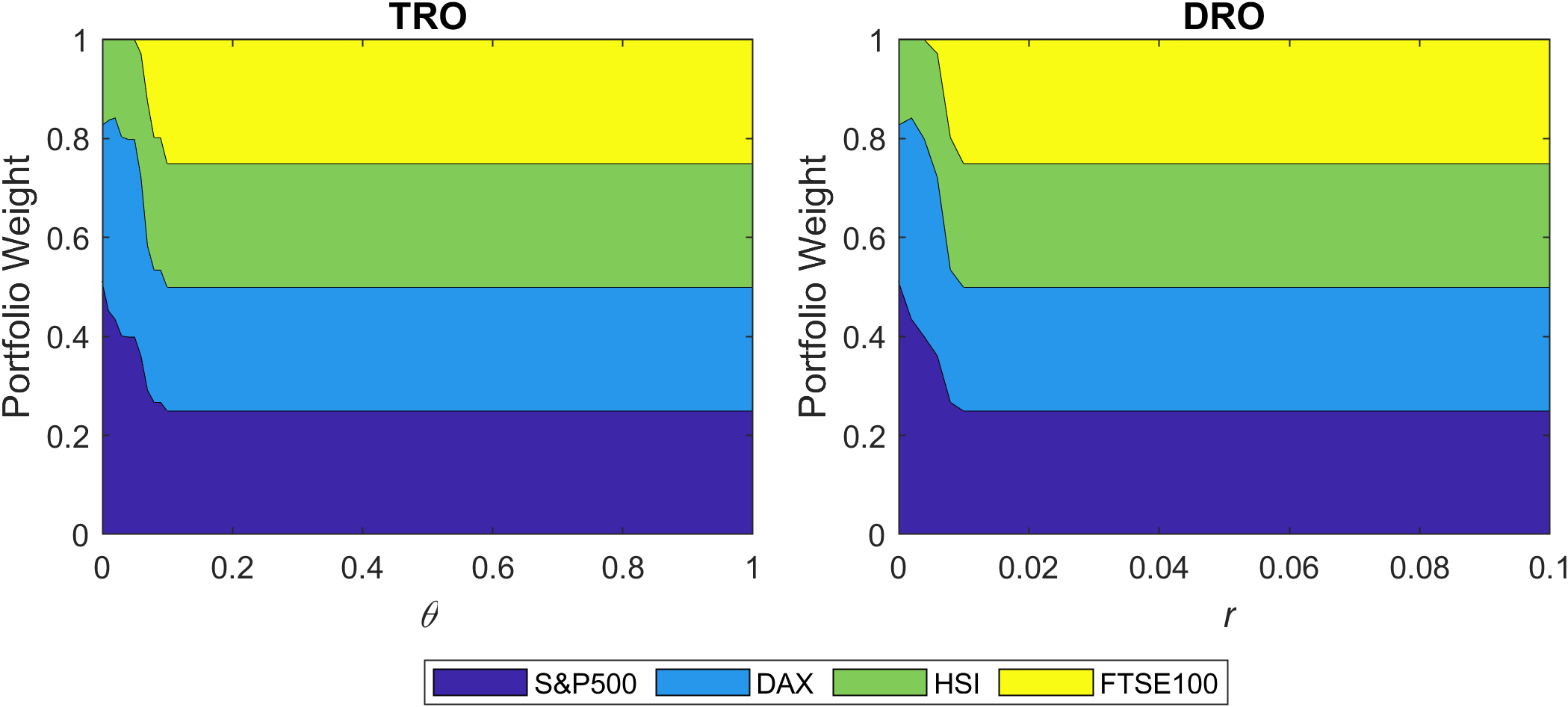}
    \caption{Spectrum of optimal solutions from the TRO model with TRO ambiguity set constructed using 1-Wasserstein ambiguity set as its shape parameter and the 1-Wasserstein DRO model in the portfolio optimization problem.}
    \label{fig:opt_sol_change_vs_DRO_Wass_PO}
\end{figure}

Figure~\ref{fig:opt_sol_change_vs_DRO_TV_PO} presents the spectra of solutions obtained from the TRO model employing the total variation ambiguity set as the shape parameter in the TRO ambiguity set and the DRO model with the total variation ambiguity set. Clearly, the spectra of TRO and DRO solutions could be different. Specifically, the portfolio weights change gradually as the TRO ambiguity set's size parameter $\theta$ increases. In contrast, the shape of the portfolio weights changes when the radius $r$ of the total variation set increases from $0.086$ to $0.092$.

Figure~\ref{fig:opt_sol_change_vs_DRO_Wass_PO} presents the spectra of solutions obtained from the TRO model employing the 1-Wasserstein ambiguity set as the shape parameter of the TRO ambiguity set and the DRO model with  1-Wasserstein ambiguity set. In contrast to the spectra obtained with the total variation ambiguity set, the spectra of the TRO and DRO models with the 1-Wasserstein set are approximately the same.  Specifically, the portfolio weights of S\&P500, DAX, and HSI decrease to $0.25$, and the portfolio weight of FTSE~100 increases to $0.25$ when the TRO ambiguity set's size parameter $\theta$ increases from $0$ to $0.1$ or when the radius $r$ in the DRO model increases from $0$ to $0.01$.

These results suggest that the spectra obtained from our TRO model with a TRO ambiguity set characterized by a distance-based shape parameter and the DRO model equipped with that shape parameter could be different for some choices of the statistical distance in the distance-based shape parameter. Finally, we note that it is not possible to obtain a spectrum of optimal solutions to the DRO model with other ambiguity sets, such as the mean-variance ambiguity set since such sets do not have a parameter that allows for controlling the conservatism. In contrast, our TRO model with TRO ambiguity set constructed using mean-variance ambiguity set as its shape parameter will enable decision-makers to explore a spectrum of solutions, ranging from optimistic to conservative solutions (see results in Section~\ref{subsec:portfolio_optimization_problem}). As discussed in Section~\ref{sec:conclusion}, we leave investigating the characteristics of the spectra of TRO optimal solutions of the TRO model under different shape parameters for future work.

\subsection{Bias-Variance Trade-off} \label{apdx:add_expt_results:bias_variance}

In this section, we numerically analyze the bias-variance trade-off of the TRO estimator $\upsilonh_N(\theta)$.  We  follow the same experimental settings detailed in Sections~\ref{subsec:inventory_control_problem} and~\ref{subsec:portfolio_optimization_problem} to estimate the bias,  $\texttt{bias}_N(\theta)$, and standard deviation, $\texttt{std}_N(\theta)$, of  $\upsilonh_N(\theta)$ for $\theta\in\Theta:=\{0,0.01,0.02,\dots,1\}$ with $N=10$. Figures~\ref{fig:bias_variance_tradeoff_IC} and \ref{fig:bias_variance_tradeoff_PO} present the bias-variance curves for the TRO model of the inventory control problem and portfolio optimization problem with TRO ambiguity set constructed using different shape parameters $\calP_N$. The  $(x, y)$ values of each point on each curve correspond to $(\texttt{std}_N(\theta),\texttt{bias}_N(\theta))$ for some $\theta\in[0,1]$. The black dot corresponds to  ($\texttt{std}_N(0)$, $\texttt{bias}_N(0)$), i.e., the standard deviation and bias of the SAA estimator.
\begin{figure}[t!]
    \centering
    \begin{subfigure}[t]{0.5\textwidth}
        \centering
        \includegraphics[scale=0.75]{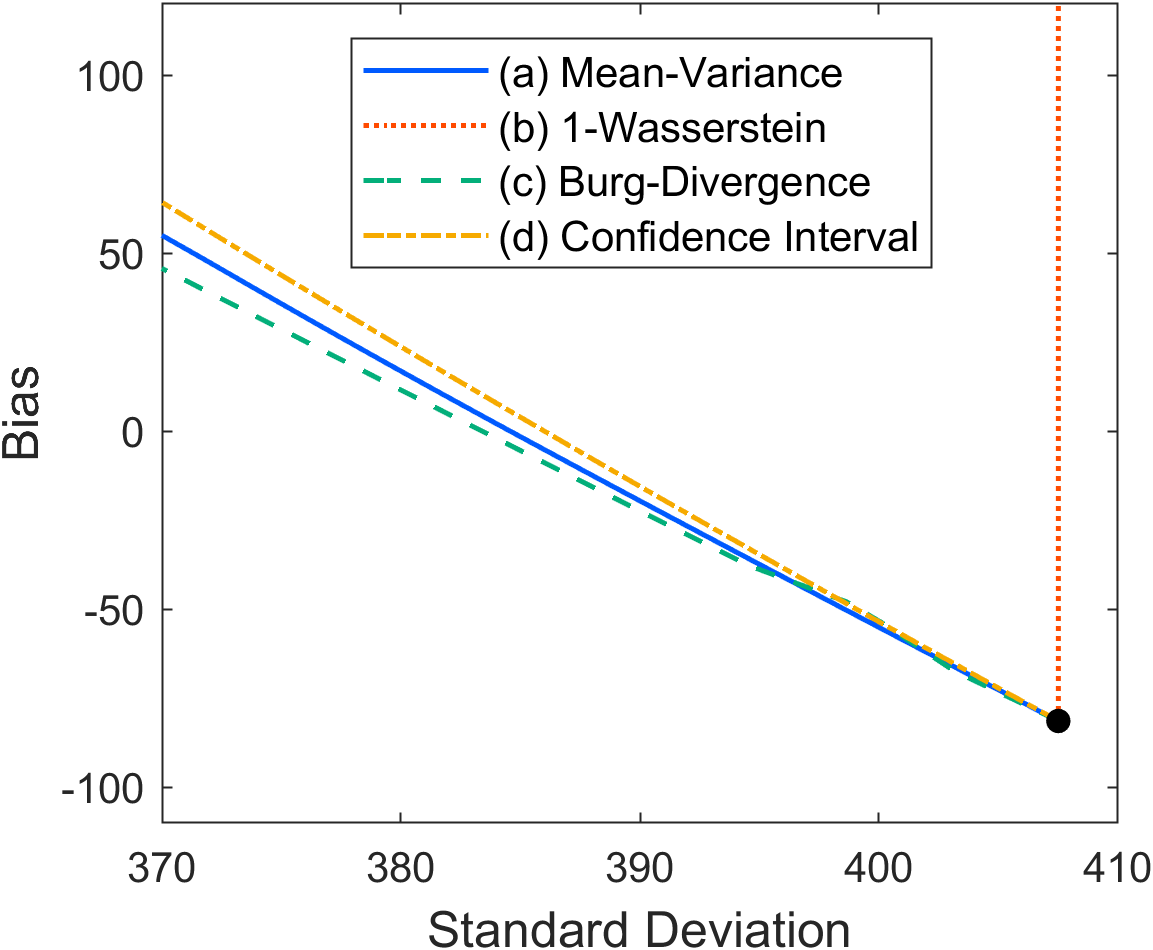}
        \caption{Inventory Control}  \label{fig:bias_variance_tradeoff_IC}
    \end{subfigure}%
    ~ 
    \begin{subfigure}[t]{0.5\textwidth}
        \centering
        \includegraphics[scale=0.75]{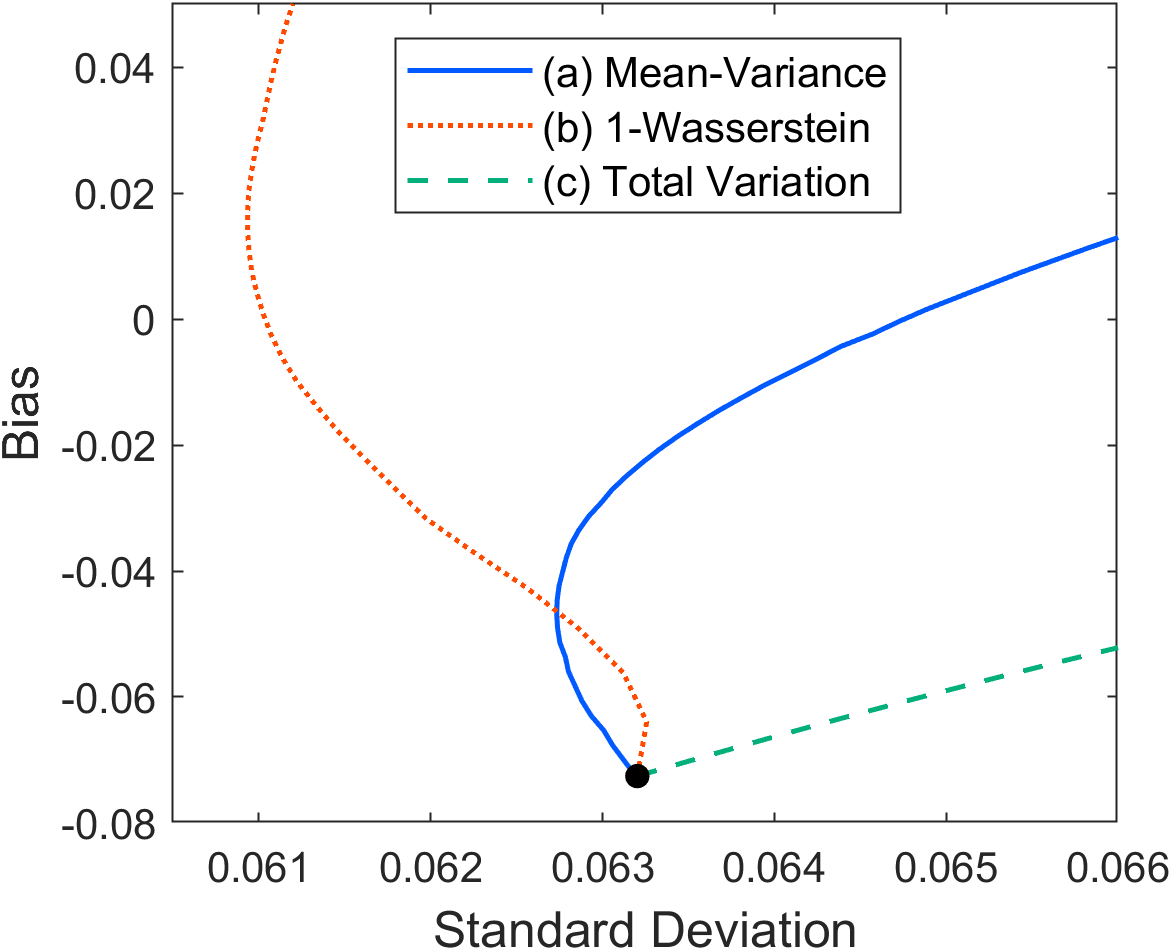}
        \caption{Portfolio Optimization}  \label{fig:bias_variance_tradeoff_PO}
    \end{subfigure}
    \caption{The bias-variance trade-off characterized by $\big(\texttt{std}_N(\theta),\,\texttt{bias}_N(\theta)\big)$ for different values of $\theta$ with $N=10$. 
    The black dot corresponds to  ($\texttt{std}_N(0)$, $\texttt{bias}_N(0)$), i.e., the standard deviation and bias of the SAA estimator.} \label{fig:bias_variance_tradeoff}
\end{figure}

Not surprisingly, the bias and standard deviation vary across different shape parameters employed in the TRO model. For the inventory control problem (see Figure~\ref{fig:bias_variance_tradeoff_IC}), employing set (b) as the shape parameter results in TRO estimators with constant standard deviation but varying bias across different $\theta$. In contrast, using sets (a), (c), or (d) as shape parameters produces TRO estimators that achieve both lower bias and standard deviation compared with the SAA estimator. This indicates that employing sets (a), (c), or (d) as shape parameters offers a superior bias-variance trade-off for this problem. We also observe that none of the shape parameters consistently produce a TRO estimator with the best bias-variance trade-off. For example, when all estimators have biases between $-62$ and $-42$, the TRO estimator using set (a) has the smallest standard deviation. When all estimators have biases less than $-62$ or greater than $-42$,  the TRO estimator using set (c) has the smallest standard deviation.

For the portfolio optimization problem (see Figure~\ref{fig:bias_variance_tradeoff_PO}), we again observe that none of the shape parameters consistently produce a TRO estimator with the best bias-variance trade-off. Specifically, when all estimators have biases less than $-0.0464$, the TRO estimator using set (a) has the smallest standard deviation. When all estimators have biases greater than $-0.0464$, the TRO estimator using set (b) has the smallest standard deviation. Additionally, some shape parameters result in TRO estimators with smaller standard deviations than the SAA estimator. For example, sets (a) and (b) produce TRO estimators with smaller standard deviations for certain values of $\theta$. In contrast, set (c) yields TRO estimators with smaller biases but larger standard deviations than the SAA estimator.

\subsection{Conservatism of TRO Solutions} \label{apdx:add_expt_results:conservatism}

In this section, we investigate the conservatism of the spectra of optimal solutions to the TRO model. Similar to prior studies (see, e.g., \cite{Liu_et_al:2022, Yin_et_al:2023}), we employ the measure $T_N(\theta)=\upsilonh_N(\theta)-\E_{\Prob^\star}[f(\xb_N(\theta),\xib)]$ to quantify conservatism,  where $\upsilonh_N(\theta)$ and $\xb_N(\theta)$ are the optimal value and optimal solution to the TRO model with size parameter $\theta$. This measure captures the extent to which the estimated optimal value (cost), $\upsilonh_N(\theta)$, exceeds the actual expected objective function value (cost), $\E_{\Prob^\star}[f(\xb_N(\theta),\xib)]$, associated with implementing the optimal solution $\xb_N(\theta)$. A larger positive value of $T_N(\theta)$ indicates that the estimated cost $\upsilonh_N(\theta)$ exceeds the true expected cost, suggesting that the solution $\xb_N(\theta)$ may be overly conservative. Conversely, a large negative value of $T_N(\theta)$, with a large absolute value, indicates that the actual cost is underestimated, suggesting that $\xb_N(\theta)$ is overly optimistic.
\begin{figure}[t!]
    \centering
    \begin{subfigure}[t]{0.5\textwidth}
        \centering
        \includegraphics[scale=0.75]{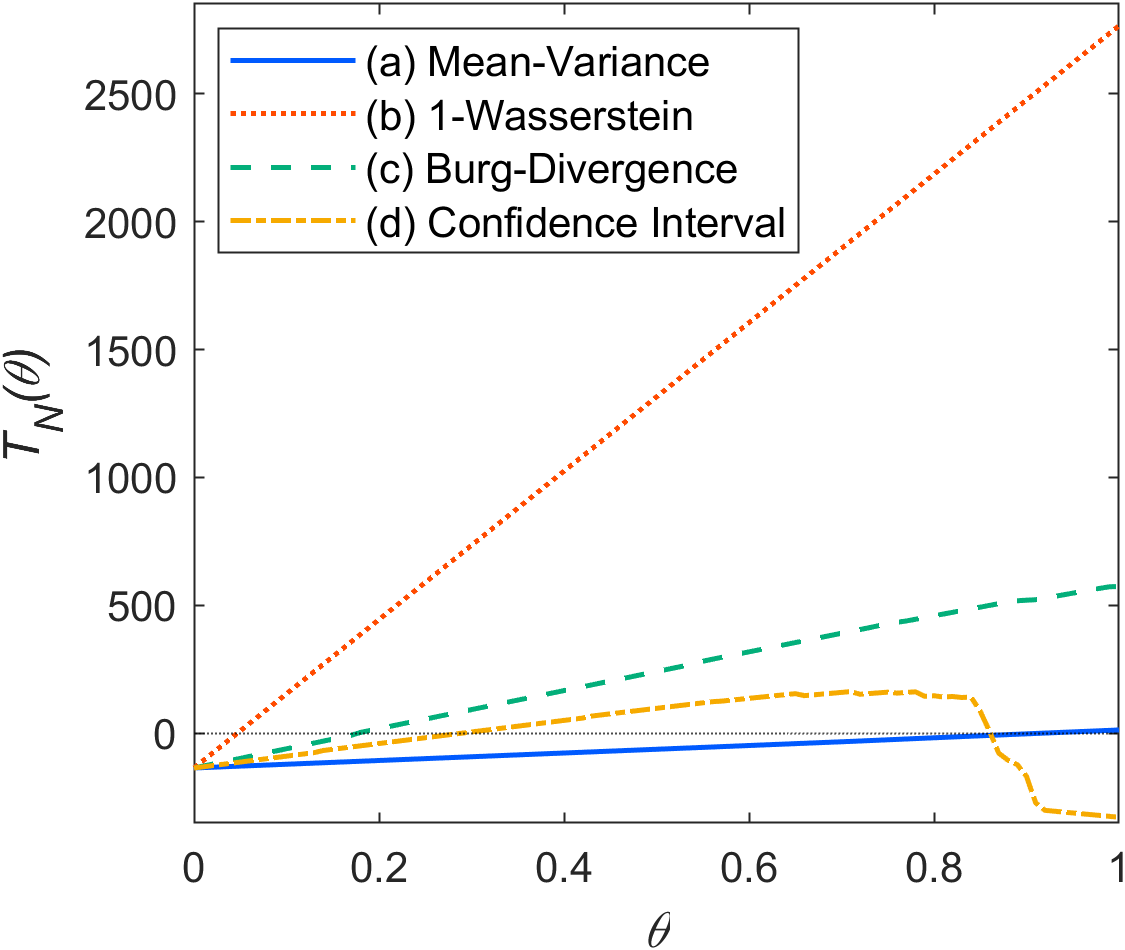}
        \caption{Inventory Control}  \label{fig:conservatism_IC}
    \end{subfigure}%
    ~ 
    \begin{subfigure}[t]{0.5\textwidth}
        \centering
        \includegraphics[scale=0.75]{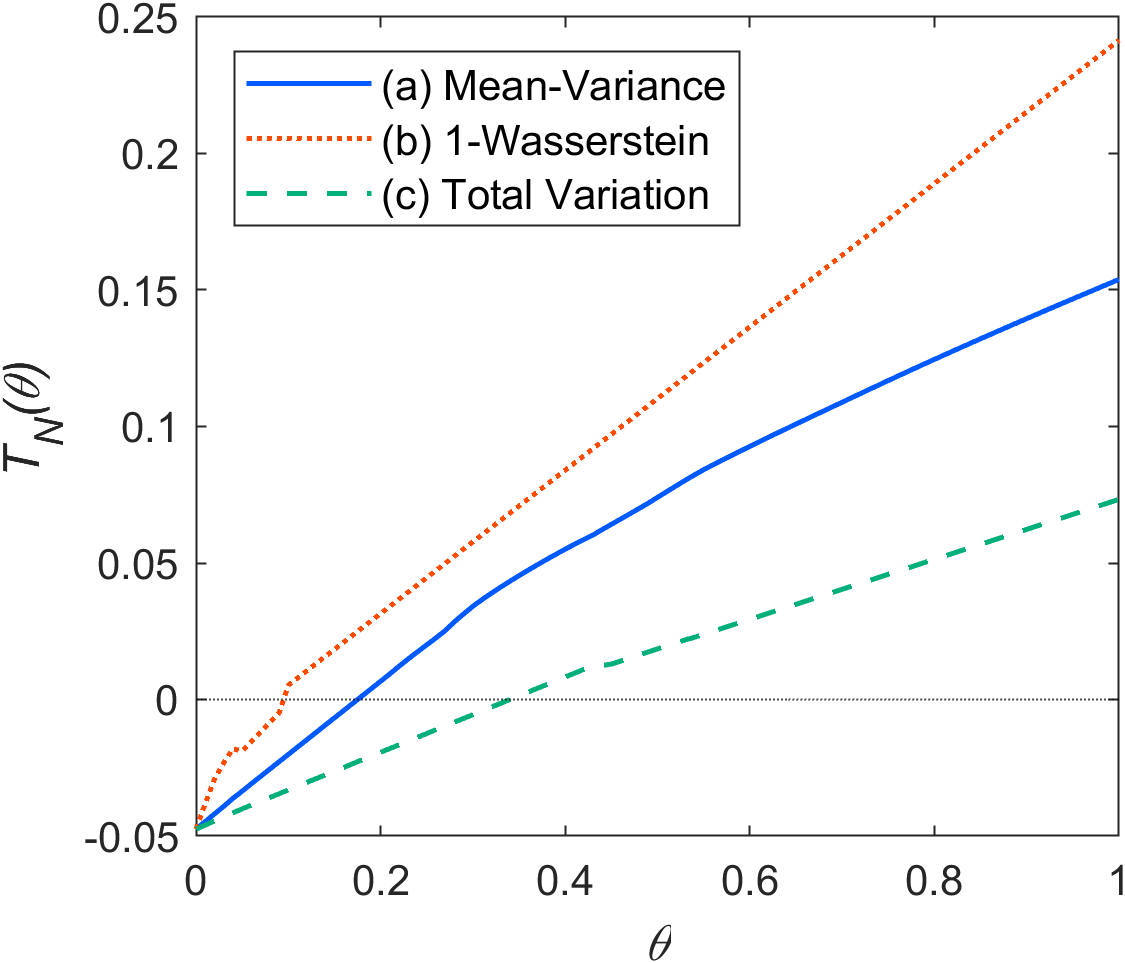}
        \caption{Portfolio Optimization}  \label{fig:conservatism_PO}
    \end{subfigure}
    \caption{$T_N(\theta)$ for different values of $\theta$} \label{fig:conservatism_num_expt}
\end{figure}

We follow the same experimental settings detailed in Sections~\ref{subsec:inventory_control_problem} and~\ref{subsec:portfolio_optimization_problem} to obtain spectra of optimal solutions to our TRO models and compute the associated $T_N(\theta)$.  Figures~\ref{fig:conservatism_IC} and \ref{fig:conservatism_PO} present the values of $T_N(\theta)$ for the inventory control problem and portfolio optimization problem, respectively. Clearly, $T_N(0)$ is negative, suggesting that the SAA solutions $\upsilonh_N(0)$ may be optimistic. Moreover, except for the TRO model of the inventory control problem that employs set (d) as the shape parameter, the value of $T_N(\theta)$ increases with $\theta$. This indicates that the TRO model generates more conservative solutions as $\theta$ grows, which is consistent with the theoretical results and discussions in Section~\ref{sec:conservatism_TRO}. For example, $T_N(1)$ associated with the DRO (TRO with $\theta=1$) solutions to the portfolio optimization problem (Figure~\ref{fig:conservatism_PO}) is fairly large, suggesting that the DRO solutions $\xb_N(1)$ using sets (a)--(c) may be overly conservative. Solutions with an intermediate value of $\theta$ have a larger absolute value of $T_N(\theta)$ than the SAA solutions but a smaller  $T_N(\theta)$ than the DRO solutions. This indicates that these solutions are less optimistic than the SAA solutions and less conservative than the DRO solutions. In the next section, we demonstrate that the out-of-sample performance of TRO solutions obtained with an intermediate value $\theta$ is better than those of the SAA and DRO solutions. Finally, we note that in the inventory control problem, since the optimal value $\upsilonh_N(\theta)$ of TRO model with TRO ambiguity set constructed using set (d) is first increasing and then decreasing (see discussions in Section~\ref{subsec:inventory_control_problem}), the resulting $T_N(\theta)$ follows the same pattern.

\subsection{Out-of-Sample Performance} \label{apdx:add_expt_results:out_of_sample}

In this section, we analyze the performance of the spectrum of optimal solutions to the TRO model via out-of-sample testing. First, we follow the same procedure detailed in Sections~\ref{subsec:inventory_control_problem} and~\ref{subsec:portfolio_optimization_problem} to obtain the spectrum of TRO solutions $\{\xb_N(\theta)\mid \theta\in\Theta\}$, where $\Theta:=\{0,0.01,0.02,\dots,1\}$. Then, we compute  the out-of-sample cost $\upsilonh_N^\text{OS}(\theta)=\E_{\Prob^\text{OS}}[f(\xb_N(\theta),\xib)]$ of the TRO solutions for all $\theta\in\Theta$, where $\Prob^\text{OS}$ denotes the out-of-sample distribution. For the inventory control problem, we choose $\Prob^\text{OS}$ as the exponential distribution with mean $50+\Delta$ and $\Delta\in\{-20,-19.8,-19.6,\dots,10\}$, where a positive (resp. negative) value of $\Delta$ corresponds to an increase (resp. decrease) in mean. For the portfolio optimization problem, we choose $\Prob^\text{OS}$ as the multivatiate normal distribution described in Section~\ref{subsec:portfolio_optimization_problem}, and we change the mean from $\mub$ to $\mub+\Delta\one$ with $\Delta\in\{-0.1,-0.098,-0.096,\dots,0.05\}$. Finally, we search for $\theta_\text{min}:=\argmin\{\theta\in\Theta\mid \upsilonh_N^\text{OS}(\theta)\leq\upsilonh_N^\text{OS}(\theta'),\,\forall\theta'\in\Theta\}$, i.e., the value of $\theta$ that gives the smallest out-of-sample cost.
\begin{figure}[t!]
    \centering
    \begin{subfigure}[t]{0.5\textwidth}
        \centering
        \includegraphics[scale=0.75]{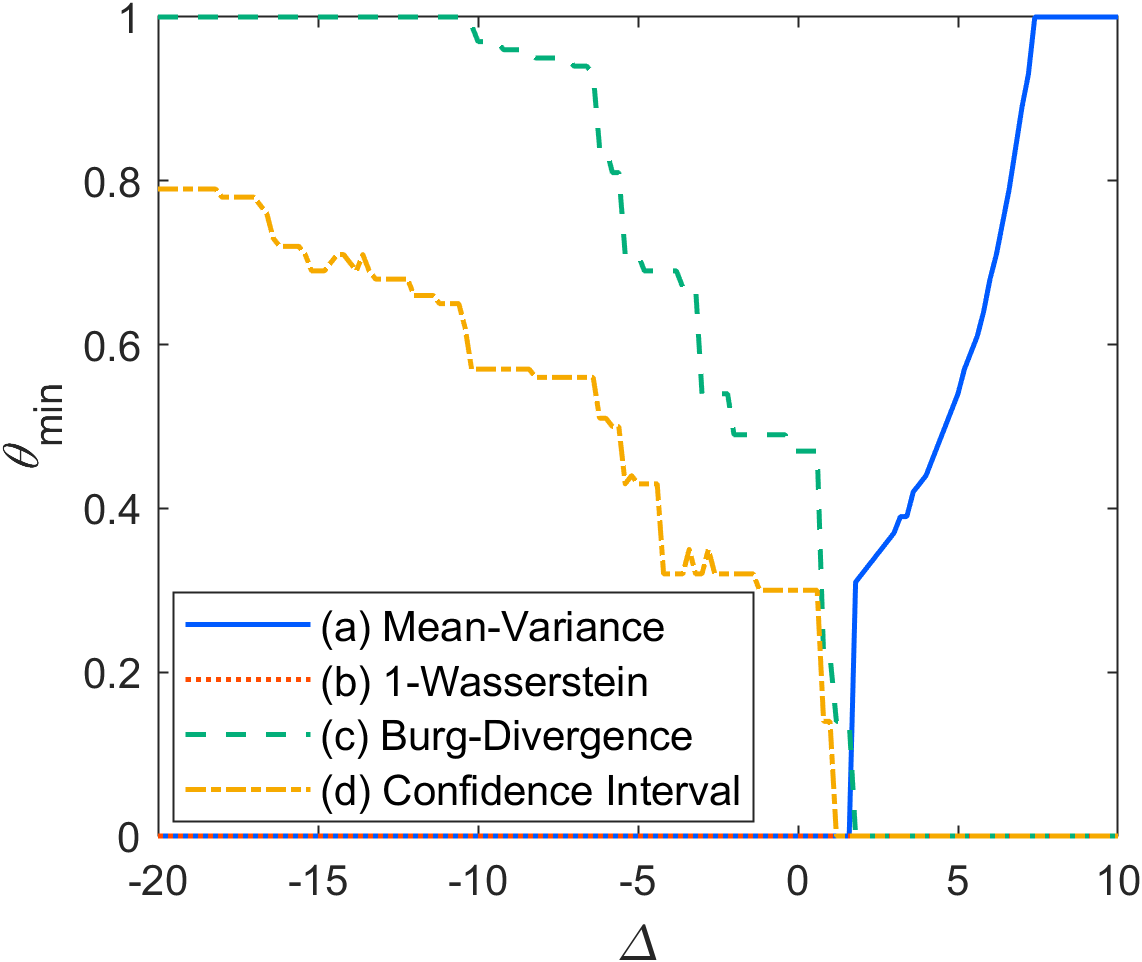}
        \caption{Inventory Control}  \label{fig:OS_IC}
    \end{subfigure}%
    ~ 
    \begin{subfigure}[t]{0.5\textwidth}
        \centering
        \includegraphics[scale=0.75]{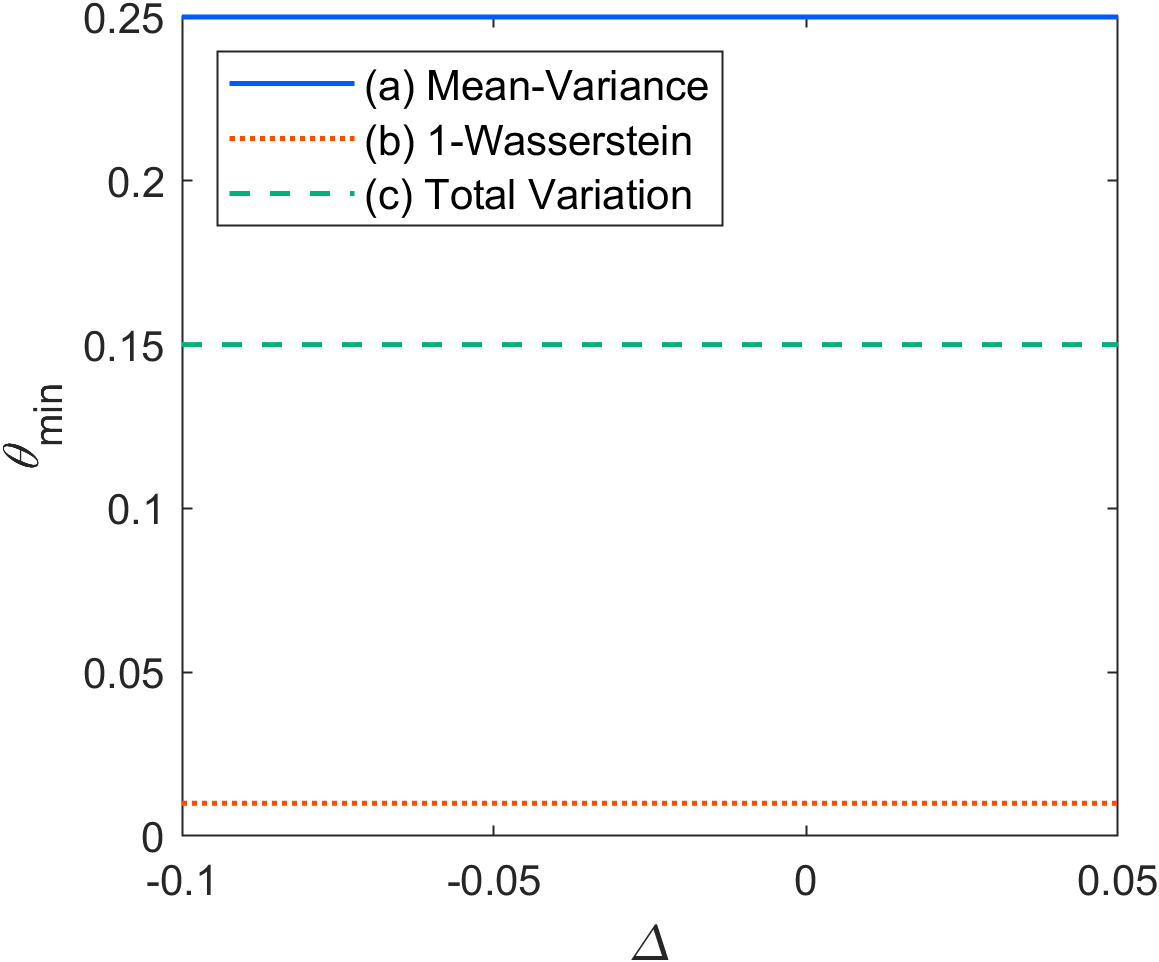}
        \caption{Portfolio Optimization}  \label{fig:OS_PO}
    \end{subfigure}
    \caption{$\theta_\text{min}$ for different values of perturbation $\Delta$} \label{fig:OS}
\end{figure}

Figures~\ref{fig:OS_IC} and \ref{fig:OS_PO} present the values of $\theta_\text{min}$ in the inventory control problem and portfolio optimization problem, respectively. Not surprisingly, the value of $\theta_\text{min}$ varies across different shape parameters employed in the TRO model. Moreover, the value of $\theta$ that yields the lowest out-of-sample costs is generally neither 0 nor 1 but lies strictly between these two extremes. This highlights that adopting solutions from the spectrum of TRO optimal solutions results in smaller out-of-sample costs than the SAA and DRO solutions. Consider the inventory control problem, for example  (see Figure~\ref{fig:OS_IC}). When $\Delta\in[1.8,7.2]$, the TRO model for this problem with set (a) as the shape parameter and intermediate values of $\theta$ yields the smallest out-of-sample costs. The TRO model with sets (c) and (d) and intermediate values of $\theta$ yield the smallest out-of-sample costs when $\Delta\in[-10.2,1.6]$ and $\Delta\in[-20,1]$, respectively. Note that the optimal solution to our TRO model using set (b) is always the same for any $\theta\in[0,1]$, i.e., there is only one solution for this problem.  Similarly, for the portfolio optimization problem (see Figure~\ref{fig:OS_PO}), our TRO model with sets (a), (b), and (c) and  $\theta=0.25$, $\theta=0.01$, and $\theta=0.15$ yields the smallest out-of-sample costs, respectively.  Note that $\theta_\text{min}$ is the same for all $\Delta\in[-0.1,0.05]$ since adding a constant $\Delta$ to each entry of the mean vector $\mub$ shifts the entire objective function of this problem by a constant. These results demonstrate the benefits of adopting solutions on the spectrum of TRO optimal solutions over the SAA and DRO solutions.

% \newpage

\section{Comparison with \protect\cite{Wang_et_al:2023}}  \label{apdx:Wang_et_al}

In this appendix, we provide a detailed comparison between our work and that of \cite{Wang_et_al:2023}.

\begin{itemize}[leftmargin=5mm]
    \item \textit{The Proposed Model.} While our TRO model looks similar to \cite{Wang_et_al:2023}'s model at the outset, our work actually generalizes the idea of \cite{Wang_et_al:2023}. Recall our TRO model in \eqref{model:trade-off_model}, which is equipped with the TRO ambiguity set $\calP'_{N,\theta}$ in \eqref{eqn:trade-off_ambig_set}. The TRO ambiguity set \eqref{eqn:trade-off_ambig_set} is characterized by two parameters: the \textit{shape} parameter $\calP_N$ and the \textit{size} parameter $\theta$. The shape parameter $\calP_N$ represents distributional ambiguity and could be any (data-driven) ambiguity set satisfying some mild assumptions mentioned in the paper. The size parameter $\theta\in[0,1]$ controls the level of optimism, i.e., it controls the trade-off between solving the problem under a distributional belief and solving it under ambiguity. By choosing the shape parameter $\calP_N$ as the distance-based ambiguity set  $B_{\epsilon_N}(\Probh_N)=\{\Prob\in\calP(\Xi)\mid \Delta(\Prob,\Probh_N)\leq \epsilon\}$ (where $\calP(\Xi)$ is the set of probability measures on the support $\Xi$ and $\Delta$ is a statistical distance), our TRO model reduces to the Bayesian distributionally robust (BDR) optimization model proposed in \cite{Wang_et_al:2023} (see equation (11) in \citealp{Wang_et_al:2023}), i.e., model \eqref{model:trade-off_model} reduces to 
    \begin{equation} \label{model:BDR}  \tag{BDR} 
        \underset{\xb\in\calX}{\text{minimize}}\quad \beta_N \max_{\Prob\in B_{\epsilon_N}(\Probh_N)} \E_{\Prob}[f(\xb,\xib)] + (1-\beta_N)\E_{\Probh_N}[f(\xb,\xib)].
    \end{equation}
    Thus, \cite{Wang_et_al:2023}'s \ref{model:BDR} model is a special case of our TRO model. In particular, we emphasize that one can construct the TRO ambiguity set $\calP'_{N,\theta}$ using any shape parameter $\calP_N$, including general moment- and distance-based ambiguity sets. Hence, our theoretical results are valid for various types of the shape parameter. In contrast, \cite{Wang_et_al:2023} analyses are limited to the case where $\calP_N$  is a distance-based ambiguity set. 

    % replacing function $f$ in \eqref{model:trade-off_model} with the loss function $h$ and 

    \item \textit{Hierarchical Properties}. In Section~\ref{sec:TRO_ambig_set_property}, we analyze properties of the TRO ambiguity set $\calP'_{N,\theta}$ defined in \eqref{eqn:trade-off_ambig_set} and the sequence of TRO ambiguity sets $\{\calP'_{N,\theta}\mid \theta\in[0,1]\}$. These were not analyzed in \cite{Wang_et_al:2023}.  We first formally introduce the notion of \textit{hierarchical properties} of the sequence of the TRO ambiguity sets $\{\calP'_{N,\theta}\mid\theta\in[0,1]\}$; see Definition~\ref{def:Hierarchical}. The hierarchical properties indicate that the size of the TRO ambiguity set $\calP'_{N,\theta}$ increases with $\theta$, i.e., $\calP'_{N,\theta}$ contains more distributions with a larger $\theta$. This implies that the TRO model is more conservative when we pick a larger $\theta$.  Then, in Theorem~1, we provide necessary and sufficient conditions for the sequence of TRO ambiguity sets $\{\calP'_{N,\theta}\mid \theta\in[0,1]\}$ to satisfy these properties. Specifically, Theorem~1 establishes that constructing the TRO ambiguity set $\calP'_{N,\theta}$ using a star-shaped shape parameter $\calP_N$ with a star center $\Probh_N$ is necessary and sufficient for the sequence of TRO ambiguity sets $\{\calP'_{N,\theta}\mid \theta\in[0,1]\}$ to satisfy the hierarchical property. Part (i) shows that for a general star-shaped shape parameter $\calP_N$, the TRO ambiguity set $\calP'_{N,\theta}$ is non-decreasing in $\theta$, i.e., $\calP'_{N,\theta_1}\subseteq \calP'_{N,\theta_2}$ whenever $\theta_1\leq\theta_2$, indicating that the objective function of the trade-off model \eqref{model:trade-off_model} is non-decreasing in $\theta$. Part (ii) illustrates the relationship between the sets $\{\Probh_N\}$, $\calP'_{N,\theta_1}$, $\calP'_{N,\theta_2}$, and $\calP_N$ with $0<\theta_1<\theta_2<1$. Specifically, part (ii) shows how the TRO ambiguity set  $\calP'_{N,\theta}$ enlarges with $\theta$. This, in turn, implies that the TRO model is more conservative when we pick a larger $\theta$. These important new results establish the connection between the specific choice of $\theta$ and the conservatism of the TRO model. However, they were not analyzed in \cite{Wang_et_al:2023}. In addition, in Proposition~\ref{prop:star_shape_distance}, we derive necessary and sufficient conditions for a general distance-based ambiguity set, i.e., the one adopted in the \ref{model:BDR} model, to be star-shaped with a star center $\Probh_N$. Thus, our results provide necessary and sufficient conditions under which the sequence of ambiguity sets corresponding to the \ref{model:BDR} model satisfies the hierarchical property. This was not studied in \cite{Wang_et_al:2023}. Moreover, as mentioned in the first point, our findings can be applied to TRO models with a general shape parameter. For instance, in Proposition~\ref{prop:star_shape_moment}, we establish the corresponding conditions for a general moment-based ambiguity set to be star-shaped, with a star center represented by $\Probh_N$, a novel contribution not explored in previous studies.

    \item \textit{Analysis of the Conservatism.} In Section~\ref{sec:conservatism}, we investigate the conservatism and properties of the optimal value $\upsilonh_N(\theta)$ and the set of optimal solutions $\calXh_N(\theta)$ of the TRO model through the lens of quantitative stability analysis.  First, in Theorem~\ref{thm:sensitivity_in_theta}, we establish mechanisms to quantify the difference in  $\upsilonh_N(\theta)$ and  $\calXh_N(\theta)$ (and hence conservatism) incurred by perturbation in $\theta$. In particular, it shows that $\upsilonh_N(\theta)$ is Lipschitz continuous in $\theta$ and $\calXh_N(\theta)$ is H\"{o}lder continuous with H\"{o}lder exponent $1/2$ under distance $D$. This shows that both the optimal value and the set of optimal solutions change gradually with $\theta\in[0,1]$. Then, in Theorem~\ref{thm:conservatism}, we show that naively combining SAA and DRO optimal solutions, e.g., via a convex combination after solving each separately, may not yield a feasible solution to the TRO problem \eqref{model:trade-off_model}. This is particularly true in applications where $\calX$ is not convex (e.g., problems involving integer variables such as facility location and scheduling problems). Specifically, part (i) of Theorem~\ref{thm:conservatism} establishes that the optimal value $\upsilonh_N(\theta)$ to our TRO model is not less than the convex combination $(1-\theta)\upsilonh_N(0)+\theta\upsilonh_N(1)$ of the SAA and DRO optimal values. In addition, if $\Probh_N\in\calP_N$, Theorem~\ref{thm:conservatism} implies that $\upsilonh_N$ is non-decreasing in $\theta$ as illustrated in Figure~\ref{fig:conservatism_plot}. Part (ii) of Theorem~\ref{thm:conservatism} indicates that the set of optimal solutions $\calXh_N(\theta)$ to our TRO model can be approximated by $\overline{\calX}_N(\theta):=(1-\theta)\calXh_N(0)+\theta\calXh_N(1)$ only when $\theta$ is close to zero or one; however, the difference could be huge for intermediate values of $\theta \in (0, 1)$. These important investigations and results are new. In particular, \cite{Wang_et_al:2023} only suspected that their \ref{model:BDR} model is likely to be less conservative than the DRO model without providing any theoretical analysis. Indeed, we could apply our results to show that by changing $\beta_N$ in \cite{Wang_et_al:2023}'s \ref{model:BDR} model, one can obtain a spectrum of optimal solutions, ranging from optimistic to conservative solutions.

    \item \textit{Finite-Sample Properties.} As discussed in the first paragraph of Section~\ref{subsec:bias_analysis}, the optimal value of our TRO model $\widehat{\upsilon}_N(\theta)$ represents an estimator of the true optimal value of the stochastic optimization problem $\upsilon^\star = \inf_{\xb\in\calX} \E_{\Prob^\star} \big[f(\xb,\xib)\big]$.  Analyzing the bias of an estimator to the true optimal value $\upsilon^\star$ is common in the related literature; see, e.g.,  \cite{Blanchet_et_al:2019a, Dentcheva_Lin:2022}. Also, it is well known that the SAA estimator $\upsilonh_N(0)$ is a downward biased estimator of $\upsilon^\star$; see~\eqref{eqn:SAA_downward_bias}. Thus, we and \cite{Wang_et_al:2023} analyze the bias of the TRO and BDR estimators, respectively. Specifically, we and \cite{Wang_et_al:2023} show that there exists $\theta^\text{u}_N\in[0,1]$ in our TRO model and $\beta^\text{u}_N\in[0,1]$ in the \ref{model:BDR} model such that the optimal values of the TRO model and the \ref{model:BDR} model are unbiased estimators of $\upsilon^\star$. However, in Section~\ref{subsec:bias_analysis}, we provide a more detailed investigation of the bias of the more general model, the TRO model, as well as new results. First, in Proposition~\ref{prop:bias_non_neg_UB}, we derive an upper bound on the bias of the TRO estimator $\upsilonh_N(\theta)$. It suggests that the bias of $\upsilonh_N(\theta)$ may not be a downward bias as that of the SAA estimator. Second, in Corollary~\ref{cor:debias_theta}, we show that for sufficiently small $\theta$, the TRO estimator $\upsilonh_N(\theta)$ has a smaller bias than the SAA estimator, which was not discussed in \cite{Wang_et_al:2023}. Third, we show that $\E_{\Prob_N}[\upsilonh_N(\theta)]$ can be decomposed as the sum of three terms: (a) the expected value of the SAA estimator $\E_{\Prob^N}[\upsilonh_N(0)]$, (b) the DRO effect $\theta\big\{\E_{\Prob^N}\big[\upsilonh_N(1)\big]-\E_{\Prob^N}\big[\upsilonh_N(0)\big]\big\}$, and (c) the concavity effect $R_N(\theta)$ (see Figure~\ref{fig:bias_reduction}). Finally, as pointed out by  \cite{Wang_et_al:2023}, parameter $\beta^\text{u}_N$ is typically hard to obtain. Similarly, $\theta^\text{u}_N$ is hard to obtain. However,  in Theorems~\ref{thm:rate_of_theta_LIL} and \ref{thm:rate_of_theta_AN}, we analyze the asymptotic behavior of $\theta_N^\text{u}$. In particular, we prove the convergence of $\theta_N^\text{u}$ as $N\rightarrow\infty$ and derive its convergence rate. \cite{Wang_et_al:2023} did not conduct such analyses. In Section~\ref{subsec:generalization}, we derive the generalization bound for our TRO model in Theorem~\ref{thm:generalization_error} based on that of the SAA and DRO models. In particular, we show that the probability \eqref{eqn:generalization_error} (i.e., the generalization error) is upper bounded by the sum of the probabilities $\alpha_{N,1}$ in \eqref{eqn:generalization_error_bound_SAA} from the SAA model and  $\alpha_{N,2}$ in \eqref{eqn:generalization_error_bound_DRO} from the DRO model. This indeed corrects the generalization bound derived in Theorem 3.5 of \cite{Wang_et_al:2023}, where they did not take the sum of the two probability bounds from the SAA and DRO models; see Appendix C.6 of \cite{Wang_et_al:2023}. In addition, we show that for specific choices of the shape parameter $\calP_N$, such as popular distance-based ambiguity sets as the one employed in \cite{Wang_et_al:2023}'s \ref{model:BDR} model, the generalization error exhibits an exponentially decaying tail. This important and attractive finite-sample property was not mentioned in \cite{Wang_et_al:2023}. 
    
    \item \textit{Asymptotic Convergence.} In Section~\ref{sec:asymptotic}, we show the almost sure convergence of the optimal value $\upsilonh_N(\theta_N)$ and the set of optimal solutions $\calXh_N(\theta_N)$ of the TRO model to their true counterparts when $N\rightarrow\infty$, and we derive the asymptotic distribution of $\upsilonh_N(\theta_N)$ when $N\rightarrow\infty$. Our asymptotic convergence results hold for TRO models with TRO ambiguity sets constructed using general shape parameters $\calP_N$, such as moment- and distance-based ambiguity sets. This differs from results in the existing literature focusing on a specific ambiguity set, including \cite{Wang_et_al:2023}. Note that the asymptotic convergence and distribution are two basic asymptotic properties that are commonly analyzed in the existing literature for data-driven optimization models \citep{Blanchet_Shapiro:2023, Kuhn_et_al:2019, Shapiro_et_al:2014}. Hence, \cite{Wang_et_al:2023} analyzed these asymptotic properties of their \ref{model:BDR} model. However, the following are some differences between our analyses and those of \cite{Wang_et_al:2023}.
    
    \begin{itemize}[leftmargin=5mm, topsep=0mm]
        \item  First, \cite{Wang_et_al:2023} assumed the DRO objective $\sup_{\Prob\in B_{\epsilon_N}(\Probh_N)} \E[h(\xb,\xib)]$ is $\Prob^\star$-bounded and attainable for $\xb\in\calX$, where we recall that $\Prob^\star$ is the true distribution; see assumption C1 of Theorem~3.3 in \cite{Wang_et_al:2023}. To justify this assumption, \cite{Wang_et_al:2023} provided one example based on the Wasserstein ambiguity set; see Appendix~C.2 in \cite{Wang_et_al:2023}. Our convergence analyses also require the DRO objective to be upper-bounded for sufficiently large $N$. However, we adopt a more general set of assumptions under which the desired boundedness condition holds. Specifically, we impose assumptions on the objective function or the sequence of the ambiguity sets $\{\calP_N\}_{N\in\N}$; see Assumption~\ref{assumption:ambig_set_regularity}. Examples~\ref{eg:ambig_set_seq_1}--\ref{eg:ambig_set_seq_5} provide a wide range of settings under which Assumption~\ref{assumption:ambig_set_regularity} holds. These examples include the case where $\calP_N$ is constructed based on the Wasserstein ambiguity sets, as well as other distance-based and moment-based ambiguity sets. In Lemma~\ref{lem:DRO_component_asym_boundedness}, we formally prove that under Assumption~\ref{assumption:ambig_set_regularity}, the DRO objective is \textit{asymptotically} bounded, which is weaker than assumption C1 adopted by \cite{Wang_et_al:2023} (which requires the DRO objective to be bounded for all $N\in\N$). 

        \item Second, in Theorem~\ref{thm:asymptotic_convergence}, we prove the almost-sure convergence of our TRO model. Specifically, we show that the optimal value $\upsilonh_N(\theta_N)$ and the set of optimal solutions $\calXh_N(\theta_N)$ of our TRO model converges almost surely to the true optimal value $\upsilon^\star$ and the set of optimal solutions $\calX^\star$ to \eqref{prob:SO}, respectively. In contrast, in Theorem~3.3 of \cite{Wang_et_al:2023}, they only provided the convergence of the optimal value and the set of optimal solutions of the \ref{model:BDR} model in probability, which is weaker than our almost-sure convergence. We would like to highlight that the asymptotic convergence holds for TRO models with TRO ambiguity sets constructed using general shape parameters, such as moment-based ambiguity sets. This differs from the existing convergence results established for data-driven DRO models, which mainly employ distance-based ambiguity sets.

        \item Third, in Theorem~3.3 of \cite{Wang_et_al:2023}, they derived the asymptotic normality of the optimal value of the \ref{model:BDR} model. To show this, they assumed that the optimal solution $\widehat{\xb}_{b,N}$ to the \ref{model:BDR} model converges (in probability) to an optimal solution $\xb_0$ to the stochastic optimization problem under the true distribution $\Prob^\star$. This assumption is not common in relevant literature when deriving the asymptotic distribution of the optimal value of data-driven optimization models \citep{Blanchet_Shapiro:2023, Guigues_et_al:2018, Shapiro_et_al:2014}. In contrast, in Theorem~\ref{thm:asy_dist}, we derive the asymptotic distribution of $\upsilonh_N(\theta_N)$ without imposing such an assumption. Specifically, we apply the Delta's method (see, e.g., \citealp{Carcamo_et_al:2020}) to derive the following asymptotic distribution of the optimal value $\upsilonh_N=\upsilonh_N(\theta_N)$ of the TRO model: $\sqrt{N}(\upsilonh_N-\upsilon^\star) \Rightarrow \inf_{\xb\in \calX^\star} \G(\xb)$, where $\G$ is a tight Gaussian process indexed by $\calX$ with mean zero and covariance function $\Cov(\G(\xb_1)),\G(\xb_2))=\Cov_{\Prob^\star}(f(\xb_1,\xib),f(\xb_2,\xib))$. Thus, our results on the asymptotic distribution of the optimal value are different from those of \cite{Wang_et_al:2023}. 

        \item Finally, for the special case when the shape parameter is chosen as some popular distance-based ambiguity set $\calP_{N,r_N}=\{\Prob\in\calP(\Xi)\mid \sfd(\Prob,\Probh_N)\leq r_N\}$ as in \cite{Wang_et_al:2023}'s \ref{model:BDR} model, we can recover the asymptotics of the optimal value in classical distance-based DRO models. Specifically, in Theorem~\ref{thm:asy_dist_distance_based}, we derive the asymptotic distribution of the optimal value $\upsilonh_N(\theta_N)$ of our TRO model under different convergence rates of the size parameter $\theta_N$ and the radius $r_N$ in the shape parameter $\calP_{N,r_N}$. This generalizes the asymptotic convergence of the optimal solution derived in Theorem~3.3 of \cite{Wang_et_al:2023}. In particular, the asymptotic distribution results in Theorem~3.3 of \cite{Wang_et_al:2023} essentially correspond to the case (i) in Theorem~\ref{thm:asy_dist_distance_based} only. Cases (ii) and (iii) in Theorem~\ref{thm:asy_dist_distance_based} were not investigated in \cite{Wang_et_al:2023}. Moreover, we also investigate the connection between the size parameter $\theta_N$ in our TRO model (with TRO ambiguity set constructed using the shape parameter $\calP_{N,r}$) and the radius $r_N$ in classical distance-based DRO models. This was not analyzed in \cite{Wang_et_al:2023}.
    \end{itemize}
    
\end{itemize}

\newpage
\bibliographystyle{elsarticle-harv}
\bibliography{references}

%\noindent \textbf{References}

\end{document}